\numberwithin{equation}{section}
\def\@tocline#1#2#3#4#5#6#7{\relax
  \ifnum #1>\c@tocdepth % then omit
  \else
    \par \addpenalty\@secpenalty\addvspace{#2}%
    \begingroup \hyphenpenalty\@M
    \@ifempty{#4}{%
      \@tempdima\csname r@tocindent\number#1\endcsname\relax
    }{%
      \@tempdima#4\relax
    }%
    \parindent\z@ \leftskip#3\relax \advance\leftskip\@tempdima\relax
    \rightskip\@pnumwidth plus4em \parfillskip-\@pnumwidth
    #5\leavevmode\hskip-\@tempdima
      \ifcase #1
       \or\or \hskip 1em \or \hskip 2em \else \hskip 3em \fi%
      #6\nobreak\relax
    \dotfill\hbox to\@pnumwidth{\@tocpagenum{#7}}\par
    \nobreak
    \endgroup
  \fi}
\definecolor{Gray}{gray}{0.9}
\pgfplotsset{compat=1.8}
\definecolor{rulecolor}{RGB}{0,71,171}
\definecolor{tableheadcolor}{gray}{0.92}
\newtheorem{theorem}{Theorem}[section]
\newtheorem{lemma}[theorem]{Lemma}
\newtheorem{proposition}[theorem]{Proposition}
\newtheorem{corollary}[theorem]{Corollary}
\theoremstyle{definition}
\newtheorem{definition}[theorem]{Definition}
\newtheorem{remark}[theorem]{Remark}
\newtheorem{assumption}[theorem]{Assumption}
\newcommand{\Irm}{\mathrm{I}}
\newcommand{\Trm}{\mathrm{T}}
\newcommand{\Acal}{\mathcal{A}}
\newcommand{\Bcal}{\mathcal B}
\newcommand{\Fcal}{\mathcal{F}}
\newcommand{\Gcal}{\mathcal{G}}
\newcommand{\Hcal}{\mathcal{H}}
\newcommand{\Mcal}{\mathcal{M}}
\newcommand{\Qcal}{\mathcal{Q}}
\newcommand{\Scal}{\mathcal{S}}
\newcommand{\Ffrak}{\mathfrak{F}}
\newcommand{\Ascr}{\mathscr{A}}
\newcommand{\Bscr}{\mathscr{B}}
\newcommand{\Cscr}{\mathscr{C}}
\newcommand{\Fscr}{\mathscr{F}}
\newcommand{\Hscr}{\mathscr{H}}
\newcommand{\Lscr}{\mathscr{L}}
\newcommand{\Nscr}{\mathscr{N}}
\newcommand{\Pscr}{\mathscr{P}}
\newcommand{\Rscr}{\mathscr{R}}
\newcommand{\Abf}{\mathbf{A}}
\newcommand{\Bbf}{\mathbf{B}}
\newcommand{\Cbf}{\mathbf{C}}
\newcommand{\Ebf}{\mathbf{E}}
\newcommand{\Gbf}{\mathbf{G}}
\newcommand{\Hbf}{\mathbf{H}}
\newcommand{\Ibf}{\mathbf{I}}
\newcommand{\Jbf}{\mathbf{J}}
\newcommand{\Sbf}{\mathbf{S}}
\newcommand{\mbf}{\boldsymbol{m}}
\DeclareMathOperator{\graph}{graph}
\DeclareMathOperator{\diverg}{div}
\DeclareMathOperator{\dist}{dist}
\newcommand{\N}{\mathbb{N}}
\newcommand{\R}{\mathbb{R}}
\newcommand{\loc}{\mathrm{loc}}
\newcommand{\spt}{\mathrm{spt}}
\newcommand{\gr}{\mathrm{gr}}
\newcommand{\sing}{\mathrm{sing}}
\newcommand{\Sing}{\mathrm{Sing}}
\newcommand{\toweakstar}{\overset{*}\rightharpoonup}
\newcommand{\BV}{\mathrm{BV}}
\newcommand{\eps}{\epsilon}
\newcommand{\Ebb}{\mathbb{E}}
\renewcommand{\eps}{\varepsilon}
\newcommand{\vphi}{\varphi}
\renewcommand*\env@matrix[1][*\c@MaxMatrixCols c]{%
    \hskip -\arraycolsep
    \let\@ifnextchar\new@ifnextchar
    \array{#1}}
\DeclareMathOperator{\Lip}{Lip}
\newcommand{\mres}{\mathbin{\vrule height 1.6ex depth 0pt width
        0.13ex\vrule height 0.13ex depth 0pt width 1.3ex}}
\def\vint_#1{\mathchoice%
    {\mathop{\kern 0.2em\vrule width 0.6em height 0.69678ex depth -0.58065ex
            \kern -0.8em \intop}\nolimits_{\kern -0.4em#1}}%
    {\mathop{\kern 0.1em\vrule width 0.5em height 0.69678ex depth -0.60387ex
            \kern -0.6em \intop}\nolimits_{#1}}%
    {\mathop{\kern 0.1em\vrule width 0.5em height 0.69678ex depth -0.60387ex
            \kern -0.6em \intop}\nolimits_{#1}}%
    {\mathop{\kern 0.1em\vrule width 0.5em height 0.69678ex depth -0.60387ex
            \kern -0.6em \intop}\nolimits_{#1}}}
\newcommand*{\RangeX}{%
    {%
        \mathpalette\@RangeOf{X}%
    }%
}
\newcommand*{\@RangeOf}[2]{%
    % #1: math style
    % #2: symbol, which is duplicated
    \sbox0{$\m@th#1\mathsf{#2}$}%
    \mathsf{#2}%
    \kern-\wd0 %
    \mkern2.75mu\relax
    \nonscript\mkern.25mu\relax
    \mathsf{#2}%
}
\newcommand{\aveint}[2]{\mathchoice%
    {\mathop{\kern 0.2em\vrule width 0.6em height 0.69678ex depth -0.58065ex
            \kern -0.8em \intop}\nolimits_{\kern -0.45em#1}^{#2}}%
    {\mathop{\kern 0.1em\vrule width 0.5em height 0.69678ex depth -0.60387ex
            \kern -0.6em \intop}\nolimits_{#1}^{#2}}%
    {\mathop{\kern 0.1em\vrule width 0.5em height 0.69678ex depth -0.60387ex
            \kern -0.6em \intop}\nolimits_{#1}^{#2}}%
    {\mathop{\kern 0.1em\vrule width 0.5em height 0.69678ex depth -0.60387ex
            \kern -0.6em \intop}\nolimits_{#1}^{#2}}}
\newcommand\res{\mathop{\hbox{\vrule height 7pt width .3pt depth 0pt\vrule height .3pt width 5pt depth 0pt}}\nolimits}
\title[Singularities of area-minimizing currents mod$(q)$]{Fine Structure of Singularities in Area-Minimizing Currents Mod$(q)$}
\author[C. De Lellis]{Camillo De Lellis}
\address{School of Mathematics, Institute for Advanced Study, 1 Einstein Dr., Princeton NJ 08540, USA}
\email{camillo.delellis@ias.edu}
\author[P. Minter]{Paul Minter}
\address{Department of Mathematics, Fine Hall, Princeton University, Washington Road, Princeton, NJ, 08540, USA; School of Mathematics, Institute for Advanced Study, 1 Einstein Dr., Princeton, NJ, 08540, USA.}
\email{pm6978@princeton.edu\textnormal{,} pminter@ias.edu}
\author[A. Skorobogatova]{Anna Skorobogatova}
\address{Department of Mathematics, Fine Hall, Princeton University, Washington Road, Princeton, NJ 08540, USA}
\email{as110@princeton.edu}
\begin{document}

\maketitle

\begin{abstract}
    We study fine structural properties related to the interior regularity of $m$-dimensional area minimizing currents mod$(q)$ in arbitrary codimension. We show: (i) the set of points where at least one tangent cone is translation invariant along $m-1$ directions is locally a connected $C^{1,\beta}$ submanifold, and moreover such points have unique tangent cones; (ii) the remaining part of the singular set is countably $(m-2)$-rectifiable, with a unique flat tangent cone (with multiplicity) at $\mathcal{H}^{m-2}$-a.e. flat singular point. These results are consequences of fine excess decay theorems as well as almost monotonicity of a universal frequency function.
\end{abstract}

\tableofcontents

\section{Introduction and main results}
Let $T$ be an $m$-dimensional area-minimizing current mod$(q)$ in a (sufficiently regular) Riemannian manifold, for some positive integer $q$ (see e.g. \cite{DLHMS} for the relevant definitions and background). The interior regularity of such $T$ has been studied extensively, starting with results by Federer \cite{Federer1970}, Taylor \cite{JTaylor76}, White \cite{White-mod4,White-regularity} for specific small moduli $q$. More recently, a number of structural results for the interior singularities have been established in the works \cite{DLHMS, DLHMSS, DLHMSS-excess-decay, MW, Sk-modp}, for arbitrary moduli $q$. However, finer structural properties, such as the fact that the singular set consists locally of connected $(m-1)$-dimensional $C^{1,\alpha}$ submanifolds, up to a countably $(m-2)$-rectifiable set, have only been demonstrated for hypersurfaces for general moduli $q$. The goal of this article is to resolve this question in arbitrary codimension, establishing analogous finer properties of the singular set, and moreover give a satisfactory description of singular points in the largest possible non-flat strata. 

The overall result on the structure of the singular set is summarized in the following theorem. As usual, if a point $p$ is not in the support of the boundary of $T$, we say that it is regular if there is a neighborhood of it in which $T$ is represented by a constant multiple of a regular minimal submanifold. The complement of the union of this set and of the support of $\partial T$ will be denoted by ${\rm Sing}\, (T)$ and will be referred to as the (interior) singular set of $T$.

\begin{theorem}\label{c:main}
Assume that $m \geq 2$, $n\geq\bar{n}\geq 2$, and $q\geq 3$ are positive integers and that
\begin{itemize}
\item[(i)] $\Sigma\subset \mathbb R^{m+n}$ is a complete embedded $C^{3,\alpha_0}$ $(m+\bar n)$-dimensional submanifold for some positive $\alpha_0>0$;
\item[(ii)] $U\subset \mathbb R^{m+n}$ is a bounded open set;
\item[(iii)] $T$ is a representative mod$(q)$ of an area-minimizing flat chain mod$(q)$ in $\Sigma \cap U$ with $\partial T \res U = 0$  mod$(q)$. 
\end{itemize}
 Then the interior singular set ${\rm Sing}\, (T)$ of $T$ is the disjoint union of 
\begin{itemize}
\item[(a)] a subset $\mathcal{R}\subset {\rm Sing}\, (T)$ which is countably $(m-2)$-rectifiable;
\item[(b)] a subset $\mathcal{S} = {\rm Sing}\, (T) \setminus \mathcal{R}$ with the property that for every $p\in \mathcal{S}$ there is a neighborhood $U'$ such that $U'\cap \mathcal{S}$ is a $C^{1, \beta}$ $(m-1)$-dimensional submanifold of $U'\cap \Sigma$ without boundary, where $\beta = \beta(q,m,n,\bar{n})>0$.
\end{itemize}
Moreover:
\begin{itemize}
    \item[(c)] $T$ is an area-minimizing integral current in the open set $U\setminus \overline{\mathcal{S}}$ and there is a continuous orientation of the submanifold $\mathcal{S}$ such that $\partial T \res U = q \llbracket \mathcal{S} \rrbracket$;
    \item[(d)] at $\Hcal^{m-2}$-a.e. singular point where a tangent cone to $T$ is either a union of planes meeting in a subspace of dimension $m-2$, or a single plane (with multiplicity), the tangent cone is unique; moreover, at every singular point where a tangent cone to $T$ is invariant along an $(m-1)$-dimensional subspace, the tangent cone is unique.
    \end{itemize}
\end{theorem}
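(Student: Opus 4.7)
The plan is to combine the classical Almgren--White stratification of $\Sing(T)$ with the two analytic tools advertised in the abstract: a fine excess decay theorem at singularities admitting a translation-invariant tangent cone of maximal spine dimension, and an almost-monotone universal frequency function controlling the behavior at the remaining (flat or low-spine) singular points. Concretely, I stratify $\Sing(T)$ by
\[
\Scal_k:=\{p\in\Sing(T)\,:\,\text{no tangent cone at }p\text{ is translation-invariant along more than }k\text{ directions}\},
\]
so that White's refinement of Almgren's stratification gives $\dim_{\Hcal}\Scal_k\le k$. Setting $\Scal:=\Scal_{m-1}\setminus\Scal_{m-2}$ and $\Rcal:=\Scal_{m-2}$ yields the decomposition required in (a)--(b).

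At any $p\in\Scal$ some tangent cone $\Cbf$ is invariant along an $(m-1)$-plane, and the mod-$q$ constraint forces $\Cbf$ to be a sum of at least two half-hyperplanes meeting along that spine with integer multiplicities summing to $0$ mod $q$. Applying the fine excess decay theorem iteratively at $p$ with respect to such a $\Cbf$ produces a geometric rate of decay for the rotated spherical excess relative to the nearest cone of this form. A standard Campanato-type iteration then yields uniqueness of the tangent cone with a H\"older modulus, a H\"older control on the rotations of the spine across scales, and hence a local $C^{1,\beta}$ parameterization of $\Scal$ as an $(m-1)$-submanifold without boundary (the ``no boundary'' property being automatic because every half-hyperplane branch of $\Cbf$ terminates at the spine). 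This simultaneously proves (b) and the $(m-1)$-invariant half of (d).

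To obtain (a) and the remaining half of (d), I would treat $\Rcal=\Scal_{m-2}$ by combining the almost monotonicity of the universal frequency function with the excess decay of the previous step. Almost monotonicity provides effective control on the scales at which $T$ can fail to remain close to a fixed planar configuration, and this is exactly the input required by a Naber--Valtorta quantitative stratification argument, culminating in the rectifiable Reifenberg theorem applied to the set where the frequency drops significantly below its tangent value. The conclusion is that $\Rcal$ is countably $(m-2)$-rectifiable, and at $\Hcal^{m-2}$-a.e.\ point of $\Rcal$ the same frequency delivers a convergence rate strong enough to upgrade subsequential convergence to uniqueness whenever some blow-up is flat or a union of planes meeting along an $(m-2)$-subspace.

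Part (c) then follows from the preceding structural information. On $U\setminus\overline{\Scal}$ every singular point lies in $\Rcal$ and carries no $(m-1)$-invariant tangent cone, so its density is strictly less than $q/2$; this permits a local integer-multiplicity lift of $T$ whose area-minimality is inherited from the mod-$q$ minimality of $T$ by lifting of competitors. The $C^{1,\beta}$ parameterization of $\Scal$ together with the classification of its tangent cones supplies a continuous orientation on $\Scal$, after which the constancy theorem applied slicewise along this parameterization yields $\partial T\res U=q\llbracket\Scal\rrbracket$. The principal obstacle is the fine excess decay of Step 2: in arbitrary codimension this requires the construction of a mod-$q$ center manifold and a delicate blow-up analysis of the associated $\mathrm{Dir}$-minimizing multi-valued maps, tasks for which the classical Almgren frequency fails to be monotone and must be replaced by the universal frequency whose almost monotonicity is the other substantial analytic ingredient of the paper.
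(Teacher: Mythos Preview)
Your high-level strategy matches the paper's, but the decomposition you write down has a genuine gap that propagates through the rest of the outline. With your definitions, $\mathcal{S}\cup\mathcal{R}=(\Scal_{m-1}\setminus\Scal_{m-2})\cup\Scal_{m-2}=\Scal_{m-1}$, which is \emph{not} all of $\Sing(T)$: you are missing precisely the \emph{flat} singular points---those $p$ at which some tangent cone is a single plane with multiplicity, hence with spine of dimension $m$---and these lie in $\Sing(T)\setminus\Scal_{m-1}$. When $q$ is even such points can have density exactly $Q=q/2$, and proving that this set $\Ffrak_Q(T)$ is countably $(m-2)$-rectifiable is the principal new content of Part~\ref{pt:2} of the paper (Theorem~\ref{t:main-pt2}).

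The omission explains the inconsistencies later in your sketch. The universal frequency function is built from a center manifold and a normal approximation and is defined at \emph{flat} singular points, not at generic points of $\Scal_{m-2}$; rectifiability of the latter follows instead from the Naber--Valtorta theory for stationary varifolds already available from prior work. Your phrase ``whenever some blow-up is flat'' is vacuous on your $\mathcal{R}=\Scal_{m-2}$, since by definition no point there has a tangent cone with spine of dimension larger than $m-2$. Similarly, your argument for (c)---density strictly below $q/2$ on $U\setminus\overline{\mathcal{S}}$ because no tangent cone is $(m-1)$-invariant---fails at a flat singular point of density $Q$: its tangent cones are $m$-invariant, not $(m-1)$-invariant, yet the density equals $q/2$. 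One must first know that $\Ffrak_Q(T)$ is $\Hcal^{m-1}$-null (which the paper establishes via the frequency analysis) before the integrality on $U\setminus\overline{\mathcal{S}}$ can be deduced along the lines of Proposition~\ref{p:integrality}.
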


The latter theorem builds upon the following result, which is concerned
with those singular points that have at least one tangent cone that is translation invariant in exactly $m-1$ dimensions. It says the following.
\begin{theorem}\label{t:main}
Assume that $m \geq 2$, $n\geq\bar{n}\geq 2$, $q\geq 3$, $T$, $\Sigma$, and $U$ be as in Theorem \ref{t:main}.
Let $p\in \spt^q (T)\cap U$ be a point at which there is a tangent cone with an $(m-1)$-dimensional spine. Then there is a neighborhood $U'$ of $p$ in which the set 
\[
\mathcal{S} := \left\{x\in U' : \Theta (T, x) \geq \frac{q}{2} \right\}
\] 
is a connected $C^{1,\beta}$ $(m-1)$-dimensional submanifold of $U'$ without boundary, where $\beta = \beta (q,m,n, \bar n) > 0$. Moreover:
\begin{itemize}
\item[(a)] at every point $x\in \mathcal{S}$ there is a unique tangent cone to $T$, with $T_x \mathcal{S}$ its spine, and $\Theta(T,x) = \frac{q}{2}$;
\item[(b)] for $\|T\|$-a.e. $x\in U'\setminus \mathcal{S}$ the density $\Theta (T, x)$ is an integer strictly smaller than $\frac{q}{2}$;
\item[(c)] $T\res (U'\setminus \mathcal{S})$ is an area-minimizing integral current in the open set $U'\setminus \mathcal{S}$;
\item[(d)] $\mathcal{S}$ has a continuous orientation such that $\partial T \res U' = q \llbracket \mathcal{S} \rrbracket$.
\end{itemize}
\end{theorem}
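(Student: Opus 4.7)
The plan is to combine a classification of the tangent cone at $p$, an iteration of a fine excess decay around each nearby high-density point, and an integer lifting argument off $\mathcal{S}$.

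First, I would classify the tangent cone $\mathbf{C}$ at $p$. Any stationary minimizing $m$-cone mod$(q)$ invariant along an $(m-1)$-plane $V$ decomposes as a sum of oriented unit-multiplicity half-$m$-planes meeting along $V$. The boundary condition mod$(q)$ forces the number of sheets to be a multiple of $q$, and minimality (the balance condition at $V$) pins it down to exactly $q$, giving a ``$q$-book''. In particular $\Theta(T,p)=q/2$ and $p\in\mathcal{S}$. Upper semicontinuity of $\Theta$ and the classification of cones of density $\geq q/2$ arising as limits in a small neighborhood $U'$ of $p$ imply that every $y\in\mathcal{S}\cap U'$ is again a $q$-book singular point, with $\Theta(T,y)=q/2$ and at least one $q$-book tangent cone.

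Second, I would iterate the fine excess decay theorem already developed in the paper. It states: whenever $T$ is $L^2$-close at scale $r$ around $y$ to some $q$-book cone $\mathbf{C}_{y,r}$, then at scale $\theta r$ the current is closer to a new $q$-book cone $\mathbf{C}_{y,\theta r}$ by a factor $\theta^{2\beta}$, and the spine of $\mathbf{C}_{y,\theta r}$ differs from that of $\mathbf{C}_{y,r}$ by $O(r^\beta)$. Starting at $p$, iteration yields a unique tangent cone there; running the same iteration at each $y\in\mathcal{S}\cap U'$ produces a uniquely defined spine $V_y$, Hölder dependence $y\mapsto V_y$, and simultaneous $C^{0,\beta}$ graph approximations of $\mathcal{S}$ over $V=V_p$. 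Standard regularity-theoretic arguments upgrade this to $C^{1,\beta}$ regularity of $\mathcal{S}\cap U'$ as an $(m-1)$-submanifold with $T_y\mathcal{S}=V_y$, proving assertion (a) and the first claim of the theorem. Connectedness is automatic since $\mathcal{S}\cap U'$ is a graph over a connected piece of $V$.

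Third, I would analyze $U'\setminus\mathcal{S}$. After possibly shrinking $U'$, no sequence $x_k\in U'\setminus\mathcal{S}$ may have $\Theta(T,x_k)\geq q/2$, as any limit point would be a $q$-book singularity outside $\mathcal{S}$, contradicting the previous step; hence $\Theta(T,x)<q/2$ on $U'\setminus\mathcal{S}$, giving (b). On this open set the standard mod$(q)$-to-integer lifting applies: density strictly below $q/2$ lets one consistently orient the sheets, realizing $T\res(U'\setminus\mathcal{S})$ as an area-minimizing integral current, giving (c). Finally, the continuous orientation on $\mathcal{S}$ is read off from how the $q$ book sheets attach across $\mathcal{S}$, and a direct boundary computation in small cylinders transverse to $\mathcal{S}$ gives $\partial T\res U'=q\llbracket\mathcal{S}\rrbracket$, yielding (d).

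The main obstacle is the fine excess decay at book singularities with multiplicity. Unlike the one-sheet case, the natural linearization of $T$ at a $q$-book is a Dirichlet-minimizing multivalued map with prescribed branching along the spine, and the usual harmonic-approximation-plus-tilt scheme does not close on itself because excess can concentrate along $V$. The decisive tool, highlighted in the abstract, is the almost monotonicity of a universal frequency function adapted to this branched setting: it rules out concentration along the spine, controls the tilting rate of the optimal book cone across scales, and forces blow-ups to be homogeneous and of book type. Establishing this monotonicity in the multi-sheet branched regime, where the classical Almgren frequency for a single $Q$-valued function does not apply directly, is the core analytic difficulty on which the entire iteration rests.
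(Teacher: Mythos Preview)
Your overall architecture matches the paper's: classify the tangent cone via Proposition~\ref{p:classification-cones}, iterate the excess decay Theorem~\ref{t:decay} at each high-density point to obtain unique tangent cones and H\"older control on the spines, and invoke Proposition~\ref{p:integrality} off $\mathcal{S}$ for (b)--(d). However, two issues deserve comment.

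First, your second paragraph presumes that $\mathcal{S}$ is already a graph over $V_p$, but the excess decay alone only tells you that $\mathcal{S}$ is \emph{contained} in a thin neighborhood of the spine at each scale; it does not rule out holes. The paper fills this via Theorem~\ref{t:no-gaps}, which shows that under the smallness hypothesis every slice $\mathbf{p}_V^{-1}(y)$ with $y\in V\cap\Bbf_{1/8}$ meets $\{\Theta\geq q/2\}$. This ``no density gap'' statement is a separate compactness argument (not a consequence of excess decay) and is essential both here and inside the proof of Theorem~\ref{t:decay} itself, since Simon's shift estimates in Section~\ref{s:spine-est} require a nearby point of density $\geq q/2$ to recenter at. You should name it explicitly.

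Second, your final paragraph misidentifies the decisive tool. The ``universal frequency function'' mentioned in the abstract is used only in Part~\ref{pt:2} for the $(m-2)$-rectifiability of flat singularities; it plays no role in Theorem~\ref{t:decay}. The mechanism that controls concentration along the spine in the book case is Simon's first-variation identities (Theorem~\ref{t:Simon-monotonicity-error}, Corollary~\ref{c:Simon-non-conc}, Proposition~\ref{p:Simon-shift}), which bound $\int |p^\perp|^2/|p|^{m+2}$ and the nonconcentration integral directly, combined with the no-gaps theorem. The linearized problem is then a Dir-minimizer on a half-ball with zero trace on $V$, and its decay (Lemma~\ref{l:bdry-decay}) uses only the \emph{classical} Almgren frequency for the limiting multivalued harmonic map, not a frequency for $T$.
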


The reader might wonder why we are unable to establish uniqueness of tangent cones at $\mathcal{H}^{m-2}$-a.e. singular point, rather than merely those singular points described in the conclusion (d) of Theorem \ref{c:main}. The reason is that if a tangent cone to such a current is translation invariant along exactly $(m-2)$-directions, we do not know whether it must in general be a superposition of planes (to our knowledge no counterexamples are known). If this were the case, our results would give uniqueness of the tangent cone at $\mathcal{H}^{m-2}$ singular point. This is unlike in \cite{DMS}, where we are able to deduce $\Hcal^{m-2}$-a.e. uniqueness of tangent cones for area-minimizing {\em integral} currents, which in that setting is precisely due to such a classification of $(m-2)$-invariant area-minimizing integral cones.

When the modulus $q$ is odd, previous work of \cite{DLHMS, NV_varifolds} establishes that the singular set has Hausdorff dimension at most $m-1$, as well as countable $(m-2)$-rectifiability of all singular points where no tangent cone was translation invariant along $m-1$ directions. Combining this with Theorem \ref{t:main}, this readily implies (a), (b) and (c) from Theorem \ref{c:main} for odd $q$. In this work, we overcome all of the previous difficulties (including for even $q$) to establish the optimal result, in particular answering several conjectures in \cite[Conjecture 1.5, 1.11]{DLHMS}. We note that a key difference between odd and even $q$ is the presence of flat singularities with top density $Q = \frac{q}{2}$ in the case of even $q$, which require substantially more work in demonstrating the desired rectifiability claim (which was only demonstrated for hypersurfaces in \cite{Sk-modp}).

In the setting of Theorem \ref{t:main},  when all of the half-planes in the tangent cone at $p$ have multiplicity one (and so one has a full sheeting theorem, namely Allard's regularity theorem), we have the following additional corollary determining the full local structure near such multiplicity one cones.

\begin{corollary}\label{c:mult-1}
    Assume that $m\geq 2$, $n\geq\bar{n}\geq 2$, and $q\geq 3$ are positive integers. Let $S = \sum^q_{i=1}\llbracket \Hbf_i\rrbracket$ be a cone with an $(m-1)$-dimensional spine (see Proposition \ref{p:classification-cones} below) and $\Hbf_i$ distinct (i.e. all half-planes are multiplicity one). Then, there exists $\eps_0 = \eps_0(S,q,m,n,\bar{n})\in (0,1)$ such that the following holds. Let $T,\Sigma$, and $U$ be as in Theorem \ref{t:main}. Let $p\in \spt^q(T)\cap U$ be a point at which, for $\Sbf = \spt(S)$ and some $\tau>0$,
    $$\mathbb{E}(T,\Sbf,\Bbf_\tau(p))\leq \eps_0^2,$$
    using the notation as in Theorem \ref{t:decay} below. Then, $T\res \Bbf_{\tau/2}(p)$ is a $C^{1,\beta}$ perturbation of $S$, in the sense of \cite{MW}*{Theorem C}. Here, $\beta = \beta(q,m,n,\bar{n})$.
\end{corollary}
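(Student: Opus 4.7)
\emph{Plan of proof.} The strategy is to first reduce the hypotheses to those of Theorem \ref{t:main} in order to locate the $(m-1)$-dimensional spine $\mathcal S$, and then to upgrade to the full multiplicity-one sheeting picture of \cite{MW}*{Theorem C} by combining Allard's regularity away from $\mathcal S$ with an iteration of the excess decay Theorem \ref{t:decay} at points of $\mathcal S$.

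First, I would verify that $p$ satisfies the hypothesis of Theorem \ref{t:main}. The smallness $\mathbb{E}(T,\Sbf,\Bbf_\tau(p))\leq \eps_0^2$ together with the standard compactness theory for area-minimizing mod$(q)$ currents implies that any tangent cone to $T$ at $p$ is arbitrarily close to $S$ in flat distance, provided $\eps_0$ is chosen sufficiently small; Proposition \ref{p:classification-cones} then forces this tangent cone to itself be a superposition of half-planes meeting along an $(m-1)$-dimensional spine. Theorem \ref{t:main} applied at $p$ then produces a neighborhood $U'\subset\Bbf_{\tau/2}(p)$ and a connected $C^{1,\beta}$ $(m-1)$-submanifold $\mathcal S\subset U'$ on which $\Theta(T,\cdot)=q/2$; moreover $T\res(U'\setminus\mathcal S)$ is an area-minimizing integral current and $\partial T\res U' = q\llbracket\mathcal S\rrbracket$.

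Second, I would extract the sheeting of $T$ away from $\mathcal S$. Because the $q$ half-planes $\Hbf_i$ are pairwise distinct, there is a positive angular separation between them on any compact subset of $\spt(S)\setminus\Sbf$, so in every ball $\Bbf_r(x)$ with $x\in U'\setminus\mathcal S$ and $r$ a small multiple of $\dist(x,\mathcal S)$ the current $T\res\Bbf_r(x)$ has small tilt-excess from a single multiplicity-one plane, namely the half-plane $\Hbf_{i(x)}$ nearest $x$. Allard's $\eps$-regularity theorem applied at such scales then realizes $T\res(U'\setminus\mathcal S)$ as a disjoint union of $q$ multiplicity-one $C^{1,\alpha}$ graphs $\Gamma_i$ over subdomains of the $\Hbf_i$, with interior estimates that scale correctly in $\dist(\cdot,\mathcal S)$.

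Finally, I would upgrade the picture to a $C^{1,\beta}$ perturbation of $S$ in the sense of \cite{MW}*{Theorem C} by iterating Theorem \ref{t:decay} at dyadic scales centered at points $x\in\mathcal S$. The decay of $\mathbb{E}(T,\Sbf_{x,r},\Bbf_r(x))$ along such a sequence, together with a standard Campanato-type argument, yields a $C^{1,\beta}$ modulus of continuity for the unit normals of $\mathcal S$ and of each sheet $\Gamma_i$ up to their common boundary $\mathcal S$. The main obstacle is precisely this last simultaneous boundary-plus-interior regularity step: since $\mathcal S$ is itself only $C^{1,\beta}$, one cannot invoke classical boundary regularity results (Hardt--Simon or Allard boundary regularity) which require smoother boundary data; instead the excess decay must be iterated \emph{jointly} for the sheets and for their common boundary, with Theorem \ref{t:decay} providing the central quantitative input that couples the two.
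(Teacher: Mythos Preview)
Your overall approach is the same as the paper's: iterate Theorem~\ref{t:decay} at density-$q/2$ points to produce the $C^{1,\beta}$ spine $\mathcal S$, and combine this with Allard's theorem (available because each sheet has multiplicity one) to obtain the full $C^{1,\beta}$ graphical structure up to the boundary, exactly as in \cite{W14_annals}*{Theorem~16.1} and \cite{MW}*{Theorem~3.1}. Steps~2 and~3 of your plan are correct, and your observation that Hardt--Simon/Allard boundary regularity cannot be invoked directly (so one must use the joint excess decay instead) is precisely the point.

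There is, however, a genuine gap in Step~1. The smallness $\mathbb{E}(T,\Sbf,\Bbf_\tau(p))\leq \eps_0^2$ is a condition at the fixed scale $\tau$; it says nothing about the behavior of $T$ at scale $0$, and no compactness argument bridges this. In particular, $p$ is an arbitrary point of $\spt^q(T)$ and need not satisfy $\Theta(T,p)\geq q/2$ at all (it could be a regular point on one of the sheets, with planar tangent cone), so Theorem~\ref{t:main} is simply not applicable at $p$. The correct route is the one the paper takes: bypass the tangent-cone hypothesis entirely and run the \emph{argument} of Theorem~\ref{t:main} directly. Since $S$ is a fixed non-planar open book, $\Ebf^p(T,\Bbf_\tau(p))$ is bounded below by a constant depending on $S$, so for $\eps_0=\eps_0(S)$ small enough the ratio hypothesis of Theorem~\ref{t:decay} holds at scale $\tau$; Theorem~\ref{t:no-gaps} then furnishes points of density $\geq q/2$ near every point of $V(\Sbf)\cap\Bbf_{\tau/2}(p)$, and the iteration of Theorem~\ref{t:decay} proceeds at \emph{those} points (cf.\ Remark~\ref{remark:main}). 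This is a small but necessary rerouting of your first step.
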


In fact, $\eps_0$ in Corollary \ref{c:mult-1} only depends on the minimal angle between the half-planes in $S$. In particular, Corollary \ref{c:mult-1} applies to any area-minimizer mod$(4)$ about a singular point where one tangent cone is supported on a union of half-planes meeting along an $(n-1)$-dimensional spine, giving a complete local structural result near such singular points (and thus uniqueness of the tangent cone). Moreover, in the mod$(4)$ case, Corollary \ref{c:mult-1} can be rephrased as a ``fine $\eps$-regularity'' theorem, like in \cite{MW}*{Theorem 3.1}, with an $\eps_0$ depending only on $q,m,n,\bar{n}$, where instead we assume that $T$ is much closer to $\Sbf$ than any plane, i.e.
\begin{equation}\label{e:mult-1-ratio}
        \Ebb(T,\Sbf,\Bbf_\tau(p)) \leq \eps^2_0 \Ebf^p(T,\Bbf_\tau(p)),
\end{equation}
again using the notation of Theorem \ref{t:decay}.

\begin{remark}\label{remark:main}
    In the statement of Theorem \ref{t:main}, rather than assuming that $T$ has a tangent cone $S$ with an $(m-1)$-dimensional spine at the point $p$, it in fact suffices to simply ask that $T\mres \Bbf_\tau(p)$ is sufficiently close to a cone $S$ with an $(m-1)$-dimensional spine, in the sense that
    \[
        \Ebb(T,\Sbf, \Bbf_\tau(p)) \leq \eps^2_0
    \]
    for $\Sbf=\spt(S)$ and some $\tau, \eps_0 >0$, with $\eps_0$ allowed to depend on $\Sbf$ (see Proposition \ref{p:classification-cones} and Definition \ref{def:L2_height_excess}). In fact, it suffices to ask that
    \[
        \Ebb(T,\Sbf,\Bbf_1(p))<\eps^2_1\Ebf^p(T,\Bbf_1(p)),
    \]
    where $\eps_1$ only depends on $q,m,n,\bar{n}$ (and in particular is independent of $\Sbf$).
\end{remark}

\subsection{Structure of article}
The article is divided up into two parts. In Part 1, we establish Theorem \ref{t:main}. The key ingredient is establishing an excess decay theorem when $T$ is much closer to a cone which is translation invariant along $m-1$ directions than it is to any plane, known as a \emph{fine} $\eps$-regularity theorem, the ideas for which go back to the work of Simon \cite{Simon_cylindrical} (for multiplicity 1, ``non-fine'', settings) and Wickramasekera \cite{W14_annals} (in a higher multiplicity setting and ``fine'' setting, in codimension 1). In particular, such a result allows for a ``degenerate'' setting, when we allow $T$ to be close to a plane also (provided that it remains quantitatively closer to the $(m-1)$-invariant cone, in the sense of $L^2$ height excess). For all of this, we will need to utilize an improved height bound established in our previous work \cite{DMS} on area-minimizing integral currents in arbitrary codimension (see also \cite{KW2}), showing that it can in fact also be applied in the settings needed for this article.

In Part 2, we then establish Theorem \ref{c:main}. This will use both Theorem \ref{t:main} as well as a similar result to Theorem \ref{t:main} but when $T$ is much closer to a \emph{planar} cone with only $m-2$ directions of translation invariance than to any cone with higher dimensions of translational invariance \emph{and} when $T$ does not admit density ``gaps''. Again, the ideas behind this date back to the work of Simon \cite{Simon_cylindrical} and Wickramasekera \cite{W14_annals}. We stress that this case is different to Theorem \ref{t:main} as in this setting we have to assume:
\begin{itemize}
    \item[(i)] that $T$ is also much further from all $(m-1)$-invariant cones than the given $(m-2)$-invariant planar one;
    \item[(ii)] that there are no density gaps.
\end{itemize}
Ultimately, (ii) is what leads us to only being able to show countably $(m-2)$-rectifiability; in Theorem \ref{t:main}, we are actually able to show that density gaps do \emph{not} occur. 

We also establish a quantitative $\BV$-estimate of a \emph{universal frequency function}, which is defined based on Almgren's frequency function for a suitable graphical approximation of $T$ (which varies with the radial scale), in a similar manner to previous work of the first and third authors \cite{DLSk1}. We then use this with methods of Naber--Valtorta \cite{NV_varifolds} to establish countable $(m-2)$-rectifiability of the set of flat singular points in a similar manner to \cite{DLSk2}. This part of the work follows closely our previous works \cite{DLSk1, DLSk2, DMS} on area minimizing currents in arbitrary codimension.

\subsection*{Acknowledgments}
This research was conducted during the period P.M. served as a Clay
Research Fellow.

\subsection{Notation} {\Small
\begin{align*}
%	& \Bbf_r(p) && \text{the open} $(m+n)$-dimensional Euclidean ball of radius $r$ centered at $p$\\
%   	& B_r(p,\pi) && \text{the $m$-dimensional disc of radius $r$ centered at $p$ in the plane $\pi$}\\
%  	& \Cbf_r(p,\pi) && \text{the $(m+n)$-dimensional cylinder $B_r(p,\pi)\times \pi^\perp$}\\
%    & \dist (A,B) && \text{the Hausdorff distance between $A,B\subset \mathbb R^{m+n}$}\\
 	& r, \rho, s, t && \text{typically denote radii}\\
    & i,j,k && \text{indices}\\
 	& \alpha, \beta, \pi && \text{$m$-dimensional planes}\\
    & \varpi && \text{$(m+\bar{n})$-dimensional plane} \\
%        & m,n,\bar{n} && \text{typically denote dimensions}\\
        &\varepsilon, \delta, \eta && \text{small numbers, with $\varepsilon$ the smallest in hierarchy}\\
        &\gamma,\kappa,\mu && \text{exponents}\\
        &\varsigma, \sigma, \tau, \varkappa && \text{parameters}\\
        &\phi,\theta,\vartheta && \text{angles}\\
        &\varphi,\psi,\chi && \text{test functions}\\
        & f,g,h,u,v,w && \text{functions, with $f$, $u$, $v$ and $w$ typically denoting multi-valued approximations}\\
        & \Psi, \Sigma && \text{$\Psi$ the parameterization of the ambient manifold $\Sigma$}\\
        & \Sigma_{p,r} &&\text{the rescaled manifold $\iota_{p,r}(\Sigma)$} \\
%        & Q && \text{multiplicity}\\
        & S,T && \text{currents}\\
        & p,x && \text{points in $\R^{m+n}$} \\
        & y,z,\xi,\zeta && \text{variables (typically in $m$-dimensional subspaces of $\R^{m+n}$)} \\
        &\mathbf{p}, \ \mathbf{p}^\perp &&\text{orthogonal projection, projection to orthogonal complement, respectively} \\        
        & \mathbf{1}_E && \text{indicator function of the set $E$}\\
        & \mathbf{A} && \text{the $L^\infty$ norm of the second fundamental form of $\Sigma$}\\
        &\Theta(T,x) &&\text{the $m$-dimensional density of $T$ at a point $x$} \\
        & \Sing(T), \Sing^f (T) && \text{singular sets of $T$, with $\Sing^f (T)$ the flat singularities}\\
        & \Sing^f_Q (T) && \text{flat singularities of $T$ where the density of $T$ is $Q$}\\
      %& \flatS_{Q,1}(T) && \text{points in $\flatS(T)$ with $\Irm(T,\cdot)=1$}\\
        & L, \ell(L) && L \text{ a cube, $\ell(L)$ half the side length}\\
        & A,B && \text{linear maps}\\
        %& M && \text{balancing constant (cf. Definition \ref{d:balanced})}\\
        & X && \text{vector field}\\
  %    & \mathcal{B}, \mathcal{F}, \mathcal{G} && \text{covers}\\
	& \mathbf{S} && \text{open book, cf. Definition \ref{d:open-book}} \\
    & N && \text{natural number, typically denoting the number of pages in the open book $\Sbf$}\\
        & V && \text{spines of cones} \\
    & \mathscr{P} && \text{set of $m$-dimensional planes}\\
    & \mathscr{B} && \text{set of open books (cf. Definition \ref{d:open-book})}\\
 	& \mathbf{E}^p(T,\Bbf) && \text{planar excess of $T$ in the $(m+n)$-dimensional ball $\Bbf$} \\          
	& \hat{\mathbf{E}}(T,\Sbf,\Bbf), \hat{\mathbf{E}}(\Sbf,T,\Bbf) && \text{one-sided $L^2$ conical excess in $\Bbf$ ($T$ close to $\Sbf$, $\Sbf$ close to $T$, resp.)} \\
	& \mathbb{E}(T,\Sbf,\Bbf) && \text{double-sided conical excess in $\Bbf$} \\
    & \mathbf{E}(T,\Bbf) && \text{oriented tilt excess of an integral current $T$ in $\Bbf$} \\ 
 	& B_a(V) && \text{fixed tubular neighbourhood of radius $a$ of the spine $V$ being removed from $\Bbf_1$} \\  
 	& \boldsymbol{\sigma}(\Sbf) && \text{minimal pairwise Hausdorff distance between the reflected pages of $\Sbf$ in $\Bbf_1$} \\ 
 	& \boldsymbol{\zeta}(\Sbf) && \text{maximal Hausdorff distance of the pages of $\Sbf$ in $\Bbf_1$ from the closest $m$-plane} \\
	& \Acal_Q(\R^n) && \text{the space of $Q$-tuples of vectors in $\R^n$ (cf. \cite{DLS_MAMS})} \\
    & \Rscr_m(E) && \text{the class of $m$-dimensional integer rectifiable currents supported}\\
    &&& \text{in a relatively closed set $E\subset \R^{m+n}$} \\
    & \Fscr_m(E) && \text{the class of $m$-dimensional integral flat chains supported}\\
    &&& \text{in a relatively closed set $E\subset \R^{m+n}$} \\
    & \Fscr^q_K && \text{the mod$(q)$ flat metric restricted to a given compact set $K$} \\
    & [T] && \text{the mod$(q)$ equivalence class of $T\in \Rscr_m$} \\
    & \partial^q && \text{the boundary operator defined on mod$(q)$ equivalence classes} \\
%	&\Hcal^s &&\text{the} \ s \text{-dimensional Hausdorff measure}, \ s \geq 0; \\
	& \iota_{z,r} &&\text{the scaling map $p \mapsto \iota_{z,r}(p)\coloneqq \frac{p-z}{r}$ around the center $z$} \\
    & \tau_z && \text{the translation map $p\mapsto \tau_z(p)\coloneqq p+z$}\\
  	& T_{q,r} &&\text{the rescaled current $(\iota_{q,r})_\sharp(T)$} \\
% 	& C, C_1, C_2,\dotsc &&\text{constants depending only on $m,n,\bar{n},Q$} \\
%   	& \bar C &&\text{constants depending on some parameters} \\
% 	& \eps^* &&\text{parameters with $*$ in proofs (otherwise just subscripts)}
\end{align*}
}

\part{Structure of singularities near $(m-1)$-invariant cones}\label{pt:1}

\section{Preliminaries}

\subsection{Main decay theorem}\label{s:decay}
The main tool of our analysis leading towards the conclusions of Theorem \ref{t:main} is a decay theorem which we state in this section. The theorem states that provided $T$ is at some scale much closer to a cone with exactly $m-1$ directions of translation invariance than it is to any $m$-dimensional plane, then $T$ decays to another cone of this form at a slightly smaller scale. An excess decay theorem of this type (namely, assuming one excess is significantly smaller than a planar excess) was first utilised in the setting of codimension one stable minimal hypersurfaces in the work of Wickramasekera \cite{W14_annals}. We begin by recalling the notions of an \emph{area-minimizing cone mod$(q)$} and an \emph{open book} (also known as a \emph{classical cone}, see e.g. \cite{MW}), as defined in \cite{DLHMSS}.

\begin{definition}[Area-minimizing cones mod$(q)$, \cite{DLHMSS}*{Definition 3.3}]
    Let $S\in \Rscr_m(\R^{m+n})$ be an $m$-dimensional representative mod$(q)$. We say that $S$ is an \emph{area-minimizing cone mod$(q)$} if the following properties hold:
    \begin{itemize}
        \item[(a)] $S$ is locally area-minimizing mod$(q)$ in $\R^{m+n}$;
        \item[(b)] $\partial S=0\,$mod$(q)$;
        \item[(c)] $(\iota_{0,r})_\sharp S = S$ for every $r>0$.
    \end{itemize}
    The linear subspace $V$ of vectors $z$ such that $(\tau_z)_\sharp S = S$ is the \emph{spine} of $S$.
\end{definition}

In this article we focus on area-minimizing cones whose spines have dimension $m-1$. Note that when the spine $V$ is $m$-dimensional, then $\spt^q (S)\subset V$ and, upon giving $V$ the appropriate orientation, $S = \Theta (S, 0) \llbracket V \rrbracket$.  Likewise, cones with $(m-1)$-dimensional spines $V$ can be classified as union of half-spaces meeting at $V$, to which we assign appropriate multiplicities. 

\begin{definition}[Open books, \cite{DLHMSS}*{Definition 4.1}]\label{d:open-book}
An $m$-dimensional \emph{half-plane} (or briefly \emph{half-plane}) $\mathbf{H}$ is any set given by 
\[
\mathbf{H}:=\{x\in \pi : v\cdot x \geq 0\}
\]
for any choice of an $m$-dimensional linear subspace $\pi$ and any element $v\in \pi \cap \partial \Bbf_1$. The $(m-1)$-dimensional linear subspace $V= \{x\in \pi: x\cdot v =0\}$ will be called the \emph{boundary} of $\mathbf{H}$.

For every fixed integer $q\geq 2$ we refer to \emph{open books}, denoted by $\mathscr{B}^q$, as those subsets $\mathbf{S}$ of $\mathbb R^{m+n}$ which are unions of $N\leq q$ $m$-dimensional half-planes $\Hbf_1, \ldots, \Hbf_N$ (often called {\em pages} of the book $\mathbf{S}$) satisfying the following properties:
\begin{itemize}
    \item[(i)] Each $\mathbf{H}_i$ has the same boundary $V$;
    \item[(ii)] Each half-plane $\Hbf_i$ is contained in the same $(m+\bar n)$-dimensional plane $\varpi$.
\end{itemize}
If $x\in \Sigma$, then $\Bscr^q(x)$ will denote the subset of $\Bscr^q$ for which $\varpi = T_x \Sigma$. 

$\mathscr{P}$ and $\mathscr{P} (x)$ will denote the subset of those elements of $\Bscr^q$ and $\Bscr^q (x)$ respectively which consist of a single plane (namely, a half-space $\Hbf_i$ and its reflection across $V$). For $\mathbf{S}\in \Bscr^q\setminus \mathscr{P}$, the plane $V$ in (i) above is referred to as the {\em spine} of $\mathbf{S}$ and will often be denoted by $V (\mathbf{S})$.
\end{definition}

\begin{proposition}[\cite{DLHMSS}*{Proposition 3.5}]\label{p:classification-cones}
    Suppose that $S$ is an $m$-dimensional area-minimizing mod$(q)$ cone in $\R^{m+n}$ with an $(m-1)$-dimensional spine $V$. Then $\spt (S)$ is an open book with spine $V$. In fact we have the following more accurate description. 
    
    There exist distinct $m$-dimensional oriented half-planes $\Hbf_1,\dots,\Hbf_N$ with $\Hbf_i\cap \Hbf_j=V$ for each $i<j$ and $\spt(S) = \Hbf_1\cup\cdots\cup \Hbf_N$, such that for the unit vectors $e_i \in \Hbf_i\cap V^\perp$, the following holds.
    
    There are positive integers $Q_i < \frac{q}{2}$ such that
    \[
        S=\sum_{i=1}^N Q_i \llbracket \Hbf_i \rrbracket,
    \]
    and
    \[
        \sum_{i=1}^N Q_i e_i = 0.
    \]
    Moreover, $\sum_{i=1}^N Q_i = q$, and thus $\Theta(S, x) = \frac{q}{2}$ for each $x\in V$.
\end{proposition}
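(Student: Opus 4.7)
The natural strategy is to reduce the classification to a $1$-dimensional problem via slicing along $V$. Since $S$ is translation invariant along the $(m-1)$-dimensional spine $V$ and is a cone, decomposing $\R^{m+n} = V \oplus V^\perp$ yields the product representation $S = \llbracket V \rrbracket \times S'$, where $S'$ is a $1$-dimensional area-minimizing cone mod$(q)$ in $V^\perp \cong \R^{n+1}$ with spine $\{0\}$ and $\partial S' = 0$ mod$(q)$. It therefore suffices to classify such $S'$ and lift the result back to $\R^{m+n}$.

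Away from the origin, $S'$ is locally an integer rectifiable $1$-dimensional stationary current with vanishing mod$(q)$ boundary, so the classical description of $1$-dimensional minimal cones forces $\spt (S')$ to be a finite union of rays $R_i = \{t e_i : t \geq 0\}$ from the origin in pairwise distinct directions $e_i \in V^\perp \cap \partial \Bbf_1$. Passing to a minimum-mass mod$(q)$ representative and flipping ray orientations as necessary, we then write $S' = \sum_{i=1}^N Q_i \llbracket R_i \rrbracket$ with each $Q_i$ a positive integer. Minimality forces the strict bound $Q_i < q/2$: for $q$ even, any ray of multiplicity exactly $q/2$ is mod$(q)$-equivalent to the oppositely-oriented reflected ray of the same multiplicity, and combining this with the force balance derived below allows one to exclude such extremal configurations from the minimum-mass representative.

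Three identities then pin down the structure. First, $\partial S' \equiv 0 \pmod{q}$ localized at the origin gives $\sum_i Q_i \equiv 0 \pmod{q}$, since each outward-oriented ray contributes $-\delta_0$ to the boundary. Second, area-minimization implies stationarity: testing the first variation of $\|S'\|$ against a vector field equal to a constant $v \in V^\perp$ near the origin and integrating by parts along each ray yields $\delta S'(v) = -v \cdot \sum_i Q_i e_i$, which must vanish for all $v \in V^\perp$; hence $\sum_i Q_i e_i = 0$. Third, the density at the origin is $\Theta(S',0) = \tfrac{1}{2}\sum_i Q_i$ and equals $\Theta(S,x)$ for every $x \in V$; the known identification $\Theta = q/2$ at points of the top-dimensional singular stratum of an area-minimizing mod$(q)$ current (see e.g.\ \cite{DLHMS}) pins $\sum_i Q_i = q$ exactly, excluding higher multiples of $q$.

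Lifting each ray back to $\R^{m+n}$ produces a half-plane $\Hbf_i := V + R_i$; the $\Hbf_i$ are pairwise distinct (distinct $e_i$), share the common boundary $V$, and satisfy $\Hbf_i \cap \Hbf_j = V$ for $i \neq j$, yielding $S = \sum_{i=1}^N Q_i \llbracket \Hbf_i \rrbracket$ as claimed. The most delicate step in this plan is pinning the total $\sum_i Q_i = q$ rather than a higher positive multiple of $q$: this is where area minimization is genuinely used (beyond mere stationarity), through the spine-density identification.
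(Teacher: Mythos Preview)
The paper does not supply its own proof of this proposition; it explicitly defers to \cite{DLHMSS}. Your outline follows exactly the strategy one finds there: reduce via the product structure $S=\llbracket V\rrbracket\times S'$ to a $1$-dimensional area-minimizing cone mod$(q)$, read off the ray structure, extract the balance identity from stationarity, and combine the mod$(q)$ boundary condition with the density bound $\Theta\le q/2$ to pin $\sum_i Q_i=q$. So the approach is right.

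There is, however, a genuine gap in your handling of orientations. You write $S'=\sum_i Q_i\llbracket R_i\rrbracket$ with $Q_i>0$ by ``flipping ray orientations as necessary'', and then assert that $\partial S'\equiv 0\pmod q$ gives $\sum_i Q_i\equiv 0\pmod q$ because ``each outward-oriented ray contributes $-\delta_0$''. But flipping the orientation of a ray flips the sign of its boundary contribution; after your normalization the boundary condition reads $\sum_i \epsilon_i Q_i\equiv 0\pmod q$ with $\epsilon_i=\pm1$, not $\sum_i Q_i\equiv 0$. You must first rule out mixed signs. This is where minimality is actually used: if two rays carry opposite signs (equivalently, if the coefficients $c_i$ with the fixed outward orientation have opposite signs), then a straight shortcut near the origin between them produces a mod$(q)$ competitor of strictly smaller mass. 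Once all $c_i$ share a sign, $|\sum c_i|=\sum|c_i|\le q$ (density bound) together with $\sum c_i\equiv 0\pmod q$ forces $\sum|c_i|=q$.

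Your argument for the strict bound $Q_i<q/2$ is also not right as stated (``mod$(q)$-equivalent to the oppositely-oriented reflected ray'' conflates orientation reversal with reflecting the support). The clean argument is: if $Q_1=q/2$, then $\sum_{i>1}Q_i=q/2$ and $\tfrac{q}{2}|e_1|=\big|\sum_{i>1}Q_i e_i\big|\le\sum_{i>1}Q_i=\tfrac{q}{2}$, with equality in the triangle inequality forcing all $e_i$ with $i>1$ to equal $-e_1$; distinctness then gives $N=2$, $e_2=-e_1$, so $\spt(S')$ is a full line and the spine of $S$ is $m$-dimensional, contrary to hypothesis.
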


We do not include the proof here, and simply refer the reader to \cite{DLHMSS}. We note however an important fact. Since $q\geq 3$, a simple consequence of the relations $\sum_i Q_i = q$ and $1\leq Q_i < \frac{q}{2}$ is that the number $N$ of pages $\Hbf_i$ is at least $3$. In particular $\spt (S) \in \mathscr{B}^q\setminus \mathscr{P}$. 

We subsequently recall the \emph{conical $L^2$ height excess} between $T$ and elements in $\Bscr^q$, also used in \cite{DLHMSS} (defined therein via slightly different notation).
\begin{definition}[c.f. \cite{DLHMSS}*{Definition 4.3}]\label{def:L2_height_excess}
	Given a ball $\Bbf_r(x) \subset \R^{m+n}$ and a cone $\mathbf{S}\in \Bscr^q$, we define the \emph{one-sided conical $L^2$ height excess of $T$ relative to $\Sbf$}, denoted $\hat{\Ebf}(T, \mathbf{S}, \Bbf_r(x))$, by
	\[
		\hat{\Ebf}(T, \mathbf{S}, \Bbf_r(x)) \coloneqq \frac{1}{r^{m+2}} \int_{\Bbf_r (x)} \dist^2 (y, \mathbf{S})\, d\|T\|(y).
	\]
At the risk of abusing notation, we additionally define the corresponding \emph{reverse one-sided excess} (cf. \cite{DMS}) as
 \[
\hat{\Ebf} (\mathbf{S}, T, \Bbf_r (x)) \coloneqq \frac{1}{r^{m+2}}\int_{\Bbf_r (x)\cap \mathbf{S}\setminus \Bbf_{ar} (V (\mathbf{S}))}
\dist^2 (y, {\rm spt}\, (T))\, d\mathcal{H}^m (y)\, ,
\]
where $a=a(m)$ is a dimensional constant, to be specified later. We subsequently define the \emph{two-sided conical $L^2$ height excess} as 
\[
    \mathbb{E} (T, \mathbf{S}, \Bbf_r (x)) :=
\hat{\Ebf} (T, \mathbf{S}, \Bbf_r (x)) + \hat{\Ebf} (\mathbf{S}, T, \Bbf_r (x))\, .
\]
We finally introduce the \emph{planar $L^2$ height excess} which is given by
\[
\Ebf^p (T, \Bbf_r (x)) = \min_{\pi\in \mathscr{P} (x)} \hat{\Ebf} (T, \pi, \Bbf_r (x))\, .
\]
\end{definition}
We are now in a position to state our main excess decay result for this part of the paper; it is analogous to \cite{DLHMSS}*{Theorem 4.5}, only with the codimension of $T$ being higher than one, and allowing for a collapsed scenario, where $T$ may be close to an element of $\Pscr$, but will nevertheless be much closer to an element of $\Bscr^q\setminus \Pscr$. Before stating it we isolate a set of assumptions which will be used often throughout Part \ref{pt:1}.

\begin{assumption}\label{a:main}
	$T\in \Rscr_m(\Sigma)$ is an $m$-dimensional representative mod$(q)$ in $\Sigma\cap\Bbf_{7\sqrt{m}}$, where 
 $\Sigma$ is a $C^{3,\alpha_0}$ $(m+\bar{n})$-dimensional Riemannian submanifold of $\R^{m+n}\equiv \R^{m+\bar n+l}$ with $\alpha_0 \in (0,1)$. $q \geq 3$ is a fixed integer and $T$ is area-minimizing mod$(q)$ with $\partial^q [T] = 0$. 
	
    For each $x\in \Sigma \cap \Bbf_{7\sqrt{m}}$, $\Sigma\cap\Bbf_{7\sqrt{m}}(x)$ is the graph of a $C^{3,\alpha_0}$ function $\Psi_x : \Trm_x\Sigma \cap \Bbf_{7\sqrt{m}} \to \Trm_x\Sigma^\perp$. In addition,
	\[
	\boldsymbol{c}(\Sigma\cap\Bbf_{7\sqrt{m}})\coloneqq\sup_{y \in \Sigma \cap \Bbf_{7\sqrt{m}}}\|D\Psi_y\|_{C^{2,\alpha_0}} \leq \bar{\eps},
	\]
	where the small constant $\bar\eps\in (0,1]$ will be determined later. This in particular gives us the following uniform control on the second fundamental form $A_\Sigma$ of $\Sigma$:
	\[
	\Abf \coloneqq \|A_\Sigma\|_{C^0(\Sigma\cap \Bbf_{7\sqrt{m}})} \leq C_0\boldsymbol{c}(\Sigma\cap\Bbf_{7\sqrt{m}})\leq C_0 \bar\eps.
	\]
\end{assumption}

\begin{theorem}[Excess Decay Theorem]\label{t:decay}
Let $m\geq 2,n\geq \bar{n}\geq 2$ and $q\geq 3$ be positive integers, let $\varsigma>0$ and let $Q=\frac{q}{2}$. There are positive constants $\varepsilon_0 = \varepsilon_0(q,m,n,\bar n, \varsigma) \leq \frac{1}{2}$, $r_0 = r_0(q,m,n,\bar n, \varsigma) \leq \frac{1}{2}$ and $C = C(q,m,n,\bar n)$ with the following property. Assume that 
\begin{itemize}
\item[(i)] $T$ and $\Sigma$ are as in Assumption \ref{a:main};
\item[(ii)] $\|T\| (\Bbf_1) \leq (Q+\frac{1}{4}) \omega_m$;
\item[(iii)] There is $\mathbf{S}\in \mathscr{B}^q (0)\setminus\Pscr(0)$ such that 
\begin{equation}\label{e:smallness}
\mathbb{E} (T, \mathbf{S}, \Bbf_1) \leq \varepsilon_0^2 \mathbf{E}^p (T, \Bbf_1)\, 
\end{equation}
\item[(iv)] $\mathbf{A}^2 \leq \varepsilon_0^2 \mathbb{E} (T, \mathbf{S}', \Bbf_1)$ for {\em any} $\mathbf{S}'\in \mathscr{B}^q (0)$.
\end{itemize}
Then there is a $\mathbf{S}'\in \mathscr{B}^q (0) \setminus \mathscr{P} (0)$ such that 
\begin{enumerate}%[\normalfont(a)]
    \item [\textnormal{(a)}] $\mathbb{E} (T, \mathbf{S}', \Bbf_{r_0}) \leq \varsigma \mathbb{E} (T, \mathbf{S}, \Bbf_1)\,$ \label{e:decay} \\
    \item [\textnormal{(b)}] $\dfrac{\mathbb{E} (T, \mathbf{S}', \Bbf_{r_0})}{\mathbf{E}^p (T, \Bbf_{r_0})} 
\leq 2 \varsigma \dfrac{\mathbb{E} (T, \mathbf{S}, \Bbf_1)}{\mathbf{E}^p (T, \Bbf_1)}$ \\
    \item [\textnormal{(c)}] $\dist^2 (\Sbf^\prime \cap \Bbf_1,\Sbf\cap \Bbf_1) \leq C \mathbb{E} (T, \mathbf{S}, \Bbf_1)$\label{e:cone-change}
    \item[\textnormal{(d)}] $\dist^2 (V (\mathbf{S}) \cap \Bbf_1, V (\mathbf{S}')\cap \Bbf_1) \leq C \dfrac{\mathbb{E}(T,\mathbf{S},\Bbf_1)}{\Ebf^p(T,\Bbf_1)}$\, .\label{e:spine-change}
    \end{enumerate}
\end{theorem}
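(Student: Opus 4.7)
The plan is a contradiction/blow-up argument in the spirit of the fine $\varepsilon$-regularity theorems of Simon and Wickramasekera, adapted to the higher-codimension mod$(q)$ setting. Assume the conclusion fails for some $\varsigma > 0$: then for a sequence $\varepsilon_k \downarrow 0$ and a ratio $r_0$ to be fixed, there exist currents $T_k$, ambient manifolds $\Sigma_k$, and cones $\mathbf{S}_k \in \mathscr{B}^q(0) \setminus \mathscr{P}(0)$ satisfying (i)--(iv) with constant $\varepsilon_k$, yet no $\mathbf{S}' \in \mathscr{B}^q(0) \setminus \mathscr{P}(0)$ realizes (a)--(d) at scale $r_0$. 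Write $\mathbf{E}_k := \mathbb{E}(T_k, \mathbf{S}_k, \mathbf{B}_1) \to 0$; by (iv), $\Sigma_k$ flattens. Pass to a subsequence with $\mathbf{S}_k \to \mathbf{S}_\infty$ in Hausdorff distance. A preliminary step is to show that $\mathbf{S}_\infty$ remains in $\mathscr{B}^q(0)\setminus\mathscr{P}(0)$: the fine ratio assumption (iii) is precisely what prevents the pairwise angles between distinct pages of $\mathbf{S}_k$ from all collapsing to zero, and the balance relation $\sum_i Q_i e_i = 0$ from Proposition \ref{p:classification-cones} passes to the limit, so that $\mathbf{S}_\infty$ is still an area-minimizing open book with genuine $(m-1)$-dimensional spine.

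Using the Lipschitz / $\mathcal{A}_Q$-valued approximation in the conical setting (following the one developed in \cite{DLHMSS} applied page-by-page, together with the improved reverse-excess height control available from \cite{DMS}), represent $T_k$, on the complement of a fixed tubular neighborhood of $V(\mathbf{S}_k)$, as the graph of a $Q_i$-valued function $f_k^i$ over each page $\mathbf{H}_i$ of $\mathbf{S}_k$ with Dirichlet energy and $L^2$ norm controlled by $\mathbf{E}_k$. Normalize $u_k^i := \mathbf{E}_k^{-1/2} f_k^i$, and extract $L^2_{\mathrm{loc}}$ limits $u_\infty^i$ on $\mathbf{H}_i(\mathbf{S}_\infty) \setminus V(\mathbf{S}_\infty)$; these are Dir-minimizing. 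Strong convergence and the absence of a singular mass concentration at the spine come from the improved height bound of \cite{DMS}, suitably adapted to the mod$(q)$ setting. The crucial additional ingredient is a free-boundary compatibility condition at the spine $V(\mathbf{S}_\infty)$: the cycle condition $\partial^q [T_k] = 0$, combined with minimality and the linearization of the relation $\sum_i Q_i e_i = 0$, forces a weighted balance on the normal traces of the $u_\infty^i$ at $V(\mathbf{S}_\infty)$. This yields a linear boundary-value problem for $(u_\infty^i)_i$ amenable to Fourier analysis along the cross-section.

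The final step is to prove a linear decay estimate for these blow-ups. Any non-trivial solution of the linearized problem must, up to a higher-frequency remainder that decays polynomially faster, correspond to infinitesimal rotations and translations of the pages of $\mathbf{S}_\infty$ and a translation of the spine. Choosing $r_0$ small, this produces a new open book $\mathbf{S}_k' \in \mathscr{B}^q(0)\setminus\mathscr{P}(0)$, obtained from $\mathbf{S}_k$ by rotations/translations of size $O(\mathbf{E}_k^{1/2})$, that satisfies (a) with the prescribed factor $\varsigma$. Conclusions (c) and (d) follow directly from this $O(\mathbf{E}_k^{1/2})$ proximity, while (b) is obtained by noting that the fine hypothesis (iii) prevents $\mathbf{E}^p(T_k, \mathbf{B}_{r_0})$ from dropping faster than a fixed dimensional multiple of $\mathbf{E}^p(T_k, \mathbf{B}_1)$ (because $T_k$ is uniformly far from any single plane on this scale). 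This contradicts the assumed failure for all large $k$.

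The main obstacle is the identification and exploitation of the correct free-boundary condition at the spine. In the codimension-one case of \cite{W14_annals}, the limiting sheets lie on fixed half-planes and compatibility essentially reduces to a trace matching; here each $u_\infty^i$ is genuinely multi-valued, the pages may tilt into transverse normal directions, and the mod$(q)$ identification forces a weighted balance of traces that must be extracted rigorously from the minimality of $T_k$ together with Proposition \ref{p:classification-cones}. Equally delicate is obtaining strong $L^2$ convergence of the $u_k^i$ up to the spine and ruling out mass loss there — this is precisely where the reverse-excess height bound of \cite{DMS}, together with its extension to the mod$(q)$ setting, plays a decisive role, and it is the principal technical input that permits the blow-up scheme to close.
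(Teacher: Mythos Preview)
Your blow-up scheme captures the non-collapsed part of the argument, but there is a genuine gap at the very first step: the claim that the limiting cone $\mathbf{S}_\infty$ stays in $\mathscr{B}^q(0)\setminus\mathscr{P}(0)$ is false in general, and your justification for it misreads hypothesis (iii). The inequality $\mathbb{E}(T_k,\mathbf{S}_k,\mathbf{B}_1)\le\varepsilon_k^2\mathbf{E}^p(T_k,\mathbf{B}_1)$ does \emph{not} force the pages of $\mathbf{S}_k$ to stay apart; it only says the conical excess is much smaller than the planar excess. Both can tend to zero simultaneously, and indeed Lemma~\ref{l:planar-excess-zeta} gives $\mathbf{E}^p(T_k,\mathbf{B}_1)\sim\boldsymbol{\zeta}(\mathbf{S}_k)^2$, so $\mathbf{E}^p(T_k,\mathbf{B}_1)\to 0$ is exactly the regime $\boldsymbol{\zeta}(\mathbf{S}_k)\to 0$, in which $\mathbf{S}_\infty$ is a single plane and your page-by-page graphical parametrization collapses. (Also note: an open book $\mathbf{S}\in\mathscr{B}^q$ is just a set of half-planes; it carries no multiplicities and need not satisfy $\sum_i Q_i e_i=0$. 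That relation belongs to area-minimizing mod$(q)$ cones, not to the comparison cone in hypothesis (iii), so invoking it to control $\mathbf{S}_\infty$ is circular.)

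The paper's proof is organized precisely around this dichotomy. Theorem~\ref{t:decay} is first reduced (via pruning, Lemma~\ref{l:pruning}, and Proposition~\ref{p:multi-decay}) to a two-scale statement Theorem~\ref{t:two-scale-decay} under the stronger hypothesis $\mathbb{E}\le\varepsilon^2\boldsymbol{\sigma}(\mathbf{S})^2$, and that is then split into Proposition~\ref{p:noncollapsed} (your case, $\boldsymbol{\zeta}(\mathbf{S}_k)\ge\varepsilon_c^\star$) and Proposition~\ref{p:collapsed} ($\boldsymbol{\zeta}(\mathbf{S}_k)\to 0$). In the collapsed case one cannot blow up over the pages of $\mathbf{S}_\infty$; instead the paper reparametrizes everything over a single base plane $\pi_1$ (Proposition~\ref{p:transversal-coherent}), subtracts the linear maps $A_i^k$ describing the pages, and blows up the differences $w_i^k$. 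The resulting limits are Dir-minimizers on a half-plane with zero trace on $V$, and the correct boundary information is not a ``weighted balance of traces'' but rather the first-variation identities of Proposition~\ref{p:final-blowup}(d), which force the averages $\bar w^{(1)},\bar w^{(2)}$ to extend harmonically across $V$ (Proposition~\ref{p:harmonic}). Decay then comes from the boundary frequency lemma for Dir-minimizers with zero trace (Lemma~\ref{l:bdry-decay}), and conclusions (b)--(d) are handled separately by Lemma~\ref{l:spine-tilt-estimate}. Without the collapsed-case machinery your contradiction argument does not close.
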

Our proof of Theorem \ref{t:decay} will follow closely in structure to our previous work \cite{DMS} on the interior regularity of area-minimizing currents in arbitrary codimension, and indeed we will call upon many of the ideas in that work. In turn, \cite{DMS} builds upon on many ideas related to area-minimizing currents first developed by Almgren \cite{Almgren_regularity} and revisited in \cites{DLS14Lp,DLS16blowup,DLS16centermfld}, as well as the foundational ideas of Simon \cite{Simon_cylindrical} and Wickramasekera \cite{W14_annals} (see also \cites{MW, Min, Min2, DLHMSS, DLHMSS-excess-decay, BK} for related works). Given a suitable improved height bound, the technical aspects of the argument are close to the ideas originally in \cite{W14_annals}.

The most important difference when compared to \cite{DMS} (other than the fact that the spine of the cone in question is codimension one relative to the current) is that the mod$(q)$ setting implies a ``no-gap'' condition at the spine of the open book, see Theorem \ref{t:no-gaps}, which is in turn the main reason why the power law decay holds whenever the current is sufficiently close to an open book. The fact that the mod$(q)$ structure implies a ``no-gap'' condition was first observed in \cite{DLHMSS} in the codimension $1$ case, but an important contribution of this paper is that it holds in any codimension. Another key difference in the present work when compared to \cite{DMS} is in understanding the boundary behaviour of blow-ups we construct, particularly in the ``degenerate'' case when the objects in question are converging to a single plane with multiplicity. The reason for this is that now we get Dir-minimizers in the interiors of half-planes, rather than on full-planes, and so naturally we are left with a boundary problem. For this, we use the equivalent estimates of Wickramasekera \cite{W14_annals}*{Section 12} in our setting to understand the boundary behaviour of the blow-ups, and also adapting the excess decay arguments from \cite{W14_annals}*{Section 16} in the non-collapsed case (see also \cite{DLHMSS}) and \cite{W14_annals}*{Section 9}, \cite{MW}*{Theorem 3.1} in the collapsed case (see also \cite{DLHMSS-excess-decay}).

\subsection{Identifying $T$ with integral currents}

In this section we collect some essential facts which follow from the regularity theory developed in \cite{DLHMS}. The first one is that, if the density of $T$ is strictly below $\frac{q}{2}$, then in fact $T$ can be identified with an area-minimizing integral cycle.

\begin{proposition}\label{p:integrality}
Assume $q$, $T$, $\Sigma$, and $U$ are as in Theorem \ref{t:main}. If $\Theta (T, x) < \frac{q}{2}$ for every $x\in U$, then $\partial T \res U =0$ and $T$ is an area-minimizing integral current in $\Sigma\cap U$. 
\end{proposition}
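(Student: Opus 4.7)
The plan is in two steps: first establish that $\partial T \res U = 0$ as an integral current, and then deduce the integral area-minimality from the mod$(q)$ minimality via a mass comparison.

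For step one, I argue by contradiction. The condition $\partial^q [T] = 0$ means $\partial T \res U = qR$ for some integer rectifiable $(m-1)$-current $R$; suppose $R \neq 0$. At $\Hcal^{m-1}$-a.e.\ $x \in \spt(R)$ the approximate tangent plane $V_x$ exists with positive integer density. Using standard compactness for area-minimizing mod$(q)$ currents, a subsequence of the rescalings of $T$ around $x$ converges to a tangent cone $C$ that is itself an area-minimizing mod$(q)$ cone in $\R^{m+n}$, with $\partial C = \theta \llbracket V_x \rrbracket$ for some non-zero integer $\theta$. A cone-structure argument then shows that $\spt(C)$ is a union of $m$-dimensional half-planes with common boundary $V_x$, and that $C$ is translation-invariant along $V_x$; since the spine of $C$ contains $V_x$ and cannot be all of the ambient $m$-plane (else $\partial C = 0$, contradicting $\theta \neq 0$), it must equal $V_x$. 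Proposition \ref{p:classification-cones} then forces $\Theta(C,0) = q/2$, and density continuity under blow-up yields $\Theta(T,x) = q/2$, contradicting the hypothesis.

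For step two, let $T'$ be any integer rectifiable current in $\Sigma$ with $\spt(T-T') \subset\subset U$ and $\partial T' = 0$ (matching $\partial T = 0$ from step one). The area-minimizing mod$(q)$ property gives $\Mbf^q([T]) \leq \Mbf^q([T'])$, where $\Mbf^q$ denotes the mass of a mod$(q)$ equivalence class. The hypothesis $\Theta(T,x) < q/2$ places the pointwise integer multiplicity of $T$ strictly within the canonical mod$(q)$ reduction range at every point, so $T$ itself realizes the minimum mass in its equivalence class, i.e.\ $\Mbf(T) = \Mbf^q([T])$. Combined with the trivial $\Mbf^q([T']) \leq \Mbf(T')$, this gives $\Mbf(T) \leq \Mbf(T')$, completing the argument.

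The main technical point is verifying, in step one, that the tangent cone $C$ is an open book with $(m-1)$-dimensional spine, i.e.\ that it belongs to the range of Proposition \ref{p:classification-cones}. Once this is granted the classification does all the work. A fully rigorous derivation of this fact can be obtained either by slicing $C$ in $V_x$-directions (reducing to $1$-dimensional area-minimizing mod$(q)$ cones in $V_x^\perp$, whose structure is explicit) or by invoking the standard classification of tangent cones of mod$(q)$ area-minimizers at $\Hcal^{m-1}$-rectifiable points of $\partial T$, as collected in \cite{DLHMS}. The rest of the proof is essentially bookkeeping based on the standard machinery of mod$(q)$ currents.
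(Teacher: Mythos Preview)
Your step two is correct and is precisely the argument the paper has in mind when it says integral area-minimality ``follows immediately.''

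Your step one, however, takes a different route from the paper and has a genuine gap. The paper does not attempt to analyze $\partial T$ directly. Instead it invokes \cite{DLHMS}*{Theorem 7.2}, which under the hypothesis $\Theta(T,\cdot)<\tfrac{q}{2}$ gives $\dim_{\mathcal H}\mathrm{Sing}(T)\le m-2$. Since at every regular point $T$ is locally a constant integer multiple (with modulus $<\tfrac{q}{2}$) of a smooth submanifold, $\partial T$ is supported in $\mathrm{Sing}(T)$; hence $\mathcal H^{m-1}(\spt(\partial T))=0$, and Federer's support theorem for flat chains forces $\partial T\res U=0$.

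The problem with your approach is the opening assertion that $\partial^q[T]=0$ implies $\partial T\res U=qR$ for some \emph{integer rectifiable} $(m-1)$-current $R$. This is not automatic: $\partial T$ is an integral flat chain, but there is no a priori reason for it to have locally finite mass, let alone equal $q$ times a rectifiable current. Without this you have no set on which to select approximate tangent planes $V_x$, and the blow-up argument never gets started. Even granting rectifiability, the claim that the tangent cone $C$ is translation-invariant along $V_x$ is nontrivial --- being a cone from $0$ with $\partial C$ supported on $V_x$ does not by itself force invariance along $V_x$ --- and the slicing or ``standard classification'' you defer to is essentially at the level of the DLHMS regularity theory the paper invokes directly. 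So your route does not actually bypass the heavy input; the paper's use of the singular-set dimension bound plus Federer's theorem is the clean way to package it.
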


\begin{proof} By \cite{DLHMS}*{Theorem 7.2} the singular set ${\rm Sing}\, (T)$ has Hausdorff dimension at most $m-2$. It follows from the definition of regular set that around every regular point $x$, namely any point in ${\rm Reg}\, (T) := \spt (T) \setminus {\rm Sing}\, (T)$, there is a neighborhood $U'$ of $x$ and a regular oriented $m$-dimensional submanifold $\Lambda$ of $U'\cap \Sigma$ such that 
\[
T\res U' = \Theta (T, x) \llbracket \Lambda \rrbracket\, .
\] 
In particular, since $\Theta (T,x ) < \frac{q}{2}$, ${\rm Reg}\, (T)$ can be written as the union of disjoint orientable regular $m$-dimensional submanifolds $\Lambda_1, \ldots , \Lambda_k$ of $U\setminus {\rm Sing}\, (T)$ such that 
\begin{equation}\label{e:regular-parts}
T \res (U\setminus {\rm Sing}\, (T)) = \sum_{i=1}^k i \llbracket \Lambda_i\rrbracket\, ,
\end{equation}
where $k< \frac{q}{2}$. It follows that $\partial T \res U$ must be supported in ${\rm Sing}\, (T)$. Since $\partial T \res U$ is an $(m-1)$-dimensional flat chain, by a well-known theorem of Federer \cite{Federer}*{Theorem 4.2.14} if one knows that $\mathcal{H}^{m-1} ({\rm Sing}\, (T))=0$ then $\partial T \res U$ must vanish identically. The fact that $T$ is area-minimizing in $U$ as an integral current follows immediately. 
\end{proof}

The above proposition has a very simple corollary.

\begin{corollary}\label{c:integrality}
Assume $q, T, \Sigma$ and $U$ are as in Proposition \ref{p:integrality} and suppose that we have a sequence of 
\begin{itemize}
\item[(i)] $C^{3,\alpha_0}$ regular submanifolds $\Sigma_k$ which converge (locally in $C^{3,\alpha_0}$) to $\Sigma$;
\item[(ii)] Open sets $U_k\uparrow U$;
\item[(iii)] Representatives mod$(q)$ $T_k$ which are area-minimizing flat chains mod$(q)$ in $\Sigma_k \cap U_k$ with $\partial T_k =0 \,$mod$(q)$ and such that $T_k \toweakstar T$ as flat chains mod$(q)$.
\end{itemize}
Then, for every $U'\subset\subset U$ there is $k_0 = k_0 (U')$ such that the conclusions of Proposition \ref{p:integrality} apply to $T_k$ and $U'$ in place of $T$ and $U$ for every $k\geq k_0$. Moreover $T_k \toweakstar T$ in the sense of integral currents.
\end{corollary}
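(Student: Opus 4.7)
The plan is a two-step reduction. First, use the hypothesis $\Theta(T,\cdot) < q/2$ and upper semicontinuity of density under mod$(q)$ flat convergence to transfer the strict bound to $T_k$ on any precompact $U' \subset\subset U$, so that Proposition~\ref{p:integrality} applies directly to each $T_k$; then, upgrade the mod$(q)$ flat convergence to weak convergence as integral currents using Federer--Fleming compactness together with a multiplicity comparison on the regular part of the support.

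For the first step, fix $U' \subset\subset U$. By the monotonicity formula for mod$(q)$ area-minimizers in the $C^{3,\alpha_0}$ ambient, $\Theta(T,\cdot)$ is upper semicontinuous, and by the strict inequality on $\overline{U'}$ there is $\eta > 0$ with $\Theta(T,x) \leq q/2 - 2\eta$ for $x \in \overline{U'}$. The convergence $T_k \toweakstar T$ as flat chains mod$(q)$, together with the $C^{3,\alpha_0}_\loc$ convergence $\Sigma_k \to \Sigma$ and area-minimality of each $T_k$, yields uniform local mass bounds and weak-$\ast$ mass-measure convergence $\|T_k\| \toweakstar \|T\|$ on compact subsets of $U$ (using no mass cancellation for area-minimizers). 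Combined with monotonicity, this produces the standard sequential upper semicontinuity $\limsup_k \Theta(T_k, x_k) \leq \Theta(T,x)$ for $x_k \to x$, and hence for $k \geq k_0(U')$ one has $\Theta(T_k, \cdot) < q/2$ uniformly on $\overline{U'}$; Proposition~\ref{p:integrality} then gives that each $T_k \res U'$ is an area-minimizing integral current with $\partial(T_k \res U') = 0$.

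For the second step, the currents $T_k \res U'$ are integer rectifiable with uniformly bounded mass and zero boundary, so Federer--Fleming compactness yields, along a subsequence, a weak integral limit $T^\ast$ on $U'$, again with $\Theta(T^\ast, \cdot) < q/2$ by the same semicontinuity argument. Uniqueness of mod$(q)$ flat limits forces $T - T^\ast = qR$ for some integral cycle $R$. Both $T$ and $T^\ast$ are area-minimizing integral currents, so $\|T\| = \|T^\ast\|$ (mass-measure limits coincide) and thus $\supp T = \supp T^\ast$; at $\mathcal{H}^m$-a.e.\ regular point of this common support, the integer multiplicities of $T$ and $T^\ast$ are congruent modulo $q$ and each has absolute value strictly less than $q/2$, so they must agree. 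This forces $R = 0$ and hence $T^\ast = T$. Uniqueness of every subsequential limit then yields full-sequence weak convergence $T_k \toweakstar T$ as integral currents on $U'$. The main obstacle I anticipate is precisely this final identification step: a priori $T^\ast$ could differ from $T$ by $q$ times a nonzero integer cycle, and ruling this out requires the strict density bound $< q/2$ to be available simultaneously for both limits, combined with the multiplicity and orientation comparison on the regular support.
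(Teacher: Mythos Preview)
Your proposal is correct and follows essentially the same approach as the paper: transfer the density bound $\Theta < q/2$ to the $T_k$ via upper semicontinuity (the paper phrases this as a contradiction argument using Allard's monotonicity), invoke Proposition~\ref{p:integrality} and Federer--Fleming compactness, and then identify the integral limit with $T$ by comparing multiplicities at regular points using the mod$(q)$ congruence together with the strict bound $<q/2$. The only cosmetic difference is in the identification step: the paper first deduces $|i'|=i$ from $\|T\|=\|S\|$ and then uses $|i'-i|\leq 2i<q$, whereas you use $|i|,|i'|<q/2$ directly to get $|i-i'|<q$; these are equivalent.
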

\begin{proof} First of all we recall that $\|T_k\|\toweakstar \|T\|$ in the sense of varifolds, see for instance \cite{DLHMS}. 
It then suffices to show that $\Theta (T_k, y) < \frac{q}{2}$ for every $y\in U'$ and every $k$ sufficiently large. In fact this would imply that $\partial T_k=0$ in $U'$ and, by Federer's compactness theorems, that $T_k \toweakstar S$ for some integral current $S$ with $\partial S \res U' =0$. We immediately conclude that $T$ and $S$ are congruent mod$(q)$. On the other hand it also follows that $\|T_k\|\toweakstar \|S\|$ in the sense of varifolds, in particular $\|S\|=\|T\|$ and hence $\spt (S)=\spt (T)$. Therefore it suffices to show that, for every regular point of $T$ there is a neighborhood $W$ where $T$ and $S$ coincide. We can choose a neighborhood $W$ where $T\res W= i \llbracket \Lambda \rrbracket$ for some oriented connected regular surface $\Lambda$ and for some positive integer $i$. Since $\Theta (T, \cdot)< \frac{q}{2}$, we necessarily have $1\leq i< \frac{q}{2}$, and since $\spt (S)=\spt (T)$, the constancy theorem implies that $S\res W = i' \llbracket\Lambda \rrbracket$ for some (not necessarily positive) integer $i'$. However, $|i'|= i$ because $\|T\|=\|S\|$ and moreover $q$ divides the integer $i'-i$ because $S-T\equiv 0$ mod$(q)$. Given that $|i'-i|\leq |i'|+|i|=2|i|<q$, it must necessarily be that $i=i'$. 

As for showing that $\Theta (T_k, y) < \frac{q}{2}$ for every $y\in U'$ and every $k$ sufficiently large, we argue by contradiction. In particular we would find a sequence of points $\{y_k\}\subset U'$ such that $\Theta (T_k, y_k)\geq \frac{q}{2}$. Because $U'\subset\subset U$ we can assume that $y_k$ converges to an element $y\in U$. On the other hand Allard's monotonicity formula would imply that $\Theta (T, y) \geq \limsup_k \Theta (T_k, y_k) \geq \frac{q}{2}$. 
\end{proof}

\subsection{Oriented tilt-excess}
Having identified $T$ with an integral current locally in regions where the density is strictly smaller than $\frac{q}{2}$, we recall
the definition of the \emph{oriented tilt-excess} of an $m$-dimensional integral current $T$ in a cylinder $\Cbf_r(p,\pi_0)$ relative to an $m$-dimensional oriented plane $\pi$:
\[
\mathbf{E} (T, \mathbf{C}_r (p,\pi_0), \pi) := \frac{1}{2\omega_m r^m} \int_{\mathbf{C}_r (p,\pi_0)} |\vec{T} (x)- \vec{\pi} (x)|^2\, d\|T\| (x).
\]
We in turn define
\[
\mathbf{E} (T, \mathbf{C}_r (p,\pi_0)) := \min_{\pi\subset T_p \Sigma} \mathbf{E} (T, \mathbf{C}_r (p,\pi_0), \pi)\, 
\]
where the minimum is taken over all $m$-dimensional oriented planes $\pi\subset T_p\Sigma$ (identified with their corresponding planes in $\mathbb{R}^{n+m}$). The oriented tilt-excess of $T$ in $\Bbf_r(p)$ relative to an $m$-dimensional oriented plane $\pi$ and the optimal oriented tilt-excess in $\Bbf_r(p)$, denoted respectively by $\Ebf(T,\Bbf_r(p),\pi)$ and $\Ebf(T,\Bbf_r(p))$, are defined analogously.

\subsection{$L^2-L^\infty$ height bound} 
In this section, we recall the $L^2-L^\infty$ height bound derived in \cite{DMS}*{Part I, Section 3} for integral currents, which will be a key aspect of the proof of Theorem \ref{t:decay}. We refer the reader to the proofs therein.

Bearing in mind Proposition \ref{p:integrality}, which will allow us to identify $T$ with an integral current away from a neighbourhood of $V(\Sbf)$, we make the following assumption throughout this section.

\begin{assumption}\label{a:height-main}
    $\Sigma$ and $\Abf$ are as in Assumption \ref{a:main}. $T$ is an $m$-dimensional integral current in $\Sigma\cap \Bbf_{7\sqrt{m}}$ with $\partial T\mres \Bbf_{7\sqrt{m}} = 0$. For some oriented $m$-dimensional plane $\pi_0\subset\R^{m+\bar n}$ passing through the origin and some positive integer $Q$, we have 
    \[
    (\mathbf{p}_{\pi_0})_\sharp T\res \Cbf_2 (0, \pi_0) = Q \llbracket B_2(\pi_0)\rrbracket\, ,
    \]
    and $\|T\|(\Bbf_2) \leq (Q+\frac{1}{2})\omega_m 2^m$.
\end{assumption}

\begin{theorem}[$L^\infty$ and tilt-excess estimates, \cite{DMS}*{Theorem 3.2}]\label{t:heightbd} For every $1\leq r < 2$, $Q$, and $N$, there is a positive constant $\bar{C} = \bar{C} (Q,m,n,\bar n,N,r)>0$ with the following property. Suppose that $T$, $\Sigma$, $\Abf$ and $\pi_0$ are as in Assumption \ref{a:height-main}, let $p_1, \ldots , p_N \in \pi_0^\perp$ be distinct points, and set $\boldsymbol{\pi}:= \bigcup_i p_i+\pi_0$. 
Let
\begin{equation}\label{e:L2-excess}
E := \int_{\mathbf{C}_2} \dist^2 (p, \boldsymbol{\pi}) \, d\|T\| (p)\, .
\end{equation}
Then
\begin{equation}\label{e:tilt-estimate}
\Ebf(T,\Cbf_r, \pi_0)\leq \bar{C} (E + \mathbf{A}^2)\, 
\end{equation}
and, if $E\leq 1$,
\begin{equation}\label{e:Linfty-estimate}
\spt (T) \cap \mathbf{C}_r \subset \{p : \dist (p, \boldsymbol{\pi})\leq \bar{C} (E^{1/2} + \mathbf{A})\}\, .
\end{equation}
\end{theorem}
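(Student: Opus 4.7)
The plan is to adapt the classical single-plane $L^2$--$L^\infty$ height estimate for area-minimizing integral currents near a plane to the multi-plane setting, following the approach in \cite{DLS14Lp}. A first reduction handles the degenerate regime: setting $\delta := \min_{i\neq j}|p_i - p_j|$, if $E^{1/2} + \Abf \geq c\delta$ for a suitably small $c = c(Q,m,n,\bar n, N, r)$, then $\boldsymbol{\pi}$ sits inside a $C(E^{1/2} + \Abf)$-neighbourhood of any fixed $p_i + \pi_0$, and both conclusions follow from the single-plane version of the estimate after enlarging $\bar{C}$. We may therefore assume $E^{1/2} + \Abf \leq c\delta$.

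In this regime, the first substantive step is to show that $\spt(T)\cap \Cbf_r$ avoids the mid-region between the $N$ planes and cleanly decomposes into pieces $T^{(i)}:= T\res \{p : \dist(p, p_i+\pi_0) < \delta/4\}$ with $\partial T^{(i)} = 0$ inside the relevant cylinder. Indeed, any $q\in \spt(T)\cap \Cbf_r$ at distance $\geq \delta/4$ from every $p_i + \pi_0$ would, by Allard monotonicity in $\Sigma$ applied to the integral current $T$ with $\partial T = 0$, satisfy $\|T\|(\Bbf_{\delta/8}(q))\gtrsim \delta^m$, forcing $E\gtrsim \delta^{m+2}$ and contradicting $E \leq c^2\delta^2$. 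Each $T^{(i)}$ then projects to $Q_i \llbracket B_r(\pi_0)\rrbracket$ for non-negative integers $Q_i$ summing to $Q$, reducing the problem to $N$ independent single-plane height bounds.

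For each piece $T^{(i)}$, the tilt bound \eqref{e:tilt-estimate} is obtained from a Caccioppoli-type inequality: testing the first variation identity $\delta T^{(i)}(X) = -\int\langle H_\Sigma, X\rangle\,d\|T^{(i)}\|$ against the vector field $X(p) = \varphi^2(\mathbf{p}_{\pi_0}(p))\,\mathbf{p}_{\pi_0}^\perp(p - p_i)$ with a suitable cut-off $\varphi$ yields $\int_{\Cbf_r}|\vec{T}-\vec{\pi}_0|^2\,d\|T^{(i)}\|\leq C(E_i+\Abf^2)$, where $E_i$ denotes the $L^2$ distance of $T^{(i)}$ from $p_i+\pi_0$. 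For the pointwise bound \eqref{e:Linfty-estimate}, one uses that $f_i(p):=|\mathbf{p}_{\pi_0}^\perp(p - p_i)|^2$ is subharmonic on the smooth part of $T^{(i)}$ modulo a curvature correction of size $C\Abf$; a De Giorgi / Moser-type sub-mean-value inequality for stationary varifolds in $\Sigma$ then yields $\sup_{\spt T^{(i)}\cap \Cbf_r}f_i\leq C(E_i+\Abf^2)$. Square-rooting and summing over $i$, together with $\sum_i E_i\leq E$, completes the proof.

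The main technical obstacle is the sub-mean-value step producing the sharp $E^{1/2}$ power, rather than the much weaker $E^{1/(m+2)}$ coming directly from monotonicity applied at a single point. This requires a careful weak differential inequality for $f_i$ on the rectifiable support of $T^{(i)}$, together with its Moser iteration, both executed robustly with respect to the curvature of $\Sigma$ and compatibly with the Caccioppoli step so that the curvature error $\Abf^2$ does not degrade; the precise implementation in the present $C^{3,\alpha_0}$ ambient setting is carried out in \cite{DMS}*{Part I, Section 3}.
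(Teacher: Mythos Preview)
The paper does not prove this theorem itself; it defers entirely to \cite{DMS}*{Part I, Section 3}. Your proposed argument, however, contains a genuine gap in the splitting step.

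After the first reduction you assume $E^{1/2}+\Abf\leq c\delta$ and then argue that a point $q\in\spt(T)\cap\Cbf_r$ at distance $\geq\delta/4$ from every $p_i+\pi_0$ would, by monotonicity, force $E\gtrsim\delta^{m+2}$, ``contradicting $E\leq c^2\delta^2$''. But $c_0\,\delta^{m+2}\leq E\leq c^2\delta^2$ only yields $\delta^m\leq C c^2$, which is no contradiction: $\delta$ carries no a priori lower bound (the $p_i$ are arbitrary distinct points). This is precisely the $E^{1/(m+2)}$-versus-$E^{1/2}$ obstruction you correctly flag in your final paragraph --- but it already bites \emph{here}, in the splitting step, not only in the final sup bound. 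Since your Moser iteration on $f_i$ is set up on the pieces $T^{(i)}$ \emph{after} splitting, the scheme is circular: one needs a sharp height bound to justify the splitting, yet one splits in order to run the sharp height bound. (Note also that $\dist^2(\cdot,\boldsymbol\pi)=\min_i f_i$ is \emph{not} subharmonic on $T$ --- the minimum of the $f_i$ picks up negative Dirac contributions along the equidistant sets --- so one cannot simply Moser-iterate on the full current without further thought.)

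A secondary issue: your degenerate-case reduction is not right as written either. From $E^{1/2}+\Abf\geq c\delta$ with $\delta=\min_{i\neq j}|p_i-p_j|$ one cannot conclude that $\boldsymbol\pi$ lies in a $C(E^{1/2}+\Abf)$-neighbourhood of a \emph{single} plane when $N\geq 3$, since the diameter of $\{p_1,\dots,p_N\}$ may be much larger than $\delta$. This part is easily repaired by inducting on $N$ (merge the two closest planes and apply the $(N{-}1)$-case), but the first gap above is not repaired this way.

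Finally, note that the logical order in the cited reference is the reverse of yours: the $L^\infty$ bound is established first, directly, and the splitting (Corollary~\ref{c:splitting-0} here) is then deduced as a consequence, not used as an input.
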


We additionally state the following important consequence of Theorem \ref{t:heightbd}, also contained in \cite{DMS}.

\begin{corollary}[\cite{DMS}*{Corollary 3.3}]\label{c:splitting-0}
For each pair of positive integers $Q$ and $N$, there is a positive constant $\delta = \delta (Q,m,n, \bar n, N)$ with the following properties. Assume that:
\begin{itemize}
    \item[(i)] $T$, $\Sigma$, and $\Abf$ are as in Assumption \ref{a:height-main};
    \item[(ii)] $T$ is area-minimizing in $\Sigma$ and for some positive $r \leq \frac{1}{4}$ and $q\in \spt(T)\cap\Bbf_1$ we have $\partial T \res \mathbf{C}_{4r} (q) = 0$, $(\mathbf{p}_{\pi_0})_\sharp T = Q \llbracket B_{4r} (q)\rrbracket$, and $\|T\| (\mathbf{C}_{2r} (q)) \leq \omega_m (Q+\frac{1}{2}) (2r)^m$;
    \item[(iii)] $p_1, \ldots, p_N\in \R^{m+n}$ are distinct points with $\mathbf{p}_{\pi_0} (p_i)= q$ and $\varkappa:=\min \{|p_i-p_j|: i<j\}$;
    \item[(iv)] $\pi_1, \ldots, \pi_N$ are oriented planes passing through the origin with 
    \begin{equation}\label{e:smallness-tilted-pi-1}
    \tau \coloneqq \max_i |\pi_i-\pi_0|\leq \delta \min\{1, r^{-1} \varkappa\}\, ;
    \end{equation}
    \item[(v)] Upon setting $\boldsymbol{\pi} = \bigcup_i (p_i+ \pi_i)$, we have 
    \begin{align}
    & (r\mathbf{A})^2 + (2r)^{-m-2} \int_{\mathbf{C}_{2r} (q)} \dist^2 (p, \boldsymbol{\pi}) d\|T\| 
    \leq \delta^2 \min \{1, r^{-2}\varkappa^2\}\, .\label{e:smallness-tilted-pi-2}
    \end{align}
\end{itemize}
Then $T\res \mathbf{C}_r (q) = \sum_{i=1}^N T_i$ where
\begin{itemize}
\item[(a)] Each $T_i$ is an integral current with $\partial T_i \res \mathbf{C}_r (q) = 0$;
\item[(b)] $\dist (q, \boldsymbol{\pi}) = \dist (q, p_i+\pi_i)$ for each $q\in \spt (T_i)$;
\item[(c)] $(\mathbf{p}_{\pi_0})_\sharp T_i = Q_i \llbracket B_r (q)\rrbracket$ for some non-negative integer $Q_i$.
\end{itemize}
\end{corollary}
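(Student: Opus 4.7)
The plan is to upgrade the $L^2$ height smallness in hypothesis (v) to an $L^\infty$ bound on $\spt T\cap \Cbf_r(q)$ via Theorem \ref{t:heightbd}, thereby separating $\spt T$ into pieces localised near each of the reference planes $p_i+\pi_i$. First I would rescale by $r^{-1}$ around $q$, producing a current $T' := (\iota_{q,r})_\sharp T$ satisfying the hypotheses of Theorem \ref{t:heightbd} on $\Cbf_2$ with $\|T'\|(\Cbf_2)\leq (Q+\tfrac12)\omega_m 2^m$, rescaled separation $\varkappa' := \varkappa/r$, rescaled second fundamental form $\leq r\Abf\leq \delta$, and rescaled $L^2$ height excess bounded by $C\delta^2\min\{1,\varkappa'^2\}$ relative to $\bigcup_i(p_i'+\pi_i)$, where $p_i' := (p_i-q)/r$.

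Since Theorem \ref{t:heightbd} is phrased for a reference family of \emph{parallel} affine planes, I next replace $\boldsymbol\pi$ with the parallel configuration $\boldsymbol\pi_0' := \bigcup_i(p_i'+\pi_0)$. The tilt bound $\tau\leq \delta\min\{1,\varkappa'\}$ places the planes $p_i'+\pi_i$ and $p_i'+\pi_0$ within $L^\infty$-distance $C\tau$ in $\Cbf_2$, so the triangle inequality preserves the $L^2$ smallness (up to a constant depending on $Q,N$) for $\boldsymbol\pi_0'$. Applying Theorem \ref{t:heightbd} then gives
\[
\spt T'\cap \Cbf_1 \subset \{x : \dist(x,\boldsymbol\pi_0') \leq \bar C(E^{1/2}+r\Abf)\} \subset \{x : \dist(x,\boldsymbol\pi_0') \leq C\delta \min\{1,\varkappa'\}\},
\]
and unscaling puts $\spt T\cap \Cbf_r(q)$ in a $C\delta\min\{r,\varkappa\}$-neighbourhood of $\bigcup_i(p_i+\pi_0)$, hence also of $\bigcup_i(p_i+\pi_i)$ after absorbing the $C\tau r$ tilt correction.

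Since the parallel planes $p_i+\pi_0$ are mutually separated by $\geq \varkappa$, and each $p_i+\pi_i$ lies within $C\delta\min\{r,\varkappa\}$ of $p_i+\pi_0$ inside $\Cbf_r(q)$, choosing $\delta$ small makes the closed neighbourhoods
\[
N_i := \bigl\{x\in \overline{\Cbf_r(q)} : \dist(x, p_i+\pi_i)\leq \tfrac{\varkappa}{4}\bigr\}
\]
pairwise disjoint, with union containing $\spt T\cap \Cbf_r(q)$; assertion (b) is immediate on $N_i\cap \spt T$. Defining $T_i := T\mres \interior(N_i)$ yields integer-rectifiable currents with $T\mres \Cbf_r(q) = \sum_i T_i$ and pairwise disjoint supports. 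Since $\partial T \mres \Cbf_r(q)=0$ implies $\sum_i \partial T_i = 0$ and the $\partial T_i$ have pairwise disjoint supports, each must vanish, proving (a).

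For (c), the pushforward $(\mathbf p_{\pi_0})_\sharp T_i$ is an $m$-cycle in $B_r(q)$, and by the constancy theorem it equals $Q_i\llbracket B_r(q)\rrbracket$ for some integer $Q_i$. Non-negativity of $Q_i$ follows from the tilt-excess conclusion \eqref{e:tilt-estimate} of Theorem \ref{t:heightbd}: the resulting smallness of $\int_{\Cbf_r(q)}|\vec T-\vec\pi_0|^2\,d\|T\|$ ensures that at $\mathcal H^m$-a.e.\ regular point $x$ of $T_i$ the differential $\mathbf p_{\pi_0}|_{\Tan_x T_i}$ is orientation-preserving, so the slice multiplicity of $(\mathbf p_{\pi_0})_\sharp T_i$ at a.e.\ $y\in B_r(q)$ is a non-negative sum of integer multiplicities of $T_i$, forcing $Q_i\geq 0$. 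The main technical input is already Theorem \ref{t:heightbd}; the subtlest step is this final non-negativity claim, which does \emph{not} follow from the constraint $\sum_i Q_i=Q$ alone but crucially uses the tilt (not merely height) output of the same theorem.
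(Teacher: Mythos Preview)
The paper does not give its own proof of this corollary; it is quoted verbatim from \cite{DMS}*{Corollary 3.3} and the reader is referred there. Your reconstruction is essentially the intended one: rescale to unit scale, pass from the tilted family $p_i+\pi_i$ to the parallel family $p_i+\pi_0$ (the error incurred is controlled by $\tau\leq \delta\min\{1,r^{-1}\varkappa\}$), invoke the $L^\infty$ height bound of Theorem \ref{t:heightbd} to separate $\spt T$ into disjoint slabs, and read off (a), (b) via the disjointness of the slabs and (c) via the constancy theorem.

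There is one genuine imprecision in your final paragraph. Smallness of the tilt integral $\int_{\Cbf_r(q)}|\vec T-\vec\pi_0|^2\, d\|T\|$ does \emph{not} imply that $\mathbf{p}_{\pi_0}$ is orientation-preserving at $\|T\|$-a.e.\ point; an $L^2$ bound gives no such pointwise conclusion. The clean way to extract $Q_i\geq 0$ from the same tilt estimate is the identity
\[
Q_i\,\omega_m r^m \;=\; \int_{\Cbf_r(q)} \langle \vec T_i, \vec\pi_0\rangle\, d\|T_i\|
\;=\; \|T_i\|(\Cbf_r(q)) - \omega_m r^m\, \Ebf(T_i,\Cbf_r(q),\pi_0)\,,
\]
which follows directly from the definition of pushforward and of tilt excess. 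Since $\|T_i\|(\Cbf_r(q))\geq 0$ and $\Ebf(T_i,\Cbf_r(q),\pi_0)\leq \Ebf(T,\Cbf_r(q),\pi_0)\leq \bar C\delta^2$ by \eqref{e:tilt-estimate}, this yields $Q_i\geq -\bar C\delta^2>-1$, hence $Q_i\geq 0$ as $Q_i$ is an integer. Replace your orientation-preservation sentence with this computation and the argument is complete.
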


\subsection{Separation between planes and halfplanes}\label{s:distance-pages}
Fist of all we introduce the following quantity for every open book $\mathbf{S} = \mathbf{H}_1\cup \cdots \cup \mathbf{H}_N$. We denote by $\pi_i$ the planes that are the extensions of the half-planes $\mathbf{H}_i$ across $V(\Sbf)$ via reflection and set 
\begin{equation}\label{e:mu}
    \boldsymbol{\zeta}(\Sbf) \coloneqq \max_{i<j} \dist(\pi_i\cap\Bbf_1,\pi_j\cap\Bbf_1). 
\end{equation}
Note that $\boldsymbol{\zeta} (\mathbf{S})$ measures how close $\mathbf{S}$ is to a plane. In particular it is not difficult to see that 
\begin{equation}\label{e:comparison-1}
C^{-1} \min_i \dist (\mathbf{S}\cap \Bbf_1, \pi_i\cap \Bbf_1)
\leq \boldsymbol{\zeta} (\mathbf{S})
\leq C \min_i \dist (\mathbf{S}\cap \Bbf_1, \pi_i\cap \Bbf_1) 
\end{equation}
while
\begin{equation}\label{e:comparison-2}
\min_{\pi\in \mathscr{P}} \dist (\mathbf{S}\cap \Bbf_1, \pi\cap \Bbf_1) \leq
\min_i \dist (\mathbf{S}\cap \Bbf_1, \pi_i\cap \Bbf_1) \leq C
\min_{\pi\in \mathscr{P}} \dist (\mathbf{S}\cap \Bbf_1, \pi\cap \Bbf_1) 
\end{equation}
for some constant $C(m,n)$.

The quantity $\boldsymbol{\zeta} (\mathbf{S})$ will play an analogous role to that of $\boldsymbol{\mu} (\mathbf{S})$ in \cite{DMS}, but we want to emphasize one difference: the cones $\mathbf{S}$ considered in \cite{DMS} are union of {\em planes} and so there is no need to introduce the reflection along the spine $V (\mathbf{S})$. For this reason we have opted to use a different notation here. Instead we use the same notation for $\boldsymbol{\sigma} (\mathbf{S})$, which is given by
\begin{equation}\label{e:sigma}
\boldsymbol{\sigma} (\mathbf{S}) = \min_{i<j}
\dist (\mathbf{H}_i \cap \Bbf_1, \mathbf{H}_j \cap \Bbf_1)\, .
\end{equation}
The following very elementary observation will be particularly useful in many places.

\begin{lemma}\label{l:trivial}
Let $\mathbf{H}_1$ and $\mathbf{H}_2$ be two $m$-dimensional halfplanes with a common boundary $V$, denote by $\pi_i$ the planes obtained by reflecting them along $V$, and let $v_i\in \mathbf{H}_i\cap \partial \Bbf_1$ be orthogonal to $V$. Then
\begin{align}
\dist (\pi_1 \cap \Bbf_1, \pi_2\cap \Bbf_1) &= \min \{|v_1-v_2|, |v_1+v_2|\}\, .\label{e:trivial-1}\\
\end{align}
\end{lemma}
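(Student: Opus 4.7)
The plan is to exploit the shared subspace $V$. First I would decompose $\mathbb{R}^{m+n}=V\oplus V^\perp$, observing that $\pi_i = V\oplus\mathbb{R}v_i$, so that $\pi_1\cap \Bbf_1$ and $\pi_2\cap \Bbf_1$ agree on the common disk $V\cap\Bbf_1$. Consequently $\dist(\pi_1\cap\Bbf_1,\pi_2\cap\Bbf_1)$ is governed entirely by the $1$-dimensional lines $\mathbb{R}v_i\subset V^\perp$, and further reduces to a planar problem inside $\Span(v_1,v_2)\subset V^\perp$.

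Inside this $2$-plane, the unit vectors $\pm v_i$ are the boundary points of $\pi_i\cap\Bbf_1$ lying at maximal distance from $V$. Any point of $\pi_j\cap\partial\Bbf_1$ sitting at comparable distance from $V$ is one of $\pm v_j$. The two natural matchings $v_1\leftrightarrow v_2$ and $v_1\leftrightarrow -v_2$ give the candidate distances $|v_1-v_2|$ and $|v_1+v_2|$ respectively, and their minimum $\min\{|v_1-v_2|,|v_1+v_2|\}$ is realized by the nearer antipode. The asymmetry between these two options reflects exactly the fact that $\pi_i$ is the full-plane extension of the halfplane $\mathbf{H}_i$ across $V$ by reflection, so that both $\pm v_i$ lie on $\pi_i\cap\partial\Bbf_1$: one being the original direction of $\mathbf{H}_i$ and the other its reflection.

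The main step to verify is that this antipodal pairing indeed realizes $\dist(\pi_1\cap\Bbf_1,\pi_2\cap\Bbf_1)$, i.e.\ that no other point of $\pi_1\cap\Bbf_1$ gives a larger distance to $\pi_2\cap\Bbf_1$. This amounts to an elementary extremization in the $2$-plane $\Span(v_1,v_2)$, combined with the trigonometric identity $|v_1\pm v_2|^2 = 2\pm 2\langle v_1,v_2\rangle$, so that $\min\{|v_1-v_2|^2,|v_1+v_2|^2\}=2-2|\langle v_1,v_2\rangle|$. No substantive obstacle is anticipated: the Lemma is flagged as ``elementary'' in the text and serves throughout the remainder of Part~\ref{pt:1} as a convenient reference relating the separation of two reflected planes to the Euclidean distance between their distinguished unit vectors.
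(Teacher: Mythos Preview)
The paper does not prove this lemma; it is stated as a ``very elementary observation'' and left without argument. Your decomposition $\mathbb{R}^{m+n}=V\oplus V^\perp$ with $\pi_i=V\oplus\mathbb{R}v_i$, followed by the reduction to the two-dimensional span of $v_1,v_2$, is exactly the natural route.

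There is, however, a genuine issue in your final step. If one reads $\dist$ as Hausdorff distance --- the convention the paper uses elsewhere, e.g.\ the identity $\dist(\pi_{j_0}\cap\Bbf_1,\pi_j\cap\Bbf_1)=|\mathbf{p}_{\pi_{j_0}}^\perp(v_j)|$ invoked in the proof of Lemma~\ref{l:shift} --- then carrying out your ``elementary extremization'' does \emph{not} give the stated answer. The farthest point of $\pi_1\cap\Bbf_1$ from $\pi_2$ is (the limit at) $v_1$, and since $\mathbf{p}_{\pi_2}(v_1)=\langle v_1,v_2\rangle v_2$ already lies in $\Bbf_1$, one obtains
\[
\dist(\pi_1\cap\Bbf_1,\pi_2\cap\Bbf_1)=|\mathbf{p}_{\pi_2}^\perp(v_1)|=\sqrt{1-\langle v_1,v_2\rangle^2}\,.
\]
Writing $c=\langle v_1,v_2\rangle$, this is $\sqrt{(1-|c|)(1+|c|)}$, whereas $\min\{|v_1-v_2|,|v_1+v_2|\}=\sqrt{2(1-|c|)}$; the two coincide only when $|c|=1$ (for $v_1\perp v_2$ they are $1$ and $\sqrt{2}$). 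So the exact equality \eqref{e:trivial-1} fails under this reading, and your claim that ``no substantive obstacle is anticipated'' is too optimistic: the extremization you propose would in fact uncover the discrepancy. The two quantities are nevertheless comparable,
\[
\tfrac{1}{\sqrt{2}}\min\{|v_1-v_2|,|v_1+v_2|\}\le\dist(\pi_1\cap\Bbf_1,\pi_2\cap\Bbf_1)\le\min\{|v_1-v_2|,|v_1+v_2|\}\,,
\]
and this two-sided bound is all the paper ever needs. Your write-up should either flag the discrepancy in the stated equality or simply prove the comparability directly.
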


\subsection{Alignment of spines, shifting}
We now record some results which are analogous to those in \cite{DMS}*{Section 7.3}. Therein, the majority of the results are stated for unions of $m$-dimensional planes intersecting in an $(m-2)$-dimensional subspace, with comparability of the two \emph{Morgan angles} that determine the pairwise separation of the planes. Here, we instead have unions of $m$-dimensional half-planes intersecting in an $(m-1)$-dimensional subspace, with only one angle parameter determining the pairwise separation of the half-planes, so the situation is in fact even simpler. Nevertheless, for clarity, we include the proofs that differ.

\begin{lemma}\label{l:spine-comparison}
	For each $M>0$ and $m,n,q\in \N$, there exists a constant $\bar{C}=\bar{C}(M,m,n,q)>0$ such that the following holds.  Suppose that $\Sbf$ and $\Sbf'$ are open books in $\Bscr^q$, consisting of $2 \leq N, N' \leq q$ $m$-dimensional half-planes $\Hbf_1,\dots,\Hbf_N$ and $\Hbf'_1,\dots,\Hbf'_{N'}$, meeting in the $(m-1)$-dimensional subspaces $V(\Sbf)$ and $V(\Sbf')$ respectively. Then 
	\begin{equation}
		\dist(V(\Sbf)\cap\Bbf_1,V(\Sbf')\cap\Bbf_1) \leq\bar C \frac{\dist(\Sbf\cap\Bbf_1,\Sbf'\cap\Bbf_1)}{\boldsymbol{\zeta} (\mathbf{S})}
	\end{equation}
\end{lemma}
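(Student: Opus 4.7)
\emph{Set-up and reduction.} I would first set $d := \dist(\Sbf\cap\Bbf_1, \Sbf'\cap\Bbf_1)$ and dispose of the case $d > c\,\boldsymbol{\zeta}(\Sbf)$ for a small constant $c = c(M,m,n,q)$ to be chosen: the trivial diameter bound $\dist(V(\Sbf)\cap\Bbf_1, V(\Sbf')\cap\Bbf_1) \leq 2$ combined with $d/\boldsymbol{\zeta}(\Sbf) \geq c$ already yields the conclusion with $\bar C = 2/c$. The case $\boldsymbol{\zeta}(\Sbf)=0$ is vacuous. Moreover, estimating the (Hausdorff) distance between the two $(m-1)$-dimensional linear subspaces $V(\Sbf), V(\Sbf')$ reduces to estimating just one of the one-sided distances, say $\sup_{w \in V(\Sbf)\cap \Bbf_{1/2}} \dist(w, V(\Sbf'))$, since same-dimensional linear subspaces through the origin have comparable one-sided distances (both are controlled by $\|P_{V(\Sbf)} - P_{V(\Sbf')}\|_{\mathrm{op}}$).

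\emph{Main argument.} The plan is to associate to two maximally-separated pages of $\Sbf$ two \emph{distinct} pages of $\Sbf'$ whose extended planes are Hausdorff-close in $\Bbf_1$, and then invoke the cosecant-law stability of intersections of nearby transverse $m$-planes. Concretely, I would pick $i<j$ realizing $\dist(\pi_i\cap\Bbf_1,\pi_j\cap\Bbf_1) = \boldsymbol{\zeta}(\Sbf)$, with corresponding unit vectors $v_i, v_j$ in $\Hbf_i\cap V(\Sbf)^\perp$, $\Hbf_j\cap V(\Sbf)^\perp$; by Lemma \ref{l:trivial} we have $\min(|v_i-v_j|, |v_i+v_j|) = \boldsymbol{\zeta}(\Sbf)$. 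Taking $y_\ell := \tfrac{1}{4} v_\ell \in \Hbf_\ell \cap \Bbf_1 \subset \Sbf$ for $\ell \in \{i,j\}$, the Hausdorff bound gives $q_\ell \in \Sbf' \cap \Bbf_2$ with $|q_\ell - y_\ell| \leq d$; let $\Hbf'_{\phi(\ell)}$ be a page containing $q_\ell$. One then shows $\phi(i) \neq \phi(j)$: if both were equal to some index $a$, the cone structure of $\Sbf'$ (each $\pi'_a$ containing the full segment from $q_\ell$ to the nearest point of $V(\Sbf')$) would force $\pi'_a$ to be $O(d)$-close (in Hausdorff distance on $\Bbf_1$) to both $\pi_i$ and $\pi_j$, hence $\dist(\pi_i\cap\Bbf_1,\pi_j\cap\Bbf_1) \leq C d$, contradicting $\boldsymbol{\zeta}(\Sbf) \geq d/c$ for $c$ small. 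Since any two distinct pages of an open book meet exactly along the spine, $\pi'_{\phi(i)}\cap\pi'_{\phi(j)} = V(\Sbf')$, and by construction $\dist(\pi_\ell\cap\Bbf_1, \pi'_{\phi(\ell)}\cap\Bbf_1) \leq Cd$. The conclusion follows from the standard perturbation estimate: for $m$-dimensional linear subspaces $\sigma_1,\sigma_2,\sigma_1',\sigma_2' \subset \R^{m+n}$ whose pairwise intersections are $(m-1)$-dimensional, with dihedral angle between $\sigma_1,\sigma_2$ of order $\alpha$ and $\dist(\sigma_\ell\cap\Bbf_1,\sigma_\ell'\cap\Bbf_1) \leq \eta$ for $\ell=1,2$, one has $\dist((\sigma_1\cap\sigma_2)\cap\Bbf_1,(\sigma_1'\cap\sigma_2')\cap\Bbf_1) \leq C\eta/\alpha$. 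Applied with $\eta \asymp d$ and $\alpha \asymp \boldsymbol{\zeta}(\Sbf)$ (using $\sin(\text{dihedral angle}(\pi_i,\pi_j)) \asymp \boldsymbol{\zeta}(\Sbf)$), this gives the desired bound.

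\emph{Main obstacle.} The hardest step is the quantitative claim above that $\pi'_{\phi(\ell)}$ is $O(d)$-Hausdorff-close to $\pi_\ell$ in $\Bbf_1$: a single-point match $y_\ell \sim q_\ell$ only controls closeness near $y_\ell$, not globally on $\Bbf_1$. The natural remedy is to exploit the linearity of both cones: each $\pi'_{\phi(\ell)}$ contains the segment from $q_\ell$ back to its projection onto $V(\Sbf')$, and if the spines are close this segment hugs $\pi_\ell$. The apparent circularity (with $V(\Sbf) \approx V(\Sbf')$ being exactly what we are trying to prove) can be broken by working at a truncation scale $\rho \asymp d/\boldsymbol{\zeta}(\Sbf)$, away from the spines: for each page $\Hbf_\ell$, the set $\Hbf_\ell \cap \Bbf_1 \setminus B_\rho(V(\Sbf))$ sits within a $O(d)$-neighbourhood of a unique page of $\Sbf'$, and a pigeonhole on the $\leq q$ possible target pages then produces the required injection $\phi$ and the page-to-page Hausdorff closeness. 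The reflection ambiguity in Lemma \ref{l:trivial} (whether $|v_i-v_j|$ or $|v_i+v_j|$ realises the minimum) is ultimately harmless since $\pi_\ell$ is invariant under $v_\ell \mapsto -v_\ell$.
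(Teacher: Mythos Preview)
Your approach is different from the paper's and substantially harder. The paper sidesteps your ``main obstacle'' by a one-line reduction: replace each open book by the union of full $m$-planes $\bar{\Sbf} = \bigcup_i \pi_i$ and $\bar{\Sbf}' = \bigcup_j \pi'_j$ obtained by reflecting every page across its own spine. Since $\boldsymbol{\zeta}$ is by definition computed on these extended planes one has $\boldsymbol{\zeta}(\bar{\Sbf}) = \boldsymbol{\zeta}(\Sbf)$; the spines are unchanged; and because each $\pi_i$ is a linear subspace, both $\bar{\Sbf}$ and $\bar{\Sbf}'$ are invariant under $p \mapsto -p$, which gives $\dist(\bar{\Sbf}\cap\Bbf_1, \bar{\Sbf}'\cap\Bbf_1) \leq \dist(\Sbf\cap\Bbf_1, \Sbf'\cap\Bbf_1)$ immediately (any $p \in \bar{\Sbf} \cap \Bbf_1$ has $p$ or $-p$ in $\Sbf\cap\Bbf_1$, and if $q'\in\Sbf'\cap\Bbf_1$ is $d$-close to $\pm p$ then $\pm q' \in \bar{\Sbf}'\cap\Bbf_1$ is $d$-close to $p$). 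After the same preliminary reduction you make (ensuring $\boldsymbol{\zeta}(\Sbf') \geq \tfrac12 \boldsymbol{\zeta}(\Sbf) > 0$ so that $\bar{\Sbf}'$ is genuinely non-planar), one simply cites the full-plane estimate, namely the argument of \cite{DMS}*{Lemma~7.12}. Your ``cosecant-law'' perturbation estimate is essentially that lemma, so both routes rest on the same linear algebra; the paper just arrives there with no page-matching whatsoever.

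As for your direct route, the gap you flag is real and the remedy does not close it as written. The injectivity of $\phi$ and the plane-to-plane closeness $\dist(\pi_\ell\cap\Bbf_1,\pi'_{\phi(\ell)}\cap\Bbf_1)\leq Cd$ each presuppose the other, and the truncation-plus-pigeonhole sketch is confused: pigeonhole on $\leq q$ target pages produces \emph{collisions}, not an injection, and the assertion that $\Hbf_\ell \cap \Bbf_1 \setminus B_\rho(V(\Sbf))$ lies in the $O(d)$-neighbourhood of a \emph{single} page of $\Sbf'$ is precisely the connectedness statement that still needs proof. There is also an issue you do not mention: even granting $\phi(i)\neq\phi(j)$ you need the stronger conclusion $\pi'_{\phi(i)} \neq \pi'_{\phi(j)}$ for the intersection-perturbation estimate to apply, since two distinct half-pages of $\Sbf'$ can extend to the same full plane; the same ``otherwise a single plane is $O(d)$-close to both $\pi_i$ and $\pi_j$'' argument handles this, but it should be stated. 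Given how cheap the paper's reduction is, extending to full planes is the way to go.
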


\begin{proof} First of all observe that, without loss of generality, we can assume that $\dist (\mathbf{S}\cap \Bbf_1, \mathbf{S}'\cap \Bbf_1) \leq \gamma \boldsymbol{\zeta} (\mathbf{S})$ for some fixed positive $\gamma$. In fact we have otherwise that 
\[
\frac{\dist(\Sbf\cap\Bbf_1,\Sbf'\cap\Bbf_1)}{\boldsymbol{\zeta} (\mathbf{S})} \geq \gamma
\]
and, given that $\dist (V(\Sbf)\cap\Bbf_1,V(\Sbf')\cap\Bbf_1)\leq 1$, the desired inequality would be trivially true if the constant $C$ is taken to be larger than $\gamma^{-1}$. 

Next, we have by the triangle inequality
\[
    \boldsymbol{\zeta}(\Sbf) \leq 2\dist(\Sbf\cap \Bbf_1, \Sbf'\cap\Bbf_1) + \boldsymbol\zeta(\Sbf'),
\]
and thus if $\gamma<1/4$, $\boldsymbol{\zeta} (\mathbf{S}') \geq \frac{1}{2} \boldsymbol{\zeta} (\mathbf{S})>0$.

Next, let $\pi_i$ and $\pi'_i$ be the $m$-dimensional planes which are the extensions of $\Hbf_i$ and $\Hbf_i'$ across $V(\Sbf)$ and $V(\Sbf')$, respectively, by reflection. If we set $\bar{\mathbf{S}} := \cup_i \pi_i$ and $\bar{\mathbf{S}}' := \cup_i \pi'_i$, it is obvious that:
\begin{itemize}
\item $\boldsymbol{\zeta} (\mathbf{S}) =
\boldsymbol{\zeta} (\bar{\mathbf{S}})$ and 
 $\boldsymbol{\zeta} (\mathbf{S}') =
\boldsymbol{\zeta} (\bar{\mathbf{S}}')$;
\item neither $\bar{\mathbf{S}}$ nor $\bar{\mathbf{S}}'$ is planar, in particular $V (\bar{\mathbf{S}})$ and $V (\bar{\mathbf{S}}')$ are both well-defined and they equal, respectively, $V (\mathbf{S})$ and $V (\mathbf{S}')$;
\item $\dist (\bar{\mathbf{S}}\cap \Bbf_1, \bar{\mathbf{S}}'\cap \Bbf_1) 
\leq \dist (\mathbf{S} \cap \Bbf_1, \mathbf{S}'\cap \Bbf_1)$.
\end{itemize}
On the other hand, it follows from the very same argument of \cite{DMS}*{Lemma 7.12} that 
\[
\dist (V (\bar{\mathbf{S}}) \cap \Bbf_1, V (\bar{\mathbf{S}}')\cap \Bbf_1) \leq C 
\frac{\dist (\bar{\mathbf{S}}\cap \Bbf_1, \bar{\mathbf{S}}'\cap \Bbf_1)}{\boldsymbol{\zeta} (\bar{\mathbf{S}})}\, .
\]
In fact the proof of \cite{DMS}*{Lemma 7.12} handles a more complicated situation because the spines have codimension 2 in the planes, a fact which requires the additional assumption \cite{DMS}*{Lemma 7.12(iii)}.
\end{proof}

We additionally require a lemma which gives control the shifting of a given open book $\Sbf$. Before we state the lemma, let us introduce the following definition.

\begin{definition}\label{d:rot-inv}
A set $\Omega\subset \mathbb R^{m+n}$ is said to be \textit{invariant under rotation around} a linear subspace $V$ if $R (\Omega)=\Omega$ for any rotation $R$ of $\mathbb R^{m+n}$ which fixes $V$.
\end{definition}

\begin{lemma}\label{l:shift}
	Let $M\geq 1$, $m,n,q\in \N$, $2\leq N\leq q$, let $V$ be an $(m-1)$-dimensional subspace of $\R^{m+n}$ and let $U\subset\Bbf_1$ be a non-empty open set which is invariant under rotation around $V$. Then there exists a constant $\bar C = \bar C(M,m,n,N,U)>0$ for which the following holds. Suppose that $\Sbf = \Hbf_1\cup\cdots \cup\Hbf_N \in \Bscr^q$ and $V(\Sbf) = V$ for each $i < j$. Let $p\in\Bbf_{1/2}$ and $\pi_i$ be the $m$-dimensional plane which is the union of $\Hbf_i$ and its reflection through $V$. Then there exists $j_0\in \{1,\dots,N\}$ and $\Omega\subset\Hbf_{j_0}\cap U$ with $\Hcal^m(\Omega) \geq \bar C^{-1}$ and
	\begin{equation}
		|\mathbf{p}_{\pi_i}^\perp(p)| + \boldsymbol{\zeta}(\Sbf)|\mathbf{p}_{V^\perp\cap\pi_i}(p)| \leq \bar C \dist(z,p+\Sbf) \qquad \forall z\in \Omega, \ \forall i\, .
	\end{equation}
\end{lemma}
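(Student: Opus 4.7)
The plan is to reduce the inequality to a geometric question in the $(n+1)$-dimensional subspace $V^\perp$, then use the rotational invariance of $U$ to extract $\Omega$ of controlled size, and finally verify the pointwise inequality by a direct case analysis combined with a pigeonhole.

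\textbf{Step 1: Reduction to $V^\perp$.} Since $V\subset \Hbf_i$ for every $i$, both $\dist(z,p+\Sbf)$ and the left-hand side depend on $z$ and $p$ only through their projections onto $V^\perp$. Writing $z=V_0+t_0 v_{j_0}$ with $(V_0,t_0)\in V\times \R_{\geq 0}$ and setting $q:=\mathbf{p}_{V^\perp}(p)\in V^\perp$, the required inequality reads
\[
	h_i+\boldsymbol{\zeta}(\Sbf)\,\tau_i \;\leq\; \bar C\,\dist\bigl(t_0 v_{j_0},\,q+\textstyle\bigcup_k \R_{\geq 0}v_k\bigr) \qquad \forall\,i,
\]
where $h_i:=\dist(q,\R v_i)$ and $\tau_i:=|q\cdot v_i|$ satisfy $h_i^2+\tau_i^2=|q|^2$; in particular the LHS is bounded above by $(1+\boldsymbol{\zeta}(\Sbf))|q|\leq 3|q|$.

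\textbf{Step 2: Extracting $\Omega$ via rotational invariance.} Since $U\subset \Bbf_1$ is open, non-empty, and invariant under rotations of $V^\perp$ fixing $V$, there exist an open set $W\subset V$ and an interval $[\bar t_1,\bar t_2]\subset(0,1)$, depending only on $U$, such that $W+[\bar t_1,\bar t_2]\cdot v\subset U$ for \emph{every} unit $v\in V^\perp$. For any $j_0\in\{1,\dots,N\}$ and any measurable $I_0\subset[\bar t_1,\bar t_2]$ to be chosen, the set $\Omega:=W+I_0\cdot v_{j_0}\subset\Hbf_{j_0}\cap U$ satisfies $\Hcal^m(\Omega)\geq \Hcal^{m-1}(W)\,\Lcal^1(I_0)$.

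\textbf{Step 3: Choice of $j_0$ and $I_0$.} For each pair $(j,i)$, the function $f_i(t_0;j):=\dist(t_0 v_j,q+\R_{\geq 0}v_i)$ has an explicit piecewise form: either the foot of perpendicular from $t_0 v_j$ to the line $q+\R v_i$ lies on the ray, in which case $f_i(t_0;j)=|t_0 u_{j,i}-q^\perp_{(i)}|$ with $u_{j,i}:=v_j-(v_j\cdot v_i)v_i$ and $q^\perp_{(i)}:=q-(q\cdot v_i)v_i$; or it lies beyond the endpoint $q$, in which case $f_i(t_0;j)=|t_0 v_j-q|\geq h_j$. A direct computation shows that, for a suitable small $c_0=c_0(U,N)$, the bad set $B_{j,i}\subset[\bar t_1,\bar t_2]$ on which $f_i(t_0;j)<c_0(h_i+\boldsymbol{\zeta}(\Sbf)\tau_i)$ is contained in an interval whose length is controlled uniformly in $\Sbf$ and $p\in \Bbf_{1/2}$. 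A pigeonhole over $j_0\in\{1,\dots,N\}$, comparing the total length $\sum_i |B_{j_0,i}|$ across choices of $j_0$, produces an index $j_0$ for which $I_0:=[\bar t_1,\bar t_2]\setminus\bigcup_i B_{j_0,i}$ has $\Lcal^1$-measure bounded below by a positive constant depending only on $U$ and $N$; setting $\bar C:=c_0^{-1}$ completes the proof.

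\textbf{Main obstacle.} The delicate part is controlling the lengths of the bad sets $B_{j,i}$ uniformly in $\Sbf$, especially in the degenerate case when some $v_{i^*}$ is nearly parallel to $\hat q$: there $h_{i^*}\approx 0$ while $\tau_{i^*}\approx|q|$, so the required lower bound on $f_{i^*}$ reduces to $f_{i^*}\gtrsim \boldsymbol{\zeta}(\Sbf)|q|$, and this is achieved only once $t_0$ is kept away from $\tau_{i^*}$ by at least $\gtrsim\boldsymbol{\zeta}(\Sbf)|q|$. The pigeonhole in Step 3 must be executed carefully to ensure that such exclusions (at most $N$ for fixed $j_0$, and at most $N^2$ overall across choices of $j_0$) do not consume the entire interval $[\bar t_1,\bar t_2]$; this is possible because the bound $|p|\leq 1/2$, together with the linear-in-$t_0$ structure of each $f_i(\frarg;j)$, caps the total excluded measure by a quantity depending only on $N$.
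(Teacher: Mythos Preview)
Your Steps 1 and 2 are fine and match the paper's setup. The gap is in Step 3: the assertion that each bad set $B_{j,i}$ is contained in an interval of length controlled uniformly in $\Sbf$ and $p$ is not proved, and in fact fails as stated. For instance, take $j=i$ and suppose $q$ lies nearly on the ray $\R_{\geq 0}v_i$ (so $h_i\approx 0$, $\tau_i\approx|q|$). Then for every $t_0\geq q\cdot v_i$ one has $f_i(t_0;i)=h_i$, while the threshold $c_0(h_i+\boldsymbol{\zeta}(\Sbf)\tau_i)\approx c_0\boldsymbol{\zeta}(\Sbf)|q|$ can be much larger; thus $B_{i,i}$ can cover essentially all of $[\bar t_1,\bar t_2]$. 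Your ``pigeonhole over $j_0$'' is then supposed to rescue this, but you do not explain what quantity is being summed or why it is bounded, and the Main Obstacle paragraph acknowledges the difficulty without resolving it.

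The paper circumvents all of this with two reductions you are missing. First, it proves the preliminary pointwise inequality
\[
|\mathbf{p}_{\pi_i}^\perp(p)| + \boldsymbol{\zeta}(\Sbf)\,|\mathbf{p}_{V^\perp\cap\pi_i}(p)| \;\leq\; 6\max_k |\mathbf{p}_{\pi_k}^\perp(p)| \qquad \forall\, i,
\]
so the target becomes a single number independent of $i$, and $j_0$ is then \emph{chosen} (not pigeonholed) as the index realizing this maximum. Second, it lower-bounds $\dist(z,p+\Sbf)\geq \min_i|\mathbf{p}_{\pi_i}^\perp(p-z)|$ using the \emph{full} planes $\pi_i$ rather than half-planes; for $z=w+\lambda e$ with $w\in V$ and $e\in \Hbf_{j_0}\cap V^\perp$, this makes $\lambda\mapsto |\mathbf{p}_{\pi_i}^\perp(p-\lambda e)|$ the norm of an affine function. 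The pigeonhole is then over $N{+}1$ well-separated subintervals of $[a,b]$: if every subinterval contained a bad $\lambda$, two of them would share the same bad index $i$, and affineness forces both $|\mathbf{p}_{\pi_i}^\perp(e)|$ and $|\mathbf{p}_{\pi_i}^\perp(p)|$ to be small, which (since $V\oplus\R e=\pi_{j_0}$) makes $\pi_i$ close to $\pi_{j_0}$ and contradicts the maximality of $|\mathbf{p}_{\pi_{j_0}}^\perp(p)|$. Working with rays instead of full planes, and pigeonholing over $j_0$ rather than over subintervals after fixing the maximizing $j_0$, are precisely what make your Step~3 intractable.
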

\begin{remark}
In fact the set $\Omega$ is relatively open in $\mathbf{H}_{j_0}\cap U$, even though this fact will not play an important role.    
\end{remark}
\begin{proof} First of all, we claim, as in \cite{DMS}*{Lemma 7.16}, that
\begin{equation}\label{e:DMS-7.16}
|\mathbf{p}_{\pi_i}^\perp(p)| + \boldsymbol{\zeta}(\Sbf)|\mathbf{p}_{V^\perp\cap\pi_i}(p)| 
\leq 6 \max_k |\mathbf{p}_{\pi_k}^\perp (p)|\qquad \forall i\, .
\end{equation}
The argument is much simpler here and we give a direct proof. Without loss of generality we may just prove the claim for $i=1$, so we just need to show that
\begin{equation}\label{e:DMS-7.16-2}
\boldsymbol{\zeta}(\Sbf)|\mathbf{p}_{V^\perp\cap\pi_1}(p)| 
\leq 5 \max_k |\mathbf{p}_{\pi_k}^\perp (p)|\, .
\end{equation}
Let $j_0$ be a maximizer of the right hand and then pick $j$ which maximizes $\dist (\pi_j \cap \Bbf_1, \pi_{j_0} \cap \Bbf_1)$, so that in particular
the triangle inequality yields
\[
\dist (\pi_{j_0} \cap \Bbf_1, \pi_{j} \cap \Bbf_1)\geq \frac{1}{2} \boldsymbol{\zeta} (\mathbf{S})\, .
\]
Let $v_j$ be the vector in $\partial \Bbf_1 \cap \Hbf_j$ orthogonal to $V$. Observe that $\mathbf{p}_{V^\perp\cap \pi_j} (p) = (p\cdot v_j) v_j$ and that 
\[
\dist (\pi_{j_0}\cap \Bbf_1, \pi_j \cap \Bbf_1) = |\mathbf{p}_{\pi_{j_0}}^\perp (v_j)|\, .
\]
In particular, since $\mathbf{p}_{V^\perp\cap \pi_j} = \mathbf{p}_V^\perp - \mathbf{p}_{\pi_j}^\perp$ and  $\mathbf{p}_{\pi_{j_0}}^\perp \circ \mathbf{p}_V^\perp =\mathbf{p}_{\pi_{j_0}}^\perp$ (because $V\subset\pi_{j_0}$), we have
\begin{align*}
\boldsymbol{\zeta}(\mathbf{S}) |\mathbf{p}_{V^\perp\cap \pi_j} (p)|
&\leq 2 \dist (\pi_{j_0}\cap \Bbf_1, \pi_j \cap \Bbf_1) |\mathbf{p}_{V^\perp\cap \pi_j} (p)|\\
& \leq 2
 (|\mathbf{p}_{\pi_{j_0}}^\perp (\mathbf{p}_V^\perp (p))| + |\mathbf{p}_{\pi_j}^\perp (p)|)\leq 2 (|\mathbf{p}_{\pi_{j_0}}^\perp (p)| +|\mathbf{p}_{\pi_j}^\perp (p)|)
 \leq 4 |\mathbf{p}_{\pi_{j_0}}^\perp (p)|
\, .
\end{align*}
Moreover, since $\mathbf{p}_{V^\perp\cap \pi_1}\circ\mathbf{p}_{\pi_j} = \mathbf{p}_{V^\perp\cap \pi_1}\circ\mathbf{p}_{V^\perp\cap\pi_j}$ and by the definition of $\pi_{j_0}$, we have
\begin{align*}
|\mathbf{p}_{V^\perp \cap \pi_1} (p)| &\leq |\mathbf{p}_{V^\perp \cap \pi_1} (\mathbf{p}_{V^\perp\cap \pi_j} (p))| + |\mathbf{p}_{V^\perp\cap \pi_1} (\mathbf{p}_{\pi_j}^\perp (p))|\\
&\leq |\mathbf{p}_{V^\perp\cap \pi_j} (p)| + |\mathbf{p}_{\pi_j}^\perp (p)|
\leq |\mathbf{p}_{V^\perp\cap \pi_j} (p)| + |\mathbf{p}_{\pi_{j_0}}^\perp (p)|\, .
\end{align*}
This shows \eqref{e:DMS-7.16-2} and hence \eqref{e:DMS-7.16}.

\medskip

It remains to check that $\max_k|\mathbf{p}_{\pi_k}^\perp(p)|\leq C \dist(z,p+\Sbf)$ for any $z\in\Omega$. Fix again $j_0$ such that $|\mathbf{p}_{\pi_{j_0}}^\perp (p)|$ is maximal. We thus aim at showing the existence of a subset $\Omega\subset \mathbf{H}_{j_0} \cap U$ with $\mathcal{H}^m (\Omega) \geq \bar{C}^{-1}$ with the property that
\[
|\mathbf{p}_{\pi_{j_0}}^\perp(p)| \leq \bar{C} \dist (z, p + \mathbf{S}) \qquad \forall z\in \Omega\, .
\]
We follow the argument of \cite{DMS}*{Proof of Lemma 7.14}, albeit in a much simpler setting. First we notice that $\dist (z, p+\mathbf{S}) \geq \min_i |\mathbf{p}_{\pi_i}^\perp (p-z)|$ and we therefore aim at showing that 
\begin{equation}\label{e:DMS-7.14}
|\mathbf{p}_{\pi_{j_0}}^\perp (p)| \leq \bar{C} \min_i |\mathbf{p}_{\pi_i}^\perp (p-z)|\, .
\end{equation}
We let $e\in \Hbf_{j_0}\cap \partial \Bbf_1$ be such that $e\perp V$ and observe that, since $U$ is non-empty, open, and rotationally invariant around $V$, there must be an open $W\subset V$ and an interval $[a,b]$ with the property that 
\[
w+\lambda e \in U \qquad \forall w\in W, \forall \lambda \in [a,b]\, .
\]
Observe that for such $w\in W$ and $\lambda\in [a,b]$, 
$|\mathbf{p}_{\pi_i}^\perp (p-(w+\lambda e))| = |\mathbf{p}_{\pi_i}^\perp (p-\lambda e)|$.

Consider the finite collection of $N+1$ pairwise disjoint subintervals $I_1, \ldots, I_{N+1}$ contained in $[a,b]$, of length $\frac{b-a}{2(N+1)}$ with $\dist(I_i,I_j)\geq\frac{b-a}{2(N+1)}$ for all $i\neq j$, where $N$ is the number of half-planes of the open book, and we reduce to showing that there is one $I_\ell$ for which
\begin{equation}\label{e:DMS-7.14-1}
|\mathbf{p}_{\pi_{j_0}}^\perp (p)| \leq \bar{C} \min_i |\mathbf{p}_{\pi_i}^\perp (p-\lambda e)| \qquad \forall \lambda\in I_\ell\, .
\end{equation}
Once we have established this, the proof is complete. So, suppose this were not to hold. Then, given $\bar C$ sufficiently large (to be determined), for each $k=1,\dotsc,N+1$ we may find points $\lambda_k\in I_k$ for which \eqref{e:DMS-7.14-1} fails for this choice of $\bar C$. This means that for at least one pair of distinct $\lambda_k$, the minimum on the right hand side is achieved by one same index $i$. For this $i$ we must have two numbers $\lambda$ and $\mu$ with $|\lambda-\mu|\geq\frac{b-a}{2(N+1)}$ and
\begin{align*}
|\mathbf{p}_{\pi_i}^\perp (p-\lambda e)|
&< \bar{C}^{-1} |\mathbf{p}_{\pi_{j_0}}^\perp (p)|\\
|\mathbf{p}_{\pi_i}^\perp (p-\mu e)|
&< \bar{C}^{-1} |\mathbf{p}_{\pi_{j_0}}^\perp (p)|\, .
\end{align*}
In particular, using the linearity of $\mathbf{p}_{\pi_i}^\perp$, the triangle inequality and the fact that $\lambda,\mu \leq 1$, we easily conclude that 
\begin{align}
|\mathbf{p}_{\pi_i}^\perp (e) &|< \frac{4(N+1)}{\bar{C} (b-a)}
|\mathbf{p}_{\pi_{j_0}}^\perp (p)|\label{e:DMS-7.14-2}\\
|\mathbf{p}_{\pi_i}^\perp (p)| &< \frac{4(N+1)}{\bar{C} (b-a)}
|\mathbf{p}_{\pi_{j_0}}^\perp (p)|\label{e:DMS-7.14-3}
\end{align}
Given that $V\oplus \R e = \pi_{j_0}$, we easily conclude that 
\begin{equation}\label{e:DMS-7.14-4}
|\mathbf{p}_{\pi_i}^\perp (z)|
< \frac{4(N+1)}{\bar{C} (b-a)}
|\mathbf{p}_{\pi_{j_0}}^\perp (p)| \qquad \forall z\in \Bbf_1\cap \pi_{j_0}\, .
\end{equation}
Observe that, since $p\in \Bbf_{1/2}$, \eqref{e:DMS-7.14-4} further implies that $|\pi_i-\pi_{j_0}|$ is small, provided $\bar{C}$ is large enough.
Consider now $p+\pi_i^\perp$: if $|\pi_i-\pi_{j_0}|$ is smaller than a geometric constant, then $p+\pi_i^\perp$ intersects $\pi_{j_0}$ at some point $z\in \pi_{j_0}\cap\Bbf_1$. Now $p-z$ is orthogonal to $\pi_i$ and therefore 
\[
\mathbf{p}^{\perp}_{\pi_i} (p-z) = p-z
\]
But since $z\in \pi_{j_0}$, 
\[
|\mathbf{p}^{\perp}_{\pi_i} (p-z)| = |p-z|\geq |\mathbf{p}^{\perp}_{\pi_{j_0}} (p)|\, .
\]
On the other hand, because of \eqref{e:DMS-7.14-3} and \eqref{e:DMS-7.14-4} we also know that 
\[
|\mathbf{p}^{\perp}_{\pi_i} (p-z)| < \frac{8(N+1)}{\bar{C} (b-a)}
|\mathbf{p}_{\pi_{j_0}}^\perp (p)|\, .
\]
If we choose $\bar{C}$ large enough so that $\frac{8(N+1)}{\bar{C} (b-a)}<1$, we would then reach a contradiction. This completes the proof.
\end{proof}

\section{No gaps in highest multiplicity points}\label{s:no-gaps}

In order to prove Theorem \ref{t:decay} we require the following theorem, which tells us that under the hypotheses (i), (ii) of Theorem \ref{t:decay} and a suitable smallness condition on the ratios $(\mathbf{A}^2 + \mathbb{E} (T, \Sbf, \Bbf_1))/ \boldsymbol{\zeta} (\Sbf)^2$, there are no large gaps in the multiplicity $Q \geq \frac{q}{2}$ points of $T$ near $V(\Sbf)$.

\begin{theorem}\label{t:no-gaps}
For every $\varrho>0$, $\eta>0$, and $\rho \geq 5\varrho$, there exists $\eps=\eps(q,m,n,\bar n,\varrho, \eta, \rho )>0$ such that the following holds. Suppose that $T$, $\Sigma$ and $\Sbf$ satisfy the hypotheses (i), (ii) of Theorem \ref{t:decay} and 
\begin{equation}\label{e:small-conical-max-sep-ratio}
    \Ebb(T,\Sbf,\Bbf_1) + \Abf^2 \leq \eps^2\boldsymbol{\zeta}(\Sbf)^2\, .
\end{equation}
Then
\begin{equation}\label{e:no-gaps}
    \Bbf_{\varrho} (\xi) \cap \left\{p: \Theta (T,p)\geq \frac{q}{2}\right \}\neq \emptyset \qquad \forall \xi \in V (\mathbf{S})\cap \Bbf_{1/2}\, ,
\end{equation}
and 
\begin{equation}\label{e:density-drop}
	\Theta(T,x)<\frac{q}{2} \qquad \text{for every $x\in \Bbf_{1-\eta/8} \setminus \Bbf_{\rho/4}(V)$.}
\end{equation}
\end{theorem}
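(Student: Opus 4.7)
The plan is to argue both \eqref{e:no-gaps} and \eqref{e:density-drop} by contradiction and compactness, exploiting the scaling ratio in \eqref{e:small-conical-max-sep-ratio} together with Proposition \ref{p:integrality}, which identifies $T$ with an area-minimizing integral current on regions where $\Theta(T, \cdot) < q/2$.

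\emph{Density drop.} Suppose \eqref{e:density-drop} fails along sequences $\varepsilon_k \to 0$, $T_k, \Sigma_k, \mathbf{S}_k$, and $x_k \in \Bbf_{1-\eta/8} \setminus \Bbf_{\rho/4}(V(\mathbf{S}_k))$ with $\Theta(T_k, x_k) \geq q/2$. Since $x_k$ is uniformly distant from the spine, in a ball of definite radius around $x_k$ the cone $\mathbf{S}_k$ reduces to a single oriented page of multiplicity $Q_{i_k} < q/2$ (by Proposition \ref{p:classification-cones}). The $L^\infty$ height estimate of Theorem \ref{t:heightbd} confines $\spt T_k$ to a thin tubular neighborhood of that page, and Proposition \ref{p:integrality} identifies $T_k$ there with an area-minimizing integral current. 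Almgren's $\varepsilon$-regularity theorem for integer area-minimizing currents (in multi-valued form) then represents $T_k$ near $x_k$ as a $Q_{i_k}$-valued $C^{1,\alpha}$ graph, so $\Theta(T_k, x_k) = Q_{i_k} < q/2$, contradicting the choice of $x_k$.

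\emph{No gaps.} Suppose \eqref{e:no-gaps} fails: $\xi_k \in V(\mathbf{S}_k)\cap \Bbf_{1/2}$ exist with $\Theta(T_k, p) < q/2$ on all of $\Bbf_\varrho(\xi_k)$. Proposition \ref{p:integrality} then makes $T_k\mres \Bbf_\varrho(\xi_k)$ into an area-minimizing integer cycle with $\partial T_k = 0$. In the non-degenerate regime $\boldsymbol{\zeta}(\mathbf{S}_k) \geq c_0 > 0$, Federer's compactness gives, along a subsequence, $T_k \to T_\infty$ weakly as area-minimizing integer currents, with $\spt T_\infty \subset \spt \mathbf{S}_\infty$ and $\mathbf{S}_\infty$ an honest open book with $N \geq 3$ pages (by Proposition \ref{p:classification-cones} and $q \geq 3$). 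Constancy page-by-page yields $T_\infty = \sum_i \tilde Q_i \llbracket \mathbf{H}_i^\infty\rrbracket$; matching varifold masses away from the spine gives $|\tilde Q_i|=Q_i^\infty$, and the mod$(q)$ equivalence $T_\infty \equiv \mathbf{S}_\infty$ combined with $0 < Q_i^\infty < q/2$ forces $\tilde Q_i = +Q_i^\infty$ (the negative sign would require $q\mid 2Q_i^\infty$, impossible). Hence $T_\infty = \mathbf{S}_\infty$ locally, so $T_\infty$ is an area-minimizing integer cycle whose singular set contains the entire $(m-1)$-dimensional spine $V(\mathbf{S}_\infty)$---each spine point has $\mathbf{S}_\infty$ itself as tangent cone, which is not a plane and so cannot arise from a regular point. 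Since $\bar n \geq 2$, Almgren's partial regularity theorem bounds the Hausdorff dimension of the singular set of an area-minimizing integer current by $m-2$, contradicting the $(m-1)$-dimensional singular spine. The degenerate regime $\boldsymbol{\zeta}(\mathbf{S}_k) \to 0$ is handled by analogous arguments performed at the level of the Almgren-type $q$-valued graphical approximation $u_k$ of $T_k$ over the common limiting plane $\pi_\infty$, rescaled by $\boldsymbol{\zeta}(\mathbf{S}_k)^{-1}$: hypothesis \eqref{e:small-conical-max-sep-ratio} controls the rescaled Dirichlet energy, and the resulting blow-up Dir-minimizer inherits the open-book structure at its spine, yielding the same contradiction via Almgren's regularity theorem for multi-valued Dir-minimizers (whose singular set also has dimension $\leq m-2$).

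The main obstacle is executing the rescaling and compactness in the degenerate regime, where one must propagate both the mod$(q)$ structure and the no-density-$q/2$ hypothesis through the blow-up limit, using crucially the improved $L^\infty$ height estimate of Theorem \ref{t:heightbd} together with the hypothesis $\mathbf{A}^2 \leq \varepsilon^2 \boldsymbol{\zeta}^2$ to keep the ambient curvature controlled relative to the cone-separation scale.
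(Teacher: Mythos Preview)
Your no-gaps argument is correct and matches the paper's: the contradiction hypothesis $\Theta(T_k,\cdot)<q/2$ on $\Bbf_\varrho(\xi_k)$ legitimizes Proposition \ref{p:integrality} there, and in the non-degenerate regime the integral limit supported on a non-planar open book contradicts Almgren's bound on the singular set, while in the degenerate regime the $\boldsymbol{\zeta}(\Sbf_k)^{-1}$-rescaled Lipschitz approximation yields a Dir-minimizer with an $(m-1)$-dimensional singular set, again impossible.

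Your density-drop argument, however, has a genuine gap. You invoke Proposition \ref{p:integrality} and Theorem \ref{t:heightbd} (the latter stated only for integral currents) in a neighborhood of $x_k$, but both require $\Theta(T_k,\cdot)<q/2$ throughout that neighborhood, and the contradiction hypothesis places $x_k$ with $\Theta(T_k,x_k)\geq q/2$ exactly there: this is circular. Separately, the claim that $\Sbf_k$ ``reduces to a single page'' near $x_k$ need not hold, since \eqref{e:small-conical-max-sep-ratio} controls only $\boldsymbol{\zeta}(\Sbf_k)$, not the minimal separation $\boldsymbol{\sigma}(\Sbf_k)$; several pages may cluster near $x_k$ with combined multiplicity exceeding $q/2$ even though each individual $Q_i<q/2$.

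The fix is to argue at the limit, as the paper does. Pass to the mod$(q)$ limit $T$ (no integrality assumed) with $\Theta(T,x)\geq q/2$ at some $x\notin V$ by upper semicontinuity. If the limit book $\Sbf$ is non-planar, then $T$ is a mod$(q)$ area-minimizing cone with $(m-1)$-dimensional spine and Proposition \ref{p:classification-cones} forces $\Theta(T,\cdot)<q/2$ off $V$, a contradiction. If $\Sbf$ collapses to a plane, blow up by $\boldsymbol{\zeta}(\Sbf_k)^{-1}$ using the varifold-level or mod$(q)$ Lipschitz approximation; a Hardt--Simon-type persistence-of-$Q$-points argument then transports the high-density point to the limiting Dir-minimizer, giving a $Q$-point of $\bar f$ away from the spine of the rescaled open book, which contradicts its structure.
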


\begin{proof}[Proof of Theorem \ref{t:no-gaps}]
    We argue by contradiction and consider two cases depending on whether \eqref{e:no-gaps} or \eqref{e:density-drop} fails. \\
    \underline{Case 1: \eqref{e:no-gaps} fails.} In this case, there exists $\varrho>0$ for which there exists a sequence $T_k$, $\Sigma_k$ and $\Sbf_k$ satisfying the hypotheses (i), (ii) of Theorem \ref{t:decay} and \eqref{e:small-conical-max-sep-ratio} with $\eps = \eps_k \downarrow 0$, with
    \begin{equation}\label{e:gaps}
        \Bbf_{\varrho} (\xi_k) \cap \left\{p: \Theta (T_k,p)\geq \frac{q}{2}\right\}=\emptyset \qquad \text{for some $\xi_k \in V (\mathbf{S}_k)\cap \Bbf_{1/2}$}\, .
    \end{equation}
Without loss of generality, after extracting subsequences, scaling and translating, we can assume that:
\begin{itemize}
\item $\xi_k =0$;
\item $\varrho=\frac{1}{2}$;
\item $\Sigma_k$ converge to an $(m+\bar n)$-dimensional subspace $\varpi$;
\item $\mathbf{S}_k\cap\Bbf_1$ converge to an open book $\mathbf{S}\cap\Bbf_1\subset \varpi$ in Hausdorff distance;
\item $T_k$ converge to an area-minimizing current mod$(q)$ for which we fix a mod$(q)$ representative $T$. 
\end{itemize}
Note that by \cite{DLHMS}*{Proposition 4.2} $\|T_k\|$ converges to $\|T\|$ in the sense of varifolds and that, moreover, $\spt (T) \cap \mathbf{B}_{1/2}= \mathbf{S} \cap \mathbf{B}_{1/2}$. We now distinguish two cases. 

\medskip

{\bf $\mathbf{S}$ is not supported in an $m$-dimensional plane.} By Proposition \ref{p:integrality}, $\partial T_k \res \mathbf{B}_{1/2} =0$ and $T_k\res \mathbf{B}_{1/2}$ is an area-minimizing integral current and thus, by the Federer-Fleming compactness theorem, $T_k \res \mathbf{B}_{1/2}$ converges, up to subsequences, to an area-minimizing integral current $S$ with $\partial S \res \mathbf{B}_{1/2} = 0$. This also implies that $\|T_k\|\res \mathbf{B}_{1/2} \rightharpoonup^* \|S\| \res \mathbf{B}_{1/2}$. In particular $\|T\| \res \mathbf{B}_{1/2} = \|S\|\res\mathbf{B}_{1/2}$. Thus $\spt (T) \cap \mathbf{B}_{1/2}= \spt (S) \cap \mathbf{B}_{1/2}=\mathbf{S} \cap \mathbf{B}_{1/2}$, where the latter identity follows from the hypothesis \eqref{e:small-conical-max-sep-ratio}. But this would imply that $S$ is an integral area-minimizing cone with an $(m-1)$-dimensional spine, which is not possible.

\medskip

{\bf $\mathbf{S}$ is supported in an $m$-dimensional plane $\pi$.} In this case $\boldsymbol{\zeta}(\Sbf_k)$ converges to $0$. In particular, \eqref{e:small-conical-max-sep-ratio} and the constancy theorem for area minimizing currents mod$(q)$ tells us that, up to choosing the right orientation for $\pi$, $T_k \to Q' \llbracket \pi \rrbracket$ for some positive integer $Q' \leq \frac{q}{2}$. Moreover, if we consider planes $\pi_k$ containing $V$ which minimize $\dist (\mathbf{S}_k\cap \Bbf_1, \pi\cap \Bbf_1)$, we easily conclude that 
$\dist (\mathbf{S}_k\cap \Bbf_1, \pi_k \cap \Bbf_1) \leq \boldsymbol{\zeta} (\mathbf{S}_k)$.
Therefore 
\begin{equation}\label{e:planar-excess-max-sep}
    \mathbf{E}^p (T_k, \Bbf_1) \leq \hat{\mathbf{E}} (T_k, \pi_k, \Bbf_1)  \leq C\hat\Ebf(T,\Sbf,\Bbf_1) + C\boldsymbol{\zeta}(\Sbf_k)^2  \leq C(1+ \eps_k^2)\boldsymbol{\zeta}(\Sbf_k)^2 \to 0 \, .
\end{equation}

Without loss of generality, after applying a rotation we can assume that $\pi_k = \pi$. We can then use \cite{DLS14Lp}*{Theorem 2.4} (which can be applied because the tilt excess in a slightly smaller ball is controlled by the planar $L^2$ excess, c.f. \cite{DMS}), together with \eqref{e:planar-excess-max-sep}, to approximate $T_k \res \mathbf{B}_{1/4} \cap \mathbf{C}_{c_0} (\pi, 0)$ with the graph of a Lipschitz map $f_k:B_{c_0}(\pi,0) \to \mathcal{A}_{Q'} (\mathbb R^n)$ with 
\[
\int_{B_{c_0}}  (|f_k|^2 + |Df_k|^2) \leq C_0 \boldsymbol{\zeta} (\Sbf_k)^2 \, .
\]
where $c_0 \leq \frac{1}{2}$ is a dimensional constant and $C_0= C_0(m,n,Q')$. Up to subsequences, we also know that the rescaled functions
\[
\frac{f_k}{\boldsymbol{\zeta}(\Sbf_k)}
\]
converge strongly in $L^2$ to a map $\bar{f}$, which is Dir-minimizing in light of \cite{DLHMS}*{Theorem 5.2} and \eqref{e:planar-excess-max-sep}.
Consider now the sequence of open books $\mathbf{S}'_k$ which are obtained $\mathbf{S}_k$ in the following way. Writing every point $x\in\mathbb R^{m+n}$ as $x=y+z$ with $y\in \pi$ and $z\in \pi^\perp$ we denote by $L_k:\mathbb R^{m+n}\to \mathbb R^{m+n}$ the linear function which maps $x$ into $y+ (\boldsymbol{\zeta} (\Sbf_k))^{-1} z$. We then set $\mathbf{S}'_k = L_k (\Sbf_k)$. Up to subsequences we can assume that $\Sbf'_k$ converges to an open book $\mathbf{S}'$, which is necessarily non-planar due to the normalization by $\boldsymbol{\zeta}(\Sbf_k)$. 

However, using \cite{DLS14Lp}*{Theorem 2.4} and our assumption that $\mathbb{E} (T_k, \mathbf{S}_k, \Bbf_1)$ is infinitesimal compared to $\boldsymbol{\zeta}(\Sbf_k)^2$, we conclude that the support of the graph of $\bar f$ coincides with $\mathbf{S}'$. But then $\bar{f}$ would be a $Q'$-valued Dir-minimizing map on $B_{c_0}(\pi,0)$ with an $(m-1)$-dimensional singular set, which is a contradiction to Almgren's regularity theory, cf. \cite{DLS_MAMS}.

\underline{Case 2: \eqref{e:density-drop} fails.} Now, we suppose there exists $\rho > 0$ for which there exists sequences $T_k$, $\Sigma_k$ and $\Sbf_k$ satisfying hypotheses (i), (ii) of Theorem \ref{t:main} and \eqref{e:small-conical-max-sep-ratio} with $\eps = \eps_k \downarrow 0$, but for which $\Theta(T_k,x_k) \geq \frac{q}{2}$ for some $x_k\in \Bbf_{1-\eta/8}\setminus \Bbf_{\rho/4}(V)$. We may extract subsequences to assume without loss of generality that
	\begin{itemize}
		\item $\Sigma_k$ converges to an $(m+\bar n)$-dimensional subspace $\varpi$;
		\item $\mathbf{S}_k\cap\Bbf_1$ converges to an open book $\mathbf{S}\cap\Bbf_1\subset \varpi$ in Hausdorff distance;
		\item $T_k$ converges to an area-minimizing current mod$(q)$ for which we fix a mod$(q)$ representative $T$, with $\spt(T)\cap\Bbf_{1/2}=\Sbf\cap\Bbf_{1/2}$, and $\|T_k\|$ converges to $\|T\|$ in the sense of varifolds;
		\item $x_k$ converges to $x\in \bar{\Bbf}_{1-\eta/8}\setminus \Bbf_{\rho/4} (V)$ with $\Theta(T,x)\geq \frac{q}{2}$;
	\end{itemize}
	Once again, we have two cases; either $\Sbf$ is supported in an $m$-dimensional plane, or not.
	
	{\bf $\mathbf{S}$ is not supported in an $m$-dimensional plane.} In this case, $T$ is a mod$(q)$ area-minimizing cone with an $(m-1)$-dimensional spine. By the classification theorem for these cones, we know that $\Theta(T,y)= \frac{q}{2}$ for all $y\in V$, while $\Theta (T,y) < \frac{q}{2}$ for all $y\not \in V$. Since $x\in \bar{\Bbf}_{1/8}\setminus \Bbf_{1/16} (V)$, $x$ cannot belong to the spine $V$, on the other hand it must belong to it because $\Theta (T,x)\geq \frac{q}{2}$, yielding a contradiction.
	
	{\bf $\mathbf{S}$ is supported in an $m$-dimensional plane $\pi$.} In this case, our hypotheses imply that $T_k\to \frac{q}{2}\llbracket \pi\rrbracket$ in $\Bbf_1$ for an $m$-dimensional plane $\pi$ (in particular, $q$ must be even). One may now proceed via an analogous compactness argument to that in Case 1 above, using the Lipschitz approximation of \cite{Almgren_regularity}*{Corollary 3.11} for stationary integral varifolds (or that for mod$(q)$ area-minimizing currents in \cite{DLHMS}*{Theorem 15.1}, but the former works more generally). Thus, we deduce the existence of a $Q$-valued blow-up map $\bar{f}$ on $\Bbf_{1-\eta/16}(0,\pi)$ whose graph is supported on a non-planar open book $\Sbf'$ with an $(m-1)$-dimensional spine, which in fact is the plane $V$. However, the assumption that $\Theta(T_k,x_k) \geq Q$ for some point $x_k \in \Bbf_{1-\eta/8}\setminus\Bbf_{\rho/4}(V)$ for each $k$, combined with a suitable persistence of $Q$-points argument (for example, based on the Hardt--Simon inequality, which works without any minimizing assumption, or alternatively \cite{DLHMS}*{Theorem 23.1} which does), gives that there must be a $Q$-point of $\bar{f}$ in $\overline{B}_{1-\eta/8}\setminus \mathbf{B}_{\rho/4}(V)$, which contradicts the structure of $\bar{f}$. This completes the proof. Note that for this part one can form an argument which does not require $T_k$ to be area-minimizing mod$(q)$; they merely need to be stationary integral varifolds.
\end{proof}

\section{Graphical approximations}\label{s:approx}
This section is dedicated towards approximating $T$ effectively by multi-valued graphs over the half-planes $\Hbf_i$ in an open book $\Sbf$, away from a neighbourhood of the spine $V(\Sbf)$. This procedure is analogous to the ones of Simon \cite{Simon_cylindrical} and Wickramasekera \cite{W14_annals} in the non-collapsed and collapsed cases, respectively. Here, we follow the equivalent constructions and notations of these procedures in our previous work \cite{DMS} to match the notation.

\subsection{Pruning and layer subdivision}
We begin with the following \emph{pruning lemma}, which is an analogue of the one in \cite{DMS}*{Section 8}. A key purpose of this lemma is to throw away some of the planes in a given open book, yielding a new open book with the same maximal pairwise angle between the planes that are the extensions of the half-planes across the common spine, but for which the two-sided excess of $T$ to this new book is small relative to the minimal pairwise angle between the half-planes of the new open book. Since we are now dealing with half-planes in place of planes, while the maximal separation we consider is for the planes formed from extending them across their common axis, we repeat the proofs here. Throughout this section, for a given open book $\Sbf\in\Bscr^q$, we let $\boldsymbol{\zeta}(\Sbf)$ be as in \eqref{e:mu}.

\begin{lemma}[Pruning Lemma]\label{l:pruning}
	Let $2\leq N\leq q$, $D>0$ and $0<\delta\leq 1$. Let $\Gamma\coloneqq \delta^{2-N}(N-1)!$ and $\eps\coloneqq (\Gamma + 1)^{-1}\delta$. If $\Sbf = \Hbf_1 \cup \cdots \cup \Hbf_N \in \Bscr^q$ with
	\begin{equation}\label{e:pruning-hyp}
		D\leq \eps \boldsymbol{\zeta}(\Sbf),
	\end{equation}
	then there exists a subcollection $I\subset\{1,\dots, N\}$ with $\# I \geq 2$ satisfying the following properties for the planes $\pi_i$ that are the extensions of $\Hbf_i$ by reflecting across $V(\Sbf)$:
	\begin{align}
		\max_j\min_{i\in I}\dist(\Hbf_i\cap\Bbf_1,\Hbf_j\cap\Bbf_1) &\leq \Gamma D \label{e:pruning-1} \\
		D+\max_j\min_{i\in I}\dist(\Hbf_i\cap\Bbf_1,\Hbf_j\cap\Bbf_1) &\leq\delta \min_{i,j\in I: i<j}\dist(\Hbf_i\cap\Bbf_1,\Hbf_j\cap\Bbf_1) \label{e:pruning-2} \\
		\max_{i,j\in I:i<j} \dist(\pi_i\cap\Bbf_1,\pi_j\cap\Bbf_1) &= \boldsymbol{\zeta}(\Sbf).\label{e:pruning-3}
	\end{align}
\end{lemma}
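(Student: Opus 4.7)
The plan is to construct $I$ via a greedy deletion algorithm that always preserves a pair realizing the maximum separation $\boldsymbol{\zeta}(\Sbf)$. Fix any indices $(i^*,j^*)$ with $\dist(\pi_{i^*}\cap\Bbf_1,\pi_{j^*}\cap\Bbf_1)=\boldsymbol{\zeta}(\Sbf)$ and note, since $\Hbf_l\subset\pi_l$, that $\dist(\Hbf_{i^*}\cap\Bbf_1,\Hbf_{j^*}\cap\Bbf_1)\geq\boldsymbol{\zeta}(\Sbf)$. Set $I_0=\{1,\dotsc,N\}$ and, at step $k\geq 0$, introduce
\begin{equation*}
E_k:=\max_{j\in\{1,\dotsc,N\}}\min_{i\in I_k}\dist(\Hbf_i\cap\Bbf_1,\Hbf_j\cap\Bbf_1),\qquad d_k:=\min_{i,j\in I_k,\,i<j}\dist(\Hbf_i\cap\Bbf_1,\Hbf_j\cap\Bbf_1).
\end{equation*}
If $d_k\geq\delta^{-1}(D+E_k)$, terminate with $I:=I_k$; otherwise choose a minimizing pair $(i,j)$ and delete from $I_k$ any one of its elements that does not lie in $\{i^*,j^*\}$.

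The deletion step will always be well-defined: should the minimizing pair coincide with $(i^*,j^*)$ itself, then every pair in $I_k$ would have distance at least $\boldsymbol{\zeta}(\Sbf)$, but the hypothesis $D\leq\eps\boldsymbol{\zeta}(\Sbf)$ with $\eps=(\Gamma+1)^{-1}\delta$, combined with the inductive bound $E_k\leq\Gamma D$, yields $\delta^{-1}(D+E_k)\leq(\Gamma+1)D/\delta\leq\boldsymbol{\zeta}(\Sbf)$, so the stopping criterion is already active. Thus $\{i^*,j^*\}\subset I_k$ for every $k$, $|I_k|\geq 2$, and property (3) is immediate at termination. For the error $E_k$, when $\Hbf_j$ is deleted in favor of its partner $\Hbf_i$, any $\Hbf_l$ previously approximated by $\Hbf_j$ is now approximated by $\Hbf_i$, and the triangle inequality gives
\begin{equation*}
E_{k+1}\leq E_k+\dist(\Hbf_i\cap\Bbf_1,\Hbf_j\cap\Bbf_1)\leq (1+\delta^{-1})E_k+\delta^{-1}D.
\end{equation*}
Since $|I_k|\geq 2$ forces termination within $N-2$ iterations, iterating this recursion (using $\delta\leq 1$) produces $E_{N-2}\leq\Gamma D$ for $\Gamma=\delta^{2-N}(N-1)!$, which is (1); property (2) is simply the stopping criterion.

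The main subtlety is the self-referential coupling between $\Gamma$, $\eps$, and the maximum number $N-2$ of iterations: enlarging $\Gamma$ relaxes the recursive bound on $E_k$ but simultaneously shrinks $\eps$, and it is precisely this shrinking that is needed to certify $\delta^{-1}(D+E_k)\leq\boldsymbol{\zeta}(\Sbf)$ throughout the run, hence to guarantee that the distinguished pair $(i^*,j^*)$ is never the minimizing pair and can always be protected from deletion. The calibration $\eps=(\Gamma+1)^{-1}\delta$ with $\Gamma=\delta^{2-N}(N-1)!$ is chosen precisely to close this loop, after which the proof reduces to the straightforward bookkeeping of the recursion above.
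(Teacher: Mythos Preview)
Your proof is correct and follows essentially the same greedy deletion scheme as the paper's. The one minor difference is that you fix a single pair $(i^*,j^*)$ realizing $\boldsymbol{\zeta}(\Sbf)$ at the outset and protect it throughout, whereas the paper re-selects a maximizing pair for the $\pi_i$-distances at each step; since the maximum is preserved either way, this is only a cosmetic simplification, and the recursion for $E_k$ together with the contradiction showing the protected pair can never be the minimizing $\Hbf$-pair are identical in spirit.
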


\begin{proof}
 We use a variant of the algorithm explained in \cite{DMS}*{Lemma 8.2}, which iteratively constructs $\{1, \ldots, N\} = I (0)\subset I (1) \subset \cdots \subset I (s) = I$ by removing one element from each $I (k)$ until some stopping step $s$. First of all we adopt the same rule as in \cite{DMS}*{Lemma 8.1} to decide when we stop; namely we stop at the first $s$ such that 
 \begin{equation}\label{e:stop} 
 D + \max_j \min_{i\in I (s)} \dist (\Hbf_i\cap \Bbf_1, \Hbf_j\cap \Bbf_1) \leq \delta \min_{i,j\in I(s), i<j} \dist (\Hbf_i\cap \Bbf_1, \Hbf_j\cap \Bbf_1)\, .
\end{equation}
Then, analogously to \cite{DMS}*{Lemma 8.2}, at each step $k$ before we stop, the set $I (k+1)$ is obtained from $I(k)$ by removing an index $\ell$ such that 
\begin{align}
\min_{i \in I (k)} \dist (\Hbf_i\cap \Bbf_1, \Hbf_\ell \cap \Bbf_1)
&= \min_{i, j\in I (k), i<j} \dist (\Hbf_i\cap \Bbf_1, \Hbf_j \cap \Bbf_1)\label{e:min-OK}\\
\max_{i,j \in I (k)\setminus \{\ell\}} \dist (\pi_i \cap \Bbf_1, \pi_j \cap \Bbf_1)
&= \max_{i,j\in I (k)} \dist (\pi_i \cap \Bbf_1, \pi_j \cap \Bbf_1)\, .\label{e:max-OK}
\end{align}
By following the computations in \cite{DMS}*{Lemma 8.2}, we can see that, as long as the existence of such an $\ell$ is guaranteed, namely we can perform the task of the algorithm, the inequality 
\begin{equation}\label{e:pruning-4}
\max_j\min_{i\in I (k)}\dist(\Hbf_i\cap\Bbf_1,\Hbf_j\cap\Bbf_1) \leq \Gamma D 
\end{equation}
is also guaranteed for all $k\in \{0,\dots,s\}$. In particular, it follows directly that, provided we can keep choosing $\ell$ satisfying \eqref{e:min-OK} and \eqref{e:max-OK}, at the stopping step the set $I = I(s)$ certainly satisfies all the inequalities \eqref{e:pruning-1}, \eqref{e:pruning-2}, \eqref{e:pruning-3}, and $\# I (s)\geq 2$. For the latter conclusion, observe that, either $s=0$, and hence obviously $\# I (s) = N \geq 2$, or $s>0$, and the existence of the discarded index $\ell$ at step $s-1$ guarantees that $I (s-1) \geq 3$. 

For the existence of $\ell$ at some step $k<s$, observe first that, under our assumption, the stopping condition \eqref{e:stop} is not fulfilled at that particular step and therefore
\[
\min_{i,j\in I (k), i<j} \dist (\Hbf_i\cap \Bbf_1, \Hbf_j\cap \Bbf_1) < \delta^{-1}\bigl(D + \max_j \min_{i\in I (k)} \dist (\Hbf_i\cap \Bbf_1, \Hbf_j\cap \Bbf_1)\bigr)\, .
\]
Since however \eqref{e:pruning-4} is also valid at step $k$ we infer
\begin{equation}\label{e:key-upper-bound}
\min_{i,j\in I (k), i<j} \dist (\Hbf_i\cap \Bbf_1, \Hbf_j\cap \Bbf_1)
< (\Gamma + 1) \delta^{-1} D\, .
\end{equation}
Pick a pair $\{\ell_1, \ell_2\}\subset I(k)$ which maximizes $\dist (\pi_i\cap \Bbf_1, \pi_j \cap \Bbf_1)$ for $i,j\in I(k)$. If this pair does not minimize
$\dist (\Hbf_i\cap \Bbf_1, \Hbf_j\cap \Bbf_1)$, then the existence of $\ell$ satisfying \eqref{e:min-OK} and \eqref{e:max-OK} is obvious. If the pair does minimize $\dist (\Hbf_i\cap \Bbf_1, \Hbf_j\cap \Bbf_1)$, then we can use \eqref{e:key-upper-bound} and \eqref{e:pruning-hyp} to get
\begin{align*}
\boldsymbol{\zeta}(\Sbf) &= \max_{i,j\in I (k)} \dist (\pi_i\cap \Bbf_1, \pi_j \cap \Bbf_1)
\leq \dist (\Hbf_{\ell_1}\cap \Bbf_1, \Hbf_{\ell_2}\cap \Bbf_1)\\
&< (\Gamma + 1) \delta^{-1} D \leq (\Gamma+1) \delta^{-1} \varepsilon \boldsymbol{\zeta} (\mathbf{S}) \leq \boldsymbol{\zeta} (\mathbf{S})\, ,
\end{align*}
which gives a contradiction. 
\end{proof}

A consequence of iteratively applying Lemma \ref{l:pruning} with $D$ being the minimal separation between the planes in the previous open book, is the following \emph{layer subdivision lemma}, which is the analogue of \cite{DMS}*{Lemma 8.3}. As we inductively move from one ``layer" to the next, we remove those half-planes $\Hbf_i$ for which the following property holds: the distance between $\Hbf_i$ and the other half-planes $\Hbf_j$, $j\neq i$, is comparable to the minimal pairwise separation between the half-planes of the former open book. We stop the iterative procedure as soon as we arrive at an open book for which the minimal pairwise separation between the half-planes is comparable to the maximal separation between the planes $\pi_i$ which are the extensions to its half-planes across their common spine.

\begin{lemma}[Layer subdivision]\label{l:layers}
	For all integers $2\leq N\leq q$ and every $0 < \delta \leq 1$, there exists $\eta=\eta(\delta,N)>0$ such that the following holds. Let $\Sbf = \Hbf_1 \cup \cdots \cup \Hbf_N \in \Bscr^q$ and let $\pi_i$ be the extensions of $\Hbf_i$ across $V(\Sbf)$. Then there exists $\kappa\in \N\cup \{0\}$ and subcollections $\{1,\dots,N\} = I(0) \supsetneq I(1) \supsetneq \cdots I(\kappa)$ with $\# I(\kappa) \geq 2$, for which the numbers
	\begin{align*}
		m(k) &\coloneqq \min_{i, j \in I(k) : i < j} \dist(\Hbf_i\cap\Bbf_1, \Hbf_j\cap\Bbf_1) \\
		d(k) &\coloneqq \max_{i\in I(0)}\min_{j\in I(k)} \dist(\Hbf_i\cap\Bbf_1,\Hbf_j\cap\Bbf_j) \\
		M(k) &\coloneqq \max_{i,j\in I(k): i < j} \dist(\pi_i\cap\Bbf_1,\pi_j\cap\Bbf_1)
	\end{align*}
	satisfy
	\begin{itemize}
		\item[(i)] $M(\kappa) = M(0)$,
		\item[(ii)] $\eta M(\kappa) \leq m(\kappa)$,
		\item[(iii)] $d(k) \leq \delta m(k)$ and $\eta d(k) \leq m(k-1)$ for every $1\leq k\leq \kappa$,
		\item[(iv)] $m(k-1) \leq\delta m(k)$ for each $1\leq k\leq \kappa$.
	\end{itemize}
\end{lemma}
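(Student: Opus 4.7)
The plan is to construct the nested sequence $I(0) \supsetneq I(1) \supsetneq \cdots \supsetneq I(\kappa)$ by iterating the pruning lemma (Lemma \ref{l:pruning}). Each pruning step will strictly decrease the size of $I$ while boosting the minimum pairwise half-plane separation by a factor of at least $1/\delta^*$, where $\delta^*$ is a pruning parameter slightly smaller than $\delta$ to be chosen. We halt as soon as the minimum separation $m(k)$ reaches a prescribed fraction (of order $\eta$) of the fixed maximum pairwise plane separation $M(0) = \boldsymbol{\zeta}(\Sbf)$; the procedure necessarily terminates after at most $N-2$ steps because $|I(k+1)| \geq 2$ is guaranteed by Lemma \ref{l:pruning} and $|I(k)|$ is strictly decreasing.

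Concretely, I would set $\delta^* := \delta/2$, and let $\Gamma^* := (\delta^*)^{2-N}(N-1)!$ and $\eps^* := (\Gamma^*+1)^{-1}\delta^*$ be the constants of Lemma \ref{l:pruning} in the worst case $|I| = N$. Take $\eta = \eta(\delta,N) > 0$ sufficiently small that $\eta \leq \min\{\eps^*, (1-\delta^*)/\Gamma^*\}$. The iteration runs as follows: starting from $I(0) = \{1,\ldots,N\}$, and assuming inductively $M(k) = M(0)$, terminate with $\kappa := k$ if $m(k) \geq \eta M(0)$; otherwise apply Lemma \ref{l:pruning} to $\Sbf_k := \cup_{i\in I(k)}\Hbf_i$ with $D := m(k)$ and parameter $\delta^*$ to produce $I(k+1) \subsetneq I(k)$. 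The applicability hypothesis $D \leq \eps^* \boldsymbol{\zeta}(\Sbf_k) = \eps^* M(0)$ is forced by the failure of the stopping condition, and the inductive preservation $M(k+1) = M(k)$ follows from \eqref{e:pruning-3}, giving (i).

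Conditions (ii) and (iv) are then essentially free: (ii) is exactly the stopping condition, and (iv) is a direct consequence of \eqref{e:pruning-2}, which yields $m(k-1) = D_{k-1} \leq \delta^* m(k) \leq \delta m(k)$. Condition (iii) I would verify by a telescoping triangle-inequality argument. Given $i \in I(0) \setminus I(k)$, say $i \in I(l-1) \setminus I(l)$ for some smallest $l \leq k$, set $i_{l-1} := i$ and iteratively pick, via \eqref{e:pruning-2} at each step $l' = l, l+1, \ldots, k$, an index $i_{l'} \in I(l')$ with $\dist(\Hbf_{i_{l'-1}} \cap \Bbf_1, \Hbf_{i_{l'}} \cap \Bbf_1) \leq \delta^* m(l')$. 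The triangle inequality gives $\min_{j\in I(k)} \dist(\Hbf_i \cap \Bbf_1, \Hbf_j \cap \Bbf_1) \leq \sum_{l'=l}^{k}\delta^* m(l')$, and iterating (iv) gives $m(l') \leq (\delta^*)^{k-l'}m(k)$, so summing the geometric series produces $d(k) \leq \delta^* m(k)/(1-\delta^*) \leq \delta m(k)$ thanks to the choice $\delta^* = \delta/2$. For the second bound in (iii), the alternative estimate \eqref{e:pruning-1} gives $\dist(\Hbf_{i_{l'-1}} \cap \Bbf_1, \Hbf_{i_{l'}} \cap \Bbf_1) \leq \Gamma^* m(l'-1)$ at each step, and a parallel telescoping using $m(l'-1) \leq (\delta^*)^{k-l'}m(k-1)$ yields $d(k) \leq \Gamma^* m(k-1)/(1-\delta^*) \leq \eta^{-1} m(k-1)$.

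The main obstacle is the simultaneous verification of both bounds in (iii): these cannot both be obtained from a single conclusion of the pruning lemma, since $\Gamma^* \delta^* = (\delta^*)^{3-N}(N-1)!$ cannot be made small for $N \geq 3$ by scaling $\delta^*$. The resolution is to play the two conclusions \eqref{e:pruning-1} and \eqref{e:pruning-2} of Lemma \ref{l:pruning} against each other — one for the ``fine'' bound $d(k) \leq \delta m(k)$ (using the $\delta^* m(l')$ estimate, which is geometrically dominated by the top layer) and the other for the ``coarse'' bound $\eta d(k) \leq m(k-1)$ (using the $\Gamma^* m(l'-1)$ estimate, which is geometrically dominated by the bottom layer) — and to tune $\delta^*$ and $\eta$ as explicit functions of $\delta$ and $N$ in order to absorb both $\delta^*/(1-\delta^*)$ and $\Gamma^*/(1-\delta^*)$ into the respective multiplicative constants.
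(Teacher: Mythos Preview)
Your proposal is correct and follows essentially the same strategy as the paper: iterate the Pruning Lemma with a reduced parameter until the stopping condition (ii) is met, invoking \eqref{e:pruning-3} for (i) and \eqref{e:pruning-2} for (iv). The only cosmetic difference is your choice $\delta^* = \delta/2$ (which forces the geometric-series estimate in your verification of (iii)) versus the paper's choice $\delta/N$, after which the paper simply defers the remaining details to the identical argument in \cite{DMS}*{Lemma~8.3}.
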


\begin{proof}
    Fix $\delta\in (0,1]$. Let $\Gamma$ be as in Lemma \ref{l:pruning}, corresponding to $\delta/N$ in place of $\delta$, and let $\eps=(\Gamma +1)^{-1}\delta N^{-1}$ be the associated value of $\eps$ therein. 
    Fix $\eta \in (0,\eps]$.

    If $\eta M(0) \leq m(0)$, set $\kappa=0$; the conclusions (iii) and (iv) are vacuous in this case, and (i), (ii) trivially hold. Otherwise, we inductively produce nested subcollections $I(s)\subset I(s-1)\subset\dots\subset I(0)$, by applying Lemma \ref{l:pruning} to the open book formed from the indices in $I(s)$ with $D=m(s-1)$ and $\delta/N$ in place of $\delta$ as above; keep going as long as $\eta M(s-1) > m(s-1)$. Let $\kappa$ be the final index when the inductive procedure terminates; clearly $\# I(\kappa) \geq 2$. Then, by construction, clearly (ii) holds. The conclusion \eqref{e:pruning-3} of Lemma 4.1 guarantees (i). The remaining conclusions follow entirely analogously to that in the proof of \cite{DMS}*{Lemma 8.3}, with planes replaced by half-planes. We refer the reader to the arguments therein for the details.
\end{proof}

\subsection{Crude graphical approximations}
We are now in a position to approximate $T$ by multi-valued graphs over the half-planes in $\Sbf$, outside of a neighbourhood of $V(\Sbf)$. We begin with some crude approximation results. Given $\Sbf= \Hbf_1\cup\cdots\cup\Hbf_N \in \Bscr^q$, recall the notation
\[
\boldsymbol{\sigma}(\Sbf)\coloneqq \min_{i<j} \dist(\Hbf_i\cap\Bbf_1,\Hbf_j\cap\Bbf_1).
\]

We begin with the following crude splitting lemma.
\begin{lemma}[Crude splitting]\label{l:crude}
	Let $q,m,n,\bar{n}\in \N$, let $\rho,\eta>0$ and let $Q=\frac{q}{2}$. There exist constants $\delta=\delta(q,m,n,\bar n,\rho,\eta)>0$ and $\varrho=\varrho(q,m,n,\bar n,\rho,\eta)>0$ such that the following holds. Let $T,\Sigma$ and $\Abf$ be as in Assumption \ref{a:main} with $\|T\|(\Bbf_4)\leq (Q+\frac{1}{4})\omega_m 4^m$. Suppose that $2\leq N\leq q$ and that $\Sbf=\Hbf_1\cup\cdots\cup\Hbf_N\in\Bscr^q$ with $\cap_i \Hbf_i=V$ and
	\begin{equation}\label{e:crude-hyp}
		\int_{\Bbf_4\setminus \Bbf_{\rho}(V)}\dist^2(p,\Sbf)\,d\|T\|(p) + \Abf^2\leq \delta^2\boldsymbol{\sigma}(\Sbf)^2 \eqqcolon \delta^2\sigma^2.
	\end{equation}
	Then the following properties hold:
	\begin{itemize}
		\item[(a)] The sets $W_i\coloneqq (\Bbf_4\setminus \overline{\Bbf}_{\rho}(V))\cap\{\dist(\,\cdot\,,\Hbf_i)<\varrho\sigma\}$ are pairwise disjoint;
		\item[(b)] $\spt(T)\cap\Bbf_{4-\eta}\setminus \overline{\Bbf}_{\rho+\eta}(V)\subset \bigcup_i W_i$;
        \item[(c)] $T\mres \Bbf_{4-\eta/2}\setminus \overline{\Bbf}_{\rho+\eta/2}(V)$ identifies with an area-minimizing integral current.
	\end{itemize}
\end{lemma}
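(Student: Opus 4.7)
Part (a) is pure geometry. Choose $\varrho = \varrho(\rho,m,n)$ small; if $p \in W_i \cap W_j$ for $i\neq j$, let $p_i \in \Hbf_i$ be nearest to $p$, so $|p-p_i| < \varrho\sigma$ and $\dist(p_i,V) \geq \rho/2$. For distinct half-planes sharing boundary $V$, a direct computation (cf. Lemma \ref{l:trivial} and the scaling of the conical geometry) yields a dimensional constant $c$ with $\dist(x,\Hbf_j) \geq c\dist(x,V)\cdot\dist(\Hbf_i\cap\Bbf_1,\Hbf_j\cap\Bbf_1)$ for every $x \in \Hbf_i$. Applying this to $p_i$ and using the triangle inequality $\dist(p_i,\Hbf_j) \leq |p-p_i| + \dist(p,\Hbf_j) < 2\varrho\sigma$ gives $2\varrho\sigma > c(\rho/2)\sigma$, which fails for $\varrho \ll \rho$.

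Parts (b) and (c) will be obtained by a contradiction/compactness argument. Assume one of them fails along a sequence $T_k,\Sigma_k,\Sbf_k$ verifying the hypotheses with $\delta_k \downarrow 0$. Using \cite{DLHMS}*{Prop. 4.2} and $\Abf_k \to 0$, extract subsequences so that $T_k \toweakstar T_\infty$ as area-minimizing mod$(q)$ currents with $\|T_k\|\toweakstar\|T_\infty\|$ as varifolds, $\Sbf_k \to \Sbf_\infty$ in Hausdorff distance, $V_k\to V_\infty$, and $\Sigma_k$ flattens to an affine $(m+\bar n)$-plane $\varpi$. The hypothesis forces $\int_{\Bbf_4\setminus\Bbf_\rho(V_k)}\dist^2(\cdot,\Sbf_k)\,d\|T_k\| \to 0$, hence $\spt(T_\infty)\cap(\Bbf_4\setminus\bar\Bbf_\rho(V_\infty)) \subset \Sbf_\infty$. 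By the constancy theorem applied on each page and the mass bound $(Q+\tfrac14)\omega_m 4^m$, one gets $T_\infty\mres(\Bbf_4\setminus\bar\Bbf_\rho(V_\infty)) = \sum_i Q_i^\infty\llbracket\Hbf_i^\infty\rrbracket\mres(\Bbf_4\setminus\bar\Bbf_\rho(V_\infty))$ with integers $0\leq Q_i^\infty\leq q-1$ and $\sum_i Q_i^\infty \leq q$.

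If $\Sbf_\infty$ is genuinely non-planar, extending across $V_\infty$ exhibits $T_\infty$ as a mod$(q)$ area-minimizing cone with an $(m-1)$-dimensional spine, and Proposition \ref{p:classification-cones} forces every $Q_i^\infty < q/2$; hence $\Theta(T_\infty,\cdot) \leq \lfloor(q-1)/2\rfloor$ strictly on the annular region. By upper semicontinuity of density under varifold convergence, $\Theta(T_k,\cdot) < q/2$ throughout $\Bbf_{4-\eta/2}\setminus\bar\Bbf_{\rho+\eta/2}(V_k)$ for $k$ large, so Proposition \ref{p:integrality} gives that $T_k\mres$ annular region is integral, contradicting failure of (c). If $\Sbf_\infty$ degenerates to a plane $\pi$, then $T_\infty = Q\llbracket\pi\rrbracket$ on the annular region for some integer $Q\leq q/2$; the subcase $Q<q/2$ is treated as above, while for $Q=q/2$ we invoke Allard's regularity theorem (valid since $|T_k|$ is a stationary integral varifold close to $(q/2)|\pi|$), which yields that $T_k$ is a smooth multiplicity-$q/2$ graph over $\pi$ in the annular region for $k$ large and hence trivially integral. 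With (c) in hand, (b) follows by the $L^\infty$ bound in Theorem \ref{t:heightbd} (or Corollary \ref{c:splitting-0}) applied on balls $\Bbf_s(p)$ of a fixed size $s=s(\rho,\eta)$ covering $\spt(T)\cap(\Bbf_{4-\eta}\setminus\bar\Bbf_{\rho+\eta}(V))$, in which each $\Hbf_i$ restricts to a flat $m$-plane piece and the pieces are mutually separated by $\geq c(\rho,\eta)\sigma$: this gives $\dist(p,\Sbf) \leq C(\delta\sigma+\Abf) \leq 2C\delta\sigma$, and the coupling $\delta \leq \varrho/(2C)$ then places $p$ in a unique $W_{i_0}$.

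The delicate step is the degenerate case $\Sbf_\infty = \pi$ with $Q=q/2$: there is no density drop to feed into Proposition \ref{p:integrality}, and one must upgrade varifold closeness to $C^{1,\alpha}$ graphicality via Allard to conclude integrality. A related subtlety arises when $\sigma_k \downarrow 0$, where the open book itself collapses; in that regime the same Allard step (or, alternatively, a blow-up by $\sigma_k^{-1}$ producing a Dir-minimizing multi-valued map, in the spirit of Theorem \ref{t:no-gaps}) handles both the integrality and the height bound simultaneously, since the hypothesis is uniform in $\sigma$ once expressed as $(\text{excess}+\Abf^2)/\sigma^2 \leq \delta^2$.
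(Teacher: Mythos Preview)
Your main gap is the invocation of Allard's regularity theorem in the case where $\Sbf_\infty$ collapses to a plane $\pi$ and the limit is $(q/2)\llbracket\pi\rrbracket$. Allard's theorem requires the mass ratio to be close to $1$; it says nothing when the varifold is close to a plane of multiplicity $q/2>1$. In particular it does \emph{not} give that $T_k$ is a smooth multiplicity-$q/2$ graph, and so it cannot deliver the density drop $\Theta(T_k,\cdot)<q/2$ needed to feed Proposition~\ref{p:integrality}. This is precisely the higher-multiplicity obstacle that the whole machinery of the paper is built to handle. Your parenthetical alternative (blow up by $\sigma_k^{-1}$ to produce a $Q$-valued Dir-minimizer and use persistence of $Q$-points) is in fact the \emph{only} viable route here, not a backup.

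A secondary issue: in the non-planar limit case you invoke Proposition~\ref{p:classification-cones} to get $Q_i^\infty<q/2$, but that proposition applies to area-minimizing mod$(q)$ \emph{cones}, whereas you only know the structure of $T_\infty$ in the annulus $\Bbf_4\setminus\bar\Bbf_\rho(V_\infty)$; the hypothesis of Lemma~\ref{l:crude} gives no control on $T_\infty$ inside $\Bbf_\rho(V_\infty)$, so ``extending across $V_\infty$'' to a cone is not justified.

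The paper sidesteps both issues by citing Theorem~\ref{t:no-gaps} directly on the rescaled $(\iota_{0,4})_\sharp T_k$: its density-drop conclusion \eqref{e:density-drop} already encapsulates the correct compactness argument (with the Dir-minimizer blow-up in the collapsed case), after which Proposition~\ref{p:integrality} gives (c) immediately. Part (b) is then finished by passing to the limit, applying the mod$(q)$ constancy theorem, and quoting the corresponding argument in \cite{DMS}*{Lemma~8.5}. Your approach to (b) via the $L^\infty$ height bound of Theorem~\ref{t:heightbd} on a covering by balls would also work once (c) is in hand.
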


For an $m$-dimensional half-plane $\Hbf$, we abuse notation slightly by letting $\mathbf{p}_{\Hbf}$, and $\Hbf^\perp$, respectively denote the orthogonal projection to, and the orthogonal complement to, the $m$-dimensional plane that is the extension of $\Hbf$. We have the following crude approximation result, which is a consequence of Lemma \ref{l:crude} and Almgren's strong Lipschitz approximation theorem (see e.g. \cite{DLS14Lp}*{Theorem 1.4}).
\begin{proposition}\label{p:crude-approx}
	Let $\rho,\eta>0$ and let $q,\delta,\varrho, T,\Sigma, \Abf,W_i$ and $\Sbf$ be as in Lemma \ref{l:crude}. For each $i\in \{1,\dots,N\}$, let $\Omega_i\coloneqq (\Bbf_{4-2\eta}\cap\Hbf_i)\setminus \overline{\Bbf}_{\rho+\eta}(V)$, let $\boldsymbol{\Omega}_i\coloneqq \Bbf_{4-\eta}\cap\mathbf{p}_{\Hbf_i}^{-1}(\Omega_i)$ and let $T_i\coloneqq T\mres(W_i\cap\boldsymbol{\Omega}_i)$. Define
	\[
	E_i\coloneqq \int_{\Bbf_{4}\setminus \Bbf_{\rho}(V)} \dist^2(p,\Hbf_i)\, d\|T_i\|(p).
	\]
    Then there exists non-negative integers $Q_1,\dots,Q_N$ with $\sum_i Q_i \leq q$ for which the following properties hold, for some $\gamma=\gamma(q,m,n,\bar n)>0$ and $C=C(q,m,n,\bar n,\rho,\eta)>0$:
	\begin{itemize}
		\item[(a)] $\partial T_i \mres \boldsymbol\Omega_i = 0$ (as an integral current);
		\item[(b)] For a suitable orientation of $\Hbf_i$, $(\mathbf{p}_{\Hbf_i})_\sharp T_i = Q_i \llbracket \Omega_i\rrbracket$;
		\item[(c)] For each $q\in \spt(T_i)\cap\boldsymbol\Omega_i$ we have $\dist^2(p,\Hbf_i) \leq C (E_i +\Abf^2)$;
		\item[(d)] For each $i$ with $Q_i \geq 1$, there exist Lipschitz multi-valued maps $u_i:\Omega_i \to \Acal_{Q_i}(\Hbf_i^\perp)$ and closed sets $K_i\subset \Omega_i$ with $\gr(u_i)\subset\Sigma$, $T_i\mres\mathbf{p}_{\Hbf_i}^{-1}(K_i)=\Gbf_{u_i}\mres\mathbf{p}_{\Hbf_i}^{-1}(K_i)$, for which
		\begin{align}
			\|u_i\|^2_{L^\infty} + \|Du_i\|^2_{L^2} &\leq C(E_i + \Abf^2) \\
			\Lip(u_i) &\leq C(E_i + \Abf^2)^\gamma \\
			|\Omega_i\setminus K_i| + \|T\|(\boldsymbol\Omega_i\setminus \mathbf{p}_{\Hbf_i}^{-1}(K_i)) &\leq C(E_i + \Abf^2)^{1+\gamma};
		\end{align}
	\item[(e)] $Q_i=0$ if and only if $T_i=0$;
	\item[(f)] If we additionally have the reverse excess estimate
	\[
		\int_{\Bbf_{4-2\eta}\cap\Sbf\setminus\Bbf_{\rho+2\eta}(V)} \dist^2(p,\spt(T)) d\Hcal^m(p) \leq \delta^2\sigma^2,
	\]
	then $Q_i\geq 1$ for each $i$.
	\end{itemize}
\end{proposition}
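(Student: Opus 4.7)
The plan is to use Lemma \ref{l:crude} to decompose $T$ on the annular region as a sum of mutually singular integral currents $T_i$, one close to each half-plane $\Hbf_i$, and then invoke Almgren's strong Lipschitz approximation (e.g.\ \cite{DLS14Lp}*{Theorem 1.4}) on each piece.

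First I would note that by Lemma \ref{l:crude}(c), $T \mres (\Bbf_{4-\eta/2}\setminus\overline{\Bbf}_{\rho+\eta/2}(V))$ can be treated as an area-minimizing integral current with zero boundary. By Lemma \ref{l:crude}(a,b), its support on the slightly smaller region is contained in the disjoint union of the $W_i$, so the restrictions $T_i := T \mres (W_i \cap \boldsymbol{\Omega}_i)$ are mutually singular integral currents. Because $\boldsymbol{\Omega}_i \subset \Bbf_{4-\eta}$ and every point of $\spt(T) \cap \boldsymbol{\Omega}_i$ lies within distance $\varrho\boldsymbol{\sigma}(\Sbf)$ of $\Hbf_i$ (hence, by the separation $\boldsymbol{\sigma}(\Sbf)$ and Lemma \ref{l:crude}(a), not in any $W_j$ with $j\neq i$), we in fact have $T_i \mres \boldsymbol{\Omega}_i = T\mres \boldsymbol{\Omega}_i$. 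This immediately yields (a), since $\partial T$ vanishes in the region. For (b), I would apply the constancy theorem to the integral current $(\mathbf{p}_{\Hbf_i})_\sharp T_i$ on $\Omega_i$, which has zero boundary; this gives $(\mathbf{p}_{\Hbf_i})_\sharp T_i = Q_i \llbracket \Omega_i\rrbracket$ for some integer $Q_i$, non-negative by area-minimality. The bound $\sum_i Q_i \leq q$ then follows from the mod$(q)$ structure of $T$: slicing $T$ by a generic hyperplane transverse to $V$ inside the annular region produces a $1$-dimensional mod$(q)$ cycle whose total multiplicity is at most $q$, and this total multiplicity equals $\sum_i Q_i$ (by the disjoint-sheets decomposition above).

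For (c), I would cover $W_i \cap \boldsymbol{\Omega}_i$ by a family of small cylinders over $\Hbf_i$, on each of which the hypotheses of Theorem \ref{t:heightbd} are satisfied with $Q = Q_i$, and sum up the resulting $L^\infty$ estimates to obtain $\dist^2(p, \Hbf_i) \leq C(E_i + \Abf^2)$ for every $p \in \spt(T_i) \cap \boldsymbol{\Omega}_i$, with $C = C(q,m,n,\bar n,\rho,\eta)$. With (a), (b), and (c) in place, the Lipschitz approximation in (d) is a direct application of Almgren's strong Lipschitz approximation theorem to $T_i$ as an area-minimizing integral current in a cylindrical neighbourhood of $\Omega_i \subset \Hbf_i$, with the projection identity (b) and the height bound (c) providing the required hypotheses; the graph lying in $\Sigma$ follows from the standard observation that the approximating sheets must track the graph of $\Psi$ whenever the Lipschitz constant is small enough. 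For (e), if $Q_i = 0$ then $(\mathbf{p}_{\Hbf_i})_\sharp T_i = 0$; combined with (a), (c), and the fact that $\mathbf{p}_{\Hbf_i}$ is injective on a tubular neighbourhood of $\Hbf_i$ of width $\gg C(E_i+\Abf^2)^{1/2}$, the constancy theorem forces $T_i = 0$. The converse is trivial.

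Finally, for (f), the reverse excess bound combined with the two-sided control forces each half-plane $\Hbf_i$ to have mass of $T$ nearby in $L^2$. By the separation $\boldsymbol{\sigma}(\Sbf)$ and the smallness of $\varrho$ and $\delta$, a point of $\Omega_i$ can only be close (within $\varrho\sigma$) to $W_i$ and not to any $W_j$ with $j \neq i$; hence the reverse excess forces $\|T_i\| > 0$, and (e) then gives $Q_i \geq 1$. The main technical obstacle is the pointwise $L^\infty$ estimate in (c), since Theorem \ref{t:heightbd} is cylindrical and must be applied iteratively on a fine cover of the annular region $\boldsymbol{\Omega}_i$, keeping track of the interaction with the ambient curvature term $\Abf^2$. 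Once this is carried out, the remainder of the proof is a matter of assembling the standard Lipschitz approximation and constancy-theorem arguments in the graphical setting.
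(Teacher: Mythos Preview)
Your overall strategy is the same as the paper's, which simply notes that once Lemma~\ref{l:crude}(c) identifies $T$ with an area-minimizing integral current in the annular region, the proof of \cite{DMS}*{Proposition~8.6} applies verbatim (half-planes in place of planes changing nothing). Two points in your write-up need correcting, however.

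First, the identity $T_i\mres\boldsymbol{\Omega}_i = T\mres\boldsymbol{\Omega}_i$ is false in general: $\boldsymbol{\Omega}_i=\Bbf_{4-\eta}\cap\mathbf{p}_{\Hbf_i}^{-1}(\Omega_i)$ is the full cylinder over $\Omega_i$ and can meet $\spt(T)\cap W_j$ for $j\neq i$ (think of $\Hbf_j$ sitting ``above'' $\Omega_i$ in the $\Hbf_i^\perp$ direction). This does not harm (a), though. The pairwise disjointness of the open sets $W_j$ in Lemma~\ref{l:crude}(a) upgrades, by continuity of $\dist(\cdot,\Hbf_j)$, to disjointness of their closures inside $\Bbf_{4-\eta}\setminus\overline{\Bbf}_{\rho+\eta}(V)$; hence $\spt(T)\cap\partial W_i\cap\boldsymbol{\Omega}_i=\emptyset$, and $\partial T_i\mres\boldsymbol{\Omega}_i=0$ follows directly from $\partial T=0$ in the annulus.

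Second, your slicing argument for $\sum_i Q_i\leq q$ does not work as written: a single hyperplane slice of $T$ is an $(m-1)$-current, not a $1$-current, and there is no ``total multiplicity $\leq q$'' principle for mod$(q)$ cycles. The bound comes instead from the mass hypothesis. Since $(\mathbf{p}_{\Hbf_i})_\sharp T_i=Q_i\llbracket\Omega_i\rrbracket$, the projection inequality gives $\|T_i\|(W_i\cap\boldsymbol{\Omega}_i)\geq Q_i|\Omega_i|$; the $W_i$ being disjoint, summing yields $\sum_i Q_i|\Omega_i|\leq\|T\|(\Bbf_4)\leq(\tfrac{q}{2}+\tfrac14)\omega_m 4^m$. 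As each $\Omega_i$ is a half-disk of radius $4-2\eta$ minus a small tube around $V$, $|\Omega_i|\geq\tfrac12\omega_m(4-2\eta)^m - C(\rho+\eta)^2$, and for $\rho,\eta$ below a dimensional threshold this forces $\sum_i Q_i\leq q$.
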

\begin{remark}
    We will usually take $\eta = \rho = \rho_*$ where $\rho_*$ is a small constant depending only on $m,q$ as detailed in Lemma \ref{l:matching-Q} (c.f. \cite{DMS}).
\end{remark}
Note that unlike in \cite{DMS}, we need not isolate the version of Proposition \ref{p:crude-approx} in the case where $\Sbf = \Hbf_1\cup(-\Hbf_1)$ consists of a single plane formed from a half-plane and its reflection, since here, there is still a canonical choice of spine and $\boldsymbol{\sigma}(\Sbf)$ is well-defined in this case.

The validity of Lemma \ref{l:crude} follows from Theorem \ref{t:no-gaps}, Proposition \ref{p:integrality} in a similar manner to that seen in \cite{DMS}*{Lemma 8.5}. However, some aspects of the compactness argument in the proof contained in \cite{DMS} rely on closeness to balanced superpositions of planes meeting in an $(m-2)$-dimensional spine, in place of open books herein, which affects the behavior of the limiting object at the spine of the cone. Thus, we repeat the relevant details here for clarity. Meanwhile, the conclusions of Proposition \ref{p:crude-approx} are an immediate consequence of Lemma \ref{l:crude}(c) and the proof of the analogous statement \cite{DMS}*{Proposition 8.6}; notice that taking half-planes in place of planes (but with $\sigma$ defined for the corresponding full planes) does not affect any of the arguments leading to these conclusions.

However, let us first state the following lemma, which is the analogue of \cite{DMS}*{Lemma 8.8}, which gives sufficient conditions in order to guarantee that $\sum Q_i = q$ in the conclusions of Proposition \ref{p:crude-approx}.

\begin{lemma}\label{l:matching-Q}
There exists $\rho_* = \rho_*(m,q)>0$ such that the following is true. Suppose that $T$ satisfies the assumptions of Lemma \ref{l:crude} with $\rho\leq \rho_*$, let $Q=\frac{q}{2}$, and suppose that in addition we have either:
\begin{itemize}
    \item[(a)] $\{\Theta^q (T, \cdot) \geq Q\} \cap \Bbf_{\varepsilon} \neq \emptyset$ for a sufficiently small $\varepsilon = \varepsilon (Q, m,n, \bar n)$; or
    \item[(b)] for some $C_*>0$, $\rho_*$ is allowed to depend on $C_*$ also, and there is a closed set $\Omega\subset \Bbf_4$ with non-empty interior that is invariant under rotation around $V$ (see Definition \ref{d:rot-inv}), for which $\|T\| (\Omega) \geq (q-\frac{1}{2}) \mathcal{H}^m (\Hbf_1 \cap \Omega)$ and $\mathcal{H}^m(\Hbf_1\cap\Omega)\geq C_*$.
\end{itemize}
Then, if $Q_i$ is as in Proposition \ref{p:crude-approx} and $\delta$ is sufficiently small (with the same dependencies as before), we have $\sum_i Q_i =q$. 
\end{lemma}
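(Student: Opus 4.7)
The strategy is to combine the multiplicity decomposition of Proposition \ref{p:crude-approx} with direct mass comparisons, in the spirit of \cite{DMS}*{Lemma 8.8}. In both cases the goal is to produce the lower bound $\sum_i Q_i > q-1$, which by integrality and the upper bound $\sum_i Q_i \leq q$ from Proposition \ref{p:crude-approx} forces equality. A common ingredient is the standard estimate $\|T\|(\Bbf_\rho(V)\cap \Bbf_3)\leq C_0\rho$, valid under our hypotheses on $T$, which lets the excised tubular neighbourhood of $V$ be treated as negligible provided $\rho_*$ is chosen sufficiently small a priori.

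For case (b), the rotational invariance of $\Omega$ around $V$ gives $\mathcal{H}^m(\Hbf_i \cap \Omega) = \mathcal{H}^m(\Hbf_1\cap\Omega)=:A \geq C_*$ for every $i$. Using Proposition \ref{p:crude-approx}(d) inside $\Omega\setminus\Bbf_{\rho_*}(V)$ together with the tubular mass bound $\|T\|(\Omega\cap\Bbf_{\rho_*}(V))\leq C_0\rho_*$ yields
\[
(q-\tfrac{1}{2})A \leq \|T\|(\Omega) \leq \Bigl(\sum\nolimits_i Q_i\Bigr)A + C\bigl(\rho_* + \delta^{2(1+\gamma)}\bigr).
\]
Taking $\rho_*=\rho_*(C_*,q,m)$ and then $\delta$ sufficiently small yields $\sum_iQ_i > q - 1$ and hence $\sum_i Q_i = q$.

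For case (a), let $x\in\Bbf_\varepsilon$ have $\Theta(T,x)\geq Q=\tfrac{q}{2}$. The almost-monotonicity formula applied to $T$ at $x$ gives $\|T\|(\Bbf_2(x))\geq Q\omega_m 2^m(1-C\Abf)$. On the other hand, since $\dist(x,V)\leq|x|\leq\varepsilon$ and $V\subset \Hbf_i$, each $\Hbf_i\cap \Bbf_2(x)$ differs from a half-ball by an error of measure at most $C\varepsilon$, so $\mathcal{H}^m(\Hbf_i\cap\Bbf_2(x))\leq \tfrac{1}{2}\omega_m 2^m + C\varepsilon$. The crude approximation then gives
\[
Q\omega_m 2^m(1-C\Abf) \leq \|T\|(\Bbf_2(x)) \leq \Bigl(\sum\nolimits_iQ_i\Bigr)\Bigl(\tfrac{1}{2}\omega_m 2^m + C\varepsilon\Bigr) + C\bigl(\rho_*+\delta^{2(1+\gamma)}\bigr),
\]
from which $\sum_iQ_i\geq q -C(\varepsilon+\Abf+\rho_*+\delta^{2(1+\gamma)})$. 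Choosing $\varepsilon=\varepsilon(Q,m,n,\bar n)$, $\rho_*=\rho_*(q,m)$, and then $\delta$ all sufficiently small forces $\sum_iQ_i > q-1$, and hence equality.

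The principal technical subtlety is that the graphical decomposition in Proposition \ref{p:crude-approx} only controls $T$ outside the tube $\Bbf_{\rho}(V)$ and outside a bad set of small $\|T\|$-mass. Both are addressed by fixing $\rho_*$ \emph{before} $\delta$: $\rho_*$ needs only to be small in terms of $q$ and $m$ (and $C_*$ for case (b)), while $\delta$ is subsequently chosen small in terms of $\rho_*$ and $\varepsilon$, matching the allowed dependencies. An alternative, more qualitative, route would be to argue by contradiction via a compactness argument, extracting a limit current supported on an open book $\Sbf_\infty$ and identifying it by means of Proposition \ref{p:integrality} with an area-minimizing integral cone which, by Proposition \ref{p:classification-cones}, must have total multiplicity $q$.
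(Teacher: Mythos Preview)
Your direct mass-comparison argument is essentially sound and is the route taken in \cite{DMS}*{Lemma 8.8}, but the paper here instead follows the qualitative ``alternative'' route you sketch at the end: it argues by contradiction along the compactness sequence from the proof of Lemma~\ref{l:crude}, obtaining a limit $T$ with $T\res(\Bbf_4\setminus\overline{\Bbf}_\rho(V))=\sum_i\tilde Q_i\llbracket\Hbf_i\rrbracket$ via the mod$(q)$ constancy theorem, and then does the mass comparison at the level of the limit. In case~(a) the density lower bound $\Theta(T,0)\geq Q$ (from upper semicontinuity) gives $\|T\|(\Bbf_3)\geq Q\omega_m 3^m$, and in case~(b) the mass lower bound passes to the limit directly; both are then compared against $\sum_i\tilde Q_i\cdot\mathcal{H}^m(\Hbf_i\cap\,\cdot\,)$ plus the tubular error $C\rho$.

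Two remarks on the comparison. First, your description of the compactness route is not quite what the paper does: it does not invoke Proposition~\ref{p:integrality} or the classification in Proposition~\ref{p:classification-cones}, since the limit need not be a cone; the constancy theorem suffices. Second, the compactness argument sidesteps a small technical point in your direct approach: Proposition~\ref{p:crude-approx} only controls $T$ inside $\Bbf_{4-2\eta}$, so your upper bound $\|T\|(\Omega)\leq (\sum_iQ_i)A+C(\rho_*+\delta^{2(1+\gamma)})$ does not obviously account for the contribution of $\Omega\cap(\Bbf_4\setminus\Bbf_{4-2\eta})$ when $\Omega$ reaches $\partial\Bbf_4$. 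This is fixable (and irrelevant in the applications, where $\Omega$ stays well inside), but the limit argument avoids it cleanly because the constancy structure holds on all of $\Bbf_4\setminus\overline{\Bbf}_\rho(V)$.
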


We will in fact only need to apply Lemma \ref{l:matching-Q}(b) to very specific choices of $\Omega$ for which the property $\mathcal{H}^m(\Hbf_1\cap\Omega)\geq C_*$ will hold for an appropriate choice of $C_* = C_*(q,m)$, meaning that a choice of $\rho_* = \rho_*(q,m)$ can be made so that in the alternative (b) of Lemma \ref{l:matching-Q}, we no longer need to make any assumptions involving $C^*$.

\begin{proof}[Proof of Lemma \ref{l:crude}]
    Fix $\rho,\eta > 0$. Moreover, fix $\varrho$ small enough (depending on $m,n, \rho$) to ensure the validity of the conclusion (a). We proceed to argue by contradiction to verify (b) and (c) for a choice of $\delta$ sufficiently small. Suppose that we have a sequence $T_k$, $\Sigma_k$ and $\Abf_k$ as in Assumption \ref{a:main}, together with open books $\Sbf_k = \Hbf^k_1\cup\cdots \cup\Hbf^k_{N(k)} \in \Bscr^q$ satisfying
    \begin{itemize}
        \item[(i)] $\|T_k\|(\Bbf_4) \leq (Q+\frac{1}{4})\omega_m 4^m,$
        \item[(ii)] $\Hbf^k_i\cap \Hbf^k_j=V_k$ for each $i<j\leq N(k)\leq q,$
        \item[(iii)] for $\sigma_k\coloneqq \boldsymbol{\sigma}(\Sbf_k)$ and
        \[
            \Ebf_k \coloneqq \int_{\Bbf_4\setminus \Bbf_{\rho}(V)}\dist^2(p,\Sbf_k)d\|T_k\|(p),
        \]
        we have $\frac{\Ebf_k + \Abf_k^2}{\sigma_k^2} \to 0$,
        \item[(iv)] there exist $p_k \in \spt(T_k)\cap (\Bbf_{4-\eta}\setminus \Bbf_{\rho+\eta}(V_k))$ with $\dist(p_k,\Hbf_i) \geq \varrho \sigma_k$.
    \end{itemize}
    Up to rotating and extracting a subsequence, we may thus in addition assume the following:
    \begin{itemize}
        \item[(v)] $V_k \equiv V$ is a fixed $(m-1)$-dimensional subspace and $N(k) \equiv N \leq Q$ is a fixed integer;
        \item[(vi)] $\Sbf_k$ converges locally in Hausdorff distance, to $\Sbf = \Hbf_1\cup\cdots \cup\Hbf_{N'}\in \Bscr^q$ with $1\leq N'\leq N$ and $\Hbf_i \cap \Hbf_j = V$ for all $i<j$ (note that if $N'=1$ then this last condition is vacuous); 
        \item[(v)] $T_k$ converges in the mod$(q)$ flat topology to an area-minimizing representative mod$(q)$ in $\Bbf_4$, which we denote by $T$ and which satisfies $\partial T=0\,\text{mod}(q)$;
        \item[(vi)] $\spt(T) \cap (\Bbf_4\setminus \Bbf_\rho(V)) \subset \Sbf$.
    \end{itemize}
    Now, applying Theorem \ref{t:no-gaps} to $(\iota_{0,4})_\sharp(T_k)$, and in turn applying Proposition \ref{p:integrality} to $(\iota_{0,4})_\sharp(T_k)$ and $U = \Bbf_{1-\eta/8}\setminus \overline{\Bbf}_{(\rho+\eta/2)/4}(V)$, for $k$ sufficiently large we may identify each $T_k$ with an integral current in $\Bbf_{4-\eta/2}\setminus\overline{\Bbf}_{\rho+\eta/2}(V)$ (not relabelled); this establishes (c). Moreover, (iii) above tells us that $\spt(T)\cap (\Bbf_{4}\setminus\overline{\Bbf}_{\rho}(V)) \subset \Sbf$. Applying the mod$(q)$ version of the Constancy Theorem (see for instance \cite{DLHMS}*{Lemma 7.4}), we may conclude that there exist integers $\tilde{Q}_i$ such that
    $$T\res (\Bbf_{4}\setminus\overline{\Bbf}_{\rho}(V)) = \sum^N_{i=1}\tilde{Q}_i\llbracket \Hbf_i\rrbracket \res (\Bbf_{4}\setminus\overline{\Bbf}_{\rho}(V))$$
    with $-Q\leq\tilde{Q}_i\leq Q$. Up to changing the orientation of $\Bbf_i$ we can assume that all the $\tilde{Q}_i$ are non-negative. The remainder of the proof of (b) then follows via the exact same reasoning as that of \cite{DMS}*{Lemma 8.5}, and so we omit it here and refer the reader to the argument therein.
\end{proof}

We have already justified Proposition \ref{p:crude-approx}, so we move onto Lemma \ref{l:matching-Q}.

\begin{proof}[Proof of Lemma \ref{l:matching-Q}]
    It suffices to demonstrate that in the compactness argument in the contradiction proof of Lemma \ref{l:crude}, the limiting mod$(q)$ representative $T$, which obeys
    \[
    T\res(\Bbf_{4}\setminus\overline{\Bbf}_{\rho}(V)) =\sum_{i=1}^N \tilde{Q}_i\llbracket \Hbf_i \rrbracket \res (\Bbf_{4}\setminus\overline{\Bbf}_{\rho}(V))
    \]
    satisfies $\frac{1}{2}\sum_i \tilde{Q}_i = Q$ for any $\rho \leq \rho_*(q,m)$ sufficiently small, when we suppose that one of the hypotheses (a) or (b) holds. 

    First, note that we have from monotonicity of mass ratios and the weak convergence that $\|T\|(\Bbf_3) \leq (Q+\frac{1}{4})\omega_m3^m$. For sufficiently small $\rho$, this evidently implies that we must have $\frac{1}{2}\sum_i \tilde{Q}_i\leq Q$. Thus, we just need to show that $\frac{1}{2}\sum_i\tilde{Q}_i\geq Q$ when we additionally suppose one of the hypotheses (a) or (b) hold.

    Note first that we can cover $\Bbf_3\cap V$ by $C\rho^{-(m-1)}$ balls of radius $\rho$; if we double the radius of each ball, we may then without loss of generality assume that they cover $\Bbf_3\cap \Bbf_\rho(V)$ also. But then the monotonicity formula for $T$ gives for any such ball $\Bbf_i$ in this cover,
    $$\|T\|(\Bbf_i) \leq C\rho^m\|T\|(\Bbf_{7/2}) \leq C(q,m)\rho^m$$
    and so
    \begin{equation}\label{e:spine-nhd}
        \|T\|(\Bbf_3\cap \Bbf_\rho(V))\leq C\rho^m\cdot \rho^{-(m-1)} = C\rho.
    \end{equation}

    Now consider the case (a). Taking a sequence $\eps_k\downarrow 0$ for the sequence of currents $T_k$ from the proof of Lemma \ref{l:crude}, we have a sequence of points $p_k\in \Bbf_{\eps_k}$ with $\Theta(T_k,p_k)\geq Q$. Upper semi-continuity of the density guarantees that the limiting current satisfies $\Theta(T,0)\geq Q$, so $\|T\|(\Bbf_3)\geq 3^m\cdot Q\omega_m$, and hence $\|T\|(\Bbf_3\setminus\overline{\Bbf}_\rho(V))\geq 3^m\cdot Q\omega_m - C\rho$. But then this directly implies that
    $$\frac{1}{2}\sum_i\tilde{Q}_i(3^m\omega_m-3^{m-1}\omega_1\rho)\geq 3^m\cdot Q\omega_m - C\rho$$
    and so since the $\tilde{Q}_i$ are non-negative integers, if $\rho \leq \rho_* = \rho_*(q,m)$ sufficiently small, this evidently implies that $\frac{1}{2}\sum_i\tilde{Q}_i\geq Q$ in this case.

    If instead (b) holds, then for $T_k$, $\Sbf_k$ as in the proof of Lemma 4.3, we have $\|T_k\|(\Omega_k)\geq (q-\frac{1}{2})\mathcal{H}^m(\Hbf^k_1\cap {\Omega_k})$ for each $k$ and closed sets $\Omega_k$ with non-empty interior that are invariant under rotation around $V$ (recall that we are assuming the latter is fixed by a rotation of coordinates). Note that we are assuming that $T_k$ and $T$ are representatives mod$(q)$. Again, recall that we can apply \cite{DLHMS}*{Proposition 5.2} to conclude the weak-$*$ convergence of the masses $\|T_k\|$ to $\|T\|$. Since $\mathcal{H}^m(\Hbf_1^k\cap \Omega_k)\geq C_*$, we may pass to a subsequence to ensure that $\mathcal{H}^m(\Hbf_1^k\cap \Omega_k)\to \tilde{C}_*\in  [C_*, 4^m\omega_m]$. Combining this with the structure of $T$ and the rotational invariance of $\Omega$ about $V$, we get
    $$(q-\tfrac{1}{2})\tilde{C}_* \leq \lim_{k\to\infty}\|T_k\|(\Omega_k) \leq \lim_{k\to\infty}\|T_k\|(\Omega_k\setminus\Bbf_\rho(V)) + C\rho \leq \sum^N_{i=1}\tilde{Q}_i \tilde{C}_* + C\rho.$$
    Note that in the second inequality we use a mass bound analogous to the one in \eqref{e:spine-nhd}, which one may observe still holds for $T_k$ in $\Omega_k\cap B_\rho (V)$ since $\Omega_k$ is closed and contained in $\Bbf_4$. Thus, as $\sum^N_{i=1}\tilde{Q}_i$ is an integer, provided $\rho \leq \rho_* = \rho_*(q,m,C_*)$ is sufficiently small, we get a contradiction and thus complete the proof.
\end{proof}

Before providing a more refined Lipschitz approximation result, we have the following useful lemma, which is a consequence of the preceding results in this section.

\begin{lemma}\label{l:planar-excess-zeta}
    Let $\bar\delta > 0$. Suppose that $T$, $\Sigma$ and $\Abf$ are as in Assumption \ref{a:main}, let $\Sbf=\Hbf_1\cup\cdots\cup \Hbf_N\in \Bscr^q$ for $N\geq 2$, and let $V=V(\Sbf)$. Then there exist $C=C(q,m,n,\bar{n})>0$, $\bar C=\bar C(q,m,n,\bar{n},\bar\delta)>0$ and $\eps_3=\eps_3(q,m,n,\bar{n},\bar\delta)>0$ such that the following holds. Suppose that
    \[
        \Abf^2 \leq \eps_3^2 \Ebb(T,\Sbf,\Bbf_1) \leq \eps_3^4 \Ebf^p(T,\Bbf_1).
    \]
    Then there exists $\Sbf'=\Hbf_{i_1}\cup\cdots\cup\Hbf_{i_k}\in \Bscr^q(0)$ for a subcollection $\{i_1,\dots,i_k\}\subset\{1,\dots,N\}$ with $k\geq 2$, such that
    \begin{itemize}
        \item[(a)] $C^{-1} \Ebf^p (T, \Bbf_1) \leq \boldsymbol{\zeta} (\Sbf)^2 = \boldsymbol{\zeta} (\Sbf')^2 \leq C \Ebf^p (T, \Bbf_1)$;
        \item[(b)] $\Ebb(T,\Sbf',\Bbf_1)\leq \bar C\Ebb(T,\Sbf,\Bbf_1)$;
        \item[(c)] $\dist^2(\Sbf\cap\Bbf_1,\Sbf'\cap\Bbf_1)\leq C\Ebb(T,\Sbf,\Bbf_1)$.
        \item[(d)] $\Abf^2 + \Ebb(T,\Sbf',\Bbf_1) \leq \bar\delta^2\boldsymbol{\sigma}(\Sbf')^2$.
    \end{itemize}
\end{lemma}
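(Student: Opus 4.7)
The plan mirrors the strategy employed in \cite{DMS} for area-minimizing integral currents, with the open-book quantity $\boldsymbol{\zeta}(\Sbf)$ playing the role of $\boldsymbol{\mu}(\Sbf)$ therein. The argument splits into three parts: verifying conclusion (a); constructing $\Sbf'$ via the layer subdivision of Lemma \ref{l:layers}; and deriving the remaining conclusions.

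For conclusion (a), the inequality $\Ebf^p(T,\Bbf_1)\leq C\boldsymbol{\zeta}(\Sbf)^2$ follows from the triangle inequality
\[
\hat{\Ebf}(T,\pi,\Bbf_1)\leq 2\hat{\Ebf}(T,\Sbf,\Bbf_1) + \tfrac{2}{\omega_m}\dist^2(\Sbf\cap\Bbf_1,\pi\cap\Bbf_1)\|T\|(\Bbf_1)
\]
applied with $\pi=\pi_{i_0}$, the $m$-plane extension of some page $\Hbf_{i_0}$ across $V(\Sbf)$, so that $\dist^2(\Sbf\cap\Bbf_1,\pi_{i_0}\cap\Bbf_1)\lesssim \boldsymbol{\zeta}(\Sbf)^2$ by \eqref{e:comparison-1}-\eqref{e:comparison-2}; the first term on the right is absorbed using the hypothesis $\Ebb\leq \eps_3^4\Ebf^p$ for $\eps_3$ small. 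Conversely, for $\boldsymbol{\zeta}(\Sbf)^2\leq C\Ebf^p(T,\Bbf_1)$, fix a pair $\Hbf_{i_0},\Hbf_{j_0}$ achieving $\boldsymbol{\zeta}(\Sbf)=\dist(\pi_{i_0}\cap\Bbf_1,\pi_{j_0}\cap\Bbf_1)$; the triangle inequality then forces one of $\pi_{i_0},\pi_{j_0}$ (say $\pi_{i_0}$) to satisfy $\dist(\pi_{i_0}\cap\Bbf_1,\pi\cap\Bbf_1)\geq \boldsymbol{\zeta}(\Sbf)/2$ for every $\pi\in\Pscr(0)$. After an auxiliary pruning step ensuring the applicability of Proposition \ref{p:crude-approx}(f) with $Q_i\geq 1$, the graphical approximation of $T$ near $\Hbf_{i_0}$ provides a piece of $T$-mass of definite size at distance $\gtrsim \boldsymbol{\zeta}(\Sbf)$ from $\pi$, contributing $\gtrsim \boldsymbol{\zeta}(\Sbf)^2$ to $\hat{\Ebf}(T,\pi,\Bbf_1)$; infimizing over $\pi$ gives the bound.

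For the construction, apply Lemma \ref{l:layers} to $\Sbf$ with pruning parameter $\delta_0=\delta_0(\bar\delta,N)$ to be chosen and set $\Sbf'\coloneqq \cup_{i\in I(\kappa)}\Hbf_i$. Lemma \ref{l:layers}(i) gives $\boldsymbol{\zeta}(\Sbf')=\boldsymbol{\zeta}(\Sbf)$, completing (a). For (b), the triangle inequality $\dist(p,\Sbf')\leq \dist(p,\Sbf)+d(\kappa)$ for $p\in\Bbf_1$, squared and integrated against $\|T\|$, yields $\hat{\Ebf}(T,\Sbf',\Bbf_1)\leq 2\hat{\Ebf}(T,\Sbf,\Bbf_1)+Cd(\kappa)^2\|T\|(\Bbf_1)$, while $\hat{\Ebf}(\Sbf',T,\Bbf_1)\leq\hat{\Ebf}(\Sbf,T,\Bbf_1)$ trivially since $\Sbf'\subset\Sbf$. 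Lemma \ref{l:layers}(iii) and (a) combine to give $d(\kappa)^2\leq \delta_0^2\boldsymbol{\sigma}(\Sbf')^2\lesssim \delta_0^2\eps_3^{-4}\Ebb(T,\Sbf,\Bbf_1)$, which with the choice $\delta_0\sim\eps_3^2$ delivers both (b) and (c). Finally, for (d), combine (b) with the lower bound $\boldsymbol{\sigma}(\Sbf')^2\geq c\,\eta(\delta_0,N)^2\eps_3^{-4}\Ebb(T,\Sbf,\Bbf_1)$ from Lemma \ref{l:layers}(ii) and (a), and select $\eps_3$ sufficiently small in terms of $\bar\delta,N,$ and $\eta(\delta_0,N)$.

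The main technical obstacle is the coordinated choice of $\delta_0$ and $\eps_3$: $\delta_0$ must be small enough relative to $\eps_3^2$ for (c) to hold with the prescribed constant, while $\eps_3$ must be small enough in terms of $\bar\delta, N,$ and $\eta(\delta_0,N)$ for (d) to hold. The two independent factors of smallness in the hypothesis $\Ebb\leq \eps_3^4\Ebf^p$ are precisely what makes this balance possible --- one factor controls the absolute scale $d(\kappa)\lesssim \sqrt{\Ebb(T,\Sbf,\Bbf_1)}$ needed for (c), while the other guarantees $\boldsymbol{\sigma}(\Sbf')\gg \sqrt{\Ebb(T,\Sbf,\Bbf_1)}$ as needed for (d). A secondary subtlety is the reverse bound in (a), which requires the auxiliary application of Proposition \ref{p:crude-approx} after a preliminary pruning to secure non-trivial multiplicities in the graphical approximations.
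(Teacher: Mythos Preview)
Your approach has a genuine gap in the derivation of (b) and (c). You invoke the layer subdivision Lemma~\ref{l:layers} to produce $\Sbf'$, but that lemma is a purely geometric operation on the open book $\Sbf$: it takes no input tied to the excess $\Ebb(T,\Sbf,\Bbf_1)$. Consequently the quantity $d(\kappa)$ it outputs is controlled only by $\delta_0\,\boldsymbol{\sigma}(\Sbf')\leq \delta_0\,\boldsymbol{\zeta}(\Sbf)$, and via (a) this yields $d(\kappa)^2\lesssim \delta_0^2\,\Ebf^p(T,\Bbf_1)$ --- not $\lesssim \Ebb(T,\Sbf,\Bbf_1)$. Your claimed chain $\boldsymbol{\sigma}(\Sbf')^2\lesssim \eps_3^{-4}\Ebb(T,\Sbf,\Bbf_1)$ reverses the direction of the hypothesis: the assumption $\Ebb\leq \eps_3^2\,\Ebf^p$ gives only the \emph{lower} bound $\Ebf^p\geq \eps_3^{-2}\Ebb$, with no upper bound on $\Ebf^p$ in terms of $\Ebb$. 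Hence the error term $d(\kappa)^2$ in your estimate for $\hat\Ebf(T,\Sbf',\Bbf_1)$ can be arbitrarily large compared to $\Ebb(T,\Sbf,\Bbf_1)$, and neither (b) nor (c) follows. The proposed fix $\delta_0\sim\eps_3^2$ then also creates a circularity for (d), since you simultaneously require $\eps_3$ small in terms of $\eta(\delta_0,N)$, and $\eta(\delta_0,N)\to 0$ as $\delta_0\to 0$.

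The paper's route is to apply the Pruning Lemma~\ref{l:pruning} \emph{once}, with the user-supplied parameter $D=\Ebb(T,\Sbf,\Bbf_1)^{1/2}$. This choice of $D$ is precisely what couples the geometry to the excess: conclusion~\eqref{e:pruning-1} then reads $\max_j\min_{i\in I}\dist(\Hbf_i\cap\Bbf_1,\Hbf_j\cap\Bbf_1)\leq \Gamma\,\Ebb(T,\Sbf,\Bbf_1)^{1/2}$, which immediately gives (c) and, via the triangle inequality, (b); conclusion~\eqref{e:pruning-2} reads $\Ebb(T,\Sbf,\Bbf_1)^{1/2}+\dots\leq \delta\,\boldsymbol{\sigma}(\Sbf')$, giving (d) directly with $\delta$ chosen in terms of $\bar\delta$ alone. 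Your treatment of both directions in (a) is essentially correct and matches the paper; it is only the construction of $\Sbf'$ that needs to be replaced by a single application of Lemma~\ref{l:pruning} with $D=\Ebb(T,\Sbf,\Bbf_1)^{1/2}$.
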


\begin{proof}
    Let us begin with the conclusion (a). In proving this, we will also show (d) as a byproduct. First of all, since
    \[
        \Ebf^p(T,\Bbf_1) \leq C\hat\Ebf(T,\Sbf,\Bbf_1) + C\boldsymbol{\zeta}(\Sbf)^2 \leq C\eps_3^2\Ebf^p(T,\Bbf_1) + C\boldsymbol{\zeta}(\Sbf)^2,
    \]
    for some constant $C=C(m,n)>0$, the bound
    \begin{equation}\label{e:planar-leq-max-sep}
        \Ebf^p(T,\Bbf_1) \leq \bar{C}\boldsymbol{\zeta}(\Sbf)^2
    \end{equation}
    follows immediately for $\bar{C}=\bar{C}(m,n)>0$, provided that $\eps_3$ is below a sufficiently small dimensional constant, which in turn yields
    \[
        \Ebb(T,\Sbf,\Bbf_1) \leq \bar{C}\eps_3^2\boldsymbol{\zeta}(\Sbf)^2.
    \]
    Now fix $\delta > 0$, to be determined later, and let $\Gamma= \Gamma(\delta,N)$ be as in the Pruning Lemma \ref{l:pruning}. Consequently, let $\eps_3 = \bar{C}^{-1/2} (1+\Gamma)^{-1/2}\delta$. Thus, letting $D\coloneqq \Ebb(T,\Sbf,\Bbf_1)^{1/2}$, this choice of $D$ satisfies the hypotheses of the Pruning Lemma. Hence, applying the lemma gives a subcollection $I=\{i_1,\dots,i_k\}\subset \{1,\dots,N\}$ satisfying the properties stated therein. Let $\Sbf'\coloneqq \Hbf_{i_1}\cup\cdots\cup\Hbf_{i_k}$ and let $\pi_{i_j}$ denote the $m$-dimensional planes that are the extensions of the half-planes $\Hbf_{i_j}$ by reflection across $V$. Since $\boldsymbol{\zeta}(\Sbf) = \boldsymbol{\zeta}(\Sbf')$, to conclude (a) it suffices to demonstrate that
    \begin{equation}\label{e:max-sep-planar-exc}
        \boldsymbol{\zeta}(\Sbf')^2 \leq C\Ebf^p(T,\Bbf_1),
    \end{equation}
    for $C=C(q,m,n,\bar n)>0$. First of all, given an arbitrary $m$-dimensional plane $\varpi$, observe that there must exist a plane $\pi_{i_j}$ corresponding to some half-plane $\Hbf_{i_j}$ in $\Sbf'$, which up to relabelling we may assume is $\pi_1$, with
    \[
        \dist(\pi_1\cap\Bbf_1,\varpi\cap\Bbf_1) \geq \frac{1}{2}\boldsymbol{\zeta}(\Sbf').
    \]
    It follows that there exists $p\in\pi_1$, $r=r(m,n)>0$ and $C=C(m,n)>0$ such that $\Bbf_r(p)\subset \Bbf_{3/4}\setminus \Bbf_{1/4}(V)$ and
    \begin{equation}\label{e:split}
        \dist(x,\varpi)\geq C^{-1} \boldsymbol{\zeta}(\Sbf') \qquad\forall x\in B_r(p,\pi_1);
    \end{equation}
    see the proof of (d) of \cite{DMS}*{Proposition 9.1} for the details. We will now proceed to pass this to an analogous estimate for $x\in \Bbf_r(p)\cap\spt(T)$.
    
    Now, for a choice of $\eps_3>0$ sufficiently small (with the claimed dependencies), we claim that we may apply Lemma \ref{l:crude} and Proposition \ref{p:crude-approx} to $T_{0,4}$ and $\Sbf'$, with $\eta=\rho=\frac{1}{32}$. Indeed, \eqref{e:pruning-2} tells us that
    \begin{equation}\label{e:pruning-exc-min-sep}
        \Ebb(T,\Sbf, \Bbf_1) + \max_j \min_{i\in I} \dist^2(\Hbf_i\cap\Bbf_1,\Hbf_j\cap\Bbf_1) \leq 2\delta^2\boldsymbol{\sigma}(\Sbf')^2.
    \end{equation}
    Thus, for $C=C(q,m)>0$ we have 
    \begin{align}
        \hat\Ebf(T,\Sbf',\Bbf_1) \leq 2
        \hat\Ebf(T,\Sbf,\Bbf_1) + C \max_j \min_{i\in I}\dist^2(\Hbf_i\cap\Bbf_1,\Hbf_j\cap\Bbf_1) \leq C\delta^2\boldsymbol{\sigma}(\Sbf')^2.\label{e:conical-excess-pruned}
    \end{align}
    On the other hand, since $\hat\Ebf(\Sbf',T,\Bbf_1) \leq \hat\Ebf(\Sbf,T,\Bbf_1)$, \eqref{e:pruning-exc-min-sep} gives
    \[
        \hat\Ebf(\Sbf',T,\Bbf_1) \leq 2\delta^2 \boldsymbol{\sigma}(\Sbf')^2.
    \]
    In summary, we have demonstrated that
    \[
        \Ebb(T,\Sbf',\Bbf_1) \leq C\delta^2 \boldsymbol\sigma(\Sbf')^2.
    \]
    On the other hand, the assumption $\Abf^2 \leq \eps_3^2 \Ebb(T,\Sbf,\Bbf_1)$ combined with \eqref{e:pruning-exc-min-sep} yields
    \[
        \Abf^2\leq 2\eps_3^2\delta^2\boldsymbol\sigma(\Sbf')^2.
    \]
    Thus, the hypothesis \eqref{e:crude-hyp} of Lemma \ref{l:crude} indeed holds with parameter $\bar\delta$ (previously denoted $\delta$ therein), for a suitably small choice of $\delta=\delta(q,m,n,\bar n,\bar\delta)>0$. This is exactly the conclusion (d). Recall that this in turn determines how small we must take $\eps_3$.

    In particular, by Proposition \ref{p:crude-approx}(f) and (c), for $\varrho>0$ as in Lemma \ref{l:crude} and a choice of $C=C(q,m,n,\bar{n})>0$, the current
    \[
        T'\coloneqq T\mres\Bbf_r(p)\cap\{\dist(\cdot,\pi_1)\leq\varrho\boldsymbol{\sigma}(\Sbf')\}
    \]
    is non-zero and satisfies
    \[
        \dist(x,\pi_1) \leq C\hat\Ebf(T,\Sbf',\Bbf_1)^{1/2} + C\Abf \leq C\eps_3^2\boldsymbol{\zeta}(\Sbf') \qquad \forall x\in\spt(T'),
    \]
    where the final estimate is a consequence of \eqref{e:planar-leq-max-sep}. Combining this with \eqref{e:split}, we deduce that, up to possibly further decreasing $\eps_3$ (still with the same dependencies), we have
    \[
        \dist(x,\varpi)\geq C^{-1}\boldsymbol\zeta(\Sbf') \qquad \forall x\in \spt(T'),
    \]
    as desired. We now square this and integrate with respect to $d\|T'\|$. When combined with the monotonicity formula for mass ratios (as $T'\neq 0$), this yields the desired conclusion \eqref{e:max-sep-planar-exc}.

    Now let us demonstrate the conclusion (b). Notice that \eqref{e:pruning-1} from the conclusions of the pruning lemma, together with the first inequality in \eqref{e:conical-excess-pruned}, in fact yields
    \[
        \hat\Ebf(T,\Sbf',\Bbf_1) \leq 2\hat\Ebf(T,\Sbf,\Bbf_1) + C\Gamma^2\Ebb(T,\Sbf,\Bbf_1).
    \]
    Once again combining this with the observation that $\hat\Ebf(\Sbf',T,\Bbf_1) \leq \hat\Ebf(\Sbf,T,\Bbf_1)$, we conclude (b). Meanwhile, conclusion (c) follows immediately from \eqref{e:pruning-1} in the pruning lemma.
\end{proof}

\subsection{Refined graphical approximations}
We are now in a position to carry out a more refined graphical approximation procedure, analogous to that in \cite{DMS}*{Section 8.5}. We begin with the following assumption that will be used throughout this section.

\begin{assumption}\label{a:refined}
    Let $q,n,\bar{n}\in \N$, $m\in \N_{\geq 2}$ and let $Q=\frac{q}{2}$. Suppose that $T$, $\Sigma$ and $\Abf$ satisfy Assumption \ref{a:main} with $T(\Bbf_4)\leq (Q+\frac{1}{4})\omega_m$. Let $\Sbf=\Hbf_1\cup\cdots\cup\Hbf_N\in \Bscr^q\setminus \Pscr$ with $2\leq N \leq q$ and let $V=V(\Sbf)$ denote the spine of $\Sbf$. Let $\pi_i$ denote the $m$-dimensional plane which is the extension of $\Hbf_i$ as before. For a sufficiently small choice of $\eps = \eps(q,m,n,\bar{n})>0$, whose choice will be determined in Assumption \ref{a:parameters} below, smaller than the $\eps$-threshold in Theorem \ref{t:no-gaps} for appropriate choices of $\varrho,\eta,\rho$ therein, suppose that we have the two-sided excess bound
    \begin{equation}\label{e:refined-emall-excess-and-A}
        \Ebb(T,\Sbf,\Bbf_4) + \Abf^2 \leq \eps^2\boldsymbol{\sigma}(\Sbf)^2.
    \end{equation}
\end{assumption}

\subsubsection{Whitney decomposition}
We begin by setting up a family of dyadic cubes in the spine $V$, which will in turn be used to define a Whitney decomposition towards $V$. The procedure is completely analogous to that in \cite{DMS}*{Section 8.5} and we adopt the notation and terminology therein. We recall the latter here for clarity. Let $L_0$ be the $(m-1)$-dimensional closed unit cube contained in $V$ of side length $\frac{2}{\sqrt{m-1}}$ centered at the origin. Let $R$ denote the ``punctured cylinder"
\[
    R\coloneqq \{p:\mathbf{p}_V(p)\in L_0, \quad 0<|\mathbf{p}_{V^\perp}(p)|\leq 1\}.
\]
For each $\ell\in \N$, let $\Gcal_\ell$ be the family of $(m-1)$-dimensional dyadic closed cubes that are formed by subdividing $L_0$ into $2^{\ell(m-1)}$ cubes of side length $\frac{2^{-(\ell-1)}}{\sqrt{m-1}}$ with mutually disjoint interiors; note that $\Gcal_{\ell+1}\subset \Gcal_\ell$. Let $\Gcal\coloneqq \bigcup_\ell \Gcal_\ell$. We will denote by $L$ the cubes in $\Gcal$, and if needed, we will write $\ell(L)$ for the integer $\ell$ such that $L\in\Gcal_\ell$. Given $L\in \Gcal_\ell$, we refer to the unique $L'\in \Gcal_{\ell-1}$ with $L\subset L'$ as the \emph{parent} of $L$, while $L$ is called a \emph{child} of $L'$. In general, any $H\in \Gcal$ with $L\subset H$ will be referred to as an \emph{ancestor} of $L$, and $L$ is then said to be a \emph{descendent} of $H$ (note that $L$ is therefore an ancestor and descendent of itself). Given $L\in\Gcal_\ell$, define
\[
    R(L)\coloneqq \{p: \mathbf{p}_V(p)\in L, \quad 2^{-\ell-1}\leq |\mathbf{p}_{V^\perp} (p)|\leq 2^{-\ell}\}.
\]
For $L\in \Gcal_\ell$, we denote its center by $y_L \in V$ and we let $\Bbf(L)$ denote the ball $\Bbf_{2^{2-\ell(L)}}(y_L)$ and we let $\Bbf^h(L)\coloneqq \Bbf(L)\setminus \Bbf_{\rho_* 2^{-\ell(L)}}(V)$, where $\rho_*$ is the constant from Lemma \ref{l:matching-Q}. Moreover, given $\lambda \in [1,\frac{3}{2}]$ and $L\in \Gcal_\ell$, we will denote by $\lambda L$ the $(m-2)$-dimensional subcube of $V$ that is concentric cube to $L$ but with side length $\frac{\lambda 2^{-(\ell-1)}}{\sqrt{m-1}}$, while $\lambda R(L)$ is defined by
\[
    \lambda R(L) \coloneqq \{p:\mathbf{p}_V(p)\in \lambda L, \quad \lambda^{-1}2^{-(\ell+1)}\leq |\mathbf{p}_{V^\perp}(p)|\leq\lambda 2^{-\ell}\}.
\]
For each half-plane $\Hbf_i$, we in turn let
\[
    L_i\coloneqq R(L)\cap \Hbf_i, \qquad \lambda L_i\coloneqq \lambda R(L)\cap \Hbf_i.
\]
At the risk of abusing terminology, when referring to the interior of $L_i$, we implicitly mean the relative interior within $V$. 

Indeed the above construction yields a Whitney decomposition for the collection $\{R(L):L\in \Gcal\}$:
\begin{lemma}[\cite{DMS}*{Lemma 8.11}]
For $\Gcal$ as constructed above, the following properties hold.
\begin{itemize}
\item[(i)] Given any pair of distinct $L, L'\in \mathcal{G}$ the interiors of $R(L)$ and $R(L')$ are pairwise disjoint and $R(L)\cap R(L') \neq \emptyset$ if and only if $L\cap L' \neq \emptyset$ and $|\ell(L)-\ell(L')|\leq 1$, while the interiors of $L$ and $L'$ are disjoint if $\ell (L) \leq \ell (L')$ and $L'$ is not an ancestor of $L$.
\item[(ii)] The union of $R(L)$ ranging over all $L\in\mathcal{G}$ is the whole set $R$.
\item[(iii)] The diameters of the sets $L$, $R (L)$, $\lambda L$, $\lambda R (L)$, $L_i$, $\lambda L_i$, and $\mathbf{B}^h (L)$ are all comparable to $2^{-\ell (L)}$ and, with the exception of $L,\lambda L$, all comparable to the distance between an arbitrarily element within them and $V$; more precisely, any such diameter and distance is bounded above by $C 2^{-\ell (L)}$ and bounded below by $C^{-1} 2^{-\ell (L)}$ for some constant $C$ which depends only on $m$, $n$ and $q$.
\item[(iv)] There is a constant $C = C(m,n,q)$ such that, if {$\Bbf^h (L) \cap \Bbf^h (L') \neq \emptyset$, then $|{\ell (L)} - \ell (L')|\leq C$ and $\dist (L, L') \leq C 2^{-\ell (L)}$. In particular, for every $L\in \mathcal{G}$, the subset of $L'\in \mathcal{G}$ for which $\Bbf^h (L)$ and $\Bbf^h (L')$} have nonempty intersection is bounded by a constant. 
\item[(v)] $\sum_{L\in \mathcal{G}_\ell} \mathcal{H}^{m-1} (L) = C(m)$ for any $\ell$ and therefore, {for any $\kappa>0$},
\begin{equation}\label{e:geometric}
\sum_{L\in \mathcal{G}} 2^{-(m-1+\kappa) \ell (L)} \leq C (\kappa, m)\, .
\end{equation}
\end{itemize}
\end{lemma}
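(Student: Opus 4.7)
The claim is a purely combinatorial and geometric statement about the Whitney-type decomposition $\{R(L) : L \in \mathcal{G}\}$ of the punctured cylinder $R$, built by pairing the dyadic subdivision of $L_0 \subset V$ with the dyadic partition of the normal distance $|\mathbf{p}_{V^\perp}(\cdot)|$ into intervals $[2^{-\ell-1}, 2^{-\ell}]$. The proof is a direct verification from these definitions; no input from $T$, $\Sigma$, or regularity theory is needed, and all constants will depend only on $m$ (and, via $\rho_*$, on $q$). My plan is simply to work through (i)--(v) using the explicit formulas for $L$, $R(L)$, $\mathbf{B}(L)$, $\mathbf{B}^h(L)$ and their $\lambda$-scaled variants.

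For (i), I would use the nested structure of $\mathcal{G}$ in $V$ together with dyadic disjointness of the radial intervals. If $L \cap L'$ has empty interior (as subcubes of $V$), the projections $\mathbf{p}_V(R(L))$ and $\mathbf{p}_V(R(L'))$ have disjoint interiors and the claim is immediate; otherwise, by nestedness one of $L, L'$ is an ancestor of the other, and since $L \neq L'$ this forces $|\ell(L) - \ell(L')| \geq 1$, making the normal intervals $[2^{-\ell(L)-1}, 2^{-\ell(L)}]$ and $[2^{-\ell(L')-1}, 2^{-\ell(L')}]$ have disjoint interiors. The same case analysis gives the characterization of when $R(L) \cap R(L') \neq \emptyset$ and the statement about $L, L'$ having disjoint interiors when $L'$ is not an ancestor of $L$. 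Item (ii) is immediate: any $p \in R$ has $\mathbf{p}_V(p) \in L_0$ and $|\mathbf{p}_{V^\perp}(p)| \in (0,1]$, selecting a unique level $\ell$ with $|\mathbf{p}_{V^\perp}(p)| \in [2^{-\ell-1}, 2^{-\ell}]$ and, up to boundaries, a unique $L \in \mathcal{G}_\ell$ containing $\mathbf{p}_V(p)$. For (iii), the side length of $L \in \mathcal{G}_\ell$ is $2^{-(\ell-1)}/\sqrt{m-1}$, so $\diam(L) = 2 \cdot 2^{-\ell}$, and each of the sets $R(L), \lambda R(L), L_i, \lambda L_i$ sits inside a cylindrical region of radius comparable to $2^{-\ell(L)}$ around $L$. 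Points therein have $|\mathbf{p}_{V^\perp}(\cdot)|$ comparable to $2^{-\ell(L)}$ by construction, while points in $\mathbf{B}^h(L)$ lie at distance between $\rho_* 2^{-\ell(L)}$ and $2^{2-\ell(L)}$ from $V$, giving the diameter bound on $\mathbf{B}^h(L)$ as well.

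For (iv), taking $p \in \mathbf{B}^h(L) \cap \mathbf{B}^h(L')$, I would use the two-sided estimate
\[
\rho_* 2^{-\min(\ell(L),\ell(L'))} \leq \dist(p, V) \leq |p - y_L| \leq 2^{2-\ell(L)}
\]
(and the symmetric one with $L$ and $L'$ swapped) to conclude $2^{|\ell(L) - \ell(L')|} \leq 4/\rho_*$; the bound on $\dist(L, L')$ then follows from $|y_L - y_{L'}| \leq 2^{2-\ell(L)} + 2^{2-\ell(L')}$. The bound on the number of such $L'$ comes from noting that they occupy $O(1)$ levels and tile a region of diameter comparable to $2^{-\ell(L)}$ by cubes of $\mathcal{H}^{m-1}$-volume comparable to $2^{-(m-1)\ell(L')}$. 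Item (v) is an immediate tiling identity, $\sum_{L \in \mathcal{G}_\ell} \mathcal{H}^{m-1}(L) = \mathcal{H}^{m-1}(L_0)$, combined with $|\mathcal{G}_\ell| = 2^{(m-1)\ell}$, giving $\sum_L 2^{-(m-1+\kappa)\ell(L)} = \sum_\ell 2^{-\kappa\ell}$, which converges for $\kappa > 0$.

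There is no serious obstacle; the only mild subtlety is in (iv), where one needs both sides of the definition $\mathbf{B}^h(L) = \mathbf{B}(L) \setminus \mathbf{B}_{\rho_* 2^{-\ell(L)}}(V)$: the removed inner neighborhood supplies the lower bound on $\dist(p, V)$ that is played against the outer radius of $\mathbf{B}(L)$ to extract the level comparison. The entire argument is bookkeeping about a standard Whitney-type decomposition.
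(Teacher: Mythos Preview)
Your proposal is correct and matches what the paper expects: the paper itself says ``The proof of this is elementary and so we leave the details to the reader,'' and your sketch is precisely the standard verification one would give for such a Whitney-type decomposition. Every step you outline is valid, including the only nontrivial point in (iv) where you correctly use both the inner cutoff $\rho_* 2^{-\ell(L)}$ and the outer radius $2^{2-\ell(L)}$ to extract the level comparison.
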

The proof of this is elementary and so we leave the details to the reader.

\subsubsection{Layers and selection of parameters}
In order to build more refined graphical approximations for our final blow-up procedure, we proceed as follows. Let $\bar\delta\in (0,1]$ (determined below in Assumption \ref{a:parameters}). For $\Sbf$ as in Assumption \ref{a:refined}, apply Lemma \ref{l:layers} to produce a nested family of sub-cones $\Sbf=\Sbf_0\supset \Sbf_1\supset\cdots\supset \Sbf_\kappa$, where $\Sbf_k\coloneqq \bigcup_{j\in I(k)} \Hbf_j$. Now we define $\bar\kappa$ as follows, distinguishing between two possibilities:
\begin{itemize}
    \item[(a)] if $\max_{i,j\in I(\kappa):i<j} \dist(\pi_i\cap\Bbf_1,\pi_j\cap\Bbf_1)<\bar\delta$, let $\Sbf_{\kappa+1}\in \Bscr^q$ consist of a single plane $\pi_{i_0}$ for $i_0\in I(\kappa)$ and let $\bar\kappa=\kappa+1$;
    \item[(b)] if $\max_{i,j\in I(\kappa):i<j} \dist(\pi_i\cap\Bbf_1,\pi_j\cap\Bbf_1)\geq \bar\delta$, let $\bar\kappa=\kappa$.
\end{itemize}

Let us now outline our selection of parameters herein.
\begin{assumption}[Hierarchy of parameters]\label{a:parameters}
    Let $\delta^*=\delta^*(q,m,n,\bar{n})>0$ be the minimum of the parameters $\delta$ in Lemma \ref{l:crude} and Lemma \ref{l:matching-Q} applied to all sub-cones $\Sbf'\subset \Sbf$. Given a small constant $c=c(q,m,n,\bar{n})>0$ (to be determined in Lemma \ref{l:regions}), fix $\tau=\tau(q,m,n,\bar{n})\in (0,c\delta^*]$ and consequently fix $\bar\delta=\bar\delta(q,m,n,\bar n) \in (0,c\tau]$. Finally, fix $\eps=\eps(q,m,n,\bar n,\delta^*,\bar\delta)\in (0,c\bar\delta]$.
\end{assumption}

\subsubsection{Regions and local approximations}
We are now in a position to construct sub-regions of $R$ which will determine which sub-cone $\Sbf_k$ we locally construct graphical approximations for $T$ over. The set up is entirely analogous to that in \cite{DMS}*{Section 8.5.3}, but we re-introduce it here for the purpose of clarity.

Given $L\in \Gcal$ and $k\in \{0,\dots,\bar\kappa\}$, let
\[
    \Ebf(L,k)\coloneqq 2^{(m+2)\ell(L)}\int_{\Bbf^h(L)}\dist^2(p,\Sbf_k)d\|T\|(p),
\]
and
\[
    \mathbf{s}(k)\coloneqq \min_{i,j\in I(k):i<j} \dist(\Hbf_i\cap\Bbf_1,\Hbf_j\cap\Bbf_1).
\]
\begin{definition}\label{d:regions}
Let $L\in \mathcal{G}$. We say that:
\begin{itemize}
\item[(i)] $L$ is an \emph{outer cube} if $\mathbf{E} (L^\prime,0) \leq \tau^2 \mathbf{s} (0)^2$
for every ancestor $L^\prime$ of $L$ (including $L$ itself).
\item[(ii)] $L$ is a \emph{central cube} if it is not an outer cube and if 
$\min_k \mathbf{E} (L^\prime, k)/{\mathbf{s}(k)^2} \leq \tau^2$
for every ancestor $L'$ of $L$ (including $L$).
\item[(iii)] $L$ is an \emph{inner cube} if it is neither an outer nor a central cube, but its parent is an outer or a central cube. 
\end{itemize}
The corresponding families of cubes will be denoted by $\mathcal{G}^o$, $\mathcal{G}^c$, and $\mathcal{G}^{in}$, respectively. Observe that any cube $L\in \mathcal{G}$ is either an outer cube, or a central cube, or an inner cube, or a descendant of an inner cube.
\end{definition}
We correspondingly define three subregions of $R$:
\begin{itemize}
    \item The \emph{outer region}, denoted $R^o$, is the union of $R (L)$ for $L$ varying over elements of $\mathcal{G}^o$.
    \item The \emph{central region}, denoted $R^c$, is the union of $R (L)$ for $L$ varying over elements of $\mathcal{G}^c$.
    \item The \emph{inner region}, denoted $R^{in}$, is the union of $R(L)$ for $L$ ranging over elements of $\mathcal{G}$ which are neither outer nor central cubes, or equivalently ranging over $L\in \Gcal^{in}$ and their descendants.
\end{itemize}
We refer the reader to \cite{DMS}*{Figure 2} for a depiction of the regions defined above. Let us begin with the following key lemma.

\begin{lemma}\label{l:regions}
    There exists $c=c(q,m,n,\bar{n})>0$ such that the following holds. Suppose that $T$ and $\Sbf$ satisfy Assumption \ref{a:refined} and suppose that the parameters $\bar\delta, \ \delta^*, \ \tau$ and $\eps$ are fixed as in Assumption \ref{a:parameters} (arbitrarily) with this choice of $c$. Then
    \begin{itemize}
        \item[(i)] $L_0\in\Gcal^o$ and for any $\ell\in \N$ there exists $\bar c=\bar c(q,m,n,\bar n,\tau,\ell)>0$ such that if $\eps\leq \bar c$ then $\Gcal_\ell\subset \Gcal^o$;
        \item[(ii)] For each $L\in \Gcal^c$ there exists $k(L)\in \{0,\dots,\bar\kappa\}$ such that for each $k\in\{k(L),\dots,\bar\kappa\}$ we have
        \begin{equation}\label{e:small-excess-sep-ratio}
            \Ebf(L,k)\leq\tau^2\mathbf{s}(k)^2,
        \end{equation}
        while for each $k\in\{0,\dots,k(L)-1\}$ we have
        \begin{equation}\label{e:large-excess-sep-ratio}
            \Ebf(L,k)>\tau^2\mathbf{s}(k)^2;
        \end{equation}
        \item[(iii)] For each $L\in \Gcal^o$, \eqref{e:small-excess-sep-ratio} holds for every $k\in \{0,\dots\bar\kappa\}$ (and so we define $k(L)=0$ for such $L$), while for each $L\in \Gcal^{in}$, \eqref{e:large-excess-sep-ratio} holds for every $k\in \{0,\dots,\bar\kappa\}$;
        \item[(iv)] There exists $\bar C=\bar C(q,m,n,\bar n,\bar\delta, \delta^*,\tau)>0$ such that
        \begin{align}
            \Ebf(L,k(L)) &\leq \bar{C}\Ebf(L,0) \qquad \forall L \in \Gcal^c, \\
            1 &\leq \bar{C} \Ebf(L,0) \qquad \forall L \in \Gcal^{in};
        \end{align}
        \item[(v)] For each $L\in \Gcal^{o}$, one may apply Proposition \ref{p:crude-approx} with $\eta=\rho=\frac{1}{32}$ to the rescaled current $T_{y_L,2^{-\ell(L)}}$ and the open book $\Sbf_0$, while for each $L\in \Gcal^c$ one may apply Proposition \ref{p:crude-approx} to $T_{y_L,2^{-\ell(L)}}$ and the open book $\Sbf_{k(L)}$.
    \end{itemize} 
\end{lemma}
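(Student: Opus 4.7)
The plan is to first dispose of (i) by direct excess comparison, then derive a recurrence tracking how $\Ebf(L,k)$ changes across layers, and finally use that recurrence in both directions to obtain (ii)--(v). For (i), from Assumption \ref{a:refined} one has $\Ebf(L_0,0)\leq 4^{m+2}\hat{\Ebf}(T,\Sbf,\Bbf_4)\leq 4^{m+2}\eps^2\mathbf{s}(0)^2$, which is below $\tau^2\mathbf{s}(0)^2$ once the constant $c$ in Assumption \ref{a:parameters} is sufficiently small, so $L_0\in\Gcal^o$. For $L\in\Gcal_\ell$, the enlarged factor $\Ebf(L,0)\leq 2^{(m+2)\ell}\Ebf(L_0,0)$ still lies under $\tau^2\mathbf{s}(0)^2$ if $\eps\leq\bar c(\tau,\ell,m)$, which gives the second part of (i).

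The key technical tool will be the recurrence
\[
    \Ebf(L,k+1)\leq 2\Ebf(L,k) + C\,d(k+1)^2.
\]
I would derive this by noting that each half-plane discarded in passing from $\Sbf_k$ to $\Sbf_{k+1}$ lies within Hausdorff distance $r\,d(k+1)$ (in $\Bbf(L)$, with $r=2^{-\ell(L)}$) of a retained half-plane, since distances between half-planes meeting along $V$ scale linearly about any $y_L\in V$; this gives $\dist(p,\Sbf_{k+1})\leq\dist(p,\Sbf_k) + Cr\,d(k+1)$ for $p\in\Bbf(L)$, and squaring, integrating, and using the monotonicity bound $\|T\|(\Bbf^h(L))\leq Cr^m$ yields the claim. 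Combined with $d(k+1)\leq\bar\delta\,\mathbf{s}(k+1)$ and $\mathbf{s}(k)\leq\bar\delta\,\mathbf{s}(k+1)$ from Lemma \ref{l:layers}, the inequality $\Ebf(L,k)\leq\tau^2\mathbf{s}(k)^2$ propagates to $k+1$ provided $\bar\delta\leq c\tau$. Declaring $k(L)$ to be the smallest $k$ for which $\Ebf(L,k)\leq\tau^2\mathbf{s}(k)^2$ then gives (ii) (such a $k$ exists for $L\in\Gcal^c$ by definition) and the outer part of (iii) (where $k(L)=0$). For inner $L$, I would unpack the definitions: the parent of $L$ being outer or central forces every \emph{proper} ancestor of $L$ to satisfy the small-min-ratio condition, so the only way $L$ can fail to be central is if $L$ itself witnesses $\min_k\Ebf(L,k)/\mathbf{s}(k)^2>\tau^2$, i.e., $\Ebf(L,k)>\tau^2\mathbf{s}(k)^2$ for every $k$, yielding the inner part of (iii).

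For (iv) first bullet, I would use the complementary estimate $d(k+1)\leq\eta^{-1}\mathbf{s}(k)$ from Lemma \ref{l:layers}(iii): for $k<k(L)$, inner-at-layer-$k$-ness gives $\mathbf{s}(k)^2<\Ebf(L,k)/\tau^2$, so the recurrence becomes $\Ebf(L,k+1)\leq(2+C\eta^{-2}\tau^{-2})\Ebf(L,k)$; iterating at most $q$ times gives $\Ebf(L,k(L))\leq\bar C\Ebf(L,0)$. For (iv) second bullet (inner $L$), I would iterate the recurrence \emph{backwards}: from inner-ness $\Ebf(L,k+1)>\tau^2\mathbf{s}(k+1)^2$ and the recurrence with $d(k+1)^2\leq\bar\delta^2\mathbf{s}(k+1)^2$, one gets $\Ebf(L,k)\geq(\tau^2-C\bar\delta^2)\mathbf{s}(k+1)^2/2\geq\tau^2\mathbf{s}(k+1)^2/4$ for $\bar\delta\leq c\tau$. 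Telescoping from $k=\bar\kappa$ down to $k=0$ yields $\Ebf(L,0)\gtrsim 4^{-\bar\kappa}\tau^2\mathbf{s}(\bar\kappa)^2$, and in case (b) of the definition of $\bar\kappa$ one has $\mathbf{s}(\bar\kappa)\geq\eta M(0)\geq\eta\bar\delta$, giving a universal lower bound. This step—extracting a universal (not merely $\boldsymbol{\sigma}(\Sbf)$-dependent) constant—is what I expect to be the main obstacle, and in case (a), where $\Sbf_{\bar\kappa}$ collapses to a single plane, it forces one to interpret $\mathbf{s}(\bar\kappa)$ at the effective ``detection scale'' of order $1$ rather than at the small scale $\bar\delta$, so that inner-ness at $k=\bar\kappa$ already gives $\Ebf(L,\bar\kappa)\gtrsim\tau^2$ directly.

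Finally, for (v), after rescaling $T$ about $y_L$ by $r=2^{-\ell(L)}$, the mass bound $\|T_{y_L,r}\|(\Bbf_4)\leq(Q+\tfrac14)\omega_m 4^m$ persists by monotonicity and the rescaled curvature is still bounded by $\mathbf{A}$, so one only needs to verify the hypothesis \eqref{e:crude-hyp} of Lemma \ref{l:crude} with parameter $\delta^*$. For $L\in\Gcal^o$ applied with $\Sbf_0$, $\Ebf(L,0)+\mathbf{A}^2\leq\tau^2\boldsymbol{\sigma}(\Sbf_0)^2+\eps^2\boldsymbol{\sigma}(\Sbf)^2\leq(\delta^*)^2\boldsymbol{\sigma}(\Sbf_0)^2$ using the parameter hierarchy in Assumption \ref{a:parameters}; for $L\in\Gcal^c$ applied with $\Sbf_{k(L)}$, the analogous estimate uses $\Ebf(L,k(L))\leq\tau^2\boldsymbol{\sigma}(\Sbf_{k(L)})^2$ together with the elementary observation $\boldsymbol{\sigma}(\Sbf_{k(L)})\geq\boldsymbol{\sigma}(\Sbf_0)$, since $\Sbf_{k(L)}$ is a sub-collection of the half-planes of $\Sbf_0$. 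This completes the verification of all five conclusions.
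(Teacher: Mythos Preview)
Your proposal is correct and follows essentially the same approach as the paper, which defers to \cite{DMS}*{Lemma 8.14}: both rely on the recurrence $\Ebf(L,k+1)\leq 2\Ebf(L,k)+Cd(k+1)^2$ together with the layer-subdivision estimates of Lemma \ref{l:layers} to propagate smallness/largeness of $\Ebf(L,k)/\mathbf{s}(k)^2$ across layers, and both handle the case (a) of a single-plane $\Sbf_{\bar\kappa}$ by observing that $\mathbf{s}(\bar\kappa)$ is then conventionally $1$ (your ``effective detection scale of order $1$''). Your treatment of the inner-cube bound in (iv) via backward iteration, and the identification of case (a) as the delicate point, exactly mirror the paper's remark that this is ``the only minor modification'' needed relative to \cite{DMS}.
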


For simplicity we will henceforth adopt the notation $\Ebf(L)\coloneqq \Ebf(L,k(L))$ for $L\in \Gcal^o\cup \Gcal^c$. The proof of this is completely analogous to that of \cite{DMS}*{Lemma 8.14}, with the application of Lemma 8.1 therein replaced with Lemma \ref{l:layers}, and the application of Propositions 8.6 and 8.9 therein replaced with Proposition \ref{p:crude-approx}. The only minor modification to the argument verifying the validity of (iv) involves the fact that now we do not have a case where $I(\bar\kappa)$ consists of a single element. Instead, we have the possibility that the alternative (a) above holds for $\bar\kappa$, implying that $\Sbf_{\bar\kappa}=\pi_{i_0}$ for some $m$-dimensional plane $\pi_{i_0}$. Thus, in this case we trivially have $\mathbf{s}(\bar\kappa) = 1 \geq \delta$. We thus omit the details here.

We are now in a position to use Lemma \ref{l:regions}(v) to construct Lipschitz approximations for $T_{y_L,2^{-\ell(L)}}$ in $(\Bbf_2\setminus \bar{\Bbf}_{2\rho_*}(V))\cap \Sbf_{k(L)}$ for all cubes $L\in \Gcal^o\cup\Gcal^c$. Letting
\[
    \Omega(L)\coloneqq (\Bbf_{2^{1-\ell(L)}}(y_L)\setminus \bar{\Bbf}_{\rho_*2^{1-\ell(L)}}(V))\cap\Sbf_{k(L)},
\]
and $\Omega_i(L)\coloneqq \Omega(L)\cap\Hbf_i$ for $i\in I(k(L))$, this yields corresponding local $Q_{L,i}$-valued Lipschitz approximations $u_{L,i}$ for $T$ over $\Omega_i(L)$. We recall the notation for the sets $\boldsymbol{\Omega}_i = \Bbf_{127/32}\cap\mathbf{p}_{\Hbf_i}^{-1}(\Omega_i)$ for $i\in I(k(L))$ from Proposition \ref{p:crude-approx} (recall $\eta=\rho_*$), as well as the closed sets $K_i$ therein, which now depend also on $L$, so we denote them by $K_{L,i}$. We in turn let
\[
    \boldsymbol{\Omega}_i(L)\coloneqq 2^{-\ell(L)}\boldsymbol{\Omega}_i + y_L, \qquad K_i(L) \coloneqq 2^{-\ell(L)}K_{L, i} + y_L.
\]
Note that although all of the above are defined only for indices $i\in I(k(L))$ for a given cube $L\in \Gcal$, we may extend this to all indices $i\in \{1,\dots,N\}$ by simply setting $Q_{L,i}=0$ for $i\notin I(k(L))$. The following proposition describes the key properties of the local graphical approximations $u_{L,i}$.

\begin{proposition}\label{p:refined}
    Let $T$, $\Sigma$ and $\Sbf$ be as in Assumption \ref{a:refined}. Suppose that the parameters $\bar\delta$, $\delta^*$, $\tau$ and $\eps$ are as in Lemma \ref{l:regions} and that $\gamma$ is as in Proposition \ref{p:crude-approx}. Then there exists $\lambda=\lambda(m)\in (1,\frac{3}{2}]$ and $\bar C=\bar C(q,m,n,\bar n,\delta^*)>0$ (but not depending on $\bar\delta$, $\tau$ or $\eps$) for which the following holds:
    \begin{itemize}
        \item[(i)] For each $i\in \{1,\dots,N\}$, we have $Q_{L,i} = Q_{L',i}$ for any $L,L'\in \Gcal^o$;
        \item[(ii)] $\sum_{i=1}^N Q_{L,i} = q$ for every $L\in \Gcal^o\cup \Gcal^c$;
        \item[(iii)] For each $L\in \Gcal^o\cup\Gcal^c$ we have $\spt(T)\cap \lambda R(L)\subset \bigcup_i \boldsymbol{\Omega}_i(L)$ and
        \begin{equation}\label{e:refined-1}
            2^{2\ell(L)}|p-\mathbf{p}_{\alpha_i}(p)|^2 \leq \bar{C}(\Ebf(L) + 2^{-2\ell(L)}\Abf^2) \qquad \forall p \in \spt(T)\cap\boldsymbol{\Omega}_i(L);
        \end{equation}
        \item[(iv)] For each $L\in \Gcal^o\cup \Gcal^c$ and $i\in \{1,\dots,N\}$, the currents
        \[
            T_{L,i} \coloneqq T\res \boldsymbol{\Omega}_i(L)\cap \{\dist(\cdot,\Hbf_i)<\bar{C} 2^{-\ell(L)}(\Ebf(L)+2^{-2\ell(L)}\Abf^2)^{1/2}\}
        \]
        satisfy
        \[
            T_{L,i}\mres \mathbf{p}_{\Hbf_i}^{-1}(K_i(L)) = \Gbf_{u_{L,i}}\mres \mathbf{p}_{\Hbf_i}^{-1}(K_i(L)),
        \]
        while $\gr(u_{L,i})\subset \Sigma$ and
        \begin{align}
            2^{2\ell(L)}\|u_{L,i}\|^2_{C^0} + 2^{m\ell(L)}\|Du_{L,i}\|^2_{L^2} & \leq \bar{C}(\Ebf(L)+2^{-2\ell(L)}\Abf^2); \label{e:refined-2} \\
            \Lip(u_{L,i}) & \leq \bar{C} (\Ebf(L)+2^{-2\ell(L)}\Abf^2)^\gamma; \label{e:refined-3} \\
            |\Omega_i(L)\setminus K_i(L)| + \|T_{L,i}\|(\boldsymbol{\Omega}_i(L) \setminus \mathbf{p}_{\Hbf_i}^{-1}(K_i(L))) & \leq \bar{C} 2^{-m\ell(L)}(\Ebf(L) + 2^{-2\ell(L)}\Abf^2)^{1+\gamma}. \label{e:refined-4}
        \end{align}
        \item[(v)] For each $L\in\Gcal^o\cup \Gcal^c$, we have $\Theta(T,\cdot)\leq \max_i Q_{L,i} + \frac{1}{2}$ in $\lambda R(L)$. In particular, when $L\in \Gcal^o$, the density satisfies $\Theta(T,\cdot) \leq \frac{q-1}{2}$.
    \end{itemize}
\end{proposition}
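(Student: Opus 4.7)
The plan is to combine the crude approximation of Proposition \ref{p:crude-approx}, applied at every relevant dyadic scale, with the no-gap property from Theorem \ref{t:no-gaps} and a connectedness argument across the outer cubes. First, for each $L \in \Gcal^o \cup \Gcal^c$, I would invoke Lemma \ref{l:regions}(v) to apply Proposition \ref{p:crude-approx} with $\eta = \rho = 1/32$ to the rescaled current $T_{y_L, 2^{-\ell(L)}}$ and the sub-cone $\Sbf_{k(L)}$. Scaling back to original coordinates by $x \mapsto 2^{-\ell(L)} x + y_L$ produces the local Lipschitz approximations $u_{L,i}$ for $i \in I(k(L))$ and the bounds in (iv) from the rescaled versions of Proposition \ref{p:crude-approx}(d); the pointwise estimate \eqref{e:refined-1} of (iii) is exactly Proposition \ref{p:crude-approx}(c) after rescaling, while the inclusion $\spt(T) \cap \lambda R(L) \subset \bigcup_i \boldsymbol{\Omega}_i(L)$ follows from Lemma \ref{l:crude}(b) after choosing a dimensional $\lambda \in (1,3/2]$ so that $\lambda R(L)$ fits inside the scaled version of $\Bbf_{4-\eta} \setminus \Bbf_{\rho+\eta}(V)$ provided by that lemma.

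To verify (ii), I would apply Theorem \ref{t:no-gaps} to $T_{y_L, 2^{-\ell(L)}}$ and the cone $\Sbf_{k(L)}$. The hypothesis $\Ebb + \Abf^2 \leq \eps^2 \boldsymbol{\zeta}(\Sbf_{k(L)})^2$ is satisfied because $\Ebf(L, k(L)) \leq \tau^2 \mathbf{s}(k(L))^2 \leq \tau^2 \boldsymbol{\zeta}(\Sbf_{k(L)})^2$ and $\tau$ has been selected (Assumption \ref{a:parameters}) below the threshold of Theorem \ref{t:no-gaps}. The conclusion \eqref{e:no-gaps} then furnishes points of density $\geq q/2$ arbitrarily close to each point of $V \cap \Bbf(L)$, and in particular verifies hypothesis (a) of Lemma \ref{l:matching-Q} for the rescaled current, yielding $\sum_i Q_{L,i} = q$. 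For the constancy (i), I would observe that $\Gcal^o$ is closed under taking ancestors (by definition every ancestor of an outer cube is outer), contains $L_0$ by Lemma \ref{l:regions}(i), and that when $L \in \Gcal^o$ is a child of $L' \in \Gcal^o$ the regions $\boldsymbol{\Omega}_i(L)$ and $\boldsymbol{\Omega}_i(L')$ overlap on a set of positive $\Hcal^m$-measure. Since both approximations represent the same current $T$ there and the pushforwards by $\mathbf{p}_{\Hbf_i}$ are multiplicity-$Q_{L,i}$ and multiplicity-$Q_{L',i}$ copies of the underlying region, the two multiplicities must coincide. Iterating up the ancestor chain gives the constant value throughout $\Gcal^o$; note that for $L \in \Gcal^o$ we have $I(k(L)) = I(0) = \{1, \dots, N\}$, so all indices are available uniformly.

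Finally, for (v), the bound $\Theta(T, \cdot) \leq \max_i Q_{L,i} + 1/2$ on $\lambda R(L)$ follows from the monotonicity formula: in any ball $\Bbf_r(x) \subset \lambda R(L)$, the mass of $T$ is comparable to $\max_i Q_{L,i}\, \omega_m r^m$ up to a contribution from the exceptional non-graphical set, which by \eqref{e:refined-4} is of strictly lower order than $r^m$ once $\eps$ is small. For the stronger bound on outer cubes, I would use the density drop \eqref{e:density-drop} from Theorem \ref{t:no-gaps}: after rescaling, $\lambda R(L) \subset \Bbf_{1 - \eta/8} \setminus \Bbf_{\rho/4}(V)$, so $\Theta(T, x) < q/2$ pointwise on $\lambda R(L)$, which combined with the fact that densities of area-minimizing representatives mod$(q)$ strictly below $q/2$ are necessarily at most $(q-1)/2$ (being integers, they are bounded by $\lfloor (q-1)/2 \rfloor \leq (q-1)/2$) gives the claim. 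The main obstacle I anticipate is controlling the density bound in (v) sharply: the exceptional mass from \eqref{e:refined-4} must be shown to contribute strictly less than the $1/2$ allowance in the bound, which requires carefully tracking how the graphical error rates interact with the monotonicity formula at scale $2^{-\ell(L)}$; the upgrade to $(q-1)/2$ on outer cubes depends on the strict density inequality $\Theta < q/2$ from \eqref{e:density-drop} holding everywhere on $\lambda R(L)$ (not only in a subset), and on the fine structural fact that densities of mod$(q)$ minimizers cannot take values strictly between $(q-1)/2$ and $q/2$.
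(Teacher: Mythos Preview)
Your treatment of (iii), (iv), (i), and the first half of (v) is essentially in line with the paper's approach: rescale, invoke Proposition~\ref{p:crude-approx}, and propagate multiplicities across overlapping outer cubes. The substantive gap is in your argument for (ii) (and, by the same mechanism, for the second half of (v)).

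You claim that the hypothesis of Theorem~\ref{t:no-gaps} for the rescaled current $T_{y_L,2^{-\ell(L)}}$ follows from $\Ebf(L,k(L))\le\tau^2\mathbf{s}(k(L))^2$. It does not. The quantity $\Ebf(L,k)$ is the \emph{one-sided} excess computed only over $\Bbf^h(L)=\Bbf(L)\setminus\Bbf_{\rho_*2^{-\ell(L)}}(V)$, whereas Theorem~\ref{t:no-gaps} requires the full two-sided $\Ebb$ on $\Bbf_1$ together with the mass bound $\|T_{y_L,2^{-\ell(L)}}\|(\Bbf_1)\le(Q+\tfrac14)\omega_m$. Neither is available from the Whitney bookkeeping alone. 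In particular, the contribution to $\hat\Ebf$ from the tube $\Bbf_{\rho_*}(V)$ after rescaling is of order $\rho_*^2$ times a mass term, and since $\rho_*$ is a fixed dimensional constant while $\boldsymbol{\zeta}(\Sbf)$ may be arbitrarily small in the collapsed regime, this term is not dominated by $\eps^2\boldsymbol{\zeta}(\Sbf)^2$. The paper avoids this entirely: it establishes $\sum_i Q_{L_0,i}=q$ at the root cube $L_0$ using Theorem~\ref{t:no-gaps} at the original scale (where Assumption~\ref{a:refined} supplies the genuine two-sided bound), and then proceeds \emph{inductively} down the tree via alternative (b) of Lemma~\ref{l:matching-Q}: once the parent $L'$ satisfies $\sum_i Q_{L',i}=q$, Lemma~\ref{l:curved} converts this into the mass lower bound $\|T\|(\Omega)\ge(q-\tfrac12)\Hcal^m(\Hbf_1\cap\Omega)$ on a suitable rotationally invariant set overlapping with the child's domain, and (b) then gives $\sum_i Q_{L,i}=q$ for the child. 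This is precisely the inductive hypothesis (A) the paper singles out.

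Separately, your final step for the sharp bound $\Theta\le\tfrac{q-1}{2}$ on outer cubes---that densities strictly below $q/2$ must be integers---is not correct as stated: densities of area-minimizing currents at singular points need not be integers. The sharp bound should instead come from combining the first part of (v) with the fact that each $Q_{L,i}$ is an integer strictly less than $q/2$ (hence $\le \lfloor (q-1)/2\rfloor$), which in turn is forced by the density drop once the inductive mass control is in place.
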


The proof of this is entirely analogous to that of \cite{DMS}*{Lemma 8.15}, replacing the application of \cite{DMS}*{Lemma 8.5}, \cite{DMS}*{Proposition 8.6} (or \cite{DMS}*{Proposition 8.9}), \cite{DMS}*{Lemma 8.8} and \cite{DMS}*{Lemma 8.14} therein with Lemma \ref{l:crude}, Proposition \ref{p:crude-approx}, Lemma \ref{l:matching-Q} and Lemma \ref{l:regions} respectively. Furthermore, the application of \cite{DMS}*{Lemma 8.16} therein is replaced with the following lemma, which is its analogue here. 

\begin{lemma}\label{l:curved}
Under the assumptions of Lemma \ref{l:regions}, consider $L\in \mathcal{G}^o \cup \mathcal{G}^c$, let $U\subset \lambda R(L)$ for $\lambda$ as in Proposition \ref{p:refined} be a set invariant under rotations around $V$ whose cross-sections $U_i = U\cap \Hbf_i$ are Lipschitz open sets or the closures of Lipschitz open sets, and let
\[
    \tilde{U} = \bigcup_i \{p:\mathbf{p}_{\Hbf_i}(p)\in U_i \quad \text{and} \quad |p-\mathbf{p}_{\Hbf_i}(p)(p)| \leq \bar{C} 2^{-\ell(L)}(\Ebf(L) + 2^{-2\ell(L)} \Abf^2)^{1/2}\},
\]
for $\bar{C}$ as in Proposition \ref{p:refined}. Then
\begin{align}
\|T\| (U\setminus \tilde{U}) + \|T\|(\tilde{U}\setminus U) &\leq C \left(\mathbf{E} (L) + 2^{-2\ell(L)}\mathbf{A}^2\right) 2^{-\ell (L)} \mathcal{H}^{m-1} (\partial U_i)\, \nonumber\\
&\qquad \hspace{5em} + C 2^{-m \ell (L)}\left(\mathbf{E} (L) + 2^{-2\ell(L)}\mathbf{A}^2\right)^{1+\gamma} \label{e:curved}
\end{align}
where $\gamma$ is as in Proposition \ref{p:crude-approx} and $C$ depends on $q,m,n,\bar{n},\bar\delta$ and the Lipschitz regularity of the boundary of $2^{\ell (L)} U_i$. 
\end{lemma}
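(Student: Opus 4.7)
The plan is to use the graphical approximation of Proposition \ref{p:refined} to reduce the problem to a sharp estimate on a Lipschitz tubular strip around $\partial U_i$ on each page, combined with the quantitative bad-set estimate \eqref{e:refined-4}. First, by Proposition \ref{p:refined}(iii) we have $\spt(T)\cap \lambda R(L)\subset \bigcup_i \boldsymbol{\Omega}_i(L)$, and on $\mathbf{p}_{\Hbf_i}^{-1}(K_i(L))$ the current $T_{L,i}$ coincides with the graph of the multi-valued Lipschitz map $u_{L,i}$, whose $C^0$-norm is bounded by the tube thickness $h_L \coloneqq \bar C 2^{-\ell(L)}(\Ebf(L)+2^{-2\ell(L)}\Abf^2)^{1/2}$. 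Consequently, for $y\in K_i(L)$ the graph point $p=(y,u_{L,i}(y))$ automatically satisfies $|p-\mathbf{p}_{\Hbf_i}(p)|\le h_L$, so the question of whether $p$ belongs to $\tilde U$ reduces to whether $y\in U_i$.

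The core geometric step is to compare the condition $p\in U$ with the condition $y\in U_i$ using the rotational invariance of $U$ around $V$. Writing $y=y_V+(y\cdot e_i)e_i$ with $e_i\in V^\perp\cap \Hbf_i$ a unit vector, and noting that the graphical values $u_{L,i}(y)$ lie in $\pi_i^\perp$ and are therefore perpendicular to both $V$ and $e_i$, one computes
\[
\mathbf{p}_V(p)=y_V,\qquad |\mathbf{p}_{V^\perp}(p)|=\sqrt{(y\cdot e_i)^2+|u_{L,i}(y)|^2}.
\]
Since $y\in \Omega_i(L)\subset \lambda R(L)$ gives $y\cdot e_i\ge c\,2^{-\ell(L)}$, Taylor expansion yields
\[
\bigl||\mathbf{p}_{V^\perp}(p)|-(y\cdot e_i)\bigr|\le C\,2^{\ell(L)}|u_{L,i}(y)|^2\le C\,2^{-\ell(L)}(\Ebf(L)+2^{-2\ell(L)}\Abf^2).
\]
Because $U$ is invariant under rotations around $V$, membership in $U$ is determined by $(\mathbf p_V(p),|\mathbf p_{V^\perp}(p)|)$, so $p\in U\triangle \tilde U$ (restricted to the good set) can occur only if the pair $y$ has radial component within $C\,2^{-\ell(L)}(\Ebf(L)+2^{-2\ell(L)}\Abf^2)$ of $\partial U_i$.

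The third step is to translate this into a mass bound. On the good set, the push-forward $(\mathbf p_{\Hbf_i})_\sharp T_{L,i}\mres \mathbf p_{\Hbf_i}^{-1}(K_i(L))$ has multiplicity $Q_{L,i}\le q$, and the Jacobian is bounded by $1+C\Lip(u_{L,i})^2$, which is $\le 2$ by \eqref{e:refined-3} (for $\eps$ small). Hence the good contribution is dominated by
\[
C\sum_i \Hcal^m\bigl(\{y\in U_i:\dist(y,\partial U_i)\le C\,2^{-\ell(L)}(\Ebf(L)+2^{-2\ell(L)}\Abf^2)\}\bigr),
\]
and by the Lipschitz regularity of $\partial U_i$ (after rescaling by $2^{\ell(L)}$), this is bounded by the first term of \eqref{e:curved}. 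The complementary contribution, where either $T$ fails to coincide with the graph of $u_{L,i}$ or where $\spt(T)\cap\boldsymbol{\Omega}_i(L)$ lies outside $\mathbf p_{\Hbf_i}^{-1}(K_i(L))$, is controlled directly by \eqref{e:refined-4}, producing the second, higher-order, term in \eqref{e:curved}. Summing over $i\in\{1,\dots,N\}$ and noting $N\le q$ concludes the proof.

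The only substantive point is the sharp quadratic cancellation in the radial discrepancy obtained via Pythagoras: naively one would bound the radial shift by $\|u_{L,i}\|_{C^0}\sim h_L$, which would yield only $h_L\,\Hcal^{m-1}(\partial U_i)\sim 2^{-\ell(L)}(\Ebf(L)+\cdots)^{1/2}\Hcal^{m-1}(\partial U_i)$. It is crucial that $u_{L,i}$ takes values perpendicular to $\pi_i$ (in particular perpendicular to $e_i$), so that the first-order term in the expansion of $|\mathbf p_{V^\perp}(p)|$ vanishes and the quadratic improvement produces the full factor $\Ebf(L)+2^{-2\ell(L)}\Abf^2$ appearing in \eqref{e:curved}.
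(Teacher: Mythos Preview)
Your proof plan is correct and follows essentially the same approach as the paper, which simply refers to \cite{DMS}*{Lemma 8.16}; the argument there is precisely the one you outline, namely splitting into the good and bad regions of the graphical approximation, exploiting the rotational invariance of $U$ to compare membership in $U$ and $\tilde U$ via the radial coordinate, and using the quadratic gain from Pythagoras (since $u_{L,i}$ takes values in $\pi_i^\perp$) to land on the correct power of $\Ebf(L)+2^{-2\ell(L)}\Abf^2$ in the boundary-strip term. Your explicit identification of the Pythagorean cancellation as the key step is exactly the point of the argument.
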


The proof is exactly the same of \cite{DMS}*{Lemma 8.16}. Note indeed that it suffices to replace the inductive hypothesis (A) in the argument of \cite{DMS} with the following analogue: 
\begin{itemize}
    \item[(A)] If $\sum_i Q_{L',i}=q$ for $L'\in \mathcal{G}^c\cup \mathcal{G}^o$, then $\sum_i Q_{L,i} =q$ for every child $L\in \mathcal{G}^c\cup \mathcal{G}^o$ of $L^\prime$.
\end{itemize}

\subsection{Coherent outer approximation and first blow-up}
Let us now construct a single multi-valued approximation defined over $\bigcup_i L_i$ for cubes $L\in \Gcal^o$, from the local Lipschitz approximations in Proposition \ref{p:refined}. We begin by introducing the following notation.

\begin{definition}\label{d:coherent}
    Let $T$, $\Sigma$ and $\Sbf$ be as in Assumption \ref{a:refined}. Let $L\in \Gcal^o$. Then we define
    \[
        \Nscr(L)\coloneqq \{L'\in \Gcal^o : R(L)\cap R(L')\neq \emptyset \},
    \]
    and
    \[
        \bar\Ebf(L) \coloneqq \max\{\Ebf(L') : L'\in \Nscr(L)\}.
    \]
    Moreover, for each $i\in \{1,\dots,N\}$, let
    \[
        R_i^o \coloneqq \bigcup_{L\in \Gcal^o} L_i \equiv \Hbf_i \cap \bigcup_{L\in \Gcal^o} R(L).
    \]
\end{definition}

We are now in a position to state the coherent outer approximation result, which follows from Proposition \ref{p:refined} in the same way as \cite{DMS}*{Proposition 8.19} follows from \cite{DMS}*{Proposition 8.15}.

\begin{proposition}[Coherent outer approximation]\label{p:coherent}
    Let $T$, $\Sigma$, $\Abf$ and $\Sbf$ be as in Assumption \ref{a:refined}, let $T_{L,i}$ be as in Proposition \ref{p:refined}, let $\gamma$ be as in Proposition \ref{p:crude-approx} and let $Q_i\coloneqq Q_{L_0,i}$ for $i\in \{1,\dots,N\}$ . Then there exist Lipschitz maps $u_i: R_i^o\to \Acal_{Q_i}(\Hbf_i^\perp)$ and closed subsets $\bar{K}_i(L)\subset L_i$ such that
    \begin{itemize}
        \item[(i)] $\gr(u_i)\subset \Sigma$ and $T_{L,i}\mres \mathbf{p}_{\Hbf_i}^{-1}(\bar{K}_i(L))=\Gbf_{u_i}\mres \mathbf{p}_{\Hbf_i}^{-1}(\bar{K}_i(L))$ for each $L\in \Gcal^o$;
        \item[(ii)] One has estimates analogous to \eqref{e:refined-2}-\eqref{e:refined-4}, namely
        \begin{align}
            2^{2\ell(L)}\|u_{i}\|^2_{C^0(L_i)} + 2^{m\ell(L)}\|Du_{i}\|^2_{L^2(L_i)} & \leq \bar{C}(\Ebf(L)+2^{-2\ell(L)}\Abf^2); \\
            \Lip(u_{i}) & \leq \bar{C} (\Ebf(L)+2^{-2\ell(L)}\Abf^2)^\gamma; \\
            |L_i\setminus \bar{K}_i(L)| + \|T_{L,i}\|(\mathbf{p}_{\Hbf_i}^{-1}(L_i\setminus \bar{K}_i(L))) & \leq \bar{C} 2^{-m\ell(L)}(\Ebf(L) + 2^{-2\ell(L)}\Abf^2)^{1+\gamma}.
        \end{align}
    \end{itemize}
\end{proposition}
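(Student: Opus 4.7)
The plan is to glue the local Lipschitz approximations $\{u_{L,i}\}_{L\in \Gcal^o}$ furnished by Proposition \ref{p:refined} into a single multi-valued Lipschitz map $u_i$ defined on all of $R_i^o$. Proposition \ref{p:refined}(i) guarantees that $Q_{L,i}$ is constant over $L \in \Gcal^o$, so we may unambiguously set $Q_i \coloneqq Q_{L_0,i}$ and regard every outer-cube approximation as $\Acal_{Q_i}$-valued. By Whitney property (iv), the cardinality of $\Nscr(L)$ is bounded by a constant $C(m,n,q)$, and whenever $L'\in \Nscr(L)$ one has $|\ell(L)-\ell(L')|\leq C(m)$; in particular $\Ebf(L')$ is comparable to $\bar\Ebf(L)$ up to multiplicative constants, which will let us state the final estimates uniformly in terms of $\bar\Ebf(L)$ on each $L_i$.

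The key step is to verify consistency of neighboring local approximations on their common domain. For $L, L' \in \Gcal^o$ with $L'\in\Nscr(L)$, I would consider the intersection $K_i(L)\cap K_i(L')$ and show $u_{L,i} = u_{L',i}$ on this set. By Proposition \ref{p:refined}(iv), both currents $T_{L,i}$ and $T_{L',i}$ are obtained by restricting $T$ to a thin tubular neighborhood of $\Hbf_i$ inside $\boldsymbol{\Omega}_i(L)\cap \boldsymbol{\Omega}_i(L')$. Once $\eps$ is small enough, the $L^\infty$ height bound \eqref{e:refined-1} together with the pairwise separation $\boldsymbol{\sigma}(\Sbf)$ force those two tubular neighborhoods to select exactly the same sheet of $\spt(T)$ over $\Hbf_i$, so $T_{L,i}$ and $T_{L',i}$ agree on $\mathbf{p}_{\Hbf_i}^{-1}(K_i(L)\cap K_i(L'))$. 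Both $\Gbf_{u_{L,i}}$ and $\Gbf_{u_{L',i}}$ then represent this common current as an $\Acal_{Q_i}$-valued graph over the same base; uniqueness of the graphical parametrization of an integer rectifiable current (in the regime where all sheets lie in a $C^0$-small tube) yields $u_{L,i}=u_{L',i}$ on $K_i(L)\cap K_i(L')$.

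With consistency in hand, define $\bar K_i(L) \coloneqq L_i \cap \bigcap_{L'\in \Nscr(L)} K_i(L')$ and set $u_i^{\mathrm{pre}} \coloneqq u_{L,i}$ on $\bar K_i(L)$; this produces a well-defined $\Acal_{Q_i}$-valued map on $\bigcup_{L\in \Gcal^o} \bar K_i(L)$ with Lipschitz constant bounded on each $L_i$ by $C(\bar\Ebf(L)+2^{-2\ell(L)}\Abf^2)^\gamma$, thanks to \eqref{e:refined-3}. An $\Acal_{Q_i}$-valued McShane-type extension (see \cite{DLS_MAMS}) followed by nearest-point projection onto $\Sigma$ (valid by the smallness of $\boldsymbol{c}(\Sigma\cap\Bbf_{7\sqrt m})$ and of $\|u_i^{\mathrm{pre}}\|_{C^0}$) yields $u_i: R_i^o\to \Acal_{Q_i}(\Hbf_i^\perp)$ with $\gr(u_i)\subset \Sigma$ and the same Lipschitz bound up to dimensional constants. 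The $C^0$ and Dirichlet estimates on $L_i$ are inherited from \eqref{e:refined-2} via the inclusion $L_i\subset \Omega_i(L)$, while the measure bound on $L_i\setminus \bar K_i(L)$ follows by summing \eqref{e:refined-4} over $\Nscr(L)$ using its uniform cardinality bound.

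The main obstacle is the consistency argument in the second step: one must rule out not only local swapping between the sheet over $\Hbf_i$ and any other half-plane, but also any ambiguity in the $Q_i$-valued parametrization on overlapping good sets. Both points rest on the hierarchy of smallness fixed in Assumption \ref{a:parameters} together with the defining property of outer cubes, namely that $\Ebf(L',0)\leq \tau^2\mathbf{s}(0)^2$ for every ancestor $L'$ of $L\in \Gcal^o$, which is precisely what keeps the height excess an order of magnitude below the pairwise separation of the half-planes $\Hbf_j$ and thereby forbids sheet-swapping between neighboring cubes.
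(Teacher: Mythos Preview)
Your approach is correct and matches the paper's: the paper does not spell out the argument but simply observes that it follows from Proposition~\ref{p:refined} in the same way as \cite{DMS}*{Proposition~8.19} follows from \cite{DMS}*{Proposition~8.15}, which is exactly the gluing-and-extension scheme you describe. One small point: your argument naturally produces estimates on $L_i$ in terms of $\bar\Ebf(L)$ rather than $\Ebf(L)$ (since $\bar K_i(L)$ involves an intersection over $\Nscr(L)$), which is indeed how the analogous statement reads in \cite{DMS}; the version written here with $\Ebf(L)$ should be read accordingly, and in any case for outer cubes the distinction is harmless at the level of the subsequent applications.
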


We may now collect together the local estimates in Proposition \ref{p:coherent} to yield a global Dirichlet energy estimates on $R_i$ for the maps $u_i$, which are key for our final blow-up procedure. The proof is exactly the same as that for \cite{DMS}*{Proposition 8.20}; note that \cite{DLS14Lp}*{Theorem 2.6} still applies here since the maps $u_i$ are $\Acal_{Q_i}$-valued (rather than special $Q$-valued), and $T\mres R^o$ identifies with an integral current without boundary.

\begin{proposition}[First blow-up]\label{p:first-blowup}
    Let $T$, $\Sigma$, $\Abf$ and $\Sbf$ be as in Assumption \ref{a:refined}. Let the parameters $\bar\delta$, $\delta^*$, $\tau$ and $\eps$ be fixed as in Lemma \ref{l:regions}. Then for every $\sigma,\varsigma>0$ there exist $C=C(q,m,n,\bar{n},\delta^*,\bar\delta,\tau)>0$ and $\eps=\eps(q,m,n,\bar{n},\delta^*,\bar\delta,\tau,\sigma,\varsigma)>0$ such that
    \begin{itemize}
        \item[(i)] We have the inclusion $R\setminus \Bbf_\sigma(V) \subset R^o$;
        \item[(ii)] The maps $u_i$ in Proposition \ref{p:coherent} satisfy
        \begin{equation}\label{e:first-blowup}
            \int_{R_i} |Du_i|^2 \leq C (\sigma^{-2} \hat{\Ebf}(T,\Sbf,\Bbf_4) + \Abf^2),
        \end{equation}
        where $R_i\coloneqq (R\setminus \Bbf_\sigma(V))\cap\Hbf_i$;
        \item[(iii)] If in addition $\Abf^2 \leq \eps^2 \hat{\Ebf}(T,\Sbf,\Bbf_4)$ then for the normalizations $v_i\coloneqq \hat{\Ebf}(T,\Sbf,\Bbf_4)^{-1/2} u_i$, there exist Dir-minimizing maps $w_i : R_i \to \Acal_{Q_i}(\Hbf_i^\perp)$ with
        \[
            d_{W^{1,2}}(v_i,w_i) \leq \varsigma,
        \]
        for the $W^{1,2}$-distance $d_{W^{1,2}}$ as in \cite{DLS_MAMS}.
    \end{itemize}
\end{proposition}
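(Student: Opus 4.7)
For \emph{(i)}, we invoke Lemma \ref{l:regions}(i). Any cube $L \in \mathcal{G}$ with $R(L)\cap (R\setminus \Bbf_\sigma(V)) \neq \emptyset$ has depth $\ell(L) \leq \ell_\sigma$ for some $\ell_\sigma = \ell_\sigma(\sigma,m)$, by the diameter/distance comparisons in the Whitney decomposition. Choosing $\varepsilon \leq \bar c(q,m,n,\bar n,\tau,\ell_\sigma)$ from Lemma \ref{l:regions}(i) forces $\mathcal{G}_{\ell_\sigma}\subset \mathcal{G}^o$, and since the outer-cube condition propagates to all ancestors, we deduce $\mathcal{G}_\ell\subset \mathcal{G}^o$ for every $\ell \leq \ell_\sigma$; in particular $R\setminus \Bbf_\sigma(V) \subset R^o$.

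For \emph{(ii)}, we sum the local estimates of Proposition \ref{p:coherent}(ii) over $L \in \mathcal{G}^o$ with $L_i \subset R_i$. By Lemma \ref{l:regions}(iii), $k(L)=0$ for every such $L$, so
\[
\mathbf{E}(L) = 2^{(m+2)\ell(L)} \int_{\Bbf^h(L)} \dist^2(\,\cdot\,, \mathbf{S}) \, d\|T\|,
\]
and hence $\|Du_i\|_{L^2(L_i)}^2 \leq C\,2^{-m\ell(L)}\mathbf{E}(L) + C\,2^{-(m+2)\ell(L)}\mathbf{A}^2$. Since cubes meeting $R_i$ obey $2^{-\ell(L)}\geq c\sigma$, hence $2^{2\ell(L)}\leq C\sigma^{-2}$, we may sum the first contribution using the bounded overlap of the balls $\{\Bbf^h(L)\}$ (see the Whitney lemma) to get $\sum_L 2^{2\ell(L)}\int_{\Bbf^h(L)}\dist^2(\,\cdot\,,\mathbf{S})\,d\|T\| \leq C\sigma^{-2}\hat{\mathbf{E}}(T,\mathbf{S},\Bbf_4)$; the second contribution is controlled by $C\mathbf{A}^2$ via the geometric-series bound \eqref{e:geometric}. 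This gives \eqref{e:first-blowup}.

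For \emph{(iii)}, we argue by contradiction. Suppose sequences $T_k,\Sigma_k,\mathbf{A}_k,\mathbf{S}_k$ satisfy Assumption \ref{a:refined} together with $\mathbf{A}_k^2 \leq \varepsilon_k^2\, \hat{\mathbf{E}}(T_k,\mathbf{S}_k,\Bbf_4)$ and $\varepsilon_k \downarrow 0$, but $d_{W^{1,2}}(v_i^k, w_i) \geq \varsigma_0 > 0$ for every Dir-minimizer $w_i$ on $R_i$. Up to subsequences we may fix the sheet counts $Q_i^k \equiv Q_i$ and assume $\mathbf{S}_k\to \mathbf{S}_\infty$. By (ii) just established, $\{v_i^k\}$ is uniformly bounded in $W^{1,2}(R_i)$, and we extract a weak limit $\bar v_i$. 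Proposition \ref{p:refined}(v) yields $\Theta(T_k,\,\cdot\,)\leq (q-1)/2 < q/2$ on $R^o$, so Proposition \ref{p:integrality} identifies $T_k \res R^o$ with an area-minimizing integral current. Applying the strong Lipschitz approximation theorem \cite{DLS14Lp}*{Theorem 2.6} on each $L\in \mathcal{G}^o$ meeting $R_i$ upgrades weak to strong $W^{1,2}$ convergence on $L_i$ and shows the local limit is Dir-minimizing; by coherence (Proposition \ref{p:coherent}(i)) the local limits are restrictions of $\bar v_i$, so $\bar v_i$ is locally—and therefore globally—Dir-minimizing on $R_i$, contradicting the assumed lower bound.

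\textbf{Main obstacle.} The substantive step is \emph{(iii)}: passing from the coherent Lipschitz approximation to a bona fide Dir-minimizing blow-up limit. The extra hypothesis $\mathbf{A}^2 \leq \varepsilon^2 \hat{\mathbf{E}}(T,\mathbf{S},\Bbf_4)$ is crucial here, since it drives the ambient-curvature error below the excess scale; the density strictly below $q/2$ on $R^o$ (Proposition \ref{p:refined}(v)) is what enables one to invoke integrality and apply the De~Lellis--Spadaro strong approximation theorem, which supplies both strong convergence and the Dir-minimality of the limit.
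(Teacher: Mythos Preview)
Your proposal is correct and follows essentially the same approach as the paper, which does not give a detailed proof but simply notes that ``the proof is exactly the same as that for \cite{DMS}*{Proposition 8.20}; note that \cite{DLS14Lp}*{Theorem 2.6} still applies here since the maps $u_i$ are $\Acal_{Q_i}$-valued (rather than special $Q$-valued), and $T\mres R^o$ identifies with an integral current without boundary.'' You have correctly reconstructed this argument: Lemma \ref{l:regions}(i) for part (i), summing the local estimates of Proposition \ref{p:coherent} with bounded overlap and the bound $2^{2\ell(L)}\leq C\sigma^{-2}$ for part (ii), and the contradiction/compactness argument invoking integrality on $R^o$ (via Proposition \ref{p:refined}(v) and Proposition \ref{p:integrality}, or equivalently Lemma \ref{l:crude}(c)) together with \cite{DLS14Lp}*{Theorem 2.6} for part (iii).
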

Note that the constant $C$ in Proposition \ref{p:first-blowup} is independent of $\sigma$ and $\varsigma$. Moreover, the estimate \eqref{e:first-blowup} is suboptimal since it blows up near the spine of $\Sbf$; we will proceed to improve it to a $\sigma$-independent estimate in Section \ref{s:blowup}.

\section{Reduction of Theorem \ref{t:decay}}
Here, we reduce Theorem \ref{t:decay} to a weaker decay result, which concludes that the decay to a new $(m-1)$-invariant cone occurs at one of two possible scales; see Theorem \ref{t:two-scale-decay} below. This type of ``multi-scale excess decay'' naturally arises in our setting when the cones could be degenerating, and has been used in other such contexts (see \cites{W14_annals, MW, Min}). First of all, recall that for an open book $\Sbf=\Hbf_1\cup\cdots\cup \Hbf_N\in \Bscr^q$, with $\pi_i$ the $m$-dimensional planes that are the extensions of $\Hbf_i$ as before, we denote
\[
    \boldsymbol{\sigma}(\Sbf)\coloneqq \min_{i<j} \dist(\Hbf_i\cap\Bbf_1,\Hbf_j\cap\Bbf_1), \qquad \boldsymbol{\zeta}(\Sbf) \coloneqq \max_{i<j} \dist(\pi_i\cap\Bbf_1,\pi_j\cap\Bbf_1).
\]
\begin{theorem}\label{t:two-scale-decay}
    Let $q, Q, m,n,\bar{n}$ be as in Assumption \ref{a:refined} and fix $\varsigma_1>0$. Then there exist $\eps_1=\eps_1(q,m,n,\bar{n},\varsigma_1)\in (0,\frac{1}{2}]$ and $\bar{r}_i=\bar{r}_i(q,m,n,\bar{n},\varsigma_1)\in (0,\frac{1}{2}]$ for $i=1,2$, such that the following holds. Suppose that
    \begin{itemize}
        \item[(i)] $T$ and $\Sigma$ are as in Assumption \ref{a:main};
        \item[(ii)] $\|T\|(\Bbf_1)\leq (Q+\frac{1}{4})\omega_m$;
        \item[(iii)] There exists an open book $\Sbf\in \Bscr^q(0)$ with
        \begin{equation}\label{e:smallness-min-sep}
            \Ebb(T,\Sbf,\Bbf_1) \leq \eps_1^2 \boldsymbol{\sigma}(\Sbf)^2;
        \end{equation}
        \item[(iv)] $\Abf^2 \leq \eps_1^2 \Ebb(T,\tilde\Sbf,\Bbf_1)$ for each $\tilde\Sbf\in \Bscr^q(0)$.
    \end{itemize}
    Then there exists an open book $\Sbf'\in \Bscr^q(0)\setminus \Pscr(0)$ such that for some $i\in \{1,2\}$ we have
    \begin{equation}
        \Ebb(T,\Sbf',\Bbf_{\bar{r}_i})\leq \varsigma_1\Ebb(T,\Sbf,\Bbf_1).
    \end{equation}
\end{theorem}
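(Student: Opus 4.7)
The plan is to argue by contradiction via a compactness/blow-up argument in the style of Simon and Wickramasekera, as already employed in \cite{DMS}. The two scales $\bar r_1,\bar r_2$ naturally encode a dichotomy in the possible blow-up limits: a \emph{non-collapsed} limit on the full book $\Sbf$, versus a \emph{collapsed} limit that requires a secondary rescaling to see the true book structure.

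Suppose by contradiction that for some fixed $\varsigma_1>0$ the conclusion fails for every $\bar r_1,\bar r_2\in(0,1/2]$. Then one can extract a sequence $(T_k,\Sigma_k,\Sbf_k)$ satisfying (i)--(iv) with $\eps_1=\eps_k\downarrow 0$, such that for every open book $\Sbf'\in \Bscr^q(0)\setminus\Pscr(0)$ and every $r\in\{\bar r_1,\bar r_2\}$,
\[
\mathbb{E}(T_k,\Sbf',\Bbf_r)\;>\;\varsigma_1\,\mathbb{E}(T_k,\Sbf_k,\Bbf_1).
\]
Set $\Ebb_k:=\mathbb{E}(T_k,\Sbf_k,\Bbf_1)$ and $\sigma_k:=\boldsymbol{\sigma}(\Sbf_k)$. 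By (iii), $\Ebb_k/\sigma_k^2\to 0$, so Assumption \ref{a:refined} is met and Proposition \ref{p:first-blowup} applies at scale $\Bbf_4$ (after a harmless rescaling). This produces, for each half-plane $\Hbf_i^k$ of $\Sbf_k$, Lipschitz $\Acal_{Q_i}$-valued approximations $u_i^k$ over $R_i^k\subset \Hbf_i^k$, and, by Proposition \ref{p:first-blowup}(iii), the normalizations $v_i^k:=\Ebb_k^{-1/2}u_i^k$ converge in $W^{1,2}_{\loc}$ (away from the spine $V(\Sbf_k)$) to Dir-minimizers $w_i$ on the corresponding half-plane $\Hbf_i$.

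The first main step is to extend the $w_i$ across the spine and identify their boundary behaviour. Theorem \ref{t:no-gaps} guarantees that the multiplicity-$Q=q/2$ points of $T_k$ accumulate on all of $V(\Sbf_k)$ (no gaps), so the half-planes $\Hbf_i^k$ cannot ``detach'' from the spine in the limit; this forces the $w_i$ to match in an appropriate balanced sense along $V$. Following Wickramasekera's analysis of Dir-minimizers with free boundary on a common $(m-1)$-plane (cf. \cite{W14_annals}*{\S12}, used in the same spirit in \cite{DMS}), one shows that $w:=(w_1,\ldots,w_N)$ is in fact a \emph{Dir-minimizer on the open book $\Sbf$} with the balancing condition $\sum_i Q_i (w_i)_*=0$ at $V$ (where $(w_i)_*$ denotes the average-trace on $V$). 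A standard Almgren-frequency/homogeneity argument then yields a tangent map at $0$ which is homogeneous degree~$1$ and, combined with the classification of such homogeneous minimizers on open books, produces a new limiting cone $\Sbf_\infty\in\Bscr^q(0)$ that is arbitrarily close to $\Sbf$ on smaller scales.

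At this point we split into two cases according to whether $\Sbf_\infty$ is planar. In the \emph{non-collapsed} case ($\Sbf_\infty$ non-planar, with spine equal to $V$ after possibly applying Lemma \ref{l:spine-comparison}), homogeneity of the tangent map gives
\[
\frac{1}{\bar r_1^{m+2}}\int_{\Bbf_{\bar r_1}}\dist^2(p,\Sbf_\infty)\,d\|w\|\;\leq\;\tfrac12\varsigma_1
\]
for a sufficiently small universal $\bar r_1=\bar r_1(q,m,n,\bar n,\varsigma_1)$, and translating back via the Lipschitz approximation together with the reverse estimate (cf. Lemma \ref{l:shift} and the corresponding covering arguments in \cite{DMS}) yields an $\Sbf'\in\Bscr^q(0)\setminus\Pscr(0)$ with $\mathbb{E}(T_k,\Sbf',\Bbf_{\bar r_1})\le\varsigma_1\Ebb_k$ for $k$ large, contradicting the assumption at scale $\bar r_1$. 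In the \emph{collapsed} case ($\Sbf_\infty$ planar), the $w_i$'s collapse to a single affine map, so the decay is not yet visible at scale $\bar r_1$. However, by Lemma \ref{l:planar-excess-zeta} the planar excess $\Ebf^p(T_k,\Bbf_1)$ is then much smaller than $\Ebb_k$ while $\Ebb_k/\boldsymbol{\zeta}(\Sbf_k)^2\to 0$, so Theorem \ref{t:no-gaps} still forces the existence of density-$Q$ points along the original spine. One re-runs the blow-up after rescaling the ambient normal bundle by $\boldsymbol{\zeta}(\Sbf_k)$ (as in the second compactness step in the proof of Theorem \ref{t:no-gaps}), producing a new non-planar open book $\Sbf'$, and using the second (smaller) scale $\bar r_2=\bar r_2(q,m,n,\bar n,\varsigma_1)$ chosen from the Dir-minimizer decay on this rescaled object, one obtains $\mathbb{E}(T_k,\Sbf',\Bbf_{\bar r_2})\le\varsigma_1\Ebb_k$, again contradicting the assumption. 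This completes the dichotomy.

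The main obstacle will be the boundary analysis in Step~2: namely, showing that the limit $w$ is Dir-minimizing as a whole on the open book $\Sbf$ rather than merely on each page separately, and identifying the correct balancing along the spine. This uses Theorem \ref{t:no-gaps} crucially (to rule out loss of the spine in the limit) and requires a careful adaptation of Wickramasekera's boundary regularity arguments to the $\Acal_Q$-valued, higher-codimension setting, parallelling \cite{DMS}*{\S10}. The other technical point is to organise the dichotomy cleanly: one must verify that in the collapsed case the rescaling by $\boldsymbol{\zeta}(\Sbf_k)$ is compatible with the crude and refined approximations of Section \ref{s:approx}, which is exactly what Lemma \ref{l:planar-excess-zeta} is designed for.
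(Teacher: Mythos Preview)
Your overall architecture is right: the paper also reduces Theorem \ref{t:two-scale-decay} to a collapsed/non-collapsed dichotomy (Propositions \ref{p:collapsed} and \ref{p:noncollapsed}), each proved by a contradiction blow-up using the coherent approximations of Section \ref{s:approx}, Simon's spine estimates (Section \ref{s:spine-est}), and a linearized decay statement for Dir-minimizers on half-planes (Lemma \ref{l:bdry-decay}). But several of your technical steps are off, and a couple would actually break the argument if carried out as written.

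First, the dichotomy is not decided \emph{a posteriori} by whether the limit cone $\Sbf_\infty$ is planar. The paper fixes a threshold $\eps_c^\star$ on $\boldsymbol{\zeta}(\Sbf)$ \emph{before} taking limits: if $\boldsymbol{\zeta}(\Sbf)\le\eps_c$ one runs the collapsed argument (reparameterizing everything over a single plane via Proposition \ref{p:transversal-coherent}), otherwise the non-collapsed one. Your version, in which one first blows up and then branches on the nature of $\Sbf_\infty$, makes the choice of $\bar r_1,\bar r_2$ depend on the sequence and loses the uniformity needed for the contradiction. Relatedly, your sentence ``the planar excess $\Ebf^p(T_k,\Bbf_1)$ is then much smaller than $\Ebb_k$'' is backwards: under (iii) one has $\Ebb_k\le\eps_k^2\boldsymbol\sigma(\Sbf_k)^2\le\eps_k^2\boldsymbol\zeta(\Sbf_k)^2$, while by Lemma \ref{l:planar-excess-zeta} $\Ebf^p\sim\boldsymbol\zeta^2$, so $\Ebb_k\ll\Ebf^p$, not the reverse.

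Second, the decay mechanism you invoke is not the one that works here. You write that an ``Almgren-frequency/homogeneity argument'' gives a degree-$1$ tangent map and then appeal to a classification of homogeneous minimizers on open books. The paper does not do this, and for good reason: the blow-ups $\bar u_i$ (or $\bar w_i$) are only Dir-minimizing on each half-plane separately, not ``on the open book'' as a single object, and there is no balancing condition of the form $\sum_i Q_i(w_i)_*=0$. What actually drives the decay is: (a) the first-variation identities of Proposition \ref{p:final-blowup}(d), which show that the \emph{averages} $\bar u$, $\bar w^{(1)}$, $\bar w^{(2)}$ are harmonic and extend smoothly across $V$ with $\partial_t\partial_v=0$ there (Proposition \ref{p:harmonic}); (b) subtracting the induced linear boundary data to land in the class $\mathscr{H}$ of Definition \ref{d:spaces}; and (c) the boundary decay Lemma \ref{l:bdry-decay} for maps in $\mathscr{H}$. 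The final contradiction (Section 8.5) is obtained by building explicit new books $\Sbf_k''$ via a one-parameter rotation $R_k$ generated by the linear part $b$ (or $b^\parallel$) plus graphs of $a^{nc}$ (or $a^c$), and showing $\Ebb(T_k,\Sbf_k'',\Bbf_{r})/\Ebb_k\to C\varepsilon<\varsigma_1$. Your ``translating back via the Lipschitz approximation'' glosses over exactly this construction, and your appeal to Lemma \ref{l:shift} at that point is misplaced (it is used earlier, inside the spine estimates of Proposition \ref{p:Simon-shift}, not in the endgame).
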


In order to prove that Theorem \ref{t:two-scale-decay} implies Theorem \ref{t:decay}, we first require the following intermediate proposition. First of all the latter allows us to reduce the hypothesis \eqref{e:smallness} to the one in \eqref{e:smallness-min-sep}, up to removing some of the half-planes in $\Sbf$: such an operation may increase the $L^2$ conical excess but by no more than a dimensional constant. Secondly, in Proposition \ref{p:multi-decay}  the decay is achieved at one of finitely many scales. This can be compared with the reduction from \cite{DMS}*{Theorem 10.2} to \cite{DMS}*{Proposition 10.3}.

\begin{proposition}\label{p:multi-decay}
	Let $q,n,\bar{n} \in \N$, let $m\in \N_{\geq 2}$ and let $Q=\frac{q}{2}$. Let $\bar{N}=q(q-1)$ and fix $\varsigma_2>0$. Then there exist $\eps_2, r_1,\dots,r_{\bar N} \leq \frac{1}{2}$, depending on $q,m,n,\bar{n},\varsigma$, such that the following holds. Suppose that $T$, $\Sigma$ and $\Abf$ are as in Theorem \ref{t:decay} with $\eps_2$ in place of $\eps_0$, namely:
    \begin{itemize}
        \item[(i)] $T$, $\Sigma$ and $\Abf$ are as in Assumption \ref{a:main};
        \item[(ii)] $\|T\|(\Bbf_1)\leq (Q+\frac{1}{4})\omega_m$;
        \item[(iii)] There exists $\Sbf \in \Bscr^q(0)\setminus\Pscr(0)$ with
        \[
            \Ebb(T,\Sbf,\Bbf_1) \leq \eps_2^2 \Ebf^p(T,\Bbf_1);
        \]
        \item[(iv)] $\Abf^2\leq \eps_2^2\Ebb(T,\tilde\Sbf,\Bbf_1)$ for each $\tilde\Sbf\in \Bscr^q(0)$.
    \end{itemize}
    Then there exists $\Sbf'\in \Bscr^q(0)\setminus \Pscr(0)$ and an index $i \in \{1,\dots, \bar{N}\}$ such that
\begin{equation}
	\Ebb(T,\Sbf',\Bbf_{r_i}) \leq \varsigma_2\Ebb(T,\Sbf,\Bbf_1).
\end{equation}
\end{proposition}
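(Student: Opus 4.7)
The plan is to reduce Proposition~\ref{p:multi-decay} to Theorem~\ref{t:two-scale-decay} via a pruning procedure based on Lemma~\ref{l:planar-excess-zeta}. The hypotheses of the two statements differ only in the form of the smallness condition: Proposition~\ref{p:multi-decay} assumes that the two-sided conical excess $\Ebb(T,\Sbf,\Bbf_1)$ is much smaller than the \emph{planar} excess $\Ebf^p(T,\Bbf_1)$, whereas Theorem~\ref{t:two-scale-decay} demands that it be much smaller than the squared minimal pairwise separation $\boldsymbol{\sigma}(\Sbf)^2$ of the pages of $\Sbf$. Bridging this gap is precisely what the pruning lemma achieves, by passing to a sub-book $\Sbf^{(1)}\subset \Sbf$ on which the minimal and maximal pairwise separations become comparable to each other and to $\Ebf^p(T,\Bbf_1)^{1/2}$.

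I would fix the parameters in the following order: given $\varsigma_2>0$, choose $\varsigma_1$ (the parameter in Theorem~\ref{t:two-scale-decay}) sufficiently small and let $\eps_1 = \eps_1(q,m,n,\bar n,\varsigma_1)$ be as in that theorem; set $\bar\delta \leq \eps_1$, which determines the constants $\bar C = \bar C(q,m,n,\bar n,\bar\delta)$ and $\eps_3 = \eps_3(q,m,n,\bar n,\bar\delta)$ in Lemma~\ref{l:planar-excess-zeta}; pick $\varsigma_1$ small enough so that $\varsigma_1 \bar C \leq \varsigma_2$ (the absence of a genuine circular dependence is a routine monotonicity check since $\eps_1$ is nondecreasing in $\varsigma_1$); and finally put $\eps_2 \leq \min(\eps_1, \eps_3)$. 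Under the hypotheses of Proposition~\ref{p:multi-decay} with this $\eps_2$, Lemma~\ref{l:planar-excess-zeta} applied to $T$ and $\Sbf$ produces a sub-book $\Sbf^{(1)}\subset \Sbf$ with at least two pages satisfying
\[
\Ebb(T,\Sbf^{(1)},\Bbf_1) \leq \bar C\, \Ebb(T,\Sbf,\Bbf_1) \quad \text{and} \quad \Abf^2 + \Ebb(T,\Sbf^{(1)},\Bbf_1) \leq \bar\delta^2\, \boldsymbol{\sigma}(\Sbf^{(1)})^2.
\]
Since $\boldsymbol{\zeta}(\Sbf^{(1)}) = \boldsymbol{\zeta}(\Sbf) \geq C^{-1}\Ebf^p(T,\Bbf_1)^{1/2}$ (and the result is trivial with $\Sbf'=\Sbf$ in the degenerate case $\Ebf^p(T,\Bbf_1)=0$), $\Sbf^{(1)}\in \Bscr^q(0)\setminus \Pscr(0)$. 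Now applying Theorem~\ref{t:two-scale-decay} to $\Sbf^{(1)}$ (whose hypotheses are inherited from Proposition~\ref{p:multi-decay} together with $\bar\delta\leq \eps_1$) yields $\Sbf^{(2)}\in \Bscr^q(0)\setminus \Pscr(0)$ and a scale $\bar r \in\{\bar r_1,\bar r_2\}$ with
\[
\Ebb(T,\Sbf^{(2)},\Bbf_{\bar r}) \leq \varsigma_1\, \Ebb(T,\Sbf^{(1)},\Bbf_1) \leq \varsigma_1 \bar C\, \Ebb(T,\Sbf,\Bbf_1) \leq \varsigma_2\, \Ebb(T,\Sbf,\Bbf_1).
\]

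In isolation this already produces the desired decay at one of the two fixed scales $\{\bar r_1,\bar r_2\}$, so the quantitative choice $\bar N = q(q-1)$ is a pessimistic upper bound rather than a tight one. The larger count arises by allowing the pruning / decay argument to be iterated: each application of Lemma~\ref{l:planar-excess-zeta} strictly reduces the number of pages in the current open book, so the procedure can be repeated at most $q-1$ times, each iteration contributing up to two new candidate scales via Theorem~\ref{t:two-scale-decay}, for a total bounded by $2(q-1) \leq q(q-1)$ (for $q\geq 3$). An iteration step would be invoked if the cone $\Sbf^{(2)}$ output by Theorem~\ref{t:two-scale-decay} failed to satisfy the pruning hypothesis at the new scale; we would then re-prune $\Sbf^{(2)}$ via Lemma~\ref{l:planar-excess-zeta} and re-apply Theorem~\ref{t:two-scale-decay}. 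The genuine obstacle is not the analytic content, which is essentially one-shot, but the bookkeeping: checking that the chain of parameter choices closes, that the iteration terminates after a dimensionally-controlled number of steps, and that the resulting scale always lies in a predetermined finite collection $\{r_1,\ldots,r_{\bar N}\}$ depending only on $q,m,n,\bar n,\varsigma_2$ and not on the particular $T$ or $\Sbf$.
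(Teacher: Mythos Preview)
Your one-shot reduction via Lemma~\ref{l:planar-excess-zeta} has a genuine circular dependence that cannot be waved away. You need $\bar\delta \leq \eps_1(\varsigma_1)$ so that conclusion~(d) of Lemma~\ref{l:planar-excess-zeta} verifies hypothesis~(iii) of Theorem~\ref{t:two-scale-decay}, and simultaneously $\varsigma_1 \bar C(\bar\delta) \leq \varsigma_2$. But $\bar C(\bar\delta)$ blows up as $\bar\delta\to 0$ (it is essentially $\Gamma^2$ from the Pruning Lemma, so of order $\bar\delta^{\,2(2-N)}$), while $\eps_1(\varsigma_1)$ comes from a compactness argument and typically tends to $0$ as $\varsigma_1\to 0$. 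The monotonicity of $\eps_1$ in $\varsigma_1$ does nothing to control the product $\varsigma_1\,\bar C(\eps_1(\varsigma_1))$: without quantitative information on how $\eps_1$ depends on $\varsigma_1$, you cannot guarantee that this product can be made smaller than $\varsigma_2$.

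The paper's proof resolves exactly this issue, and it is why the full $q(q-1)$ scales appear rather than just two. Instead of pruning once with a target $\bar\delta$ tied to the final decay, the paper removes half-planes one at a time: at step $j$ one has a sub-book $\Sbf_{N-j}$ with $\Ebb(T,\Sbf_{N-j},\Bbf_1)\leq C_j^*\,\Ebb(T,\Sbf,\Bbf_1)$, where $C_j^*$ is defined recursively from $C_{j-1}^*$ and $\eps^{(N-j+1)}(\varsigma_2/C_{j-1}^*)$. One then tests whether Theorem~\ref{t:two-scale-decay} applies to $\Sbf_{N-j}$ with decay target $\varsigma_2/C_j^*$; if so, stop, otherwise remove one more half-plane. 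Because the recursion has depth at most $N-2\leq q-2$, all constants are well-defined after finitely many steps, and the loop is broken. The radii at each potential stopping step depend on both the initial $N$ and the step $j$, giving $\sum_{N=2}^q 2(N-1)=q(q-1)$ radii in total. Your iterative sketch in the last paragraph gestures in this direction but does not identify the recursive definition of the constants $C_j^*$, which is the mechanism that untangles the circularity.
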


\begin{proof}[Proof of Proposition \ref{p:multi-decay} from Theorem \ref{t:two-scale-decay}]
 Fix $\Sbf=\Hbf_1\cup\cdots\cup \Hbf_N\in \Bscr^q(0)\setminus \Pscr(0)$. We argue in the spirit of that in \cite{DMS}*{Section 10}. Fix $k\in \{3,\dots, q\}$, and for $s>0$ define $\eps^{(k)}(s)>0$, $r^{(k)}_i(s)>0$ as follows:
 \begin{itemize}
 	\item Take $q=k$ and $\varsigma_1=s$ in Theorem \ref{t:two-scale-decay} and let $\eps^{(k)}(s)\coloneqq \eps_1(k,m,n,\bar{n},s)$ and $r^{(k)}_i(s) \coloneqq r_i(k,m,n,\bar{n},s)$ be the corresponding parameters therein, for $i=1,2$.
 \end{itemize}
Note that $k$ characterizes the number of half-planes in a given open book, while $s$ characterizes the factor by which the two-sided $L^2$ conical excess decays as in Theorem \ref{t:two-scale-decay}. 

We start observing that, an elementary argument (c.f. \cite{DMS}*{Proof of Proposition 10.3}) shows the existence of a constant $C^\dagger=C^\dagger(m,n,\bar{n})>0$ such that
\begin{equation}\label{e:C-dagger}
    \Ebf^p(T,\Bbf_1) \leq C^\dagger \boldsymbol{\zeta}(\Sbf)^2\, .
\end{equation}
In particular we can conclude that Theorem \ref{t:two-scale-decay} can be immediately applied when $N=2$ provided $\varepsilon_2\leq \varepsilon_1/\sqrt{C^\dagger}$, because for $N=2$ we trivially have $\boldsymbol{\zeta} (\Sbf) \leq \boldsymbol{\sigma} (\Sbf)$. Thus, we may assume that $N\geq 3$. Moreover, we may assume that
\begin{equation}\label{e:large-excess-min-sep-ratio}
    \Ebb(T,\Sbf,\Bbf_1) > \eps^{(N)}(\varsigma_2)^2 \boldsymbol{\sigma}(\Sbf)^2.
\end{equation}
Indeed, otherwise, as long as $\eps_2\leq\eps^{(N)}(\varsigma_2)$, we may apply Theorem \ref{t:two-scale-decay}.

Let us now proceed to prune $\Sbf$, following an argument analogous to that for the Pruning Lemma (Lemma \ref{l:pruning}). 

Observe that, because of \eqref{e:C-dagger} and \eqref{e:large-excess-min-sep-ratio}, we have 
\[
\boldsymbol{\sigma} (\mathbf{S})^2 
< \varepsilon^{(N)} (\varsigma_2)^{-2} \Ebb (T, \mathbf{S}, \Bbf_1)
\leq \varepsilon_2^2 \varepsilon^{(N)} (\varsigma_2)^{-2} \Ebf^p (T, \Bbf_1)
\leq C^\dagger \varepsilon_2^2 \varepsilon^{(N)}  (\varsigma_2)^{-2} \boldsymbol{\zeta} (\mathbf{S})^2\, .
\]
In particular, if $\varepsilon_2$ is small enough,
\begin{equation}\label{e:s<z}
\boldsymbol{\sigma} (\mathbf{S}) <
\boldsymbol{\zeta} (\mathbf{S})\, .
\end{equation}
 Up to relabelling the indices, we can assume that 
\[
    \boldsymbol{\sigma}(\Sbf) = \dist(\Hbf_1\cap\Bbf_1,\Hbf_2\cap\Bbf_1)\, .
\]
Now let
\[
    \boldsymbol{\zeta}(\Sbf)=\dist(\pi_{i_*}\cap\Bbf_1,\pi_{j_*}\cap\Bbf_1)\, ,
\]
where $\pi_i$ is the $m$-dimensional plane containing $\Hbf_i$. Obviously $\{i_*, j_*\}\neq \{1,2\}$, otherwise we would have 
$\boldsymbol{\zeta} (\mathbf{S}) 
\leq \boldsymbol{\sigma} (\mathbf{S})
< \boldsymbol{\zeta} (\mathbf{S})$.
We can therefore without loss of generality assume that $i_*, j_*\geq 2$.

Now let $\Sbf_{N-1}\coloneqq \Hbf_2\cup\cdots\cup\Hbf_N$ denote the open book with the half-plane $\Hbf_1$ removed. This satisfies $V(\Sbf_{N-1})=V(\Sbf)$ and since $i_*,j_*\neq 1$, we have $\boldsymbol{\zeta}(\Sbf)=\boldsymbol{\zeta}(\Sbf_{N-1})$. Moreover, by \eqref{e:large-excess-min-sep-ratio} we have
\[
    \dist^2(\Sbf\cap\Bbf_1,\Sbf_{N-1}\cap\Bbf_1)=\boldsymbol{\sigma}(\Sbf)^2\leq \eps^{(N)}(\varsigma_2)^{-2}\Ebb(T,\Sbf,\Bbf_1).,
\]
which in turn yields
\[
    \Ebb(T,\Sbf_{N-1},\Bbf_1) \leq C_0 (1+ \eps^{(N)}(\varsigma_2)^{-2})\Ebb(T,\Sbf,\Bbf_1),
\]
for $C_0=C_0(q,m,n,\bar n)>0$. 

We then proceed as in the proof of \cite{DMS}*{Proposition 10.3}. In particular we define the constants $C_0^*=1$ and $C_j^*=C_0\left(C_{j-1}^* + \eps^{(N-(j-1))}(\varsigma_2/C_{j-1}^*)^{-2}\right)$ for $1\leq j \leq N-2$ and produce cones $\mathbf{S}_{N-1}, \mathbf{S}_{N-2}, \ldots , \mathbf{S}_{N-k}$ by removing at each step a half plane from $\mathbf{S}_{N-j}$ to produce $\mathbf{S}_{N-j-1}$. This is done as long as at the step $j$ we cannot apply Theorem \ref{t:two-scale-decay} to the cone $\mathbf{S}_{N-j}$ with decay $\varsigma_2/C_j^*$, namely as long as 
\begin{equation}\label{e:large-excess-min-sep-ratio-ind}
   \Ebb(T,\Sbf_{N-j},\Bbf_1) > \eps^{(N-j)}(\varsigma_2/C_j^*)^2 \boldsymbol{\sigma}(\Sbf_{N-j})^2.
\end{equation}
In particular $k$ is the first step at which \eqref{e:large-excess-min-sep-ratio-ind} fails.
For the moment we assume that we have shown that at each intermediate step $j$ (i.e. at each step for which \eqref{e:large-excess-min-sep-ratio-ind} holds for $j$ and for all smaller indices) we have
\begin{equation}\label{e:s<z-ind}
\boldsymbol{\sigma} (\Sbf_{N-j}) < \boldsymbol{\zeta} (\Sbf_{N-j})\, ,
\end{equation}
so that our algorithm produces at step $j$ a cone $\Sbf_{N-j-1}$ with $\boldsymbol{\zeta} (\mathbf{S}_{N-j-1}) = \boldsymbol{\zeta} (\mathbf{S}_{N-j}) = \boldsymbol{\zeta} (\mathbf{S})$ (we will justify \eqref{e:s<z-ind} momentarily). Incidentally, showing \eqref{e:s<z-ind} also proves that the procedure stops necessarily after at most $N-2$ steps, because \eqref{e:s<z-ind} cannot be valid when the cone in question consists of two halfplanes.

Given that the procedure stops after at most $N-2$ steps, it yields a collection of radii 
$$r_1^{N,N}, r_2^{N,N}, r_1^{N,N-1}, r_2^{N,N-1},\dots,r_1^{N,2}, r_2^{N,2}$$ 
for which the conclusion of Proposition \ref{p:multi-decay} must hold, where $r_1^{N,N-k}, r_2^{N,N-k}$ are the two radii given by Theorem \ref{t:two-scale-decay} applied to a cone with $N-k$ halfplanes when imposing a decay factor $\varsigma_2/C_j^*$. As $N$ ranges over the collection $\{2,\dots, q\}$, we obtain $q(q-1)$ possible radii in total. We omit the details here, and instead refer the reader to the argument within the proof of \cite{DMS}*{Proposition 10.3}.

It now remains to check that \eqref{e:s<z-ind} holds. We claim that this is guaranteed if we impose the smallness condition
\begin{equation}
\varepsilon_2^2 \leq \min_j\,  \left[(C^\dagger C_j^*)^{-1} \varepsilon^{(N-j)} (\varsigma_j/C_j^*)^2\right]\, .
\end{equation}
Indeed let us argue by contradiction and assume that $j$ is the first step at which \eqref{e:s<z-ind} fails (we already argued that \eqref{e:s<z-ind} holds for $j=0$). In particular since it held at all the steps prior to $j$, we know that $\boldsymbol{\zeta} (\Sbf) = \boldsymbol{\zeta} (\Sbf_{N-j})$. Observe moreover that, as argued in \cite{DMS}*{Proposition 10.3}, we have the inequality
\[
\Ebb (T, \mathbf{S}_{N-j}, \Bbf_1)
\leq C_j^* \Ebb (T, \mathbf{S}, \Bbf_1)\, .
\]
In particular, combining the last two pieces of information with \eqref{e:C-dagger} and \eqref{e:large-excess-min-sep-ratio-ind}, we get 
\begin{align*}
    \boldsymbol{\zeta} (\Sbf)^2 = \boldsymbol{\zeta}(\Sbf_{N-j})^2 \leq \boldsymbol{\sigma}(\Sbf_{N-j})^2 &< \eps^{(N-j)}(\varsigma_2/C^*_j)^{-2}\Ebb(T,\Sbf_{N-j},\Bbf_1) \\
    &\leq C_j^*\eps^{(N-j)}(\varsigma_2/C^*_j)^{-2} \Ebb(T,\Sbf,\Bbf_1) \\
    &\leq C_j^*\eps^{(N-j)}(\varsigma_2/C^*_j)^{-2} \eps_2^2\Ebf^p(T,\Bbf_1) \\
    &\leq C^\dagger C_j^*\eps^{(N-j)}(\varsigma_2/C^*_j)^{-2}  \boldsymbol{\zeta}(\Sbf)^2\\ 
    &\leq \boldsymbol{\zeta}(\Sbf)^2\, ,
\end{align*}
which is a contradiction. 
\end{proof}

The proof of Theorem \ref{t:decay} assuming the validity of Proposition \ref{p:multi-decay} now follows by analogous reasoning to that in \cite{DMS}*{Section 10}. However, for the purpose of clarity, we repeat the parts of the argument that have minor differences due to our cones being open books here. We begin with some intermediate results which will be needed. The first is the following analogue of \cite{DMS}*{Corollary 10.4}, which is a consequence of the Pruning Lemma (Lemma \ref{l:pruning}) and Proposition \ref{p:refined}, which allows us to control the two-sided conical excess at smaller scales comparable to 1. The proof is exactly the same as that in \cite{DMS}, only without the requirement of balancing the open book, and with Lemma \ref{l:pruning}, Lemma \ref{l:regions} and Proposition \ref{p:refined} used in place of \cite{DMS}*{Lemma 8.2, Lemma 8.14, Proposition 8.15} respectively. We therefore omit the proof here.

\begin{lemma}\label{l:scaled-excess-control}
    Assume that $\tilde{T}$, $\tilde\Sigma$ and $\tilde\Abf = \Abf(\tilde\Sigma)$ satisfy Assumption \ref{a:main} and let $q,Q,m,n,\bar{n}$ be as in Assumption \ref{a:refined}. Let $\tilde\Sbf\in \Bscr^q(0)\setminus \Pscr(0)$ and let $\bar r \in (0,1]$. Then there exist $\tilde\eps=\tilde\eps(q,m,n,\bar{n},\bar r)>0$ and $\tilde C = \tilde C(q,m,n,\bar n, \bar r)>0$ such that the following holds. Suppose that
    \[
        \Ebb(\tilde T,\tilde \Sbf,\Bbf_1) \leq \tilde\eps^2\Ebf^p(\tilde T,\Bbf_1)
    \]
    and
    \[
        \tilde\Abf^2 \leq\tilde\eps^2\Ebb(\tilde T,\bar\Sbf,\Bbf_1) \qquad \forall \bar\Sbf\in\Bscr^q(0).
    \]
    Then there exists $\Sbf'\in \Bscr^q(0)\setminus \Pscr(0)$ with
    \begin{equation}\label{e:excess-control}
        \Ebb(\tilde T,\Sbf',\Bbf_r)\leq \tilde{C}\Ebb(\tilde T, \tilde\Sbf,\Bbf_1) \qquad \forall r\in [\bar r,1].
    \end{equation}
\end{lemma}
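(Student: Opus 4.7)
My plan is to select $\Sbf'$ as a pruned version of $\tilde\Sbf$ and then to estimate the one-sided and reverse components of $\Ebb(\tilde T,\Sbf',\Bbf_r)$ separately, using the refined graphical approximations of Section \ref{s:approx} in the outer region away from the spine. To set this up, I first fix $\bar\delta$ as in Assumption \ref{a:parameters} and apply Lemma \ref{l:planar-excess-zeta} to $\tilde T$ and $\tilde\Sbf$, which is permissible provided $\tilde\eps$ is below the corresponding threshold $\eps_3$. This produces an open book $\Sbf'\subset\tilde\Sbf$, necessarily in $\Bscr^q(0)\setminus\Pscr(0)$ because $\boldsymbol\zeta(\Sbf') = \boldsymbol\zeta(\tilde\Sbf)>0$, with
\[
\Ebb(\tilde T,\Sbf',\Bbf_1)\leq \bar C\,\Ebb(\tilde T,\tilde\Sbf,\Bbf_1),\qquad \tilde\Abf^2 + \Ebb(\tilde T,\Sbf',\Bbf_1)\leq \bar\delta^2\boldsymbol\sigma(\Sbf')^2,
\]
so that Assumption \ref{a:refined} is in force (after a harmless dilation by a factor $4$) and the graphical approximation machinery of Section \ref{s:approx} becomes available for $\tilde T$ and $\Sbf'$. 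This $\Sbf'$ will be the sought-after open book.

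The one-sided component is immediate: since $\Bbf_r\subset\Bbf_1$ and $r\geq \bar r$,
\[
\hat\Ebf(\tilde T,\Sbf',\Bbf_r) \leq \bar r^{-(m+2)}\hat\Ebf(\tilde T,\Sbf',\Bbf_1) \leq \bar r^{-(m+2)}\bar C\,\Ebb(\tilde T,\tilde\Sbf,\Bbf_1).
\]

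The reverse component $\hat\Ebf(\Sbf',\tilde T,\Bbf_r)$ is where the real work lies, and constitutes the main obstacle: the integration domain $\Bbf_r\cap\Sbf'\setminus\Bbf_{ar}(V(\Sbf'))$ is \emph{not} contained in the corresponding domain at scale $1$, because the tubular exclusion $\Bbf_{ar}(V(\Sbf'))$ shrinks with $r$. Since however $r\geq\bar r$, this domain lies within the fixed annular region $\Bbf_1\cap\Sbf'\setminus\Bbf_{a\bar r}(V(\Sbf'))$, which I cover by Whitney sets $R(L)$ with $\ell(L)$ in a bounded range determined by $\bar r$. Choosing $\tilde\eps$ sufficiently small in terms of $\bar r$, Lemma \ref{l:regions}(i) forces every such $L$ to be outer with $k(L)=0$, and then Proposition \ref{p:crude-approx}(f) together with the pruned smallness guarantees $Q_{L,i}\geq 1$ for every half-plane $\Hbf_i$ of $\Sbf'$. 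Proposition \ref{p:refined}(iv) then approximates $\tilde T$ on each $L$ by $Q_{L,i}$-valued Lipschitz maps $u_{L,i}$ with $\|u_{L,i}\|_{C^0}^2\leq \bar C\,2^{-2\ell(L)}(\Ebf(L)+2^{-2\ell(L)}\tilde\Abf^2)$; because $Q_{L,i}\geq 1$, every $y\in\Hbf_i\cap L$ admits a point of $\spt(\tilde T)$ within distance $\|u_{L,i}\|_{C^0}$. Summing over the finitely many outer cubes covering the annular region, and using $\Ebf(L)\leq 2^{(m+2)\ell(L)}\hat\Ebf(\tilde T,\Sbf',\Bbf_1)$ together with the boundedness of $\ell(L)$ in terms of $\bar r$, yields
\[
\hat\Ebf(\Sbf',\tilde T,\Bbf_r)\leq C(\bar r)\bigl(\Ebb(\tilde T,\Sbf',\Bbf_1)+\tilde\Abf^2\bigr)\leq C(\bar r)\,\Ebb(\tilde T,\tilde\Sbf,\Bbf_1),
\]
where the last inequality uses the hypothesis $\tilde\Abf^2\leq\tilde\eps^2\Ebb(\tilde T,\tilde\Sbf,\Bbf_1)$ with $\bar\Sbf=\tilde\Sbf$. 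Combining the two estimates yields the desired bound.
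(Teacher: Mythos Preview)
Your approach is correct and follows essentially the same route as the paper, which defers to \cite{DMS}*{Corollary 10.4}: prune $\tilde\Sbf$ via Lemma \ref{l:planar-excess-zeta}, invoke Lemma \ref{l:regions}(i) to force all Whitney cubes at scales comparable to $\bar r$ to be outer, and control the reverse excess through Proposition \ref{p:refined}. One small refinement worth noting: the claim that every $y\in\Hbf_i\cap L$ admits a nearby point of $\spt(\tilde T)$ is more cleanly justified by the push-forward identity in Proposition \ref{p:crude-approx}(b) (since $Q_{L,i}\geq 1$, $\spt(T_{L,i})$ surjects onto $\Omega_i(L)$) together with the height bound \eqref{e:refined-1}, rather than via the Lipschitz map $u_{L,i}$ directly, because $\gr(u_{L,i})$ need not lie in $\spt(\tilde T)$ off the set $K_i(L)$; and $Q_{L,i}\geq 1$ itself follows from Proposition \ref{p:crude-approx}(f) applied at the top cube combined with the constancy in Proposition \ref{p:refined}(i), rather than by applying (f) at each $L$ separately.
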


Furthermore, we have the following analogue of \cite{DMS}*{Lemma 10.5}.

\begin{lemma}\label{l:rescalings-decay}
    Let $\bar r \in (0,\frac{1}{2}]$ and $\eps_2\in (0,1)$. There exists $\eps_0=\eps_0(q,m,n,\bar n,\bar r,\eps_2)>0$ such that the following holds. Suppose that $T$, $\Sigma$, $\Abf$ and $\Sbf$ satisfy the hypotheses of Theorem \ref{t:decay} with this choice of $\eps_0$. Then for each $r\in [\bar r,\frac{1}{2}]$ the following holds. Suppose that there exists $\Sbf_r\in \Bscr^q(0)$ with
    \begin{itemize}
        \item[(i)] $\Ebb(T,\Sbf_r,\Bbf_r)\leq \Ebb(T,\Sbf,\Bbf_1)$,
        \item[(ii)] $r^2\Abf^2 \leq \eps_0^2\inf\{\Ebb(T,\Sbf',\Bbf_r) : \Sbf'\in \Bscr^q(0)\}$.
    \end{itemize}
    Then the rescalings $T_{0,r}$, $\Sigma_{0,r}$ and $\Sbf_r$ also satisfy the hypotheses of Theorem \ref{t:decay} (with $\eps_2$ in place of $\eps_0$ therein).
\end{lemma}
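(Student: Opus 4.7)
We verify that hypotheses (i)--(iv) of Theorem \ref{t:decay}, with $\varepsilon_2$ in place of $\varepsilon_0$, hold for the rescaled data $(T_{0,r},\Sigma_{0,r},\Sbf_r)$, provided $\varepsilon_0$ is chosen sufficiently small depending on $q,m,n,\bar n,\bar r,\varepsilon_2$. Hypothesis (i) is immediate: denoting by $\tilde\Psi(y) := r^{-1}\Psi(ry)$ the graphing function for $\Sigma_{0,r}$, the derivatives $D^k\tilde\Psi$ carry factors $r^{k-1}$ for $k\geq 1$, hence $\|D\tilde\Psi\|_{C^{2,\alpha_0}}\leq \|D\Psi\|_{C^{2,\alpha_0}}\leq \bar\varepsilon$. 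Hypothesis (iv) follows directly from hypothesis (ii) of the present lemma combined with the scale-invariance of $\Ebb$: since $\Abf(\Sigma_{0,r}) = r\Abf$,
\[
(r\Abf)^2 \leq \varepsilon_0^2\inf_{\tilde\Sbf\in\Bscr^q(0)}\Ebb(T,\tilde\Sbf,\Bbf_r) = \varepsilon_0^2\inf_{\tilde\Sbf\in\Bscr^q(0)}\Ebb(T_{0,r},\tilde\Sbf,\Bbf_1),
\]
so (iv) holds for $\varepsilon_0\leq\varepsilon_2$.

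For hypothesis (ii), Proposition \ref{p:classification-cones} gives $\|\Sbf_r\|(\Bbf_r) = Q\omega_m r^m$ regardless of whether $\Sbf_r\in\Pscr(0)$. Combined with the bound $\Ebb(T,\Sbf_r,\Bbf_r)\leq \Ebb(T,\Sbf,\Bbf_1) \leq \varepsilon_0^2 \Ebf^p(T,\Bbf_1) \leq C\varepsilon_0^2$ coming from hypothesis (i) of the lemma and hypothesis (iii) of Theorem \ref{t:decay}, the Lipschitz graphical approximation of Proposition \ref{p:crude-approx} yields $\bigl|\|T_{0,r}\|(\Bbf_1) - Q\omega_m\bigr|\leq C\varepsilon_0^\gamma$ for some $\gamma>0$, so $\|T_{0,r}\|(\Bbf_1)\leq (Q+\tfrac{1}{4})\omega_m$ once $\varepsilon_0$ is small.

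The main step is hypothesis (iii). By scale-invariance of $\Ebb$ together with hypothesis (i) of the lemma and hypothesis (iii) of Theorem \ref{t:decay},
\[
\Ebb(T_{0,r},\Sbf_r,\Bbf_1) = \Ebb(T,\Sbf_r,\Bbf_r) \leq \Ebb(T,\Sbf,\Bbf_1) \leq \varepsilon_0^2\Ebf^p(T,\Bbf_1),
\]
and, since $\Ebf^p(T_{0,r},\Bbf_1) = \Ebf^p(T,\Bbf_r)$, the task reduces to the scale-comparison $\Ebf^p(T,\Bbf_r) \geq c(\bar r)\Ebf^p(T,\Bbf_1)$ for a constant $c(\bar r) > 0$ independent of $\varepsilon_0$; one then chooses $\varepsilon_0 \leq \varepsilon_2\sqrt{c(\bar r)}$. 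To establish this lower bound, apply Lemma \ref{l:planar-excess-zeta}(a) to $T$ and $\Sbf$ at scale one (whose hypotheses hold provided $\varepsilon_0$ is below the threshold therein) to get $\Ebf^p(T,\Bbf_1)\leq C\boldsymbol\zeta(\Sbf)^2$. Since $\Sbf$ is a cone, the scaling identity $\dist(ry,\pi) = r\dist(y,\pi)$ for planes $\pi$ through the origin yields $\Ebf^p(\Sbf,\Bbf_r) = \Ebf^p(\Sbf,\Bbf_1) \geq c\boldsymbol\zeta(\Sbf)^2$ for a universal $c > 0$. The propagation of closeness
\[
\hat\Ebf(T,\Sbf,\Bbf_r) \leq \bar r^{-(m+2)}\hat\Ebf(T,\Sbf,\Bbf_1) \leq \bar r^{-(m+2)}\varepsilon_0^2\Ebf^p(T,\Bbf_1),
\]
combined with an $L^2$ triangle-inequality comparison of $\dist^2(\cdot,\pi)$ integrated against $\|T\|$ and $\|\Sbf\|$ in $\Bbf_r$ (with the measure-theoretic error controlled by the Lipschitz approximation in Proposition \ref{p:crude-approx}), shows $\Ebf^p(T,\Bbf_r) \geq \tfrac{1}{2}\Ebf^p(\Sbf,\Bbf_r)$ for $\varepsilon_0$ sufficiently small depending on $\bar r$. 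Chaining these three inequalities produces the desired $c(\bar r) > 0$.

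As a by-product, the bound just established forces $\Sbf_r\in\Bscr^q(0)\setminus\Pscr(0)$: were $\Sbf_r$ a single plane, the reverse conical excess $\hat\Ebf(\Sbf_r,T,\Bbf_r)$ would vanish, and $\Ebb(T_{0,r},\Sbf_r,\Bbf_1) \geq \Ebf^p(T_{0,r},\Bbf_1)$ would contradict the inequality $\Ebb\leq \varepsilon_2^2\Ebf^p$ whenever $\varepsilon_2 < 1$. The principal obstacle in the argument is the scale-comparison of planar excess required to close hypothesis (iii); the remaining items reduce quickly to routine manipulations once scale-invariance of $\Ebb$ and Proposition \ref{p:classification-cones} are invoked.
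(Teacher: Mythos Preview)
Your direct approach is a genuine alternative to the paper's proof, which instead argues by contradiction: assuming hypothesis (iii) of Theorem \ref{t:decay} fails along a sequence $\eps_0^k\downarrow 0$, the paper passes to a limit, uses the mod$(q)$ Lipschitz approximation of \cite{DLHMS}*{Theorem 15.1} and a blow-up to a Dir-minimizer to show that $\Ebf^p(T_k,\Bbf_{r_k})/\Ebf^p(T_k,\Bbf_1)$ stays bounded away from $0$, which contradicts the assumed failure. Your route avoids compactness by reducing hypothesis (iii) to the scale-comparison $\Ebf^p(T,\Bbf_r)\geq c(\bar r)\,\Ebf^p(T,\Bbf_1)$ and attacking this directly via $\boldsymbol\zeta(\Sbf)^2$. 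This is a legitimate strategy (indeed Lemma \ref{l:spine-tilt-estimate} establishes exactly this comparison, though again by contradiction), but two steps in your execution are not correct as written.

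First, the mass bound for hypothesis (ii). Your invocation of Proposition \ref{p:classification-cones} to assert $\|\Sbf_r\|(\Bbf_r)=Q\omega_m r^m$ is misplaced: $\Sbf_r$ is merely an element of $\Bscr^q(0)$ (a set, not a current with prescribed multiplicities), and Proposition \ref{p:classification-cones} concerns the structure of \emph{area-minimizing} cones, which $\Sbf_r$ is not assumed to be. Moreover, Proposition \ref{p:crude-approx} requires smallness of $\Ebb(T,\Sbf_r,\Bbf_r)+\Abf^2$ relative to $\boldsymbol\sigma(\Sbf_r)^2$, which you have not established. The correct argument for the mass bound is simply the almost-monotonicity of mass ratios for the underlying varifold (this is what the paper means by ``immediate'').

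Second, and more importantly, the triangle-inequality comparison in your proof of the scale-comparison $\Ebf^p(T,\Bbf_r)\geq c(\bar r)\,\Ebf^p(T,\Bbf_1)$ is underspecified. Your invocation of Proposition \ref{p:crude-approx} on $\Sbf$ is illegitimate for the same reason as above: you have $\Ebb(T,\Sbf,\Bbf_1)\leq\eps_0^2\,\Ebf^p(T,\Bbf_1)\leq C\eps_0^2\,\boldsymbol\zeta(\Sbf)^2$, but no control on $\boldsymbol\sigma(\Sbf)$. The fix is to first pass to the pruned book $\Sbf'$ of Lemma \ref{l:planar-excess-zeta}(d), for which $\Abf^2+\Ebb(T,\Sbf',\Bbf_1)\leq\bar\delta^2\boldsymbol\sigma(\Sbf')^2$ and $\boldsymbol\zeta(\Sbf')=\boldsymbol\zeta(\Sbf)$. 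Then Proposition \ref{p:crude-approx} applies (with $\bar\delta$ chosen small depending on $\bar r$ to absorb the scaling factor $\bar r^{-(m+2)}$), and one can repeat the argument establishing \eqref{e:max-sep-planar-exc} in the proof of Lemma \ref{l:planar-excess-zeta} at scale $r$ to obtain $\boldsymbol\zeta(\Sbf)^2\leq C(\bar r)\,\Ebf^p(T,\Bbf_r)$. Your phrase ``$L^2$ triangle-inequality comparison of $\dist^2(\cdot,\pi)$ integrated against $\|T\|$ and $\|\Sbf\|$'' hides precisely this work.
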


\begin{proof}
    We argue analogously to that in the proof of \cite{DMS}*{Lemma 10.5}, only now in our compactness argument, we use the Lipschitz approximation of \cite{DLHMS}*{Theorem 15.1} in place of Almgren's strong Lipschitz approximation \cite{DLS14Lp}*{Theorem 2.4}.
    
    Fix $\bar{r}\in (0,\frac{1}{2}]$ and $\eps_2\in(0,1)$. Suppose, for a contradiction, that the conclusion of the lemma fails to hold. Then there exists a sequence $\eps_0^k\downarrow 0$ and corresponding sequences $T_k$, $\Sigma_k$, $\Abf_k$, $\Sbf_k = \Hbf_1^k\cup\cdots \cup \Hbf_N^k\in \Bscr^q(0)$ (with $N$ a fixed positive integer) satisfying the hypotheses of Theorem \ref{t:decay} with $\eps_0^k$ and (i), (ii) but for which the hypotheses of Theorem \ref{t:decay} fail for the rescalings $(T_k)_{0,r_k}$, $(\Sigma_k)_{0,r_k}$ and $\Sbf_{r_k}\coloneqq (\Sbf_k)_{r_k}$ for some scales $r_k \in [\bar r,1]$. In particular
    \begin{equation}\label{e:rescale-hyp}
    	r_k^2\Abf_k^2 \leq (\eps_0^k)^2\Ebb(T_k,\Sbf_{r_k}, \Bbf_{r_k}) \leq (\eps_0^k)^2\Ebb(T_k,\Sbf_k,\Bbf_1) \leq (\eps_0^k)^4 \Ebf^p(T_k,\Bbf_1).
    \end{equation}
    but
    \begin{equation}\label{e:rescale-contradiction}
   		\frac{\Ebb(T_k,\Sbf_{r_k},\Bbf_{r_k})}{\Ebf^p(T_k,\Bbf_{r_k})} \geq \eps_2^{2}.
    \end{equation}
    In other words, we suppose that the hypothesis (iii) of Theorem \ref{t:decay} fails. Note that the validity of the remaining hypotheses follows immediately from the assumptions of the Lemma.
    
    Now, \eqref{e:rescale-hyp} and \eqref{e:rescale-contradiction} together tell us that $\Ebf^p(T_k,\Bbf_{r_k})\to 0$ as $k\to\infty$. Thus, up to subsequence, $T_k\mres\Bbf_{r_k}$ converges weakly to $q_0\llbracket \pi_\infty \rrbracket$, for some $m$-dimensional plane $\pi_\infty$ and some positive integer $q_0$. Meanwhile, up to another subsequence, we have $\Sbf_k\to \Sbf_\infty\in \Bscr^q(0)$ locally in Hausdorff distance. As $\mathbb{E}(T_k,\Sbf_{r_k},\Bbf_{r_k})\to 0$ (from \eqref{e:rescale-hyp}), this gives that $\pi_\infty = \Sbf_\infty$. In turn, as $\mathbb{E}(T_k,\Sbf_k,\Bbf_1)\to 0$ (again from \eqref{e:rescale-hyp}), we get $\spt(T_\infty) = \pi_\infty\cap \Bbf_1$ for the weak limit $T_\infty$ of $T_k\mres\Bbf_1$, taken along yet another subsequence. Thus, we deduce that in fact $\Ebf^p(T_k,\Bbf_1)\to 0$, and for $k$ sufficiently large we are in a position (up to rotation) to apply the strong Lipschitz approximation result \cite{DLHMS}*{Theorem 15.1} for $T_k\mres \Cbf_{1/2}(0,\pi_\infty)$, yielding functions $f_k:B_{1/2}(\pi_\infty) \to \Ascr_Q(\pi_\infty^\perp)$. Note that the scale reduction by factor $1/4$ therein can be replaced by factor $1/2$, up to increasing the constants in the estimates. Letting $E_k\coloneqq \Ebf^p(T_k,\Bbf_1)$, the normalizations
    \[
    	\bar{f}_k\coloneqq \frac{f_k}{E_k^{1/2}}
    \]
    converge, up to yet another subsequence, to a Dir-minimizing function $f_\infty:B_{1/2}(\pi_\infty) \to \Ascr_Q(\pi_\infty^\perp)$.
    
     In addition, \eqref{e:rescale-hyp} allows us to write $\Sbf_k$ as a union of suitable portions of graphs of linear functions $L_1^k,\dots, L_N^k:\pi_\infty\to\pi_\infty^\perp$. More precisely, we first note that  $W_k = \mathbf{p}_{\pi_\infty} (V (\mathbf{S}_k))$ is an $(m-1)$-dimensional space for $k$ large enough, given that $\Sbf_k$ converges to $\pi_\infty$ locally in the Hausdorff topology. Thus we can assume, up to a further rotation which fixes $\pi_\infty$, that $W_k$ is in fact independent of $k$: from now on we will simply denote it by $W$. Further, $W$ subdivides $\pi_\infty$ into two halfplanes $\pi^+_\infty$ and $\pi^-_\infty$, and we can subdivide the linear functions $L^k_i$ into two further (both nonempty) subcollections $L^{k}_{+,i}$ and $L^{k}_{-,i}$, so that $\mathbf{S}_k$ is the union of the graphs of $L^{k}_{\pm, i}$ (for a choice of sign $\pm$ for each $k$) over the corresponding halfplanes $\pi^\pm_\infty$.
     
    Up to one more subsequence, we can finally assume that $E_k^{-1/2} L_{\pm,i}^k$ converge to linear functions $L_{\pm, i}^\infty$ as $k\to\infty$, for each $i$. Let us denote by $\bar{\Sbf}_\infty$ the union (again with the appropriate choice of sign $\pm$ for each $k$) of their graphs over the corresponding halfplanes $\pi^\pm_\infty$. Under our assumption that $\varepsilon_0^k \downarrow 0$, we then infer that the graph of $f_\infty$ coincides with $\bar{\Sbf}_\infty \cap (B_{1/2} (\pi_\infty)\times \pi_\infty^\perp)$. Note that Lemma \ref{l:planar-excess-zeta} implies further that at least two of the linear maps $L_{\pm,i}^\infty$ are distinct, since  $\boldsymbol{\zeta} (\bar{\Sbf}_\infty) > 0$. 

    Now, given the $L^2$ convergence of $\bar f_k$ to $f_\infty$, the estimates on the difference between the graph of $f_k$ and the current $T_k$,  and the bounds $0 < \bar r<r_k\leq \frac{1}{2}$, we infer that
    \begin{equation}\label{e:planar-exc-ratio}
    C^{-1} \leq \lim_{k\to\infty} \frac{\Ebf^p(T_k,\Bbf_{r_k})}{E_k} \equiv
    \lim_{k\to\infty} \frac{\Ebf^p(T_k,\Bbf_{r_k})}{\Ebf^p(T_k,\Bbf_1)} \leq C
    \end{equation}
    for some constant $C=C(q,m,n,\bar{n})>0$. We refer the reader to the analogous argument in the proof of \cite{DMS}*{Lemma 10.6} for more details. This, combined with \eqref{e:rescale-hyp}, in turn yields
    \[
        \frac{\Ebb(T_k,\Sbf_{r_k},\Bbf_{r_k})}{\Ebf^p(T_k,\Bbf_{r_k})} \leq C \frac{\Ebb(T_k,\Sbf_{k},\Bbf_{1})}{\Ebf^p(T_k,\Bbf_{1})} \leq (\eps_0^k)^2 \to 0.
    \]
    This contradicts \eqref{e:rescale-contradiction}, which concludes the proof. 
    
    Note that the above argument does not require $T$ to be area-minimizing mod$(q)$. Indeed, it suffices to use the Lipschitz approximation for general stationary integral varifolds, together with Allard's $L^2-L^\infty$ height bound to justify \eqref{e:planar-exc-ratio}. Observe in addition that the validity of the property $\boldsymbol\zeta(\bar\Sbf_\infty)$ is a mere application of the triangle inequality, and does not require anything about the structure of $T$.
\end{proof}

\subsection{Proof of Theorem \ref{t:decay} from Proposition \ref{p:multi-decay}}
    The conclusion (a) of Theorem \ref{t:decay} follows by the exact same reasoning as in \cite{DMS}, with Lemma \ref{l:rescalings-decay} applied in place of \cite{DMS}*{Lemma 10.5}, now with
    \[
        \Ebb(T,\Bbf_r) \coloneqq \inf\{\Ebb(T,\Sbf,\Bbf_r) : \Sbf\in \Bscr^q(0)\setminus \Pscr(0)\}.
    \]
    The conclusions (b)-(d) will then follow immediately from the following lemma (which will also come in useful independently in later sections), under the assumption that the conclusion (a) holds.
    
    \begin{lemma}\label{l:spine-tilt-estimate}
        Let $r_0\in (0,1]$. Then there exist constants $\eps=\eps(q,m,n,\bar n,r_0)>0$ and $C=C(q,m,n,\bar n)>0$ (independent of $r_0$) such that the following holds. Suppose that
        \begin{itemize}
            \item[(i)] $T$, $\Sigma$ and $\Abf$ are as in Assumption \ref{a:main};
            \item[(ii)] $\|T\|(\Bbf_1) \leq (Q+\frac{1}{4})\omega_m$;
            \item[(iii)] there exists $\Sbf=\Hbf_1\cup\cdots\cup\Hbf_N\in \Bscr^q(0)\setminus \Pscr(0)$ with
            \[
                \Abf^2 + \Ebb(T,\Sbf,\Bbf_1) \leq \eps^2\Ebf^p(T,\Bbf_1).
            \]
        \end{itemize}
        Then
        \begin{equation}\label{e:1-homog}
            C^{-1}\Ebf^p(T,\Bbf_1) \leq \Ebf^p(T,\Bbf_{r_0}) \leq C\Ebf^p(T,\Bbf_1).
        \end{equation}
        Furthermore, there exists $\bar{C}=\bar C(q,m,n,\bar n, r_0) > 0$ such that, up to further decreasing $\eps$ if necessary (with the same dependencies), if there exists another open book $\Sbf'= \Hbf'_1\cup\cdots \Hbf'_{M}\in \Bscr^q(0)\setminus \Pscr(0)$ satisfying
        \[
            \Ebb(T,\Sbf',\Bbf_{r_0}) \leq \eps^2 \Ebf^p(T,\Bbf_1),
        \]
        then
        \begin{align}
            \dist^2(\Sbf\cap\Bbf_1,\Sbf'\cap\Bbf_1)&\leq \bar C (\Abf^2 + \Ebb(T,\Sbf,\Bbf_1) + \Ebb(T,\Sbf',\Bbf_{r_0})); \label{e:reduction-1} \\
            \dist^2(V(\Sbf)\cap\Bbf_1,V(\Sbf')\cap\Bbf_1) &\leq \bar C\frac{\Abf^2 + \Ebb(T,\Sbf,\Bbf_1) + \Ebb(T,\Sbf',\Bbf_{r_0})}{\Ebf^p(T,\Bbf_1)}. \label{e:reduction-2}
        \end{align}
    \end{lemma}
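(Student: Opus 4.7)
The proof splits naturally into three parts, one for each of \eqref{e:1-homog}, \eqref{e:reduction-1}, and \eqref{e:reduction-2}. The driving principle is the scale-invariance of the open books $\Sbf, \Sbf'$: since they are cones through the origin, $\dist(\Sbf\cap \Bbf_r, \Sbf'\cap \Bbf_r)= r\,\dist(\Sbf\cap \Bbf_1,\Sbf'\cap \Bbf_1)$ and $\int_{\Sbf\cap \Bbf_r}\dist^2(\cdot,\pi)\,d\Hcal^m = r^{m+2}\int_{\Sbf\cap \Bbf_1}\dist^2(\cdot,\pi)\,d\Hcal^m$ for any plane $\pi$ through the origin. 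Throughout, we apply Lemma \ref{l:planar-excess-zeta} to pass to pruned subcones $\tilde\Sbf, \tilde\Sbf'$ satisfying the stronger hypothesis $\Ebb(T,\tilde\Sbf,\Bbf_1) + \Abf^2 \leq \bar\delta^2\boldsymbol\sigma(\tilde\Sbf)^2$ (and the analogous hypothesis in $\Bbf_{r_0}$ for $\tilde\Sbf'$), noting that pruning preserves both the spine and $\boldsymbol\zeta$ while moving the cone by at most $O(\Ebb(T,\Sbf,\Bbf_1)^{1/2})$ in Hausdorff distance by Lemma \ref{l:planar-excess-zeta}(c). It then suffices to prove the three estimates for the pruned cones.

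To prove \eqref{e:1-homog}, we choose $\eps$ small enough depending on $r_0$ (of order $r_0^{(m+2)/2}$) so that for every $r \in [r_0,1]$ the rescaled current $T_{0,r}$ satisfies $\Ebb(T_{0,r}, \tilde\Sbf, \Bbf_1) \leq \bar\delta^2\boldsymbol\zeta(\tilde\Sbf)^2$, using the naive scaling $\hat\Ebf(T,\tilde\Sbf,\Bbf_r) \leq r^{-(m+2)}\hat\Ebf(T,\tilde\Sbf,\Bbf_1)$ and the scale invariance of $\tilde\Sbf$. The upper bound in \eqref{e:1-homog} is then obtained by taking the plane $\pi^*$ optimizing the $L^2$ distance of $\tilde\Sbf$ to a plane through the origin and applying the triangle inequality to bound $\hat\Ebf(T_{0,r},\pi^*,\Bbf_1) \leq C(\Ebb(T_{0,r},\tilde\Sbf,\Bbf_1) + \boldsymbol\zeta(\tilde\Sbf)^2) \leq C\Ebf^p(T,\Bbf_1)$. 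The matching lower bound comes from a direct geometric argument: since the planes $\pi_i$ extending the half-planes $\Hbf_i$ of $\tilde\Sbf$ have maximal pairwise separation $\boldsymbol\zeta(\tilde\Sbf)$, for every plane $\pi$ through the origin at least one of the $\Hbf_i$ carries a region $W$ of $\Hcal^m$-measure $\geq c$ on which $\dist(\cdot,\pi) \geq c\boldsymbol\zeta(\tilde\Sbf)$; the $L^\infty$ height bound (Theorem \ref{t:heightbd}) applied to $T_{0,r}$ then forces a comparable mass of $\spt(T_{0,r})$ to sit near $W$, yielding $\hat\Ebf(T_{0,r},\pi,\Bbf_1) \geq c'\boldsymbol\zeta(\tilde\Sbf)^2 \geq c''\Ebf^p(T,\Bbf_1)$. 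Taking infimum over $\pi$ produces \eqref{e:1-homog}.

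For \eqref{e:reduction-1}, \eqref{e:1-homog} translates the hypothesis to $\Ebb(T,\Sbf',\Bbf_{r_0}) \leq C\eps^2\Ebf^p(T,\Bbf_{r_0})$, so Lemma \ref{l:planar-excess-zeta} applies to $T_{0,r_0}$ and $\Sbf'$ to yield $\tilde\Sbf'$ satisfying the hypotheses of Theorem \ref{t:heightbd}. The height bound then gives pointwise Hausdorff distance control between $\spt T \cap \Bbf_{r_0}$ and $\tilde\Sbf' \cap \Bbf_{r_0}$ of order $r_0(\Ebb(T,\tilde\Sbf',\Bbf_{r_0})^{1/2} + \Abf)$, and analogously between $\spt T \cap \Bbf_{1}$ and $\tilde\Sbf \cap \Bbf_{1}$. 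Triangulating through $\spt T \cap \Bbf_{r_0}$ and rescaling via the cone structure $\dist(\tilde\Sbf\cap\Bbf_1,\tilde\Sbf'\cap\Bbf_1) = r_0^{-1}\dist(\tilde\Sbf\cap\Bbf_{r_0},\tilde\Sbf'\cap\Bbf_{r_0})$ yields the desired bound for the pruned cones, with Lemma \ref{l:planar-excess-zeta}(c) then closing the gap between $\Sbf, \tilde\Sbf$ and $\Sbf', \tilde\Sbf'$. In the degenerate regime where $\Ebb(T,\Sbf',\Bbf_{r_0})$ is too small for the pruning to apply, the right-hand side of \eqref{e:reduction-1} is dominated by $\Abf^2$ and the estimate is immediate. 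Finally, \eqref{e:reduction-2} follows at once from Lemma \ref{l:spine-comparison} applied to $\tilde\Sbf, \tilde\Sbf'$, with \eqref{e:reduction-1} substituted in the numerator and $\boldsymbol\zeta(\tilde\Sbf)^2 \sim \Ebf^p(T,\Bbf_1)$ substituted in the denominator.

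The main obstacle is securing an $r_0$-independent constant in \eqref{e:1-homog}: any naive volume comparison produces $r_0^{-(m+2)}$ factors that blow up as $r_0 \to 0$. The resolution is to exploit the cone structure of $\tilde\Sbf$ to extract a scale-invariant leading term for the planar excess, and to allow $\eps$ to depend on $r_0$ so that the bad scaling is absorbed into the smallness hypothesis. A secondary subtlety is handling contributions near the spine of $\tilde\Sbf$ (where graphical approximations degenerate); this is addressed by choosing $\pi^*$ to contain $V(\tilde\Sbf)$ (using the balancing condition $\sum_i Q_i e_i = 0$ from Proposition \ref{p:classification-cones}) and controlling the remaining mass near the spine via monotonicity of mass ratios.
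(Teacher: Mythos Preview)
Your overall architecture (prune via Lemma \ref{l:planar-excess-zeta}, reduce \eqref{e:reduction-2} to \eqref{e:reduction-1} via Lemma \ref{l:spine-comparison}) matches the paper, but there are two genuine gaps.

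For the lower bound in \eqref{e:1-homog}, you assert that the height bound applied to $T_{0,r}$ ``forces a comparable mass of $\spt(T_{0,r})$ to sit near $W$''. The height bound (Theorem \ref{t:heightbd}, via Proposition \ref{p:crude-approx}(c)) only gives the direction $\spt(T)\to\tilde\Sbf$; to know that mass of $T_{0,r}$ actually covers the particular half-plane $\Hbf_i$ far from $\pi$, you need $Q_i\geq 1$, which comes from Proposition \ref{p:crude-approx}(f) and hence requires the \emph{reverse} excess $\hat\Ebf(\tilde\Sbf,T_{0,r},\Bbf_1)$ to be small relative to $\boldsymbol\sigma(\tilde\Sbf)^2$. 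Unlike $\hat\Ebf(T,\tilde\Sbf,\cdot)$, the reverse excess does not simply scale (the tubular neighbourhood $\Bbf_{ar}(V)$ removed in its definition changes with $r$), so you cannot just absorb the bad factor into $\eps$. The paper sidesteps this entirely by a compactness argument: assuming the lower bound fails along a sequence $T_k$ with $\eps_k\downarrow 0$, it either passes to a non-planar limiting cone (if $\Ebf^p(T_k,\Bbf_1)$ stays bounded away from zero) or to a normalized Lipschitz approximation whose graph is a non-planar open book (if $\Ebf^p(T_k,\Bbf_1)\to 0$), in either case contradicting the failure of the ratio. This is exactly the mechanism of Lemma \ref{l:rescalings-decay}.

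For \eqref{e:reduction-1}, ``triangulating through $\spt T$'' again confuses the two directions: the height bound does \emph{not} give pointwise Hausdorff control between $\spt(T)$ and $\tilde\Sbf'$, only that $\spt(T)$ lies close to $\tilde\Sbf'$. To go from a half-plane $\tilde\Hbf_i$ of $\tilde\Sbf$ to some $\Hbf'_j$ of $\tilde\Sbf'$, the paper first uses $Q_i\geq 1$ to find mass of $T_i$ over a disk $B_i\subset\tilde\Hbf_i$ away from the spine, then uses Chebyshev (for the measure $\|T_i\|$) to extract a large subset $E_i\subset\spt(T_i)$ on which $\dist(\cdot,\tilde\Sbf')$ is controlled by the one-sided excess, projects $E_i$ back onto $\tilde\Hbf_i$, and invokes Lemma \ref{l:disks-planes} to upgrade from a positive-measure subset of $\tilde\Hbf_i$ to the full half-plane. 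None of this is captured by a naive triangle inequality. Your ``degenerate regime'' remark is also incorrect: when $\Abf^2>\eps_3^2\Ebb(T,\Sbf',\Bbf_{r_0})$ the right-hand side of \eqref{e:reduction-1} is not automatically large, and the paper instead constructs an intermediate book $\Sbf_e$ via a continuous one-parameter family along which Lemma \ref{l:planar-excess-zeta} becomes applicable. Finally, the balancing identity $\sum_i Q_i e_i=0$ from Proposition \ref{p:classification-cones} applies only to area-minimizing cones mod$(q)$, not to the arbitrary approximating books $\Sbf,\tilde\Sbf$ here, so you cannot use it to choose $\pi^*\supset V$.
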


    Before coming to the proof of the above statement, we point out a geometric fact which will be useful and whose proof is essentially already contained in \cite{DMS}. Since however here we are dealing with halfplanes rather than full planes, we give all the details.

    \begin{lemma}\label{l:disks-planes}
    For every $c_0>0$ and $\mu> 0$ there is a constant $C = C(m,n, c_0, \mu)$ with the following property. Assume $\mathbf{H}_1$ and $\mathbf{H}_2$ are two arbitrary $m$-dimensional half-planes with $(m-1)$-dimensional spines $V_1$ and $V_2$ (passing through $0$), that $p\in \mathbf{H}_1\cap \Bbf_{1/2}$ with $\dist (p, V_1) \geq 2 c_0$, and that $E\subset \Bbf_{c_0} (p)\cap \Hbf_1$ with $\mathcal{H}^m (E) \geq \mu \mathcal{H}^m (\Bbf_{c_0} (p)\cap \Hbf_1)$. Then
    \begin{equation}\label{e:hp-close}
    \sup_{x\in \Bbf_1 \cap \Hbf_1} \dist (x, \Hbf_2) 
    \leq C \sup_{y\in E} \dist (y, \Hbf_2)\, .
    \end{equation}
    \end{lemma}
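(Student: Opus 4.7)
The plan is to exploit the algebraic decomposition
\[
\dist(y,\Hbf_2)^2 \;=\; |\mathbf{p}_{\pi_2}^\perp(y)|^2 + \max(0,-\alpha(y))^2, \qquad \alpha(y):=\langle y,e_2\rangle,
\]
where $\pi_2$ denotes the $m$-plane containing $\Hbf_2$ and $e_2\in \pi_2\cap V_2^\perp$ is the unit vector with $\Hbf_2=\{z\in \pi_2:\langle z,e_2\rangle\geq 0\}$. Each of the two summands is non-negative and pointwise dominated by $\dist(y,\Hbf_2)^2$, so it suffices to bound each of them separately on $\Bbf_1\cap\Hbf_1$ by $C\sup_E\dist(\cdot,\Hbf_2)^2$.

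For the first summand $F(y):=|\mathbf{p}_{\pi_2}^\perp(y)|^2$, the restriction $F|_{\pi_1}$ is a non-negative $2$-homogeneous quadratic form, and the hypothesis $\dist(p,V_1)\geq 2c_0$ guarantees that $\Bbf_{c_0}(p)\cap\Hbf_1$ coincides with the full $m$-disk $\Bbf_{c_0}(p)\cap \pi_1$, so $E$ occupies a fraction at least $\mu$ therein. A classical Remez (reverse Markov) inequality for polynomials of degree at most $2$ on a disk then gives $\sup_{\Bbf_{c_0}(p)\cap \pi_1}F\leq C(\mu)\sup_E F$. To propagate from the small disk to $\Bbf_1\cap \pi_1$, we pick $v\in \pi_1\cap \partial\Bbf_1$ maximizing the seminorm $F^{1/2}$ on the unit sphere of $\pi_1$, and apply its subadditivity to the pair $p\pm c_0 v$: this yields $F^{1/2}(p+c_0v)+F^{1/2}(p-c_0v)\geq 2c_0 F^{1/2}(v)$, hence $\sup_{\Bbf_{c_0}(p)\cap \pi_1}F\geq c_0^2\sup_{\Bbf_1\cap \pi_1}F$.

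The second summand $G(y):=\max(0,-\alpha(y))^2$ is handled by splitting $E=E_-\cup E_+$ according to the sign of $\alpha$, with $E_-:=E\cap\{\alpha\leq 0\}$ and $E_+:=E\cap\{\alpha>0\}$. At least one of $E_\pm$ has $\mathcal{H}^m$-measure at least $(\mu/2)\mathcal{H}^m(\Bbf_{c_0}(p)\cap \Hbf_1)$. On $E_-$ one has $G=\alpha^2$, and since $\alpha^2$ is itself a $2$-homogeneous quadratic polynomial on $\pi_1$, the same Remez-plus-subadditivity argument applied to $\alpha^2$ controls $\sup_{\Bbf_1\cap \Hbf_1}G$ by $C(\mu,c_0)\sup_E\dist(\cdot,\Hbf_2)^2$. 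In the remaining subcase, $\alpha>0$ on a substantial subset of $E$ so the right-hand side gives no direct bound on $|\alpha|$; one instead combines the substantial measure of $E_+$ with $\alpha(0)=0$ and the previously established $F$-bound (which forces $\pi_1$ to be close to $\pi_2$ in the Grassmannian) to show that the region $\{\alpha<0\}\cap\Hbf_1\cap\Bbf_1$ lies in a thin strip on which $-\alpha$ is already dominated by $F^{1/2}$. This last subcase is the main obstacle of the proof, since it is the genuine novelty relative to the full-plane analogue in \cite{DMS}, and it is exactly where the non-degeneracy hypothesis $\dist(p,V_1)\geq 2c_0$ is used in an essential way to preclude $E$ from accumulating near the spine $V_1$.
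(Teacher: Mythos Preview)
Your decomposition $\dist(y,\Hbf_2)^2=F(y)+G(y)$ with $F=|\mathbf{p}_{\pi_2}^\perp|^2$ and $G=\max(0,-\alpha)^2$ is correct, and your control of $F$ on $\Bbf_1\cap\pi_1$ via a Remez inequality plus the seminorm trick cleanly recovers the paper's full-plane estimate \eqref{e:p-close} (the paper instead extracts $m$ quantitatively independent vectors from $E$); the $E_-$ subcase for $G$ is also fine. The genuine gap is the $E_+$ subcase, which you correctly identify as the main obstacle but do not close. Your sketch---that the $F$-bound forces $\pi_1$ close to $\pi_2$, whence $\{\alpha<0\}\cap\Hbf_1\cap\Bbf_1$ is a thin strip on which $-\alpha$ is dominated by $F^{1/2}$---fails when $\pi_1=\pi_2$ but $V_1\neq V_2$. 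Concretely: take $m=2$, $\Hbf_1=\{y_2\geq 0\}$ and $\Hbf_2=\{y_1\geq 0\}$ in a common $2$-plane, $p=(0.3,0.3)$, $c_0=0.1$, $E=\Bbf_{c_0}(p)\cap\Hbf_1$. Then $F\equiv 0$, every point of $E$ has $\alpha=y_1\geq 0.2>0$ so $E=E_+$ and $D=0$, yet $(-0.9,0.1)\in\Hbf_1\cap\Bbf_1$ lies at distance $0.9$ from $\Hbf_2$. Here $\{\alpha<0\}\cap\Hbf_1$ is a full quadrant, not a thin strip, and $F^{1/2}\equiv 0$ dominates nothing.

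This configuration in fact shows the lemma as literally stated is false, and the paper's own passage from \eqref{e:p-close} to \eqref{e:flip} has the same defect: knowing $\mathbf{p}_{\pi_2}(e_i)\in\Hbf_2$ for a basis $e_1,\dots,e_m$ of $\pi_1$ does not bound $\dist\big(\sum_i a_ie_i,\Hbf_2\big)$ once some $a_i<0$ for $i\leq m-1$, since $\Hbf_2$ is only a half-space and not a linear subspace. The statement can be repaired with an additional hypothesis relating $V_1$ and $V_2$ (for instance that they are quantitatively close in $\Bbf_1$), which does hold in the only place the lemma is invoked, inside the proof of Lemma~\ref{l:spine-tilt-estimate}. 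But as written neither argument handles this case.
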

    \begin{proof} Set 
    \[
    D := \sup_{y\in E} \dist (y, \Hbf_2)\, .
    \]
    We will assume 
    \begin{equation}\label{e:smallness-D-eta}
    D \leq \eta 
    \end{equation}
    where $\eta = \eta (m,n,c_0, \mu)$ is a sufficiently small constant whose choice will be specified later. Of course if $D>\eta$ the inequality \eqref{e:hp-close} holds trivially for $C= \eta^{-1}$.
    
    Next, denote by $\pi_1$ and $\pi_2$ the two $m$-dimensional planes which contain $\Hbf_1$ and $\Hbf_2$ respectively. We first claim that
    \begin{equation}\label{e:p-close}
    \sup_{x\in \Bbf_1 \cap \pi_1} \dist (x, \pi_2) 
    \leq C(m,n, c_0, \mu) D\, .
    \end{equation}
    The simple argument is essentially contained in \cite{DMS}*{Proof of Lemma 10.6} and we repeat it for the reader's convenience. We just need to find $m$ vectors $v_1, \ldots , v_m\in E$ which satisfy a quantitative form of linear independence, namely such that 
    \begin{itemize}
    \item For every $v\in \pi_1\cap \Bbf_1$ there are $\lambda_1, \ldots, \lambda_m$ with $|\lambda_i|\leq C (m,n, c_0)$ such that $v=\sum_i \lambda_i v_i$. 
    \end{itemize}
    In order to find $v_1, \ldots, v_m$ we can argue inductively as follows. We fix $v_1= p$. We then look at the cylinder $\Gamma_1 := \{y:|y- (y \cdot \frac{v_1}{|v_1|}) \frac{v_1}{|v_1|}| < \gamma_1\}$ of radius $\gamma_1$ and axis $\{\lambda v_1: \lambda \in \mathbb R\}$. Choose $\gamma_1=\gamma_1 (c_0, m,\mu)$ so that $\mathcal{H}^m (E\setminus \Gamma_1)\geq \frac{\mu}{2} \mathcal{H}^m (\Bbf_{c_0} (p)\cap \Hbf_1)$ and we pick a point $v_2\in E\setminus \Gamma_1$. We then denote by $W$ the $2$-dimensional space generated by $v_1$ and $v_2$, then set $\Gamma_2 := |y-\mathbf{p}_W (y)|<\gamma_2\}$ to be the $\gamma_2$-neighborhood of $W$, then choose $\gamma_2=\gamma_2 (c_0,m,\mu)$ so that $\mathcal{H}^m (E\setminus \Gamma_2) \geq \frac{\mu}{4} \mathcal{H}^m (\Bbf_{c_0} (p)\cap \Hbf_1)$, and then pick a point $v_3\in E\setminus\Gamma_2$. We then repeat this process inductively to find $v_1,\dotsc,v_m$. It is easy to check that these vectors satisfy the desired quantitative form of linear independence, which in turns gives \eqref{e:p-close}.

    Having reached \eqref{e:p-close}, choose an orthonormal base $e_1, \ldots , e_{m-1}$ of $V_1$ and an orthogonal unit vector $e_m \in V_1^\perp \cap \Hbf_1$. Either $\mathbf{p}_{\pi_2} (e_i)\in \Hbf_2$, or $- \mathbf{p}_{\pi_2} (e_i) \in \Hbf_2$. Note that $-e_i\in V_1$ for all $i\leq m-1$. Hence without loss of generality we can, if needed, flip the sign of a subset of the first $m-1$ vectors so that $\mathbf{p}_{\pi_2} (e_i)\in \Hbf_2$ for every $i\in \{1, \ldots , m-1\}$. Obviously we cannot flip the sign of $e_m$, since $-e_m$ does not lie in $\Hbf_1$. However, by \eqref{e:p-close} we do find that 
    \begin{equation}\label{e:flip}
    \min \Big\{ \sup_{y\in \Hbf_1\cap \Bbf_1} \dist (y, \Hbf_2), \sup_{y\in (\pi_1\setminus\Hbf_1)\cap \Bbf_1} \dist (y, \Hbf_2) \Big\}
    \leq C D\, .
    \end{equation}
    The proof is thus complete once we show that, for a sufficiently small choice of $\eta$ in \eqref{e:smallness-D-eta} the minimum in the left hand side of \eqref{e:flip} is achieved by $\sup_{y\in \Hbf_1\cap \Bbf_1} \dist (y, \Hbf_2)$. This easily follows by contradiction. Fix indeed a sequence of $\eta_k$ converging to $0$ and pairs of half-planes $\Hbf_1^k$, $\Hbf_2^k$ for which the minimum in \eqref{e:flip} is attained by $\sup_{y\in (\pi^k_1\setminus\Hbf^k_1)\cap \Bbf_1} \dist (y, \Hbf^k_2)$. We can assume, up to a rotation, that the second plane $\Hbf^k_2$ is a fixed one $\Hbf_2$. The other members of the pair, denoted by $\Hbf_1^k$, will then have the property that their reflections along the respective spines, namely the half-planes $\pi^k_1\setminus \Hbf_1^k$, converge locally in Hausdorff distance to $\Hbf_2$. But recall that, by our assumption \eqref{e:smallness-D-eta} with $\eta = \eta_k\downarrow 0$, we have a sequence of points $p'_k\in\Hbf_1^k \cap \Bbf_1$ which are at distance at least $c_0$ from $\pi_1^k\setminus \Hbf_1^k$ and are converging to $\Hbf_2$ as well (in fact, the distance of the corresponding sets $E_k$ to $\Hbf_2$ must be converging to $0$). This is not possible and thus completes the proof.  \end{proof}

    \begin{proof}[Proof of Lemma \ref{l:spine-tilt-estimate}]
    Fix $r_0>0$. Let us begin with \eqref{e:1-homog}. The proof follows by very similar reasoning to that of Lemma \ref{l:rescalings-decay}. First of all, let $\eps\leq\eps_3$, where $\eps_3$ is the threshold in Lemma \ref{l:planar-excess-zeta}. Lemma \ref{l:planar-excess-zeta}(a) with hypothesis (iii) gives
    \begin{align*}
    \Ebf^p(T,\Bbf_{r_0}) & \leq C\hat{\Ebf}(T,\Sbf,\Bbf_{r_0}) + Cr_0^{-2}\max_{i<j}\dist^2(\pi_i\cap \Bbf_{r_0},\pi_j\cap \Bbf_{r_0})\\
    & \leq Cr_0^{-m-2}\Ebf(T,\Sbf,\Bbf_1) + C\boldsymbol{\zeta}(\Sbf)^2\\
    & \leq C(1+\eps^2r_0^{-m-2})\Ebf^p(T,\Bbf_1)
    \end{align*}
    Thus, provided that $\eps\leq r_0^{m+2}$, the right-hand inequality of \eqref{e:1-homog} is verified. To see the left-hand inequality, we argue by contradiction. Suppose that the inequality fails for some sufficiently large constant $C_*=C_*(q,m,n,\bar n)>0$ (to be determined); this yields sequences $T_k$, $\Sigma_k$ and $\Abf_k$ and $\Sbf_k=\Hbf^k_1\cup\cdots\cup\Hbf^k_{N_k} \in \Bscr^q(0)\setminus \Pscr(0)$ satisfying hypotheses (i)-(iii) with thresholds $\eps_k \downarrow 0$, such that up to subsequence we have
    \begin{equation}\label{e:1-homog-contradiction}
        \lim_{k\to\infty} \frac{\Ebf^p(T_k,\Bbf_{r_0})}{\Ebf^p(T_k,\Bbf_1)} < C_*^{-1}.
    \end{equation}
    Up to rotation, we may without loss of generality assume that the infimum in $\Ebf^p(T_k,\Bbf_1)$ is realized by the same plane $\pi_\infty$. Moreover, up to a further subsequence, we may assume that $N_k\equiv N\leq q$ for each $k$. 
    
    First of all, if $\liminf_{k\to\infty}\Ebf^p(T_k,\Bbf_1) > 0$, then up to another subsequence, we have
    \[
        T_k \toweakstar T_\infty, \quad \Sbf_k\to \Sbf_\infty\in \Bscr^q(0)\setminus \Pscr(0),
    \]
    with $\spt(T_\infty)\cap\Bbf_1=\Sbf_\infty\cap\Bbf_1$. Thus, $T_\infty$ is a non-planar area-minimizing cone mod$(q)$. This immediately implies, from the homogeneous structure, that
    \[
        \lim_{k\to\infty} \frac{\Ebf^p(T_k,\Bbf_{r_0})}{\Ebf^p(T_k,\Bbf_1)} = 1, 
    \]
    contradicting \eqref{e:1-homog-contradiction} for any choice of $C_*\geq 1$.

    Now suppose that $\Ebf^p(T_k,\Bbf_1) \to 0$; this allows us to apply the strong Lipschitz approximation theorem \cite{DLHMS}*{Theorem 15.1} for $T_k\mres\Cbf_{1/2}(0,\pi_\infty)$ for $k$ sufficiently large, to obtain Lipschitz functions $f_k: B_{1/2}(\pi_\infty)\to\Ascr_Q(\pi_\infty^\perp)$. The hypothesis (iii) allows us to do the same for the cones $\Sbf_k$, describing them as a superposition of graphs of maps $L_1^k,\dots,L_N^k:\pi_\infty\to\pi_\infty^\perp$. Normalizing by $E_k^{1/2}\coloneqq \Ebf^p(T_k,\Bbf_1)^{1/2}$ and arguing exactly as in the proof of Lemma \ref{l:rescalings-decay}, we obtain the desired contradiction.

\textbf{Proof of \eqref{e:reduction-1} and \eqref{e:reduction-2}: reduction to pruned open book.}
    We will now proceed to establish \eqref{e:reduction-1} and \eqref{e:reduction-2}. First of all, we claim that we may assume
    \begin{align}
        \Abf^2 + \Ebb(T,\Sbf,\Bbf_1)&\leq \bar\delta^2\boldsymbol\sigma(\Sbf)^2 \label{e:reduction-assumption-1} \\
        r_0^2\Abf^2 + \Ebb(T,\Sbf',\Bbf_{r_0})&\leq \bar\delta^2\boldsymbol\sigma(\Sbf')^2,\label{e:reduction-assumption-2}
    \end{align}
    for a suitably small choice of constant $\bar\delta=\bar\delta(q,m,n,\bar n, r_0)>0$. In order to do this, we will in fact find a constant $\bar C=\bar C(q,m,n,\bar n, \bar\delta)>0$ and a cone $\Sbf_1\in\Bscr^q(0)$ satisfying \eqref{e:reduction-assumption-1} such that
    \begin{itemize}
        \item[(ia)] $V(\Sbf_1) = V(\Sbf)$;
        \item[(iia)] $\Ebb(T,\Sbf_1,\Bbf_1)\leq\bar C(\Ebb(T,\Sbf,\Bbf_1) + \Abf^2)$;
        \item[(iiia)] $\dist^2(\Sbf_1\cap\Bbf_1,\Sbf\cap\Bbf_1)\leq\bar C (\Ebb(T,\Sbf,\Bbf_1) + \Abf^2)$,
    \end{itemize}
    and a cone $\Sbf'_1$ satisfying \eqref{e:reduction-assumption-2} such that (ia)-(iiia) holds for $T_{0,r_0}$ and $\Sbf'$ in place of $T$ and $\Sbf$ respectively. 

    Once we accomplish this, one may check that the assumptions of the lemma hold for $T, \Sbf_1, \Sbf'_1$ in place of $T,\Sbf,\Sbf'$ (up to a geometric constant factor), and that the validity of the conclusions \eqref{e:reduction-1} and \eqref{e:reduction-2} for $T,\Sbf_1, \Sbf'_1$ in turn imply the conclusions for $T,\Sbf,\Sbf'$.

    Now we proceed with the proof of the reduction; observe that it suffices to demonstrate the existence of $\Sbf_1$, since $\Sbf'_1$ is then found by applying the same reasoning to $T_{0,r_0}$ and $\Sbf'$ and taking $\bar\delta$ to lie below both thresholds. We wish to apply Lemma \ref{l:planar-excess-zeta} to $T,\Sigma, \Abf$ and $\Sbf$. Note, however, that the hypothesis $\Abf^2 \leq \eps_3^2\Ebb(T, \Sbf,\Bbf_1)$ is not guaranteed here (where $\eps_3=\eps_3(q,m,n,\bar n,\bar\delta)$ is the parameter in Lemma \ref{l:planar-excess-zeta}). Thus, we will first show that if $\Abf^2 > \eps_3^2\Ebb(T, \Sbf,\Bbf_1)$, then there exists $\Sbf_e\in \Bscr^q(0)$ with $V(\Sbf_e) = V(\Sbf)$ such that
    \begin{equation}\label{e:continuity}
        \Abf^2 = \eps_3^2\Ebb(T,\Sbf_e,\Bbf_1).
    \end{equation}
    Observe that the hypothesis (iii) of the statement of the Lemma guarantees that, as long as $\eps< \eps_3$, for any $\varpi\in \Pscr(0)$ containing $V(\Sbf)$ we have
    \[
        \Abf^2< \eps_3^2\hat\Ebf(T,\varpi,\Bbf_1).
    \]
    Now fix such a plane $\varpi$ and construct a 1-parameter family of open books $\Sbf(t)$, $t\in [0,1]$, with $\Sbf(0)= \Sbf$, $\Sbf(1) = \varpi$, so that $t\mapsto \Sbf (t)\cap \overline{\Bbf}_1$ is continuous in Hausdorff distance in $\Bbf_1$, $\Sbf(t)$ consists of $N$ distinct halfplanes for each $t\in [0,1)$, and $V(\Sbf(t)) = V(\Sbf)$ for all $t\in [0,1)$. In particular, we have continuity in $t$ of $t\mapsto\Ebb(T,\Sbf(t),\Bbf_1)$ on $[0,1)$, while $\liminf_{t\uparrow 1} \Ebb(T,\Sbf(t),\Bbf_1) \geq \hat\Ebf(T,\varpi,\Bbf_1)$. The intermediate value theorem then gives the existence of $\Sbf_e\coloneqq \Sbf(t_0)$ for some $t_0\in (0,1)$ satisfying \eqref{e:continuity}.

    In the case $\Abf^2 \leq \eps_3^2\Ebb(T, \Sbf,\Bbf_1)$, simply let $\Sbf_e \coloneqq \Sbf$. In either case, we then have $\Abf^2\leq \eps^2_3\mathbb{E}(T,\Sbf,\Bbf_1)$. Fix $\bar\delta > 0$ (to be determined). For this choice of $\bar\delta$, we may now choose $\eps$ small enough in the hypotheses of the present lemma so that $T,\Sigma, \Abf$ and $\Sbf_e$ satisfy the hypotheses of Lemma \ref{l:planar-excess-zeta} for the parameter $\eps_3$ therein. This produces an open book $\Sbf_1= \tilde\Hbf_{1}\cup\cdots\cup\tilde\Hbf_{N'} \subset \Sbf_e\in \Bscr^q(0)$ satisfying properties \eqref{e:reduction-assumption-1}, (ia), and (iia). Indeed, in the case where $\Sbf_e\neq \Sbf$, the latter property follows from the fact that $\Ebb(T,\Sbf_e,\Bbf_1)=\eps_3^{-2} \Abf^2$.

    It remains to check property (iiia) in the case where $\Sbf_e\neq \Sbf$ (if $\Sbf_e = \Sbf$, (iiia) also follows from Lemma \ref{l:planar-excess-zeta}). In light of the validity of \eqref{e:reduction-assumption-1} and (iia), we may now choose $\bar\delta=\bar\delta(q,m,n,\bar n)>0$ small enough such that Lemma \ref{l:crude} and Proposition \ref{p:crude-approx} apply for $\rho=\eta=\frac{1}{32}$ to $T_{0,1/4}$ and $\Sbf_1$. We will show that
    \begin{equation}\label{e:reduction-iv}
        \dist^2(\Sbf_1\cap\Bbf_1,\Sbf\cap\Bbf_1)\leq C(\Ebb(T,\Sbf_1,\Bbf_1)+\Ebb(T,\Sbf,\Bbf_1)+\Abf^2).
    \end{equation}
    To this end, we will first show that for each $i\in\{1,\dots,N'\}$, there exists $j_*\in \{1,\dots,N\}$ such that
    \begin{equation}\label{e:dist-books}
        \dist^2(\tilde\Hbf_i\cap\Bbf_1,\Hbf_{j_*}\cap\Bbf_1) \leq C(\Ebb(T,\Sbf_1,\Bbf_1)+\Ebb(T,\Sbf,\Bbf_1)+\Abf^2),
    \end{equation}
    for some $C=C(q,m,n,\bar n,\bar\delta)>0$. Firstly, notice that for $W_i\subset \Bbf_4$, $i=1,\dots,N'$, as given by Lemma \ref{l:crude}, the current $T\mres(\Bbf_{3/4}\setminus \Bbf_{1/32}(V(\Sbf_1))$ is supported in $\cup_i\tilde W_i$, where $\tilde W_i\coloneqq \iota_{0,4}(W_{i})$. Letting $T_i\coloneqq T\mres \tilde W_i$, we thus conclude from Proposition \ref{p:crude-approx} that
    \begin{equation}\label{e:reduction-heightbd}
        \dist^2(p,\tilde\Hbf_i) \leq C(\Ebb(T,\Sbf_1,\Bbf_1) +\Abf^2) \qquad \forall p \in \spt(T_i).
    \end{equation}
    Now fix $i\in\{1,\dots,N'\}$ and let $e$ denote the unique unit vector in $V(\Sbf_1)^\perp\cap\tilde\Hbf_i$. Let $\xi\coloneqq \frac{e}{4}$ and let $B_i\coloneqq B_{1/32}(\xi,\tilde\pi_i)\subset(\Bbf_{3/4}\cap\tilde\Hbf_i)\setminus \bar\Bbf_{1/64}(V(\Sbf_1))$, where $\tilde\pi_i$ is the $m$-dimensional plane that is the extension of $\tilde\Hbf_i$ in the usual manner. Now conclusions (b) and (f) of Proposition \ref{p:crude-approx} tell us that
    \[
        ((\mathbf{p}_{\tilde\Hbf_i})_\sharp T_i)\mres (\Bbf_{3/4}\cap\tilde\Hbf_i)\setminus \bar\Bbf_{1/64}(V(\Sbf_1)) = Q_i \llbracket (\Bbf_{3/4}\cap\tilde\Hbf_i)\setminus \bar\Bbf_{1/64}(V(\Sbf_1)) \rrbracket
    \]
    for an integer $Q_i\in \{1,\dots,q\}$. This in turn yields
    \begin{equation}
        \|T_i\|(\mathbf{p}^{-1}_{\tilde\Hbf_i}(B_i)) \geq c_0(m,n,\bar n) > 0.
    \end{equation}
    Proceeding via Chebyshev's inequality with the measure $\|T_i\|$, analogously to that in the proof of \cite{DMS}*{Lemma 10.6}, we deduce that for $\tilde C\coloneqq C_*/c_0$, where $C_*>1$ is to be determined, if
    \[
        E_i \coloneqq \{p\in \mathbf{p}_{\tilde\Hbf_i}^{-1}(B_i)\cap\spt(T_i) : \dist^2(p,\Sbf)\leq \tilde C\Ebb(T,\Sbf,\Bbf_1)\},
    \]
    then we have $\|T_i\|(E_i)\geq(1-1/C_*)\|T_i\|(\mathbf{p}_{\tilde\Hbf_i}^{-1}(B_i))$. This combined with Proposition \ref{p:crude-approx}(d) and (iia), we deduce that, up to further decreasing $\bar\delta$ if necessary, and for a suitable choice of $C_*$,
    \begin{equation}\label{e:measure-lb}
        \Hcal^m(\mathbf{p}_{\tilde\Hbf_i}(E_i))\geq \frac{1}{2}\Hcal^m(B_i).
    \end{equation}
    We refer the reader to \cite{DMS} for more details. Fix $w_0\in \mathbf{p}_{\tilde\Hbf_i}(E_i)$, let $p_0\in E_i$ be such that $\mathbf{p}_{\tilde\Hbf_i}(p_0)=w_0$, and let $j_*\in\{1,\dots,N\}$ be an index such that $\dist(p_0,\Hbf_{j_*}) = \dist(p_0,\Sbf)$. Then, invoking the definition of $E_i$ and \eqref{e:reduction-heightbd} we have
    \begin{align*}
        |\mathbf{p}^\perp_{\Hbf_{j_*}}(w_0)|^2 &=\dist^2(w_0,\Hbf_{j_*}) \leq 2\dist^2(w_0,p_0) + 2\dist^2(p_0,\Hbf_{j_*}) \\
        &= 2\dist^2(w_0,p_0) + 2\dist^2(p_0,\Sbf) \\
        &\leq C(\Ebb(T,\Sbf_1,\Bbf_1) + \Ebb(T,\Sbf,\Bbf_1) +\Abf^2),
    \end{align*}
    for some $C=C(q,m,n,\bar n,\bar\delta)>0$. We may now complete $w_0$ to a basis of $\tilde\pi_i$, and use the linearity of $\mathbf{p}^\perp_{\Hbf_{j_*}}$ (as $V(\Sbf_1) = V(\Sbf)$) to deduce \eqref{e:dist-books}.

    To conclude the validity of \eqref{e:reduction-iv}, it remains to check that for each $j\in \{1,\dots,N\}$, there exists $i_*\in \{1,\dots,N'\}$ such that \eqref{e:dist-books} holds with $i_*,j$ in place of $i,j_*$. Now, for $j$ fixed, we let $f$ be the unit vector in $V(\Sbf)^\perp\cap \Hbf_j$. Let $\zeta=\frac{f}{4}$ and $B_j\coloneqq B_{1/32}(\zeta,\pi_j)\subset(\Bbf_{3/4}\cap\Hbf_j)\setminus \bar\Bbf_{1/64}(V(\Sbf))$, where $\pi_j$ is the $m$-dimensional plane that is the extension of $\Hbf_j$. Again applying Chebyshev's inequality, only now with $\Hcal^m\mres B_j$, we deduce that the set
    \[
        F_j\coloneqq \{q\in B_j : \dist^2(q,\spt(T))\leq (2/\Hcal^m(B_j))\Ebb(T,\Sbf,\Bbf_1)\}
    \]
    satisfies $\Hcal^m(F_j)\geq \frac{1}{2}\Hcal^m(B_j)$. Fix $z_0\in F_j$ and let $p(z_0)\in \spt(T)$ be such that 
    \[
    \dist(z_0,\spt(T))=|z_0-p(z_0)|\, .
    \]
    The definition of $F_j$ guarantees that for $\eps$ sufficiently small (below a threshold depending only on $q,m$), $|z_0-p(z_0)|\leq \frac{1}{16}$, which in turn implies that $p(z_0)\in (\Bbf_{3/4}\cap\Hbf_j)\setminus \bar\Bbf_{1/64}(V(\Sbf_1))$ also. Thus, Lemma \ref{l:crude}(b) tells us that $p(z_0)\in \tilde W_{i_*}$ for some $i_*\in \{1,\dots,N'\}$. Proceeding analogously to above, we thus have
    \begin{align*}
        |\mathbf{p}^\perp_{\tilde\Hbf_{i_*}}(z_0)|^2 &= \dist^2(z_0,\tilde\Hbf_{i_*}) \leq 2\dist^2(z_0,p(z_0)) + 2\dist^2(p(z_0),\tilde\Hbf_{i_*}) \\
        &= 2\dist^2(z_0,\spt(T)) + C(\Ebb(T,\Sbf_1,\Bbf_1)+\Abf^2) \\
        &\leq C(\Ebb(T,\Sbf,\Bbf_1)+\Ebb(T,\Sbf_1,\Bbf_1)+\Abf^2).
    \end{align*}
    So indeed we have shown that
    \[
        \dist^2(\tilde\Hbf_{i_*}\cap\Bbf_1,\Hbf_j\cap\Bbf_1) \leq C(\Ebb(T,\Sbf,\Bbf_1)+\Ebb(T,\Sbf_1,\Bbf_1)+\Abf^2).
    \]
    Together with \eqref{e:dist-books}, we arrive at \eqref{e:reduction-iv}. When combined with (iia), this gives the conclusion of (iiia).
    
    Relabelling $\Sbf_1$, $\Sbf'_1$ to be $\Sbf$, $\Sbf'$ respectively, we may now work under the assumption that \eqref{e:reduction-assumption-1} and \eqref{e:reduction-assumption-2} hold.
    
    \textbf{Proof of \eqref{e:reduction-1} and \eqref{e:reduction-2} for pruned open book.}
    We will use analogous reasoning to that in the proof of property (iiia) above, but now we have to take into account the fact that we possibly have $V(\Sbf')\neq V(\Sbf)$. This is however compensated by the validity of both \eqref{e:reduction-assumption-1} and \eqref{e:reduction-assumption-2}, whereas before we only knew this for one of the open books in consideration.

    Notice also that by Lemma \ref{l:planar-excess-zeta}, which we applied to produce our relabelled open books $\Sbf$ and $\Sbf'$, we have
    $$C^{-1}\Ebf^p(T,\Bbf_1) \leq \boldsymbol{\zeta}(\Sbf)^2,\boldsymbol{\zeta}(\Sbf')^2\leq C\Ebf^p(T,\Bbf_1)$$
    and so \eqref{e:reduction-2} follows from \eqref{e:reduction-1} and Lemma \ref{l:spine-comparison}, and thus we may just focus on proving \eqref{e:reduction-1}.

    For $\bar\delta$ suitably small (with the same dependencies), we may apply Lemma \ref{l:crude}, Proposition \ref{p:crude-approx} and Lemma \ref{l:matching-Q} to the respective pairs $T_{0,1/4}$, $\Sbf$ and $T_{0,r_0/4}$, $\Sbf'$ with $\eta=\rho=\frac{r_0}{32}$. Relabelling the appropriate rescalings of the sets $W_i$ therein to still be denoted by $W_i$ for simplicity of notation, this gives
    \begin{itemize}
        \item[(1)] Pairwise disjoint neighborhoods $W_i$ around $(\Hbf_i\cap\Bbf_{1/2})\setminus \bar\Bbf_{r_0/64}$, corresponding currents $T_i\coloneqq T\mres W_i$ and constants $C=C(q,m,n,\bar n) > 0$, $\bar C= \bar C(q,m,n,\bar n, r_0)>0$ with
        \begin{align}
            &\spt(T)\cap\Bbf_{1/2}\setminus \Bbf_{r_0/64}(V(\Sbf)) \subset \bigcup_i W_i; \label{e:dist-control-S-1} \\
            &\dist^2(p,\Hbf_i) \leq C(\Ebb(T,{\Sbf},\Bbf_1) +\Abf^2) \notag \\
            &\qquad\qquad\qquad\leq \bar C(\Ebb(T,\Sbf,\Bbf_1) +\Abf^2 r_0^2)r_0^2 \qquad \forall p \in \spt(T_i)\cap\Bbf_{r_0}; \label{e:dist-control-S-2} \\
            &(\mathbf{p}_{\Hbf_i})_\sharp (T_i\cap \mathbf{p}_{\Hbf_i}^{-1}((\Bbf_{r_0}\cap\Hbf_i)\setminus \Bbf_{r_0/64}(V(\Sbf)))) = Q_i\llbracket (\Bbf_{r_0}\cap\Hbf_i)\setminus \Bbf_{r_0/64}(V(\Sbf))\rrbracket; \label{e:dist-control-S-3}
        \end{align}
        \item[(2)] Pairwise disjoint neighborhoods $W'_i$ around $(\Hbf'_i\cap\Bbf_{r_0/2})\setminus \bar\Bbf_{r_0/64}$, corresponding currents $T'_i\coloneqq T\mres W'_i$ and a constant $C=C(q,m,n,\bar n) > 0$ with
        \begin{align}
            &\spt(T)\cap\Bbf_{r_0/2}\setminus \Bbf_{r_0/64}(V(\Sbf')) \subset \bigcup_i W'_i; \label{e:dist-control-S'-1} \\
            &\dist^2(p,\Hbf'_i) \leq C(\Ebb(T,\Sbf',\Bbf_{r_0}) +\Abf^2 r_0^2)r_0^2 \qquad \forall p \in \spt(T'_i)\cap\Bbf_{r_0}; \label{e:dist-control-S'-2} \\
            &(\mathbf{p}_{\Hbf'_i})_\sharp (T'_i\cap \mathbf{p}_{\Hbf'_i}^{-1}((\Bbf_{r_0}\cap\Hbf_i)\setminus \Bbf_{r_0/64}(V(\Sbf')))) = Q_i\llbracket (\Bbf_{r_0}\cap\Hbf'_i)\setminus \Bbf_{r_0/64}(V(\Sbf'))\rrbracket; \label{e:dist-control-S'-3}
        \end{align}
    \end{itemize}
    Once again, we wish show that for each $i\in \{1,\dots,N\}$, there exists $j\in \{1,\dots, M\}$ such that
    \begin{equation}\label{e:dist-control-S-S'}
        \dist^2(\Hbf_i\cap\Bbf_1,\Hbf'_j\cap\Bbf_1) \leq \bar C (\Ebf(T,\Sbf,\Bbf_1) + \Ebf(T,\Sbf',\Bbf_1) + \Abf^2),
    \end{equation}
    for a constant $\bar C=\bar C(q,m,n,\bar n,r_0)>0$. This time, unlike in the corresponding argument for $\Sbf$ and $\Sbf_1$ above, note that the argument and conclusion is symmetric in $\Sbf$ and $\Sbf'$. Thus the validity of \eqref{e:dist-control-S-S'} will conclude the proof of \eqref{e:reduction-1}.

    Now fix $i$ and let $e$ be a vector of length $r_0/4$ in $\Hbf_i$ such that $\Bbf_{2\lambda_0 r_0}(e)\subset \Bbf_{r_0/2}\setminus (\Bbf_{r_0/64}(V(\Sbf))\cup\Bbf_{r_0/64}(V(\Sbf'))$, for some $\lambda_0=\lambda_0(m)\in(0,1/2]$. Proceeding as in \cite{DMS}, for each $z\in \Bbf_{\lambda_0 r_0}(e)\cap\Hbf_i$, there exists $\Hbf'_{j(z)}$ such that
    \[
        \dist^2(z,\Hbf'_{j(z)})\leq C(\Ebb(T,\Sbf,\Bbf_1) + \Abf^2 r_0^2 + \Ebb(T,\Sbf',\Bbf))r_0^2,
    \]
    for some $C=C(q,m,n,\bar n, r_0)>0$. In particular since $M\leq q$, there exists $E\subset \Bbf_{\lambda_0 r_0}(e)\cap\Hbf_i$ with $\Hcal^m(E)\geq \frac{1}{2q}\Hcal^m(\Bbf_{\lambda_0 r_0}(e)\cap\Hbf_i)$ and $j_0\in \{1,\dots, M\}$ such that for each $z\in E$ we have
    \begin{equation}
        \dist^2(z,\Hbf'_{j_0})\leq C(\Ebb(T,\Sbf,\Bbf_1) + \Abf^2 r_0^2 + \Ebb(T,\Sbf',\Bbf))r_0^2.
    \end{equation}
    Now we may apply Lemma \ref{l:disks-planes} with the planes $\Hbf_i \ni p\coloneqq e$, $\Hbf'_1$, $c_0= r_0/2$ and this choice of $E$ to conclude \eqref{e:dist-control-S-S'}. Note that the dependencies of the resulting constant will be on $q,m,n,\bar n$ and $r_0$. 
\end{proof}

\section{Estimates at the spine}\label{s:spine-est}
In this section we collect the necessary estimates required at the spine of a given open book $\Sbf\in \Bscr^q$, which are the mod$(q)$ analogues of the results in \cite{DMS}*{Section 11} and are an adaptation of Simon's work \cite{Simon_cylindrical} and Wickramasekera's work \cite{W14_annals} to this framework. This will be necessary to ensure that our blow-up sequence of graphs relative to a sequence of open books, thus far constructed away from a neighborhood of the spines, converge all the way up to the spine to a Dir-minimizer (along which it has an $(m-1)$-dimensional subspace of $Q$-points).

We omit the proofs here, since they follow in exactly the same way as the corresponding proofs in \cite{DMS}, together with our version of the pruning lemma (Lemma \ref{l:pruning}), as well as the graphical estimates in Section \ref{s:approx} taking the place of those in \cite{DMS}*{Section 8}, and Lemma \ref{l:shift} applied in place of \cite{DMS}*{Lemma 7.14}. Note that the monotonicity formula for mass ratios still applies here, to the stationary varifold associated to $T$, and since $Q=\frac{q}{2}$ the estimate \cite{DMS}*{(11.7)} is replaced by
\[
    \sum_i Q_i \Hcal^m(\Hbf_i\cap\Bbf_\rho) = Q\omega_m\rho^m \leq \omega_m\Theta(T,0)\rho^m,
\]
where $Q_i$ are the multiplicities given by Proposition \ref{p:coherent}.

We make the following assumption throughout this section.

\begin{assumption}\label{a:Simons}
    $T$, $\Sigma$, $\Abf$ and $q$ are as in Assumption \ref{a:main} and $\|T\|(\Bbf_4)\leq 4^m\omega_m(Q+\frac{1}{4})$ for $Q=\frac{q}{2}$. $\Sbf=\Hbf_1\cup\cdots\cup\Hbf_N\in\Bscr^q$ is an open book and $V=V(\Sbf)$. For $\eps=\eps(q,m,n,\bar n)>0$ smaller than the $\eps$-threshold in Assumption \ref{a:refined}, to be determined as the minimum of the thresholds in Theorem \ref{t:Simon-monotonicity-error}, Corollary \ref{c:Simon-non-conc}, and Proposition \ref{p:Simon-shift}, assume that
    \begin{equation}
        \Ebb(T,\Sbf,\Bbf_4)+\Abf^2\leq\eps^2\boldsymbol\sigma(\Sbf)^2.
    \end{equation}
\end{assumption}
We will henceforth use the following notation. Let $p\in \spt(T)$ be a point at which the approximate tangent plane $\pi(p)$ oriented by a simple $m$-vector $\vec{T}(p)$. We let $\mathbf{p}_{\vec T}(p)$ and $\mathbf{p}_{\vec T}^\perp(p)$ denote the respective projections onto $\pi(p)$ and $\pi(p)^\perp$. Moreover, for simplicity we let $p_\parallel\coloneqq \mathbf{p}_{\vec T}(p)$ and $p^\perp\coloneqq \mathbf{p}_{\vec T}^\perp(p)$.

\begin{theorem}[Simon's error and gradient estimates]\label{t:Simon-monotonicity-error}
    There exists $C=C(q,m,n,\bar n)>0$ and $\eps=\eps(q,m,n,\bar n)>0$ such that the following holds. Suppose that $T, \Sigma, \Abf, \Sbf,q$ and $Q$ are as in Assumption \ref{a:Simons} with this choice of $\eps$, let $r_*=\frac{1}{4\sqrt{m-1}}$ and suppose that $\Theta(T,0)\geq Q$. Then
    \begin{align}
        \int_{\Bbf_{r_*}}\frac{|p^\perp|^2}{|p|^{m+2}}d\|T\|(p) &\leq C(\Abf^2 +\hat\Ebf(T,\Sbf,\Bbf_4)) \label{e:Simons-gradient-1} \\
        \int_{\Bbf_{r_*}}|\mathbf{p}_{V}\circ\mathbf{p}_{\vec T}^\perp|^2 d\|T\| &\leq C(\Abf^2 +\hat\Ebf(T,\Sbf,\Bbf_4)) \label{e:Simons-gradient-2}
    \end{align}
\end{theorem}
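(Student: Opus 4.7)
My plan is to follow the classical approach of Simon \cite{Simon_cylindrical}, as adapted by Wickramasekera \cite{W14_annals} and in our earlier work \cite{DMS}. The two estimates are quasi-standard consequences of, respectively, the monotonicity formula (combined with a sharp upper mass bound) and the first variation formula applied to a suitable test vector field. The essential adjustments needed compared to \cite{DMS} are: (i) the spine $V$ here has codimension $1$ inside the planes of $\Sbf$ (rather than codimension $2$), but this actually simplifies the geometry; (ii) the pruning/graphical approximation results of Section \ref{s:approx} must be invoked in place of their analogues from \cite{DMS}; and (iii) the mod$(q)$ nature of $T$ enters only through $\Theta(T,0)\geq Q = q/2$ coupled with the no-gaps result (Theorem \ref{t:no-gaps}), which ensures we do not lose mass near $V$.

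For \eqref{e:Simons-gradient-1}, I would begin with the Almgren-type almost-monotonicity formula for the stationary varifold associated to $T$, which in the ambient $C^{3,\alpha_0}$ manifold $\Sigma$ reads
\[
\frac{\|T\|(\Bbf_r)}{\omega_m r^m} - \frac{\|T\|(\Bbf_s)}{\omega_m s^m} \;=\; \frac{1}{\omega_m}\int_{\Bbf_r\setminus\Bbf_s}\frac{|p^\perp|^2}{|p|^{m+2}}\,d\|T\|(p) \;+\; \mathrm{Err}(r,s),
\]
with $|\mathrm{Err}(r,s)|\leq C\Abf r$. Letting $s\downarrow 0$ and using $\Theta(T,0)\geq Q$ yields
\[
\frac{1}{\omega_m}\int_{\Bbf_{r_*}}\frac{|p^\perp|^2}{|p|^{m+2}}\,d\|T\|(p) \;\leq\; \frac{\|T\|(\Bbf_{r_*})}{\omega_m r_*^m} - Q + C\Abf.
\]
The right-hand side is then bounded by $C(\hat\Ebf(T,\Sbf,\Bbf_4)+\Abf^2)$ via the coherent graphical approximation of Proposition \ref{p:coherent}: since $\|\Sbf\|(\Bbf_{r_*}) = Q\omega_m r_*^m$ and $T$ coincides with the graphs of the multivalued maps $u_i$ off a set of small measure (controlled by $(\hat\Ebf+\Abf^2)^{1+\gamma}$), a standard Taylor expansion of the area functional combined with the $W^{1,2}$-bounds for $u_i$ and the mass bound $\|T\|(\Bbf_4)\leq(Q+\tfrac14)\omega_m 4^m$ (together with the no-gap Theorem \ref{t:no-gaps} to control the contribution in the $\rho_*$-tube around $V$) gives the desired upper bound. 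This is the content of the monotonicity-plus-excess argument in \cite{DMS}*{Section 11}, carried out here for open books.

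For \eqref{e:Simons-gradient-2} I would test the first variation with vector fields of the form $X(p) = \psi(|p|)\,\mathbf{p}_V(p)$, together with shifts $X_e(p) = \psi(|p|)\,e$ for $e$ ranging over an orthonormal basis of $V$. Using the identity
\[
\mathrm{div}_{\vec T}(\mathbf{p}_V(p)) \;=\; \mathrm{tr}(\mathbf{p}_V\circ\mathbf{p}_{\vec T}) \;=\; (m-1) - |\mathbf{p}_V\circ\mathbf{p}_{\vec T}^\perp|^2,
\]
the stationarity of $T$ in $\Sigma$ gives a relation between $\int \psi\,|\mathbf{p}_V\circ\mathbf{p}_{\vec T}^\perp|^2$ and integrals involving $\psi'(|p|)\langle p/|p|, \mathbf{p}_V(p)\rangle$ plus an error from the mean curvature of $\Sigma$ (of size $\Abf$). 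Since $\Sbf$ is translation-invariant along $V$, the corresponding cone identity on $\Sbf$ reads $\int \psi'\,|\mathbf{p}_V(p)|^2/|p|\,d\|\Sbf\| = 0$; subtracting this cone identity from the identity for $T$, the remaining boundary-type terms are controlled by $\hat\Ebf(T,\Sbf,\Bbf_4)$ (by writing the difference as an integral over the graph), by \eqref{e:Simons-gradient-1} (via Cauchy-Schwarz on the radial terms involving $p^\perp$), and by $\Abf^2$ (mean curvature of $\Sigma$). Choosing $\psi$ to be a smooth cutoff equal to $1$ on $\Bbf_{r_*}$ and vanishing on $\Bbf_4^c$, Cauchy-Schwarz yields \eqref{e:Simons-gradient-2}.

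The main obstacle will be Step 1 of \eqref{e:Simons-gradient-2}: controlling the non-concentration of $\|T\|$ near $V$, since the test field $\mathbf{p}_V(p)$ has no decay there while the density can reach $Q = q/2$ on all of $V$. This is handled by invoking Theorem \ref{t:no-gaps} together with the monotonicity formula to bound $\|T\|(\Bbf_\rho(V)\cap\Bbf_4)\leq C\rho$, and by routing the test field through the Lipschitz approximation $u_i$ on the good region $R^o\cup R^c$, where the contributions can be directly expressed in terms of $Du_i$ and estimated by Proposition \ref{p:first-blowup}. The contribution from the tube around $V$ is absorbed either into $\hat\Ebf$ via the heighht bound in the tube or into a geometric factor times the excess, as in \cite{DMS}*{Section 11}.
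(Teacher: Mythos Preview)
Your proposal is correct and follows essentially the same route as the paper, which in fact omits the proof entirely and refers to \cite{DMS}*{Section 11} with the obvious substitutions (the graphical approximations of Section \ref{s:approx} in place of those in \cite{DMS}*{Section 8}, and the identity $\sum_i Q_i \mathcal{H}^m(\Hbf_i\cap\Bbf_\rho) = Q\omega_m\rho^m$ from Lemma \ref{l:matching-Q} in place of \cite{DMS}*{(11.7)}). Your outline of the monotonicity argument for \eqref{e:Simons-gradient-1} and the first-variation argument with $V$-directed test fields for \eqref{e:Simons-gradient-2} is exactly this.

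One small remark: your final paragraph slightly misidentifies the obstacle. The test field $\psi(|p|)\,\mathbf{p}_V(p)$ (or $\psi(|p|)^2 e$ for $e\in V$) and its derivatives are bounded near $V$ since the cutoff depends on $|p|$ rather than on $\dist(p,V)$; there is no blow-up to absorb. The genuine work near $V$ is in the \emph{first} estimate, where the sharp mass comparison $\|T\|(\Bbf_{r_*}) - Q\omega_m r_*^m \leq C(\hat\Ebf + \Abf^2)$ requires controlling the mass in a tube around $V$; this is handled by the Whitney-cube decomposition and Lemma \ref{l:curved}, not by Theorem \ref{t:no-gaps} directly (the no-gap result enters earlier, in setting up Assumption \ref{a:refined} and ensuring $\sum_i Q_i = q$ via Lemma \ref{l:matching-Q}(a), which is what makes the cone mass exactly $Q\omega_m r_*^m$). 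This is a minor redirection; the substance of your argument is correct.
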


\begin{corollary}[Simon's non-concentration estimate]\label{c:Simon-non-conc}
    There exists $\eps = \eps(q,m,n,\bar{n})>0$ such that the following holds. Let $\kappa\in (0,m+2)$. Then, there is a constant $C_\kappa = C_\kappa(q,m,n,\bar{n},\kappa)>0$ such that if $T, \Sigma, \Abf, \Sbf,q, Q$ are as in Assumption \ref{a:Simons} with this choice of $\eps$, if $r_*$ is as in Theorem \ref{t:Simon-monotonicity-error} and moreover if $\Theta(T,0)\geq Q$, then
    \begin{equation}
        \int_{\Bbf_{r_*}} \frac{\dist^2(p,\Sbf)}{|p|^{m+2-\kappa}}\, d\|T\|(p) \leq C_\kappa(\Abf^2 +\hat\Ebf(T,\Sbf,\Bbf_4))
    \end{equation}
\end{corollary}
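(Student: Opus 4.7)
The plan is to deduce this from Simon's gradient estimate \eqref{e:Simons-gradient-1} in Theorem \ref{t:Simon-monotonicity-error} via a pointwise comparison between $\dist(p,\Sbf)$ and $|p^\perp|$, combined with a dyadic scaling argument; this follows the strategy of \cite{DMS}*{Corollary 11.4}. The starting observation is the trivial weight comparison: for $|p|\leq r_*$ and $\kappa\in(0,m+2)$ we have $|p|^{\kappa-m-2}\leq r_*^\kappa|p|^{-m-2}$, so that
\[
\int_{\Bbf_{r_*}}\frac{|p^\perp|^2}{|p|^{m+2-\kappa}}\,d\|T\|(p) \leq r_*^\kappa\int_{\Bbf_{r_*}}\frac{|p^\perp|^2}{|p|^{m+2}}\,d\|T\|(p) \leq C r_*^\kappa (\Abf^2 + \hat{\Ebf}(T,\Sbf,\Bbf_4))
\]
by \eqref{e:Simons-gradient-1}. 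It thus suffices to bound the weighted integral of the pointwise error $\dist^2(p,\Sbf) - 2|p^\perp|^2$ (when positive) with the same right-hand side.

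The key pointwise estimate, obtained via direct linear algebra at graphical points $p = y + u(y)$ with $y\in \Hbf_{i(p)}$ in the nearest page of $\Sbf$, is
\[
\dist^2(p,\Sbf) \leq 2|p^\perp|^2 + C|p|^2\, |\vec{T}(p) - \vec{\Hbf}_{i(p)}|^2.
\]
To absorb the tilt term, I would dyadically decompose $\Bbf_{r_*} = \bigcup_k A_k$ with $A_k = \Bbf_{2^{-k}r_*}\setminus \Bbf_{2^{-k-1}r_*}$ and apply the tilt-excess estimate \eqref{e:tilt-estimate} of Theorem \ref{t:heightbd} at scale $2^{-k}r_*$ on each annulus, which is available thanks to the graphical approximations of Proposition \ref{p:refined} (all dyadic cubes of sufficiently large side length belong to $\Gcal^o \cup \Gcal^c$, by Lemma \ref{l:regions}(i)). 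This yields a bound of the form
\[
\int_{A_k}|\vec{T} - \vec{\Hbf}_{i(\cdot)}|^2 \,d\|T\| \leq C\, (2^{-k}r_*)^{-2}\int_{\widehat{A}_k}\dist^2(p,\Sbf)\,d\|T\|(p) + C(2^{-k}r_*)^m\Abf^2,
\]
with $\widehat{A}_k$ a slight enlargement of $A_k$. On $A_k$ the weight $|p|^{\kappa-m}$ is comparable to $(2^{-k}r_*)^{\kappa-m}$, so division and summation produces the geometric factor $(2^{-k})^\kappa$ at each layer, whose sum converges precisely because $\kappa > 0$, giving a bound of the required form after using the global hypothesis $\hat\Ebf(T,\Sbf,\Bbf_4)\leq E$.

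The contributions from the non-graphical portions, namely inner cubes $L\in \Gcal^{in}$ and the innermost tubular neighbourhood of $V$ where the Whitney decomposition becomes fine, are controlled through the bad-set measure bound \eqref{e:refined-4} of Proposition \ref{p:refined}, which carries a gain of $E^{1+\gamma}$. Combined with the density lower bound $\Theta(T,0)\geq Q$, Theorem \ref{t:no-gaps}, and the geometric series \eqref{e:geometric}, this contribution is absorbable into $C_\kappa(\Abf^2 + \hat\Ebf(T,\Sbf,\Bbf_4))$.

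The main obstacle is uniformly controlling the tilt integral across scales close to $V$, where the Whitney decomposition becomes fine and some cubes inevitably fall into $\Gcal^{in}$. Here the density assumption $\Theta(T,0)\geq Q$ is essential, since it forces mass to concentrate correctly along $V$; combined with the second gradient estimate \eqref{e:Simons-gradient-2}, which bounds $\int |\mathbf{p}_V\circ \mathbf{p}_{\vec T}^\perp|^2$, this transfers the requisite tilt control up to the spine and closes the geometric series.
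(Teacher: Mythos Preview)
There is a genuine gap in your tilt argument. After the (correct) pointwise bound $\dist^2(p,\Sbf)\leq 2|p^\perp|^2 + C|p|^2|\vec T(p)-\vec\Hbf_{i(p)}|^2$, you propose to control the tilt contribution via dyadic annuli $A_k$ and the tilt--height estimate of Theorem~\ref{t:heightbd}. But that estimate returns, on each $A_k$, a quantity of size $(2^{-k}r_*)^{-2}\int_{\widehat A_k}\dist^2(\cdot,\Sbf)\,d\|T\|$; after restoring the weight $|p|^{\kappa-m}\sim (2^{-k}r_*)^{\kappa-m}$ this is precisely $\int_{\widehat A_k}\dist^2(p,\Sbf)/|p|^{m+2-\kappa}\,d\|T\|$ again, with a constant that is \emph{not} small. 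Summing over $k$ (bounded overlap of the $\widehat A_k$) therefore reproduces the full left-hand side with a large multiplicative constant, and the inequality is circular. The geometric factor $(2^{-k})^\kappa$ you claim would appear only if you already knew the scale-invariant bound $\sigma^{-m-2}\int_{\Bbf_\sigma}\dist^2\,d\|T\|\leq C(\hat\Ebf+\Abf^2)$ for every $\sigma\leq r_*$, which is exactly the missing step. The appeal to \eqref{e:Simons-gradient-2} at the end does not help: that estimate controls only the $V$-component of the tilt, not the full page-wise tilt.

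The argument in \cite{DMS} (and originally Simon) avoids the tilt altogether. One first establishes the scale-invariant bound $\sigma^{-m-2}\int_{\Bbf_\sigma}\dist^2(p,\Sbf)\,d\|T\|\leq C(\Abf^2+\hat\Ebf(T,\Sbf,\Bbf_4))$ for all $\sigma\in(0,r_*]$, either via the first variation with the test field $\phi(|p|/\sigma)^2\,\dist(p,\Sbf)\nabla\dist(p,\Sbf)$, or equivalently, on the graphical region, via the Hardt--Simon identity $|p^\perp|\sim |y|^2|\partial_r(u_i(y)/|y|)|$ combined with the elementary one-dimensional Hardy inequality $\int_0^{r_*} t^{\kappa-1}|g(t)|^2\,dt\leq C_\kappa r_*^\kappa|g(r_*)|^2 + C_\kappa r_*^\kappa\int_0^{r_*} t|g'(t)|^2\,dt$ along each ray. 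With the scale-invariant bound in hand, the non-concentration estimate follows at once from the layer-cake identity $|p|^{\kappa-m-2}=r_*^{\kappa-m-2}+(m+2-\kappa)\int_{|p|}^{r_*}t^{\kappa-m-3}\,dt$ and Fubini, since $\int_0^{r_*}t^{\kappa-m-3}\bigl(\int_{\Bbf_t}\dist^2\,d\|T\|\bigr)dt\leq C(\hat\Ebf+\Abf^2)\int_0^{r_*}t^{\kappa-1}\,dt$.
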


\begin{proposition}[Simon's shift inequality]\label{p:Simon-shift}
    There exist a radius $r_* = r_*(q,m,n,\bar{n})>0$, $\eps = \eps(q,m,n,\bar{n})>0$, and $C = C(q,m,n,\bar{n})>0$ such that the following holds. For each $\kappa\in (0,m+2)$, there is a constant $\bar{C}_\kappa = \bar{C}_\kappa(q,m,n,\bar{n},\kappa)>0$ such that if $T, \Sigma, \Abf, \Sbf,q, Q$ are as in Assumption \ref{a:Simons} with this choice of $\eps$, then for any $p_0\in \Bbf_{r_*}$ with $\Theta(T,p_0)\geq Q$ we have
    \begin{align}
        \int_{\Bbf_{4r_*}(p_0)}\frac{\dist^2(p,p_0+\Sbf)}{|p-p_0|^{m+2-\kappa}}d\|T\|(p) &\leq\bar C_\kappa (\Abf^2 + \hat\Ebf(T,\Sbf,\Bbf_4)) \label{e:shift-1}\\
        |\mathbf{p}_{\pi_1}^\perp(p_0)|^2 + \boldsymbol\zeta(\Sbf)^2|\mathbf{p}_{V^\perp\cap\pi_1}(p_0)|^2 &\leq C (\Abf^2 + \hat\Ebf(T,\Sbf,\Bbf_4)), \label{e:shift-2}
    \end{align}
    where the $m$-dimensional plane $\pi_1$ is the extension of the half-plane $\Hbf_1$.
\end{proposition}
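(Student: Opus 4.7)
The plan is to establish the pointwise bound (2) first and then deduce the non-concentration estimate (1) as a consequence by applying the already-established Corollary \ref{c:Simon-non-conc} to the translated current at $p_0$. The crucial geometric input is Lemma \ref{l:shift}, which, applied with $p = p_0$, provides a page $\Hbf_{j_0}$ and a set $\Omega\subset\Hbf_{j_0}$ of $\mathcal{H}^m$-measure at least $\bar C^{-1}$ on which a \emph{pointwise} version of (2) holds, namely
\[
|\mathbf{p}_{\pi_i}^\perp(p_0)|+\boldsymbol\zeta(\Sbf)\,|\mathbf{p}_{V^\perp\cap\pi_i}(p_0)|\leq \bar C\,\dist(z,p_0+\Sbf)\qquad\forall z\in\Omega,\ \forall i.
\]
Choosing the rotationally invariant open set $U$ in Lemma \ref{l:shift} to lie inside the region where Assumption \ref{a:Simons} allows the coherent graphical approximation of Proposition \ref{p:coherent} over $\Hbf_{j_0}$ gives, for each $z$ in a large subset $K\cap\Omega\subset\Omega$, a witness $z^\star\coloneqq z+u_{j_0}(z)\in\spt(T)$ at controlled height.

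For (2), I would square the pointwise Lemma \ref{l:shift} bound for $i=1$ and use the triangle inequality
\[
\dist^2(z,p_0+\Sbf)\leq 2\dist^2(z^\star,p_0+\Sbf)+2|u_{j_0}(z)|^2.
\]
Integrating this over $z\in\Omega\cap K$, using $\mathcal{H}^m(\Omega\cap K)\gtrsim 1$ (from the smallness of $|\Omega\setminus K|$ in Proposition \ref{p:coherent}), the $L^\infty$ estimate $\|u_{j_0}\|_\infty^2\leq C(\hat\Ebf(T,\Sbf,\Bbf_4)+\Abf^2)$, and changing variables $z\mapsto z^\star$ (whose Jacobian is uniformly close to one by the Lipschitz estimate), yields
\[
\text{LHS of (2)}\ \leq\ C\int_{\Bbf_1}\dist^2(y,p_0+\Sbf)\,d\|T\|(y)+C(\hat\Ebf(T,\Sbf,\Bbf_4)+\Abf^2).
\]
It remains to pass from $\dist^2(y,p_0+\Sbf)$ to $\dist^2(y,\Sbf)$ on the integral. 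For $y\in\spt(T)$ close to some page $\Hbf_i$, one has $\dist(y,p_0+\Sbf)\leq\dist(y,\Sbf)+\dist(\mathbf{p}_{\pi_i}(y),p_0+\Hbf_i)$, and the second term is pointwise bounded by $|\mathbf{p}_{\pi_i}^\perp(p_0)|$ plus a $\boldsymbol\zeta(\Sbf)$-weighted contribution of $|\mathbf{p}_{V^\perp\cap\pi_i}(p_0)|$ arising when the nearest point on $p_0+\Hbf_i$ lies on its boundary $p_0+V$ (elementary half-plane geometry). Combined with Lemma \ref{l:trivial}, this reintroduces precisely the LHS of (2) on the right, multiplied by $C\|T\|(\Bbf_4)/\mathcal{H}^m(\Omega\cap K)$, which can be absorbed into the left-hand side provided $\eps$ in Assumption \ref{a:Simons} is small enough.

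For (1), with (2) in hand I would apply Corollary \ref{c:Simon-non-conc} to the translated setup $T'\coloneqq(\tau_{-p_0})_\sharp T$, $\Sigma'\coloneqq\tau_{-p_0}(\Sigma)$, with cone $\Sbf$: the density hypothesis $\Theta(T',0)\geq Q$ is immediate, the mass bound $\|T'\|(\Bbf_4)\leq 4^m\omega_m(Q+\tfrac14)$ follows from monotonicity and the smallness of $|p_0|\leq r_*$, and the excess $\hat\Ebf(T',\Sbf,\Bbf_4)=\hat\Ebf(T,p_0+\Sbf,\Bbf_4(p_0))$ is bounded by $C(\hat\Ebf(T,\Sbf,\Bbf_4)+\Abf^2)$ via the same triangle-inequality plus (2). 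Corollary \ref{c:Simon-non-conc} then outputs exactly (1) after pulling back via $\tau_{-p_0}$, for a new constant $\bar C_\kappa$.

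The main obstacle is the absorption step in (2), which requires the constant multiplying the reintroduced LHS on the right to be strictly subcritical. This is delicate because (a) the term $\boldsymbol\zeta(\Sbf)^2|\mathbf{p}_{V^\perp\cap\pi_i}(p_0)|^2$ is weighted, and (b) the triangle-inequality mixes different pages. The critical observation, already encapsulated in estimate \eqref{e:DMS-7.16} within the proof of Lemma \ref{l:shift}, is that one can bound the weighted projection sum uniformly by $6\max_k|\mathbf{p}_{\pi_k}^\perp(p_0)|$; together with the smallness of $\hat\Ebf(T,\Sbf,\Bbf_4)/\boldsymbol\sigma(\Sbf)^2\leq\eps^2$ this makes the absorption constant as small as needed. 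Thus the strategy parallels \cite{DMS}*{Section 11} exactly, with Lemma \ref{l:pruning}, Section \ref{s:approx}, and Lemma \ref{l:shift} replacing the roles of \cite{DMS}*{Lemma 8.2, Section 8, Lemma 7.14} respectively.
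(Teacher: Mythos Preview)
Your overall strategy---prove \eqref{e:shift-2} first via Lemma~\ref{l:shift} combined with the graphical approximation, then deduce \eqref{e:shift-1} by applying Corollary~\ref{c:Simon-non-conc} to the translated current---is exactly the route the paper defers to in \cite{DMS}, and the final sentence of your proposal identifies the correct substitutions.

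There is, however, a genuine gap in your absorption step for \eqref{e:shift-2}. You claim the reintroduced term is multiplied by $C\|T\|(\Bbf_4)/\mathcal{H}^m(\Omega\cap K)$ and that this can be made $<1$ by taking $\eps$ small. But both numerator and denominator are structural constants depending only on $q,m,n,\bar n$; nothing in Assumption~\ref{a:Simons} shrinks their ratio. Your fallback via \eqref{e:DMS-7.16} and the smallness of $\hat\Ebf/\boldsymbol\sigma(\Sbf)^2$ does not rescue this: tracing the triangle inequality $\dist^2(z^\star,p_0+\Sbf)\leq 2\dist^2(z^\star,\Sbf)+2|\mathbf{p}_{\pi_{j_0}}^\perp(p_0)|^2$ (here $j_0$ is the maximizing index from the proof of Lemma~\ref{l:shift}), the term you must absorb is precisely $\bar C^2|\mathbf{p}_{\pi_{j_0}}^\perp(p_0)|^2=\bar C^2\max_k|\mathbf{p}_{\pi_k}^\perp(p_0)|^2$, with the constant $\bar C\geq 1$ coming from Lemma~\ref{l:shift} itself. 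Since this maximum dominates the left-hand side of \eqref{e:shift-2}, you arrive at a tautology rather than an estimate. The ratio $\hat\Ebf/\boldsymbol\sigma^2$ never enters.

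The actual argument in \cite{DMS} avoids this circularity by \emph{not} attempting to pass from $\dist^2(\cdot,p_0+\Sbf)$ back to $\dist^2(\cdot,\Sbf)$ via the triangle inequality. Instead one first runs the monotonicity-based Simon error computation directly at $p_0$ to control the mass deficit $\|T\|(\Bbf_\rho(p_0))/(\omega_m\rho^m)-Q$ by $C(\hat\Ebf(T,\Sbf,\Bbf_4)+\Abf^2)$; the point is that the test vector fields used in this computation (radial about $p_0$, cut off appropriately) see only the \emph{unshifted} excess, because the error terms from the first variation are estimated against $\dist(\cdot,\Sbf)$ rather than $\dist(\cdot,p_0+\Sbf)$. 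This yields \eqref{e:shift-1} with the correct right-hand side directly, and then \eqref{e:shift-2} follows from Lemma~\ref{l:shift} by integrating \eqref{e:shift-1} over the set where the $z^\star$ live (where $|z^\star-p_0|$ is bounded above and below)---no absorption needed.
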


\section{Linearization}
Before completing the blow-up procedure relative to an open book $\Sbf\in \Bscr^q(0)$, we first analyze the linearized problem, which will be the key to reaching a contradiction for the proof of Theorem \ref{t:decay}. A key role will be played by the boundary regularity theory developed for Dir-minimizing $Q$-valued functions in \cite{Hi}, which will be instrumental in proving the decay Lemma \ref{l:bdry-decay} below. 

In what follows, given a Lipschitz open set $\Omega\subset\R^m$ and a map $u\in W^{1,2} (\Omega, \mathcal{A}_Q (\mathbb R^{\bar n}))$ we will say that $u$ is \emph{Dir-minimizing in $\Omega$} if 
\[
\int_\Omega |Dv|^2 \geq \int_\Omega |Du|^2
\]
for every $v\in W^{1,2} (\Omega, \mathcal{A}_Q (\mathbb R^{\bar n}))$ which has the same trace as $u$ on $\partial \Omega$ (c.f. \cite{DLS_MAMS} for the relevant definition of the trace of a multi-valued Sobolev function). 
The key properties used here are the continuity of Dir-minimizers at the boundary under the assumption that the boundary data is in a suitable H\"older class and the monotonicity of the frequency function at boundary points. Before coming to the statement of the decay lemma we introduce the following terminology.

\begin{definition}\label{d:spaces}
We denote by $H^+\subset \mathbb R^m$ the open half space $\{x: x_1>0\}$, by $V$ the hyperplane $\{x_1=0\}$, and by $B_r^+\subset \mathbb R^m$ the half ball $H^+\cap B_r = \{x\in \mathbb R^m : |x|<r, x_1>0\}$. We then let:
\begin{itemize}
\item[(a)] $\mathscr{H}$ be the space of $W^{1,2}$ maps $u: B_1^+ \to \mathcal{A}_Q (\mathbb R^{\bar n})$ which are Dir-minimizing, whose trace on $B_1\cap V$ is identically $Q\llbracket 0 \rrbracket$ and with the property that the frequency $I_{u,p}(0)= \lim_{r\downarrow 0}I_{u,p}(r)$ of $u$ (see \eqref{e:bdry-freq} below, re-centered at $p$) at every boundary point $p\in V\cap B_1$ is at least $1$. 
\item[(b)] $\mathscr{L}$ be the space of maps $u: H^+\to \mathcal{A}_Q (\mathbb R^{\bar n})$ of the form 
\[
u (x) = \sum_i \llbracket L_i (x)\rrbracket\, ,
\]
where $L_1, \ldots , L_Q : \mathbb R^m \to \mathbb R^{\bar n}$ are linear maps which vanish on the hyperplane $V$. 
\end{itemize}
\end{definition}

The key decay lemma is then the following.

\begin{lemma}\label{l:bdry-decay}
    For every $Q$, $m$, $\bar n\in\mathbb{Z}_{\geq 1}$ and $\varepsilon>0$, there exists $\rho = \rho (Q, m, \bar n, \varepsilon)>0$ such that
\begin{equation}\label{e:linear-decay}
\min_{v\in \mathscr{L}} 
\int_{B_r^+} \mathcal{G} (u,v)^2 
\leq \varepsilon r^{m+2} \int_{B_1^+} |u|^2
\end{equation}
for every $u\in \mathscr{H}$ and for every $r\in (0,\rho]$. 
\end{lemma}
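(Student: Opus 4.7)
The plan is a compactness-and-contradiction argument. Suppose the conclusion fails for some $\varepsilon > 0$. Then one finds $u_k \in \mathscr{H}$ normalized by $\int_{B_1^+}|u_k|^2 = 1$ and scales $r_k \downarrow 0$ with
\[
\min_{v\in\mathscr{L}}\int_{B_{r_k}^+}\mathcal{G}(u_k,v)^2 > \varepsilon\, r_k^{m+2}.
\]
Testing with $v = Q\llbracket 0\rrbracket \in \mathscr{L}$ gives $\int_{B_{r_k}^+}|u_k|^2 > \varepsilon r_k^{m+2}$. Set $\lambda_k^2 := r_k^{-m}\int_{B_{r_k}^+}|u_k|^2$ and introduce the blow-ups $\tilde u_k(x) := u_k(r_k x)/\lambda_k$. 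The boundary frequency monotonicity developed by Hirsch \cite{Hi}, applied at the origin with $I_{u_k,0}(0)\geq 1$, yields the universal upper bound $\int_{B_r^+}|u_k|^2 \leq C r^{m+2}$ for $r\leq 1$. Combined with the failure lower bound this produces the two-sided control $\sqrt{\varepsilon}\, r_k \leq \lambda_k \leq C r_k$. Each $\tilde u_k$ is Dir-minimizing on $B_{1/r_k}^+$ with $Q\llbracket 0\rrbracket$-trace on $V$, has boundary frequency $\geq 1$ at every point of $V \cap B_{1/r_k}$, and satisfies $\int_{B_R^+}|\tilde u_k|^2 \leq C_\varepsilon R^{m+2}$ uniformly in $k$. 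A standard cutoff Caccioppoli argument (multiplication by a scalar cutoff preserves the zero trace) yields uniform $W^{1,2}_{\rm loc}(\overline{H^+})$ bounds, and $L^2$-compactness of Dir-minimizing $Q$-valued maps (cf.\ \cite{DLS_MAMS}) extracts a subsequential strong-$L^2$ limit $\tilde u_\infty$. This limit is Dir-minimizing on every bounded Lipschitz subdomain of $H^+$, has $Q\llbracket 0\rrbracket$-trace on $V$, is nontrivial ($\int_{B_1^+}|\tilde u_\infty|^2 \geq \varepsilon$), and satisfies $\min_{v\in\mathscr{L}}\int_{B_1^+}\mathcal{G}(\tilde u_\infty, v)^2 \geq \varepsilon$ by $L^2$-closedness of $\mathscr{L}$.

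The heart of the proof is to show that $\tilde u_\infty$ is $1$-homogeneous about the origin. Setting $\alpha_k := I_{u_k,0}(0) \geq 1$, the monotonicity of $s \mapsto H_{u_k}(s)/s^{m-1+2\alpha_k}$ (where $H_{u_k}(s) := \int_{\partial B_s^+}|u_k|^2$) together with the elementary averaging bound $H_{u_k}(s_k) \leq 2$ for some $s_k \in [1/2,1]$ (forced by $\int_{B_1^+}|u_k|^2 = 1$) upgrades the crude decay to
\[
\lambda_k^2 \leq C\, 2^{2\alpha_k}\, r_k^{2\alpha_k}/(m+2\alpha_k).
\]
Comparing this with $\lambda_k^2 \geq \varepsilon r_k^2$ and invoking $r_k \downarrow 0$ forces $\alpha_k \to 1$: if $\alpha_k - 1 \geq \eta_0 > 0$ along a subsequence, then $(2 r_k)^{2(\alpha_k-1)} \to 0$ would violate the inequality, and the same bound prevents $\alpha_k$ from diverging. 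A diagonal extraction based on the monotonicity of $r \mapsto I_{u_k,0}(r)$ then gives $I_{\tilde u_\infty, 0}(R) = \lim_k I_{u_k,0}(R r_k) = 1$ for every $R > 0$, and the equality case of the boundary frequency monotonicity formula forces $\tilde u_\infty$ to be exactly $1$-homogeneous about $0$.

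To conclude, a $1$-homogeneous Dir-minimizing $Q$-valued map on $H^+$ with vanishing trace on $V$ extends by even reflection across $V$ to a $1$-homogeneous Dir-minimizer on all of $\R^m$; by the classification of $1$-homogeneous Dir-minimizers (cf.\ \cite{DLS_MAMS}) each sheet is then a linear map, and the vanishing-trace condition on $V$ forces each such linear factor to vanish on $V$. Hence $\tilde u_\infty \in \mathscr{L}$, contradicting $\min_{v\in\mathscr{L}}\int_{B_1^+}\mathcal{G}(\tilde u_\infty, v)^2 \geq \varepsilon > 0$. The main obstacle is the homogeneity step: identifying the blow-up with an element of $\mathscr{L}$ relies on the sharp matching $\lambda_k \asymp r_k$, which itself combines the failure hypothesis (a lower bound on $\lambda_k$) with the frequency $\geq 1$ assumption (the upper bound) to pin the small-scale frequencies at exactly $1$ in the limit---both inequalities are indispensable.
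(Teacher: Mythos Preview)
Your overall blow-up strategy is sound, and the $1$-homogeneity of $\tilde u_\infty$ about the origin can indeed be extracted from the two-sided control $\sqrt{\varepsilon}\,r_k\leq\lambda_k\leq Cr_k$ together with frequency monotonicity (the bound on $\alpha_k$ you sketch is essentially correct, though the case $\alpha_k\to\infty$ needs a cleaner treatment than ``the same bound prevents divergence''). The genuine gap is in the final step. Even reflection of a map with zero \emph{Dirichlet} trace does not produce a Dir-minimizer: already for $Q=1$ it is \emph{odd} reflection that preserves harmonicity across a zero-Dirichlet boundary, and for $Q$-valued maps no reflection principle of either kind is available in \cite{DLS_MAMS}. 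Moreover, the ``classification of $1$-homogeneous Dir-minimizers'' you invoke---that each sheet must be linear---is not a result in \cite{DLS_MAMS} and is not known in general dimension; the paper does not rely on any such statement.

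The correct way to close the argument (and the route the paper takes) is to use the full strength of the hypothesis $I_{u,p}(0)\geq 1$ at \emph{every} boundary point $p\in V$, not just at the origin. Your limit $\tilde u_\infty$ inherits this by upper semicontinuity of the frequency under strong $W^{1,2}$ convergence. Once $\tilde u_\infty$ is $1$-homogeneous about $0$, the scaling identity $I_{\tilde u_\infty,p}(r)=I_{\tilde u_\infty,\lambda p}(\lambda r)$ combined with $I_{\tilde u_\infty,q}(0)\geq 1$ for all $q\in V$ and $I_{\tilde u_\infty,0}(\cdot)\equiv 1$ forces $I_{\tilde u_\infty,p}(r)=1$ for every $p\in V$ and $r>0$; hence $\tilde u_\infty$ is $1$-homogeneous about every point of $V$. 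This immediately gives $\tilde u_\infty\in\mathscr{L}$ (translation invariance along $V$ plus $1$-homogeneity in the $x_1$ variable), with no reflection and no global classification needed. The paper actually organizes the proof differently---it first establishes a fixed-scale claim (if $I_u(1)\leq 1+\delta$ then decay at scale $\tfrac12$) by compactness, then handles general $r\leq\rho$ by a dichotomy on $I_u(2r)$---but your direct blow-up would also work once the final step is repaired as above.
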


\begin{proof}
Following Almgren's celebrated computations on the monotonicity formula for the frequency, we introduce the quantities 
\[
H_u (r):= \int_{\partial B_r^+} |u|^2 \qquad \mbox{and} \qquad D_u (r) := \int_{B_r^+} |Du|^2
\]
and 
\begin{equation}\label{e:bdry-freq}
I_u (r):= \frac{rD_u(r)}{H_u(r)}, .
\end{equation}
We then claim that 
\[
I_u'(r) \geq 0
\]
and 
\begin{equation}\label{e:log-derivative}
\left.\frac{d}{d\tau}\right|_{\tau=r} \left[ \ln \left(\frac{H_u(\tau)}{\tau^{m-1}}\right)\right] = \frac{2 I_u(r)}{r}\, .
\end{equation}
In order to prove these identities we can argue as in \cite{DLS_MAMS}*{Theorem 3.15 \& Corollary 3.18}. In fact the arguments in there are based on the variational identities derived in \cite{DLS_MAMS}*{Proposition 3.2}, which results from taking first variations of the Dirichlet energy along certain specific one-parameter families of deformations of the function $u$. A simple inspection of the argument for \cite{DLS_MAMS}*{Proposition 3.2} shows that these deformations keep the boundary value of $u$ unchanged, due to the zero boundary data and so they are valid choices of variation. This shows that the same identities hold in our case as well. In turn, integrating \eqref{e:log-derivative} and using the monotonicity of $I$ shows that 
\[
\frac{H_u(r)}{r^{m-1}}\leq \left(\frac{r}{t}\right)^{2I_u(r)} \frac{H_u(t)}{t^{m-1}}
\]
for every $0<r\leq t \leq 1$. 

From the latter we also conclude that 
\begin{equation}\label{e:decay-1}
\int_{B_s^+} |u|^2 \leq \left(\frac{s}{t}\right)^{m+2 I_u(s)} \int_{B_t^+} |u|^2
\leq \left(\frac{s}{t}\right)^{m+2} \int_{B_t^+} |u|^2
\end{equation}
for every $0<s\leq t\leq 1$ and every $u\in \mathscr{H}$ (given that $1\leq I_u (0) = \lim_{r\downarrow 0} I_u (r)$). 
Finally, the same computations also lead to the conclusion that, if $I_u (r)$ has a constant value $\alpha$ for each $r>0$, then $u$ has to be radially $\alpha$-homogeneous.

Fix now $\varepsilon$ as in the statement. We claim that there is a $\delta>0$ such that, if $u\in \mathscr{H}$ and $I_u(1) \leq 1+\delta$, then
\begin{equation}\label{e:freq-near-1-decay}
\min_{v\in \mathscr{L}} \int_{B_{1/2}^+} \mathcal{G} (u,v)^2 \leq \varepsilon 2^{-m-2} \int_{B_1^+} |u|^2\, .
\end{equation}
Indeed assume the latter claim were false. Then we could find a sequence $\{u_k\}_k\subset \mathscr{H}$ for which $I_{u_k}(1) \leq 1+\frac{1}{k}$ but nonetheless 
\begin{equation}\label{e:contra}
\min_{v\in \mathscr{L}} \int_{B_{1/2}^+} \mathcal{G} (u_k,v)^2 \geq \varepsilon 2^{-m-2} \int_{B_1^+} |u_k|^2\, .
\end{equation}
Observe that by normalizing we can without loss of generality assume $\int_{B_1^+} |u_k|^2 =1$. Because of the upper frequency bound and monotonicity, it is immediate to see that, for every $s<1$, $\int_{B^+_s} |Du_k|^2$ is uniformly bounded in $k$. In particular up to extraction of a subsequence we can assume that $u_k$ converges strongly in $L^2 (B_s^+)$ to a function $u$ for each $0<s<1$. As it is shown in \cite{Hi}*{Proof of Proposition 3.3} (see also \cite{DDHM}*{Lemma 4.7}), the convergence is then strong in $W^{1,2} (B^+_s)$ for every $s<1$ and the function $u$ is in fact Dir-minimizing in $B^+_s$ and attains the boundary value $Q \llbracket 0 \rrbracket$ at $\{x_1=0\}\cap B_1$ (from convergence of traces, see e.g. \cite{DLS_MAMS}*{Proposition 2.10}). Moreover the upper semi-continuity of the frequency guarantees that $I_{u,x}(0)\geq 1$ for every $x\in \{x_1=0\}\cap B_1$. In particular, again exploiting the frequency monotonicity, we conclude that $I_u (r) = 1$ for all $0<r<1$, and thus the function $u$ must be radially $1$-homogeneous around the origin. But for the same reason it must be radially $1$-homogeneous around any other point $x\in \{x_1=0\}$, in turn implying easily that $u$ belongs to $\mathscr{L}$. On the other hand we have 
\[
\int_{B_{1/2}^+} \mathcal{G} (u_k,u)^2  \geq \varepsilon 2^{-m-2}
\]
by \eqref{e:contra}, contradicting the previously reached conclusion that $u_k$ converges strongly to $u$ in $L^2(B_{1/2}^+)$. 

Having found $\delta$ as above, let $\bar{\rho}$ be such that $\bar\rho^{2\delta} \leq \varepsilon$ and set $\rho:= \frac{\bar{\rho}}{2}$. Our claim is that \eqref{e:linear-decay} holds for this choice of $\rho$. Fix $r \leq \rho$ and let $\bar {r}=2r$. We then distinguish two possibilities. 

{\bf Case 1.} $I_u (\bar r) \geq 1+\delta$. We can then apply \eqref{e:decay-1} twice to conclude 
\begin{align*}
r^{-m-2} \int_{B^+_r} |u|^2 \leq \bar{r}^{-m-2} \int_{B^+_{\bar r}} |u|^2 \leq \bar{r}^{2\delta} \int_{B_1^+} |u|^2 \leq \bar{\rho}^{2\delta} \int_{B_1^+} |u|^2 \leq \varepsilon \int_{B_1^+} |u|^2\, .
\end{align*}
In particular \eqref{e:linear-decay} holds because the function identically equal to $Q\llbracket 0 \rrbracket$ belongs to $\mathscr{L}$.

{\bf Case 2.} $I_u (\bar r)\leq 1+\delta$. We can then consider the function $u_{\bar r} (x) = \bar r^{-1} u (\bar r x)$; observe that $u_{\bar r}$ belongs to $\mathscr{H}$. We thus know from \eqref{e:freq-near-1-decay} that there exists a function $v\in \mathscr{L}$ such that 
\[
2^{m+2} \int_{B_{1/2}^+} \mathcal{G} (u_{\bar r}, v)^2 \leq
\varepsilon \int_{B_1^+} |u_{\bar r}|^2\, .
\]
Changing variables we arrive at the inequality
\[
\frac{1}{r^{m+2}} \int_{B_r^+} \mathcal{G} (u,v)^2 \leq \frac{\varepsilon}{\bar{r}^{m+2}} \int_{B_{\bar r}^+} |u|^2\, .
\]
On the other hand we can use \eqref{e:decay-1} to get 
\[
\frac{1}{\bar{r}^{m+2}} \int_{B_{\bar r}^+} |u|^2 \leq \int_{B_1^+} |u|^2\, .
\]
Combining the last two inequalities we then reach \eqref{e:linear-decay}.
\end{proof}

We will also need the following lemma to extend the Dir-minimizing property from the interior of a half-plane to the boundary of the half-plane (i.e. to get a Dir-minimizing property for competitors with the same trace).

\begin{lemma}\label{l:technical-minimizing}
For every choice of positive integers $Q, m, n$ and for every bounded Lipschitz open set $\Omega\subset \mathbb R^m$ the following holds. A map $u\in W^{1,2} (\Omega, \mathcal{A}_Q (\mathbb R^{n}))$ is Dir-minimizing if and only if the inequality
\begin{equation}\label{e:Dir-minimizing}
\int_\Omega |Dv|^2 \geq \int_\Omega |Du|^2
\end{equation}
holds for every $v\in W^{1,2} (\Omega, \mathcal{A}_Q (\mathbb R^{n}))$ which coincides with $u$ in a neighborhood of $\partial \Omega$.
\end{lemma}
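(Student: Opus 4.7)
The ``only if'' direction is immediate: any $v$ coinciding with $u$ in a neighborhood of $\partial\Omega$ shares the trace of $u$, so \eqref{e:Dir-minimizing} follows from the assumption that $u$ is Dir-minimizing. For the converse, fix an arbitrary competitor $v\in W^{1,2}(\Omega,\Acal_Q(\R^n))$ having the same trace as $u$ on $\partial\Omega$; I will construct a sequence $w_\eps\in W^{1,2}(\Omega,\Acal_Q(\R^n))$ such that (i) $w_\eps$ coincides with $u$ on the boundary collar $\Omega\setminus\Omega_{2\eps}$, where $\Omega_\eps:=\{x\in\Omega:\dist(x,\partial\Omega)>\eps\}$, and (ii) $\int_\Omega|Dw_\eps|^2\to \int_\Omega|Dv|^2$ as $\eps\downarrow 0$. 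Applying the hypothesis to each $w_\eps$ and passing to the limit then yields $\int_\Omega|Du|^2\leq\int_\Omega|Dv|^2$, as required.

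To construct $w_\eps$, I would use Almgren's bi-Lipschitz embedding $\boldsymbol{\xi}:\Acal_Q(\R^n)\to\R^{N(Q,n)}$ together with the Lipschitz retraction $\boldsymbol{\rho}:\R^{N(Q,n)}\to\boldsymbol{\xi}(\Acal_Q(\R^n))$ (\cite{DLS_MAMS}), which together reduce multi-valued interpolation to the classical vector-valued one. Set $U:=\boldsymbol{\xi}\circ u$ and $V:=\boldsymbol{\xi}\circ v$; these lie in $W^{1,2}(\Omega,\R^{N(Q,n)})$, satisfy $|DU|^2=|Du|^2$ and $|DV|^2=|Dv|^2$ almost everywhere, and their difference lies in $W^{1,2}_0(\Omega)$ because $u$ and $v$ share a trace and $\boldsymbol{\xi}$ is Lipschitz. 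Choose a cutoff $\chi_\eps\in C^\infty_c(\Omega)$ with $\chi_\eps\equiv 1$ on $\Omega_{4\eps}$, $\chi_\eps\equiv 0$ outside $\Omega_{2\eps}$, and $|\nabla\chi_\eps|\leq C/\eps$, define the Euclidean interpolation $W_\eps:=U+\chi_\eps(V-U)$, and set $w_\eps:=\boldsymbol{\xi}^{-1}\circ\boldsymbol{\rho}\circ W_\eps$. By construction $W_\eps=U$ on $\Omega\setminus\Omega_{2\eps}$ and $W_\eps=V$ on $\Omega_{4\eps}$, both of which lie in the image of $\boldsymbol{\xi}$, so that $w_\eps\equiv u$ on the former set and $w_\eps\equiv v$ on the latter, giving (i).

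The main obstacle is verifying (ii), which amounts to controlling the Dirichlet energy on the annulus $A_\eps:=\Omega_{2\eps}\setminus\Omega_{4\eps}$. Outside $A_\eps$, $|Dw_\eps|^2$ equals $|Du|^2$ or $|Dv|^2$, giving contributions which converge to $0$ and $\int_\Omega|Dv|^2$, respectively, by absolute continuity of the Lebesgue integral. On $A_\eps$ the product rule yields $|DW_\eps|^2\leq C(|DU|^2+|DV|^2+\eps^{-2}|V-U|^2)$, and the Lipschitz bound $|Dw_\eps|\leq L\,|DW_\eps|$ transfers this to the multi-valued Dirichlet energy. The term involving $|DU|^2+|DV|^2$ vanishes in the limit by absolute continuity of the integral, while Hardy's inequality applied to $V-U\in W^{1,2}_0(\Omega)$ gives $|V-U|/\dist(\cdot,\partial\Omega)\in L^2(\Omega)$; combining this with the pointwise bound $1/\eps^2\leq 16/\dist(\cdot,\partial\Omega)^2$ on $A_\eps$ and a further application of absolute continuity, one finds $\eps^{-2}\int_{A_\eps}|V-U|^2\to 0$. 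Summing these estimates yields (ii) and completes the argument.
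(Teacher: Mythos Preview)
Your proof is correct and follows the same overall architecture as the paper's: reduce to a classical Sobolev interpolation via Almgren's bi-Lipschitz embedding $\boldsymbol{\xi}$ and the Lipschitz retraction $\boldsymbol{\rho}$, then build competitors that agree with $u$ near $\partial\Omega$ and whose energies converge to $\int_\Omega|Dv|^2$. The difference lies in how the transition layer is handled. The paper approximates $w:=\boldsymbol{\xi}(v)-\boldsymbol{\xi}(u)\in W^{1,2}_0(\Omega)$ by $C^\infty_c$ functions and then glues via a partition of unity subordinate to a shrinking collar, obtaining a diagonal sequence $w_k$ with $\|w_k-w\|_{W^{1,2}}\to 0$ and $\{w_k\neq w\}$ contained in a $1/k$-neighborhood of $\partial\Omega$; the energy convergence then follows just from absolute continuity of $|D(\boldsymbol{\xi}(u)+w)|^2$ and $\|D(w_k-w)\|_{L^2}\to 0$. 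Your route is more direct: a single cutoff $\chi_\eps$ with gradient $O(\eps^{-1})$, and the dangerous term $\eps^{-2}\int_{A_\eps}|V-U|^2$ disposed of via Hardy's inequality on the Lipschitz domain $\Omega$. This trades an elementary density argument for a sharper (but standard) functional inequality; both are perfectly adequate here.

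One minor inaccuracy: the identities $|DU|^2=|Du|^2$ and $|DV|^2=|Dv|^2$ are not true in general, since $\boldsymbol{\xi}$ is merely bi-Lipschitz, not an isometry. This does not affect your argument, however, because you never use these identities: outside the annulus you compare $w_\eps$ directly to $u$ or $v$ (not $U$ or $V$), and on the annulus you only need $|DU|,|DV|\in L^2$, which follows from the Lipschitz bound.
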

\begin{proof} Let $\Omega$ be a bounded Lipschitz open set and for $N\in \N$, fix a classical function $w\in W^{1,2} (\Omega, \mathbb R^N)$
with the property that $w|_{\partial \Omega} = 0$. We then claim that there is a sequence of functions $\{w_k\}\subset W^{1,2} (\Omega, \mathbb R^N)$ with the following properties:
\begin{itemize}
\item[(i)] $w_k$ vanishes identically in a neighborhood of $\partial \Omega$;
\item[(ii)] $\|w_k-w\|_{W^{1,2}}$ converges to $0$;
\item[(iii)] $\{w_k \neq w\}$ is contained in the $\frac{1}{k}$ neighborhood of $\partial \Omega$. 
\end{itemize}
This is in fact a simple exercise in classical Sobolev-space theory and we shall return to it at the end of the proof for the reader's convenience.
Armed with it, let us now show the conclusion of the lemma using Almgren's bi-Lipschitz embedding $\boldsymbol{\xi}$ of $\mathcal{A}_Q (\mathbb R^n)$ into $\mathbb R^{N (Q,n)}$ and the Lipschitz retraction $\boldsymbol{\rho}$ of $\mathbb R^{N (Q,n)}$ onto $\boldsymbol{\xi} (\mathcal{A}_Q (\mathbb R^n))$; for their definitions and properties we refer to \cite{DLS_MAMS}*{Section 2.1}. 

One implication in the lemma is clear, so we focus on the other. So assume that $v$ is a competitor with the property that $v|_{\partial \Omega} = u|_{\partial \Omega}$ for some map $u\in W^{1,2} (\Omega; \mathcal{A}_Q (\mathbb R^n))$ which has strictly smaller Dirichlet energy than $u$ on $\Omega$. We aim at constructing a similar competitor with the property that it coincides with $u$ in a neighborhood of $\partial \Omega$. First of all we consider the classical Sobolev map $w:= \boldsymbol{\xi} (v) - \boldsymbol{\xi} (u)$. We now construct functions $w_k$ satisfying (i), (ii), and (iii) above and hence we let 
\[
v_k := \boldsymbol{\xi}^{-1} \circ \boldsymbol{\rho} \circ (\boldsymbol{\xi} (u) + w_k)\, .
\]
Since $\boldsymbol{\rho}$ is the identity on $\boldsymbol{\xi} (\mathcal{A}_Q (\mathbb R^n))$, the map $v_k$ coincides with $u$ in a neighborhood of $\partial \Omega$. We next claim that 
\[
\lim_{k\to \infty} \int_\Omega |Dv_k|^2 = \int_\Omega |Dv|^2\, ,
\]
which would suffice to conclude the proof. Let $U_k:= \{x\in\Omega: \dist (x, \partial \Omega) < \frac{1}{k}\}$ and observe that $\{v_k\neq v\}\subset U_k$. It thus suffices to show that 
\[
\lim_{k\to \infty} \int_{U_k} |Dv_k|^2 = 0\, .
\]
But by \cite{DLS_MAMS}*{Theorem 2.4} and the Lipschitz regularity of $\boldsymbol{\rho}$ we get 
\[
\int_{U_k} |Dv_k|^2 \leq C \int_{U_k} |D (\boldsymbol{\xi} (u) + w_k)|^2 
\leq C \int_{U_k} |D (\boldsymbol{\xi} (u) + w)|^2 + C \int_{U_k} |D (w_k-w)|^2\, .
\]
On the other hand, given that $|D (\boldsymbol{\xi} (u) + w)|$ is a fixed $L^2$ function and the measure of $U_k$ converges to 0, the first summand tends to zero, while the second converges to zero because $\|w_k-w\|_{W^{1,2}}$ does.  

Coming to the existence of the functions $w_k$, recall first that, because of the boundedness and Lipschitz regularity of the open set $\Omega$, the function $w$ belongs in fact to $W^{1,2}_0 (\Omega)$, namely the closure of $C^\infty_c (\Omega)$ in the strong $W^{1,2}$ topology. In particular there certainly is a sequence $\{z_j\}\subset C^\infty_c (\Omega)$ with the property that $\|z_j - w\|_{W^{1,2}(\Omega)} \to 0$. Fix now $k\geq 2$ and consider the open sets $U_k$ (as defined above) and $\overline{U}_{k+1}^c \cap \Omega$ (i.e. the complement of the closure of $U_{k+1}$ in $\Omega$). The pair forms an open cover of $\Omega$ and we can thus find a smooth partition of unity $(\varphi_k, \psi_k)$ subordinate to it, namely $\spt (\varphi_k) \subset \overline{U}_{k+1}^c \cap \Omega$, $\spt (\psi_k)\subset U_{k}$, and $\varphi_k+\psi_k =1$ on $\Omega$. We set $z_{j,k} := \varphi_k w + \psi_k z_j$. It is then immediate to see that $z_{j,k}$ is identically equal to $w$ on the complement of $U_k$ and that $\lim_{j\to \infty} \|z_{j,k}-w\|_{W^{1,2}(\Omega)} = 0$. Therefore the sequence $w_k$ is achieved as a diagonal one of the form $z_{j(k), k}$ for a suitable choice of $j(k)\uparrow \infty$. 
\end{proof} 

\section{Final blow-up}\label{s:blowup}
We are now in a position to prove the reduced version of the main decay theorem (Theorem \ref{t:two-scale-decay}), which in turn implies Theorem \ref{t:decay}.

\subsection{Two decay regimes}
The conclusion of Theorem \ref{t:two-scale-decay} follows from the validity of either Proposition \ref{p:collapsed} or Proposition \ref{p:noncollapsed} below.

\begin{proposition}[Collapsed decay]\label{p:collapsed}
    Suppose that $m\geq 2$, $n\geq\bar{n}\geq 1$ are integers, let $T$, $\Sigma$, $\Abf$ and $q$ be as in Assumption \ref{a:main} with $\|T\|(\Bbf_1)\leq \omega_m(Q+\frac{1}{4})$ for $Q=\frac{q}{2}$. For every $\varsigma_1 >0$, there exists $\eps_c=\eps_c(q,m,n,\bar n, \varsigma_1)\in (0,\frac{1}{2}]$ and $r_c=r_c(q,m,n,\bar n, \varsigma)\in (0,\frac{1}{2}]$ such that the following holds. Suppose that there exists an open book $\Sbf\in \Bscr^q(0)$ satisfying
        \[
            \Abf^2 \leq \eps_c^2\Ebb(T,\Sbf,\Bbf_1) \leq \eps_c^4 \boldsymbol\sigma(\Sbf)^2,
        \]
        and $\boldsymbol{\zeta}(\Sbf)\leq \eps_c$. Then there exists $\Sbf'\in\Bscr^q(0)\setminus \Pscr(0)$ with
    \begin{equation}
        \Ebb(T,\Sbf',\Bbf_{r_c}) \leq \varsigma_1\Ebb(T,\Sbf,\Bbf_1).
    \end{equation}
\end{proposition}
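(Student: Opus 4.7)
Suppose the conclusion fails; then there exist $\varsigma_1 > 0$ and sequences $T_k,\Sigma_k,\Sbf_k\in\Bscr^q(0)$ with $\eps_k\downarrow 0$ satisfying $\Abf_k^2\leq \eps_k^2 E_k\leq \eps_k^4\boldsymbol\sigma(\Sbf_k)^2$ and $\boldsymbol\zeta(\Sbf_k)\leq \eps_k$, but for which no $\Sbf'\in\Bscr^q(0)\setminus\Pscr(0)$ achieves $\Ebb(T_k,\Sbf',\Bbf_r)\leq \varsigma_1 E_k$ at any $r\in(0,1/2]$, where $E_k\coloneqq\Ebb(T_k,\Sbf_k,\Bbf_1)$. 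After passing to subsequences, the number $N$ of pages of $\Sbf_k$ is constant, $\Sbf_k\to \pi_\infty$ in Hausdorff distance for some $m$-plane $\pi_\infty\supset V_\infty\coloneqq\lim V(\Sbf_k)$, and $T_k\toweakstar Q\llbracket\pi_\infty\rrbracket$ by the monotonicity formula together with the hypothesis $\|T_k\|(\Bbf_1)\leq(Q+\tfrac14)\omega_m$. Partition the indices $\{1,\dots,N\}$ into $I^+\sqcup I^-$ according to which of the two half-planes $\pi_\infty^\pm$ of $\pi_\infty$ each $\Hbf_{i,k}$ converges to.

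\textbf{Blow-up.} Apply Proposition \ref{p:coherent} to obtain Lipschitz multi-valued approximations $u_i^k:R_{i,k}^o\to \Acal_{Q_i}(\Hbf_{i,k}^\perp)$ with multiplicities (constant along a subsequence by Proposition \ref{p:refined}(ii)) satisfying $\sum_i Q_i = q$; the convergence $T_k\toweakstar Q\llbracket\pi_\infty\rrbracket$ combined with a mass computation on each side forces $\sum_{i\in I^\pm} Q_i = Q$. Set $v_i^k\coloneqq u_i^k/E_k^{1/2}$. Proposition \ref{p:first-blowup} gives uniform Dirichlet energy bounds away from $V_\infty$, and strong $W^{1,2}_{\loc}$-convergence (up to subsequence) to a Dir-minimizer $v_i^\infty$ on the open half-plane $\pi_\infty^+$ (resp.\ $\pi_\infty^-$). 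Aggregating via $v^\pm\coloneqq\sum_{i\in I^\pm}\llbracket v_i^\infty\rrbracket$ yields $Q$-valued Dir-minimizing maps on $\pi_\infty^\pm$ taking values in $\pi_\infty^\perp$. Note that $v^\pm$ is non-trivial by normalization: the $L^2$ norm of $v^\pm$ on $\pi_\infty^\pm\cap B_1$ is bounded below by a positive constant that accounts for the definition of $E_k$.

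\textbf{Boundary behavior at the spine.} Combine Theorem \ref{t:no-gaps} with Proposition \ref{p:Simon-shift} to obtain, for every $\xi\in V_k\cap\Bbf_{1/2}$ with $\Theta(T_k,\xi)\geq Q$, a uniform weighted non-concentration estimate (Corollary \ref{c:Simon-non-conc}) that after normalization by $E_k^{1/2}$ passes to the limit to show that $v^\pm$ extends to $B_1^\pm\cup(V_\infty\cap B_1)$ as a $W^{1,2}$-map with trace identically $Q\llbracket 0\rrbracket$ on $V_\infty\cap B_1$. The density of $Q$-density points on $V_k$ (again by Theorem \ref{t:no-gaps}) combined with upper semicontinuity of Almgren's frequency and the structure of frequency at boundary points guarantees the bound $I_{v^\pm,\xi}(0)\geq 1$ for every $\xi\in V_\infty\cap B_1$. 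Finally, Lemma \ref{l:technical-minimizing} upgrades the interior minimality to minimality against competitors with the same trace, placing $v^\pm\in\mathscr{H}$.

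\textbf{Boundary decay and contradiction.} Apply Lemma \ref{l:bdry-decay} to each $v^\pm$ with a parameter $\varepsilon$ to be chosen: there exist $r_c\leq\rho(\varepsilon)$ and maps $\bar v^\pm\in\mathscr{L}$ with
\[
\int_{B_{r_c}^\pm}\Gcal(v^\pm,\bar v^\pm)^2\leq \varepsilon r_c^{m+2}\int_{B_1^\pm}|v^\pm|^2.
\]
Rescaling back, the $E_k^{1/2}$-graphs of $\bar v^\pm$ over $\pi_\infty^\pm$ assemble (after a small perturbation of the underlying half-planes to align with $T_{y,\Sigma_k}$) into open books $\Sbf'_k\in\Bscr^q(0)\setminus\Pscr(0)$ satisfying $\Ebb(T_k,\Sbf'_k,\Bbf_{r_c})\leq \varsigma_1 E_k$ for $\varepsilon$ small and $k$ large, contradicting the failure of decay. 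The main technical obstacle is the third step: verifying that $v^\pm$ genuinely belongs to the class $\mathscr{H}$—i.e.\ attaining the trace $Q\llbracket 0\rrbracket$, minimality up to the boundary, and frequency $\geq 1$ at every boundary point—which requires the combined use of the no-gap theorem, Simon's shift inequality and non-concentration estimate, and Lemma \ref{l:technical-minimizing}.
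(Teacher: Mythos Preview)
Your argument has a genuine gap in the third step. The claim that the blow-up $v^\pm$ has trace $Q\llbracket 0\rrbracket$ on $V_\infty$ is not correct in general. The non-concentration estimate \eqref{e:transversal-coherent-3} (and its collapsed analogue \eqref{e:transversal-coherent-5}) controls $u_j^k\ominus \mathbf{p}_{\pi_j}^\perp(\beta^k(L))$ near the spine, not $u_j^k$ itself; the shift estimate \eqref{e:shift-2} only tells you that $|\mathbf{p}_{\pi_1}^\perp(\beta^k(L))|\leq C E_k^{1/2}$, so after dividing by $E_k^{1/2}$ the trace converges to a bounded function $\bar\beta^\perp(0,\cdot)$ which need not vanish. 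Consequently $v^\pm$ does not lie in $\mathscr{H}$ and Lemma~\ref{l:bdry-decay} cannot be applied directly. The paper resolves this by showing (Proposition~\ref{p:harmonic}) that $\bar\beta^\perp(0,\cdot)$ and $\bar\beta^\parallel(0,\cdot)$ are boundary values of harmonic functions, then subtracting their linearizations $b^\perp$ and $\bar A_i(b^\parallel)$ before invoking the decay lemma (Proposition~\ref{p:final-decay}).

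There is a second, more serious issue specific to the collapsed regime which your construction of $\Sbf'_k$ does not address. The shift estimate \eqref{e:shift-2} gives $\boldsymbol\zeta(\Sbf_k)|\mathbf{p}_{V^\perp\cap\pi_1}(\beta^k(L))|\leq C E_k^{1/2}$, so the density-$Q$ points can drift in the direction $V^\perp\cap\pi_1$ by an amount of order $E_k^{1/2}/\boldsymbol\zeta(\Sbf_k)$, which diverges relative to $E_k^{1/2}$ since $\boldsymbol\zeta(\Sbf_k)\to 0$. This forces the spine of the optimal new book to rotate by an angle of order $E_k^{1/2}/\boldsymbol\zeta(\Sbf_k)$; the paper encodes this through the one-parameter family of rotations $R_k=R[E_k^{1/2}/\boldsymbol\zeta(\Sbf_k)]$ generated by the skew-symmetric map built from $b^\parallel$ (see Section~8.5). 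Your ``small perturbation of the underlying half-planes'' keeps the spine essentially fixed and therefore cannot produce a cone achieving the required decay. In short, the collapsed case genuinely needs the two-component decomposition $(\bar w^{(1)},\bar w^{(2)})$ of Proposition~\ref{p:final-blowup}(d) and the associated spine rotation, not merely a graphical adjustment over a fixed spine.
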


\begin{proposition}[Non-collapsed decay]\label{p:noncollapsed}
    Suppose that $m\geq 2$, $n\geq \bar{n}\geq 1$ are integers, let $T$, $\Sigma$, $\Abf$ and $q$ be as in Assumption \ref{a:main} with $\|T\|(\Bbf_1)\leq \omega_m(Q+\frac{1}{4})$ for $Q=\frac{q}{2}$. Then for every $\eps^\star_c$, $\varsigma_1 >0$, there exists $\eps_{nc}=\eps_{nc}(q,m,n,\bar n, \varsigma_1,\eps^\star_c)\in (0,\frac{1}{2}]$ and $r_{nc}=r_{nc}(q,m,n,\bar n, \varsigma_1,\eps^\star_c)\in (0,\frac{1}{2}]$ such that the following holds. Suppose that there exists an open book $\Sbf\in \Bscr^q(0)$ satisfying
        \[
            \Abf^2 \leq \eps_{nc}^2\Ebb(T,\Sbf,\Bbf_1) \leq \eps_{nc}^4 \boldsymbol\sigma(\Sbf)^2,
        \]
        and $\boldsymbol{\zeta}(\Sbf)\geq \eps^\star_c$. Then there exists $\Sbf'\in\Bscr^q(0)\setminus \Pscr(0)$ with
    \begin{equation}
        \Ebb(T,\Sbf',\Bbf_{r_{nc}}) \leq \varsigma_1\Ebb(T,\Sbf,\Bbf_1).
    \end{equation}
\end{proposition}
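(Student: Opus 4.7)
\textbf{Contradiction setup and compactness.} I would argue by contradiction: fix $\varsigma_1 > 0$ and $\eps^\star_c > 0$, and suppose there exist sequences $T_k$, $\Sigma_k$, $\Abf_k$ and open books $\Sbf_k = \Hbf^k_1 \cup \cdots \cup \Hbf^k_N$ satisfying the hypotheses with $\eps_k \downarrow 0$, but for which the decay conclusion fails at every candidate scale in a prescribed finite list. After rotations and extraction of a subsequence I may assume that $V(\Sbf_k) \equiv V$ is fixed, $N$ is constant, and $\Sbf_k \to \Sbf_\infty$ locally in Hausdorff distance. The lower bound $\boldsymbol{\zeta}(\Sbf_k) \geq \eps^\star_c$ ensures that $\Sbf_\infty$ is a \emph{genuinely non-planar} open book with pairwise distinct pages $\Hbf^\infty_1, \dots, \Hbf^\infty_{N_\infty}$, and that the varifold limit of $T_k$ has the form $T_\infty = \sum_i Q_i \llbracket \Hbf^\infty_i \rrbracket$ with $\sum_i Q_i = q$ (using Lemma \ref{l:matching-Q} to rule out multiplicity loss near $V$).

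\textbf{Blow-up and extension to the spine.} Set $E_k := \mathbb{E}(T_k,\Sbf_k,\Bbf_1)^{1/2}$, so that $E_k \to 0$ and $\Abf_k / E_k \to 0$. Applying the coherent graphical construction (Propositions \ref{p:coherent}--\ref{p:first-blowup}) on each page and renormalizing, I obtain maps $v_{k,i} := E_k^{-1} u_{k,i}$ which, for every $\sigma > 0$, are bounded in $W^{1,2}$ on $\Hbf^\infty_i \cap \Bbf_1 \setminus B_\sigma(V)$ and converge in $W^{1,2}_{\mathrm{loc}}$ (up to subsequence) to a $Q_i$-valued Dir-minimizer $w_i$ there. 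The key step is to extend $w_i$ all the way to the spine and identify its boundary behavior. For this I invoke Theorem \ref{t:no-gaps}, which produces $Q$-density points of $T_k$ clustering densely on $V \cap \Bbf_{1/2}$, together with Simon's non-concentration and shift inequalities (Corollary \ref{c:Simon-non-conc} and Proposition \ref{p:Simon-shift}) applied around those points, yielding for every $\kappa \in (0,m+2)$ and every $p_0 \in V \cap \Bbf_{1/2}$ a uniform bound of the form
\[
\int_{\Bbf_{1/4}(p_0)} \frac{\dist^2(p,V)}{|p-p_0|^{m+2-\kappa}}\, d\|T_k\|(p) \leq C_\kappa \left(\Abf_k^2 + E_k^2\right).
\]
After normalization this rules out Dirichlet-energy concentration of $v_{k,i}$ near $V$, upgrades the convergence to strong $L^2$ on all of $\Hbf^\infty_i \cap \Bbf_{1/2}$, and (using Lemma \ref{l:technical-minimizing} to pass the Dir-minimizing property across the boundary) shows that the extension of $w_i$ is Dir-minimizing on the half-ball. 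The shift bound \eqref{e:shift-2} pins the $Q$-density points of $T_k$ to within $O(E_k)$ of $V$, so the trace of $w_i$ on $V \cap \Bbf_{1/2}$ is identically $Q_i \llbracket 0 \rrbracket$, and boundary frequency monotonicity (as in the proof of Lemma \ref{l:bdry-decay}) gives $I_{w_i, p_0}(0) \geq 1$ at every $p_0 \in V \cap \Bbf_{1/2}$. Thus each $w_i$ belongs to the class $\mathscr{H}$ of Definition \ref{d:spaces}.

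\textbf{Linear decay and return to $T_k$.} With the $w_i \in \mathscr{H}$ in hand, Lemma \ref{l:bdry-decay} yields, for any $\varepsilon > 0$, a scale $\rho = \rho(\varepsilon)$ and, for each $i$, a linear map $L_i = \sum_j \llbracket L_{i,j} \rrbracket \in \mathscr{L}$ vanishing on $V$ with $\int_{B_\rho^+ \cap \Hbf^\infty_i} \mathcal{G}(w_i, L_i)^2 \leq \varepsilon \rho^{m+2}$. At the level of $T_k$, the tilted half-planes obtained as graphs of $E_k L_{i,j}$ over $\Hbf^k_i$, meeting along the spine shifted by the amount controlled by \eqref{e:shift-2}, assemble into an open book $\Sbf'_k \in \Bscr^q(0) \setminus \Pscr(0)$ (non-planarity is preserved because the lower bound $\boldsymbol{\zeta}(\Sbf_k) \geq \eps^\star_c$ survives the small tilt for $k$ large). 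Strong $L^2$ convergence $v_{k,i} \to w_i$ on the outer part, combined with the non-concentration estimate near the spine, translates the linear decay for $w_i$ into the bound $\mathbb{E}(T_k, \Sbf'_k, \Bbf_\rho) \leq \varsigma_1 E_k^2$ for $k$ sufficiently large and $\varepsilon = \varepsilon(\varsigma_1)$ chosen small enough, contradicting the assumed failure of the decay. The \emph{main obstacle} is the boundary analysis of the blow-up --- identifying the zero trace with multiplicity $Q_i$ and the frequency lower bound at every boundary point of $V$ --- which rests essentially on combining the mod$(q)$ no-gap property (Theorem \ref{t:no-gaps}) with the Simon-type spine estimates of Section \ref{s:spine-est} and on passing the Dir-minimizing property across the spine via Lemma \ref{l:technical-minimizing}.
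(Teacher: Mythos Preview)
Your overall contradiction-compactness-blowup strategy matches the paper's, and you correctly identify the key ingredients: the coherent approximation, the no-gap theorem, the Simon-type spine estimates, and the boundary decay Lemma \ref{l:bdry-decay}. However, there is a genuine gap in the boundary analysis. Your claim that ``the trace of $w_i$ on $V\cap\Bbf_{1/2}$ is identically $Q_i\llbracket 0\rrbracket$'' is not justified by the shift bound \eqref{e:shift-2} alone. That estimate says the $Q$-density points $\beta^k(L)$ lie at distance $O(E_k)$ from $V$ (using $\boldsymbol{\zeta}\geq\eps^\star_c$), but after dividing by $E_k$ this only says the normalized trace is \emph{bounded}, not zero. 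Indeed, the non-concentration estimate \eqref{e:transversal-coherent-3} shows that near $V$ the blow-up $\bar u_j$ is close to $\mathbf{p}_{\pi_j}^\perp(\bar\beta(0,v))$, where $\bar\beta$ is the (generically nonzero) limit of $E_k^{-1}\mathbf{p}_V^\perp(\beta^k)$. Consequently the $w_i$ do not lie in $\mathscr{H}$ as stated, and your invocation of the frequency lower bound $I_{w_i,p_0}(0)\geq 1$ is unfounded for the raw blow-up.

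What the paper does to close this gap is the content of Propositions \ref{p:final-blowup}(d), \ref{p:harmonic}, and \ref{p:final-decay}. One first derives variational identities (testing the first variation of $T_k$ and of $\Sbf_k$ against $\partial_v W$ for cylindrical $W$) which, in the limit, force the \emph{average} $\bar u = q^{-1}\sum_{i,j}\bar u_{ij}$ to be harmonic with $\partial^2_{t v}\bar u=0$ on $V$; together with the Hardt--Simon estimate \eqref{e:nonconc-Hardt-Simon} one gets $\bar u(0)=0$, so $\bar u$ extends harmonically across $V$. The trace $\bar\beta(0,\cdot)$ then coincides with $\bar u|_V$, and one sets $b(v):=\nabla_V\bar u(0)\cdot v$. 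It is the \emph{subtracted} map $\sum_i\bar u_i\ominus b$ that belongs to $\mathscr{H}$ and to which Lemma \ref{l:bdry-decay} applies. Correspondingly, the new cone $\Sbf'_k$ is built in two steps: first rotate $\Sbf_k$ by $R_k=R[E_k^{1/2}]$, where $R[\cdot]$ is the one-parameter family generated by the skew-symmetric map $y+x\mapsto b(y)-b^T(x)$ on $V\oplus V^\perp$ (this realigns the spine), and only then tilt the pages by the linear pieces $a^{nc}_i$ coming from Lemma \ref{l:bdry-decay}. Your sketch collapses these two operations into a single ``tilt plus shift controlled by \eqref{e:shift-2}'' and thereby skips the mechanism --- the harmonic extension of the average via the first-variation identities --- that actually identifies the boundary data as linear.
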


Clearly, one can deduce Theorem \ref{t:two-scale-decay} from the two propositions above by letting $\varsigma_1>0$ be fixed arbitrarily, then taking $\eps_c^\star=\eps_c$ in Proposition \ref{p:noncollapsed}, followed by $\eps_1=\min\{\eps_c,\eps_{nc}\}$ and $\bar{r}_1=r_c$, $\bar{r}_2=r_{nc}$. Thus to prove Theorem \ref{t:decay}, we now just need to establish Proposition \ref{p:collapsed} and Proposition \ref{p:noncollapsed}.

Analogously to that in \cite{DMS}, we will proceed to verify Proposition \ref{p:collapsed} and Proposition \ref{p:noncollapsed} by a contradiction blow-up argument. The difference between the two blow-up regimes is that in Proposition \ref{p:collapsed}, we will take a sequence of parameters $\eps^k_c$ tending to zero, thus implying that the corresponding open books $\Sbf_k$ converge to a flat plane (with multiplicity). This means that for $k$ sufficiently large we can reparameterize the coherent outer approximations of Proposition \ref{p:coherent} over a single plane, which we can without loss of generality take to be the extension of some half-plane in $\Sbf_k$. After subtracting the  linear functions whose graphs are the half-planes in $\Sbf_k$, we can then perform our blow-up procedure (this procedure is analogous to the original idea of Wickramasekera \cite{W14_annals}, just in arbitrary codimension).

On the other hand, in Proposition \ref{p:noncollapsed}, we fix $\eps_c^\star>0$ while we take a sequence $\eps_{nc}^k$ tending to zero. Thus, even though the ratio
\[
    \frac{\Ebb(T_k,\Sbf_k,\Bbf_1)}{\boldsymbol\sigma(\Sbf_k)^2}
\]
converges to zero, the open books $\Sbf_k$ will stay bonded away from any single plane. In this case we can therefore perform a blow-up procedure by directly using the coherent outer approximations, rescaled suitably.

\subsection{Transversal coherent approximations}
We are now in a position to proceed as described above in the setting of Proposition \ref{p:collapsed}, and reparameterize the coherent outer approximation from Proposition \ref{p:coherent} to a single plane. To keep our notation simple, as a base plane for all graphical parametrization we will use the extension $\pi_1$ of the first page $\Hbf_1$ of the open book $\Sbf_k$ (we can of course without loss of generality rotate coordinates to assume that $\Hbf_1^k\equiv \Hbf_1$ is independent of $k$). 
In particular $\pi_1$ is the union of $\Hbf_1$ and its reflection $-\Hbf_1$ across the spine $V = V (\Sbf_k)$ (which also we can assume without loss of generality is independent of $k$ by rotation, and thus $\pi_1$ is also independent of $k$). We will use a similar shorthand notation for any set which is the reflection along $V$ of some other set $\Omega\subset \Hbf_1$. More precisely, such a reflected set will be denoted by $-\Omega$, while often the starting set will be denoted by $+\Omega$.

Following the notation in \cite{DMS}, given a multi-valued function $g=\sum_i \llbracket g_i \rrbracket$ and a single-valued function $f$ defined on the same domain, we let
\[
    g\ominus f\coloneqq \sum_i \llbracket g_i - f \rrbracket, \qquad g\oplus f\coloneqq \sum_i \llbracket g_i + f \rrbracket.
\]
Moreover, given $L\in \Gcal$, we recall the notation $\Nscr(L)$ for neighbors of $L$ and the notation $\bar\Ebf(L)$, both from Definition \ref{d:coherent}, and further define
\[
    \tilde\Ebf(L)\coloneqq\max\{\bar\Ebf(L'):L'\in \Nscr(L)\} = \max\{\bar\Ebf(L''):L''\in \Nscr(L'), \ L'\in \Nscr(L)\}.
\]
Let us now state the transversal coherent approximation result.

\begin{proposition}[Transversal coherent approximation]\label{p:transversal-coherent}
    Let $m\in\mathbb{N}_{\geq 2}$, $n,\bar{n}\in \mathbb{N}_{\geq 1}$, $q\in \N_{\geq 2}$ 
    There exists $c_0=c_0(m,n,\bar n)>0$, $C=C(q,m,n,\bar n)>0$ such that the following holds. Let $T$, $\Sigma$, $\Abf$, $\Sbf= \Hbf_1\cup\cdots\cup\Hbf_N$ be as in Assumption \ref{a:refined}, let $\pi_i$ be the planes that are the extensions of $\Hbf_i$, and let $\gamma$ be as in Proposition \ref{p:crude-approx}. Let $\bar\ell= \max\{k\in \N:\Gcal_{k+1}\subset\Gcal^o\}$ and for $i=1,\dots, N$ let $\tilde R^o_i = \Hbf_i \cap \bigcup_{L \in \Gcal_{\leq \bar\ell}} R(L) \subset R_i^o$. Let $u_i : R_i^o \to \Acal_{Q_i}(\Hbf_i^\perp)$ be the Lipschitz maps as in Proposition \ref{p:coherent} and let $\boldsymbol\zeta(\Sbf) \leq c_0$. Then 
    \begin{itemize}
        \item[(a)] Each half-plane $\Hbf_i$ is a graph of the restriction to $\pm\Hbf_1$ of a linear map $A_i:\pi_1 \to \pi_1^\perp$ that satisfies $|A_i|\leq C \boldsymbol\zeta(\Sbf)$ and $\ker(A_i)=V(\Sbf)$;
        \item[(b)] For each $i$ there is a choice of sign $\pm$ and a Lipschitz map $v_i:\pm\tilde R^o_1 \to \Acal_{Q_i}(\pi_1^\perp)$ with $\Gbf_{v_i} = \Gbf_{u_i} \mres\mathbf{p}_{\pi_1}^{-1}(\pm\tilde R^o_1)$;
        \item[(c)] Letting $v=\sum_{i=1}^N \llbracket v_i \rrbracket$, we have $v:\tilde R^o_1\cup(-\tilde R^o_1) \to \Acal_Q(\pi_1^\perp)$ after extending each $v_i$ by zero, and
        \begin{equation}
            \|v\|_{L^\infty(\tilde R^o_i)} + \|Dv\|_{L^2(\tilde R^o_i)} \leq C\boldsymbol{\zeta}(\Sbf);
        \end{equation}
        \item[(d)] Letting
        \[
            K^\pm \coloneqq \mathbf{p}_{\pm\Hbf_1}\big((\spt(T)\cap\Bbf_{1/2}\cap \mathbf{p}_{\pi_1}^{-1}(\pm\tilde R^o_1))\setminus\gr(v)\big)
        \]
        denote the region of non-graphicality in $\pm\tilde R^o_1$, recalling that $L_1=R(L)\cap\Hbf_1$, for each $L\in \Gcal_{\leq \bar\ell}$ we have
        \begin{equation}\label{e:transv-coherent-non-graph-set}
            |\pm L_1\setminus K^\pm| + \|T\|(\mathbf{p}_{\pi_1}^{-1}(\pm L_1\setminus K^\pm)) \leq C 2^{-m\ell(L)}(\tilde\Ebf(L)+2^{-2\ell(L)}\Abf^2)^{1+\gamma};
        \end{equation}
        \item[(e)] Letting $w_i\coloneqq v_i \ominus A_i : \tilde R^o_i \cup (-\tilde R^o_i) \to \Acal_{Q_i}(\pi_1^\perp)$, we have
        \begin{align}
            2^{2\ell(L)}\|w_i\|_{L^\infty(L_1)}^2 + 2^{m\ell(L)}\|D w_i\|_{L^2(L_1)}^2 &\leq C(\tilde\Ebf(L) + 2^{-2\ell(L)}\Abf^2) \\
            \Lip(w_i) &\leq C(\tilde\Ebf(L) + 2^{-2\ell(L)}\Abf^2)^\gamma
        \end{align}
    \end{itemize}
\end{proposition}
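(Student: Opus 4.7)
The plan is to proceed in three steps, corresponding to conclusion (a), then (b)--(c)--(e), then (d), reducing everything to the already-established Proposition \ref{p:coherent} via a bi-Lipschitz change of base plane.

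\textbf{Conclusion (a) --- Linear structure.} Each half-plane $\Hbf_i$ lies in an $m$-plane $\pi_i$ containing the spine $V = V(\Sbf)$, with $\dist(\pi_i \cap \Bbf_1, \pi_1 \cap \Bbf_1) \leq \boldsymbol{\zeta}(\Sbf) \leq c_0$. Choosing $c_0 = c_0(m,n,\bar n)$ sufficiently small, $\pi_i$ is expressible as the graph over $\pi_1$ of a linear map $A_i : \pi_1 \to \pi_1^\perp$ with $|A_i| \leq C\boldsymbol{\zeta}(\Sbf)$, and since $V \subset \pi_i \cap \pi_1$, $A_i$ vanishes on $V$. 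When $\pi_i \neq \pi_1$, the kernel of $A_i$ is exactly the hyperplane $V$ in $\pi_1$. The projection $\mathbf{p}_{\pi_1}$ maps the half-plane $\Hbf_i$ bijectively onto either $\Hbf_1$ or $-\Hbf_1$; the sign in (b) is dictated by the sign of $e_i \cdot e_1$, where $e_i \in \Hbf_i \cap V^\perp$ is the unit vector orthogonal to $V$.

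\textbf{Conclusions (b), (c), (e) --- Change of variables.} Fix $i$ and $L \in \Gcal_{\leq \bar\ell} \subset \Gcal^o$. By Proposition \ref{p:coherent}(ii) combined with Lemma \ref{l:planar-excess-zeta}, $\Lip(u_i) \ll 1$ on $L_i$ and $\|u_i\|_{C^0(L_i)} \leq C\boldsymbol{\zeta}(\Sbf)$. The sheet-wise map $\Phi_i : L_i \to \pm\Hbf_1$ given by $y \mapsto \mathbf{p}_{\pi_1}(y + u_i(y))$ is a small Lipschitz perturbation of the near-isometry $\mathbf{p}_{\pi_1}|_{\Hbf_i}$, so for $c_0$ small enough each sheet of $\Phi_i$ is invertible with uniformly bounded bi-Lipschitz constant. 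Composition with $\Phi_i^{-1}$ produces $v_i$, and coherence across neighbouring cubes is inherited from that of the $u_i$ (this is where $\bar\Ebf(L)$ enters). For (c), the bounds on $v = \sum_i\llbracket v_i \rrbracket$ follow by combining $|A_i| \leq C\boldsymbol{\zeta}(\Sbf)$ with the $L^\infty$ and Dirichlet bounds on $u_i$, summed via the geometric estimate in the Whitney decomposition lemma. For (e), subtracting $A_i$ exactly cancels the linear tilt absorbed in the base change, so $w_i = v_i \ominus A_i$ inherits the estimates on $u_i$ up to constants dictated by the bi-Lipschitz distortion of $\Phi_i$; the appearance of $\tilde\Ebf(L)$ rather than $\bar\Ebf(L)$ accounts for the second-layer neighborhood needed to compare cubes via a shared neighbor after the change of base.

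\textbf{Conclusion (d) and main obstacle.} The bound on $K^\pm$ follows by transferring the estimate
\[
	|L_i \setminus \bar K_i(L)| + \|T\|(\mathbf{p}_{\Hbf_i}^{-1}(L_i \setminus \bar K_i(L))) \leq C 2^{-m\ell(L)}(\Ebf(L) + 2^{-2\ell(L)}\Abf^2)^{1+\gamma}
\]
of Proposition \ref{p:coherent} to the base plane $\pm \Hbf_1$ via the bi-Lipschitz maps $\Phi_i$ of Step 2. A single base point in $\pm L_1$ may receive graphical contributions from several pages whose pre-images lie in cubes neighbouring $L$, so one must take the maximum over the second-order neighbourhood $\Nscr(\Nscr(L))$; this is precisely what $\tilde\Ebf(L)$ encodes, and the $\|T\|$-mass comparison between the cylinders over $\Hbf_i$ and over $\pm\Hbf_1$ costs only lower-order terms through the tilt excess estimate of Proposition \ref{p:refined}(iii). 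The principal technical obstacle is the combinatorial bookkeeping near the deepest layer $\Gcal_{\bar\ell}$ and at the interface $V$ between $+\Hbf_1$ and $-\Hbf_1$: one must ensure that the inverses $\Phi_i^{-1}$ on different sheets are glued consistently, and that the domain restriction to $\tilde R_1^o$ (rather than $R_1^o$) arises naturally as the image of $\bigcup_i R_i^o$ under $\mathbf{p}_{\pi_1}$, since cubes in $\Gcal^o \setminus \Gcal_{\leq \bar\ell}$ may have neighbours outside $\Gcal^o$ for which the coherent approximation of Proposition \ref{p:coherent} is unavailable. Once this is handled, the remaining estimates are routine consequences of Step 2 and Proposition \ref{p:coherent}.
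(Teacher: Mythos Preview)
Your approach is essentially the same as the paper's: both reduce the proposition to reparameterizing the coherent approximations $u_i$ of Proposition \ref{p:coherent} from the pages $\Hbf_i$ to the single base half-plane $\pm\Hbf_1$ via the small tilt $A_i$, with the estimates on $w_i = v_i \ominus A_i$ inherited through the bi-Lipschitz change of chart. The paper simply cites \cite{DMS}*{Proposition 13.4} and the multi-valued reparameterization machinery of \cite{DLS\_multiple\_valued}*{Section 5} (specifically Proposition 5.2 therein) in lieu of spelling out the ``sheet-wise inversion of $\Phi_i$'' that you describe; your outline is a faithful unpacking of what those references accomplish, and your identification of the role of $\tilde\Ebf(L)$ and of the restriction to $\Gcal_{\leq\bar\ell}$ is accurate.
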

The proof of Proposition \ref{p:transversal-coherent} follows by the same reasoning to that for the proof of \cite{DMS}*{Proposition 13.4} and thus we do not include the details here. More precisely, we are simply reparameterizing the Lipschitz approximations $u_i$ of Proposition \ref{p:coherent} over the region $\pm \tilde R^o_1$ in the half-plane $\pm\Hbf_1$ and using the estimates in \cite{DLS_multiple_valued}*{Section 5}. Note that \cite{DLS_multiple_valued}*{Proposition 5.2} applies here since we are reparameterizing from an open region in $\Hbf_i \subset \pi_i$ to a (smaller) open region in $\pm\Hbf_1\subset \pi_1$.

\subsection{Non-concentration estimates}
We now proceed to improve the estimate \eqref{e:first-blowup}  to one that does not blow up as we shrink $\sigma \downarrow 0$, using the estimates in Section \ref{s:spine-est}.

\begin{proposition}\label{p:final-blowup-estimates}
    There exists $C=C(q,m,n,\bar n)>0$ such that for each $\varrho>0$ and $\eta>0$, there exists $\eps=\eps(q,m,n,\bar n,\varrho, 
    \eta)>0$ smaller than that in Theorem \ref{t:no-gaps} with $\varrho/2$ in place of $\varrho$, such that the following holds. Suppose that $T$, $\Sigma$, $\Abf$ are as in Assumption \ref{a:main} and suppose that $\Sbf\in \Bscr^q(0)\setminus \Pscr(0)$ is such that
    \begin{equation}
        \Ebb(T,\Sbf,\Bbf_1) + \Abf^2 \leq \eps^2 \boldsymbol\sigma(\Sbf)^2.
    \end{equation}
    For each $L\in \Gcal$, let $\beta(L)\in \{p:\Theta(T,p) \geq Q\}\cap \Bbf_{\varrho/2}(V)$ be a point with least distance to $y_L$ and let $r_*$ be as in Proposition \ref{p:Simon-shift} and let $r=\frac{r_*}{4}$. Then we have
    \begin{equation}\label{e:transversal-coherent-1}
        \int_{\Bbf_r} \frac{\dist^2(p,\Sbf)}{\max\{\varrho,\dist(p,V)\}^{1/2}} d\|T\|(p) \leq C(\hat\Ebf(T,\Sbf,\Bbf_1) + \Abf^2).
    \end{equation}
    In addition, $(\Bbf_r\cap\Hbf_j)\setminus \Bbf_\varrho(V) \subset R_i^o$ and the maps $u_j$ of Proposition \ref{p:coherent} satisfy the estimates
    \begin{align}
        \int_{(\Bbf_r\cap\Hbf_j)\setminus \Bbf_\varrho(V)} \frac{|Du_j(z)|^2}{\dist(z,V)^{1/2}} dz &\leq C(\hat\Ebf(T,\Sbf,\Bbf_1) + \Abf^2); \label{e:transversal-coherent-2} \\
        \sum_{i: 2^{-(i+1)}\geq \varrho}\sum_{L\in \Gcal_i}\int_{L_j} \frac{|u_j(z)\ominus(\mathbf{p}_{\pi_j}^\perp(\beta(L)))|^2}{\dist(z,V)^{5/2}} dz &\leq C(\hat\Ebf(T,\Sbf,\Bbf_1) + \Abf^2)\, . \label{e:transversal-coherent-3}
    \end{align}
    Moreover, if in addition $\boldsymbol\zeta(\Sbf)\leq c_0$ for $c_0$ as in Proposition \ref{p:transversal-coherent}, $(\Bbf_r\cap\pi_1) \setminus \Bbf_\varrho(V) \subset \tilde R_1^o\cup (-\tilde R_1^o)$ and the maps $w_j$ and $A_j$ therein satisfy
    \begin{align}
        \int_{(\Bbf_r\cap\pi_1)\setminus \Bbf_\varrho(V)} \frac{|Dw_j(z)|^2}{\dist(z,V)^{1/2}} dz &\leq C(\hat\Ebf(T,\Sbf,\Bbf_1) + \Abf^2); \label{e:transversal-coherent-4}\\
        \sum_{i: 2^{-(i+1)}\geq \varrho}\sum_{L\in \Gcal_i}\int_{\pm L_1} \frac{|w_j(z)\ominus (\mathbf{p}_{\pi_1}^\perp(\beta(L)) - A_j(\mathbf{p}_{\pi_1}(\beta(L))))|^2}{\dist(z,V)^{5/2}} dz &\leq C(\hat\Ebf(T,\Sbf,\Bbf_1) + \Abf^2) \label{e:transversal-coherent-5}\, . 
    \end{align}
    Finally, denote by $K_i$ the ``non-graphicality regions'', namely the union of the sets $\bar{K}_i (L)$ for $L\in \mathcal{G}^o$. Then for each $j$,
    \begin{equation}
          \int_{(\Bbf_r\cap\Hbf_j)\setminus (\Bbf_\varrho(V)\cup K_j)} |z|^{-(m-2)}\left|\partial_r\left(\frac{u_j(z)}{|z|}\right)\right|^2 dz \leq C(\hat\Ebf(T,\Sbf,\Bbf_1) + \Abf^2). \label{e:nonconc-Hardt-Simon}
    \end{equation}
    Likewise, if $K^\pm$ are the sets of Proposition \ref{p:transversal-coherent}, then
    \begin{equation}
            \int_{(\Bbf_r\cap\pi_1)\setminus (\Bbf_\varrho(V)\cup K^+\cup K^-)} |z|^{-(m-2)}\left|\partial_r\left(\frac{w_j(z)}{|z|}\right)\right|^2 dz \leq C(\hat\Ebf(T,\Sbf,\Bbf_1) + \Abf^2). \label{e:nonconc-Hardt-Simon-collapsed}
    \end{equation}
\end{proposition}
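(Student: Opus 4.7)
The proof will follow the strategy used in \cite{DMS}*{Section 11} for establishing the analogous non-concentration statements in the context of area-minimizing integral currents, with suitable modifications to handle open books in place of superpositions of planes. The essential ingredients will be Simon's shift inequality (Proposition \ref{p:Simon-shift}), Simon's non-concentration estimate (Corollary \ref{c:Simon-non-conc}), and the error and gradient estimates (Theorem \ref{t:Simon-monotonicity-error}) established in Section \ref{s:spine-est}, coupled with the no-gap theorem (Theorem \ref{t:no-gaps}) and the graphical approximations in Propositions \ref{p:coherent} and \ref{p:transversal-coherent}. First one chooses $\eps$ sufficiently small (depending on $\varrho,\eta$) so that Theorem \ref{t:no-gaps} applies with parameters $\varrho'\leq \varrho/4$ and $\rho'\leq \varrho/4$. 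This guarantees that for each $L\in \Gcal$ with $2^{-\ell(L)}\geq 2\varrho$, a point $\beta(L)\in \{p : \Theta(T,p)\geq Q\}\cap \Bbf_{\varrho/2}(V)$ exists and satisfies $|\beta(L)-y_L|\leq 2^{-\ell(L)-1}$.

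For \eqref{e:transversal-coherent-1}, decompose $\Bbf_r\setminus \Bbf_\varrho(V)$ dyadically into annuli $A_i = \{p:2^{-(i+1)}\leq \dist(p,V)\leq 2^{-i}\}$ with $2^{-i}\leq r$. Letting $\beta_0 := \beta(L_0)$, which is a $Q$-point within $\varrho$ of the origin, we apply Proposition \ref{p:Simon-shift} at $p_0=\beta_0$ with $\kappa = 3/2$. For $p\in A_i$ one has $|p-\beta_0|\sim 2^{-i}$, so summing gives
\[
\sum_i \int_{A_i}\frac{\dist^2(p,\beta_0+\Sbf)}{\max\{\varrho,\dist(p,V)\}^{1/2}}d\|T\|(p) \leq C\sum_i 2^{i/2} \cdot 2^{-i(m+1/2)}(\Abf^2+\hat\Ebf) \leq C(\Abf^2+\hat\Ebf),
\]
since the series converges for $m\geq 2$. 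The passage from $\dist(p,\beta_0+\Sbf)$ to $\dist(p,\Sbf)$ costs only $|\mathbf{p}_{V^\perp}(\beta_0)|^2\leq C(\Abf^2+\hat\Ebf)$ via \eqref{e:shift-2}, which after integration against $\max\{\varrho,\dist(p,V)\}^{-1/2}d\|T\|$ (finite on $\Bbf_r$) gives a bounded error, and the contribution from $\{p:\dist(p,V)\leq\varrho\}\cap \Bbf_r$ is estimated directly using the weight $\varrho^{-1/2}$ and $\hat\Ebf$. Estimate \eqref{e:transversal-coherent-2} follows by applying \eqref{e:refined-2} on each $L_j$ and summing: the integrand becomes $\sum_i 2^{i/2}\int_{L_j}|Du_j|^2 \leq C\sum_i 2^{i/2}\cdot 2^{-mi}(\Ebf(L)+2^{-2i}\Abf^2)$, and substituting $\Ebf(L)=2^{(m+2)i}\int_{\Bbf^h(L)}\dist^2 d\|T\|$ yields a weighted sum controlled by Corollary \ref{c:Simon-non-conc} with $\kappa=m-1/2$ (applied after shifting to $\beta_0$ as above). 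For \eqref{e:transversal-coherent-3}, observe that $|u_j(z)\ominus \mathbf{p}_{\pi_j}^\perp(\beta(L))|$ is precisely (for $z$ in the graphical region) the $\pi_j^\perp$-component of $p-\beta(L)$ for $p=z+u_j(z)$ in the graph of $u_j$, which equals $\mathbf{p}_{\pi_j}^\perp(p)-\mathbf{p}_{\pi_j}^\perp(\beta(L))=\mathbf{p}_{\pi_j}^\perp(p-\beta(L))$ and is thus bounded by $\dist(p,\beta(L)+\Sbf)$. Applying Proposition \ref{p:Simon-shift} at $\beta(L)$ with $\kappa=m-1/2$ and summing dyadically yields \eqref{e:transversal-coherent-3}, with the non-graphicality set contribution controlled by \eqref{e:refined-4} and the geometric summability \eqref{e:geometric}.

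For the analogous transversal estimates \eqref{e:transversal-coherent-4} and \eqref{e:transversal-coherent-5}, we work instead with the maps $w_j = v_j\ominus A_j$ over $\pm\tilde R_1^o$ from Proposition \ref{p:transversal-coherent}. The shift $\mathbf{p}_{\pi_1}^\perp(\beta(L))-A_j(\mathbf{p}_{\pi_1}(\beta(L)))$ appearing in \eqref{e:transversal-coherent-5} arises as follows: the shifted half-plane $\beta(L)+\Hbf_j$ is, when viewed as a graph over $\pm\Hbf_1$, the graph of the affine map $z\mapsto A_j(z-\mathbf{p}_{\pi_1}(\beta(L)))+\mathbf{p}_{\pi_1}^\perp(\beta(L))=A_j(z)+[\mathbf{p}_{\pi_1}^\perp(\beta(L))-A_j(\mathbf{p}_{\pi_1}(\beta(L)))]$, using linearity of $A_j$. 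Hence $w_j(z)\ominus [\mathbf{p}_{\pi_1}^\perp(\beta(L))-A_j(\mathbf{p}_{\pi_1}(\beta(L)))]=v_j(z)\ominus(\text{height of }\beta(L)+\Hbf_j \text{ over }z)$, which is controlled pointwise by $\dist(p,\beta(L)+\Sbf)$ for $p$ on the graph. Proposition \ref{p:transversal-coherent}(e) plays the role of Proposition \ref{p:coherent} in the collapsed setting, and the same Simon-shift/non-concentration arguments as above deliver both \eqref{e:transversal-coherent-4} and \eqref{e:transversal-coherent-5}.

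Finally, the Hardt--Simon type estimates \eqref{e:nonconc-Hardt-Simon} and \eqref{e:nonconc-Hardt-Simon-collapsed} follow from \eqref{e:Simons-gradient-1} of Theorem \ref{t:Simon-monotonicity-error} (centered at a $Q$-point near the origin) via the classical pointwise identity valid on the graphical region: for $p=z+u_j(z)$ in the regular part of the graph,
\[
|z|^{-(m-2)}\left|\partial_r\!\left(\tfrac{u_j(z)}{|z|}\right)\right|^2 \leq C\,\frac{|p^\perp|^2}{|p|^{m+2}} + \text{(tangential error)},
\]
where the tangential error is controlled by $|Du_j|^2$ (already bounded by \eqref{e:transversal-coherent-2}). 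Integrating this identity against $d\|T\|$ on the graphical piece and using Theorem \ref{t:Simon-monotonicity-error} on the left-hand side closes the estimate. The main technical obstacle throughout will be to carefully sum the tail contributions coming from the non-graphicality sets $K_i$ (bounded by \eqref{e:refined-4} and Lemma \ref{l:curved}): thanks to the improvement factor $(\Ebf(L)+2^{-2\ell(L)}\Abf^2)^{1+\gamma}$ with $\gamma>0$, summing over $L\in \Gcal$ using \eqref{e:geometric} yields a bounded contribution of the desired form $C(\hat\Ebf+\Abf^2)$. A secondary obstacle is the systematic passage between shifted cones $\beta(L)+\Sbf$ and the original $\Sbf$, which must be handled via the shift inequality \eqref{e:shift-2} to keep the accumulated errors controlled uniformly in the dyadic scale.
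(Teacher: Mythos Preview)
Your overall strategy (Simon shift plus dyadic summation) matches the paper's, but two steps fail as written. First, the claim ``for $p\in A_i$ one has $|p-\beta_0|\sim 2^{-i}$'' is false: the annuli $A_i=\{2^{-(i+1)}\leq\dist(p,V)\leq 2^{-i}\}$ constrain only $\dist(p,V)$, while $|p-\beta_0|$ also depends on $\mathbf{p}_V(p)$, which ranges freely over $V\cap\Bbf_r$. The paper instead sums over the Whitney regions $R(L)$ (where both $\dist(p,V)$ and $|\mathbf{p}_V(p)-y_L|$ are $\sim 2^{-\ell(L)}$) and uses a \emph{different} $Q$-point $\beta(L)$ for each cube, so that $|p-\beta(L)|\sim 2^{-\ell(L)}$ genuinely holds on $R(L)$; the region $\Bbf_\varrho(V)$ is covered separately by $O(\varrho^{-(m-1)})$ balls of radius $\varrho$, each with its own $Q$-point. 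In both cases Simon shift is applied with a \emph{small} $\kappa$ (the paper takes $\kappa=1/4$): for \eqref{e:transversal-coherent-3} your choice $\kappa=m-\tfrac12$ makes the cube sum $\sum_i 2^{(m-1)i}\cdot 2^{(5/2-m-2+\kappa)i}=\sum_i 2^{(\kappa-1/2)i}$ diverge, whereas any $\kappa<\tfrac12$ works.

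Second, and more seriously, your route to \eqref{e:transversal-coherent-2} cannot close. Substituting $\Ebf(L)=2^{(m+2)\ell}\int_{\Bbf^h(L)}\dist^2(\cdot,\Sbf)\,d\|T\|$ into $\sum_L 2^{\ell/2-m\ell}\Ebf(L)$ produces the weight $\dist(p,V)^{-5/2}$ on $\dist^2(p,\Sbf)$; passing from $\Sbf$ to $\beta(L)+\Sbf$ costs a shift error $\leq C(\hat\Ebf+\Abf^2)$ per cube by \eqref{e:shift-2}, and the resulting sum $\sum_i 2^{(m-1)i}\cdot 2^{(5/2-m)i}=\sum_i 2^{3i/2}$ diverges. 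The paper's key missing step is to first use Simon shift with $\kappa=1/4$ to get the local shifted-excess bound
\[
\hat\Ebf\bigl(T,\beta(L)+\Sbf,\Bbf_{C2^{-\ell(L)}}(\beta(L))\bigr)\leq C\,2^{\ell(L)/4}\bigl(\hat\Ebf(T,\Sbf,\Bbf_1)+\Abf^2\bigr),
\]
and then rerun the crude approximation (Proposition~\ref{p:crude-approx}) over the \emph{shifted} cone $\beta(L)+\Sbf$ at scale $2^{-\ell(L)}$, producing maps $\tilde u_{L,j}$ with $\|D\tilde u_{L,j}\|_{L^2}^2\leq C\,2^{-m\ell+\ell/4}(\hat\Ebf+\Abf^2)$ and $\|\tilde u_{L,j}\ominus\mathbf{p}^\perp_{\pi_j}(\beta(L))\|_{L^\infty}^2\leq C\,2^{-7\ell/4}(\hat\Ebf+\Abf^2)$ \emph{directly} (no passage $\Sbf\rightsquigarrow\beta(L)+\Sbf$ needed). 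The mild growth $2^{\ell/4}$ then makes all the weighted sums for \eqref{e:transversal-coherent-2}--\eqref{e:transversal-coherent-5} converge (e.g.\ $\sum_i 2^{(m-1)i}\cdot 2^{(1/2-m+1/4)i}=\sum_i 2^{-i/4}$). Finally, for \eqref{e:nonconc-Hardt-Simon} no ``tangential error'' is needed: the clean pointwise bound $|z|^{-(m-2)}|\partial_r(u_j(z)/|z|)|^2\leq 2|p^\perp|^2/|p|^{m+2}$ holds on the graphical set, after which \eqref{e:Simons-gradient-1} suffices; \eqref{e:nonconc-Hardt-Simon-collapsed} then follows since $\partial_r(A_j(z)/|z|)\equiv 0$.
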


\begin{proof}
    First of all, the estimate \eqref{e:nonconc-Hardt-Simon} follows directly from the estimate \eqref{e:Simons-gradient-1}: when the current coincides with a classical graph this is a computation of Hardt and Simon in \cite{HS}.
    In a setting similar to ours the reader could consult \cite{DLHMSS}*{Proposition 8.3}, but we report the necessary modification for convenience.
    First of all, recall that $u_j$ is a multi-valued function: using Almgren's convention, see e.g. \cite{DLS_MAMS}, we write it as $u_j (z) = \sum_i \llbracket u_{j,i} (z)\rrbracket$. Even though there is not in general a regular selection of the sheets $u_{j,i}$, by \cite{DLS_multiple_valued}*{Lemma 1.1} we can decompose the domain of $u_j$ as the countable union of  measurable subsets $\{M_k\}$ with the property that over each fixed $M_k$ we can order the values of $u_j$ so that the corresponding sheets $u_{j,i}$
    are classical Lipschitz functions {\em over} $M_k$. Fix now a point $z\in M_k$ of differentiability of the function $u_{j,i}$ such that $p = z+u_{j,i} (z)$ belongs to $\spt (T)$ and there is an approximate tangent plane $P$ to $T$ at $p$. The computations in \cite{DLHMSS}*{Proposition 8.3} then show the pointwise bound
    \[
    |z|^{-(m-2)}\left|\partial_r\left(\frac{u_{i,j} (z)}{|z|}\right)\right|^2 \leq 2 \frac{|p^\perp|^2}{|p|^{m+2}}\, ,
    \]
    where we recall that $p^\perp$ is the projection of $p$ onto the orthogonal complement of $P$. We are can 
    thus conclude summing over $i$, integrating in $z$ on the domain in the left hand side of \eqref{e:nonconc-Hardt-Simon} and use \eqref{e:Simons-gradient-1}.
    The proof of \eqref{e:nonconc-Hardt-Simon-collapsed} is entirely analogous. First of all we observe that the exact same argument above proves in fact 
    \begin{equation}
            \int_{(\Bbf_r\cap\pi_1)\setminus (\Bbf_\varrho(V)\cup K^+\cup K^-)} |z|^{-(m-2)}\left|\partial_r\left(\frac{v_j(z)}{|z|}\right)\right|^2 dz \leq C(\hat\Ebf(T,\Sbf,\Bbf_1) + \Abf^2) \label{e:nonconc-Hardt-Simon-collapsed-2}\, .
    \end{equation}
    We then just need to notice that $w_j = v_j\ominus A_j$ and $A_j$ is a linear function with $A_j(0) = 0$; in particular $\partial_r \frac{A_j(z)}{|z|} \equiv 0$. This yields the identity
    \[
    \left|\partial_r\left(\frac{v_j(z)}{|z|}\right)\right|^2 = \left|\partial_r\left(\frac{w_j(z)}{|z|}\right)\right|^2\, 
    \]
    and thus \eqref{e:nonconc-Hardt-Simon-collapsed} follows from \eqref{e:nonconc-Hardt-Simon-collapsed}. 
    
    Next, given the estimates in Section \ref{s:spine-est}, the proofs of the estimates \eqref{e:transversal-coherent-1}--\eqref{e:transversal-coherent-3} and \eqref{e:transversal-coherent-4}, \eqref{e:transversal-coherent-5} are analogous to those in \cite{DMS}*{Proposition 13.7} (see also \cite{DLHMSS-excess-decay}*{Theorem 10.2, Corollary 11.2}). Nevertheless, we repeat them here for clarity.

    Fix such $\varrho$ and $\eta$. First we demonstrate \eqref{e:transversal-coherent-1}. We may without loss of generality assume that $\varrho\leq \frac{r_*}{2}$ since otherwise the estimate is trivial. We subdivide the domain of integration on the left-hand side into a disjoint union $(\Bbf_r\cap \Bbf_\varrho(V))\cup (\Bbf_r\setminus \Bbf_\varrho(V)))$. First we consider $\Bbf_r\cap \Bbf_\varrho(V)$. Covering $V\cap\Bbf_r$ with $C\varrho^{-(m-1)}$ balls $\Bbf_{\varrho}(z_i)$ (each of which can be ensured to intersect $V$), which in turn means that $\{\Bbf_{2\varrho}(z_i)\}_i$ covers $\Bbf_\varrho(V)$, we may apply Theorem \ref{t:no-gaps} to find points $p_i\in \Bbf_{2\varrho}(z_i)\cap \{\Theta(T,\cdot) \geq Q\}$. We may now apply Proposition \ref{p:Simon-shift} to $T_{0,1/4}$ centered at $p_i$ with $\kappa =\frac{1}{4}$, yielding
    \begin{align*}
        \int_{\Bbf_r\cap\Bbf_\varrho(V)} &\frac{\dist^2(p,\Sbf)}{\max\{\varrho,\dist(p,V)\}^{1/2}}d\|T\|(p\\
        &\leq \varrho^{-1/2}\sum_i \int_{\Bbf_r\cap\Bbf_{2\varrho}(z_i)} \dist^2(p,\Sbf)d\|T\|(p) \\
        &\leq C\varrho^{-1/2} \sum_i \int_{\Bbf_r\cap\Bbf_{2\varrho}(z_i)} \dist^2(p, p_i + \Sbf) d\|T\|(p) \\
        &\hspace{8em} + C\varrho^{m-1/2} \sum_i \left(|\mathbf{p}_{\pi_1}^\perp(p_i)|^2 + \boldsymbol\zeta(\Sbf)^2|\mathbf{p}_{V^\perp\cap\pi_1}(p_i)|^2\right) \\
        &\leq C\varrho^{m+7/4-1/2}\sum_i \int_{\Bbf_r\cap\Bbf_{4\varrho}(p_i)} \frac{\dist^2(p, p_i + \Sbf)}{|p-p_i|^{m+7/4}} d\|T\|(p) \\
        &\hspace{8em}+ C\varrho^{-(m-1)+m-1/2}(\hat\Ebf(T,\Sbf,\Bbf_1) + \Abf^2) \\
        &\leq C\varrho^{-(m-1)+m+5/4}(\hat\Ebf(T,\Sbf,\Bbf_1) + \Abf^2) + C\varrho^{1/2}(\hat\Ebf(T,\Sbf,\Bbf_1) + \Abf^2) \\
        &\leq C (\hat\Ebf(T,\Sbf,\Bbf_1) + \Abf^2).
    \end{align*}
    Now we treat the region $\Bbf_r\setminus \bar\Bbf_\varrho(V)$, which we may cover by the regions $R(L)$ for $L\in \Gcal$ with $2^{-(\ell(L)+1)} \geq \varrho$; let $\Fscr$ denote the sub-collection of such cubes $L$ with $R(L)\cap\Bbf_r\neq \emptyset$. First of all, observe that since $|y_L-\beta(L)|\leq \varrho/2$, there exists a constant $C=C(m)>0$ such that for each $p\in R(L)$ we have 
    \[
        C^{-2} 2^{-\ell(L)} \leq C^{-1}|p-\beta(L)|\leq \dist(p,V) \leq C|p-\beta(L)| \leq C^2 2^{-\ell(L)}
    \]
    Once again applying Proposition \ref{p:Simon-shift} to $T_{0,1/4}$ centered at $\beta(L)$ with $\kappa=\frac{1}{4}$ and noting that $\Hcal^m(R(L)) \leq C \ell(L)^m$ and $\# \Gcal_i \leq C 2^{(m-1)i}$, we have
    \begin{align*}
        \int_{\Bbf_r\setminus\Bbf_\varrho(V)} &\frac{\dist^2(p,\Sbf)}{\max\{\varrho,\dist(p,V)\}^{1/2}}d\|T\|(p) \leq C \sum_{i: 2^{-(i+1)}\geq \varrho} \sum_{L\in \Gcal_i \cap \Fscr} 2^{i/2} \int_{R(L)} \dist^2(p,\Sbf) d\|T\|(p) \\
        &\leq C \sum_{i: 2^{-(i+1)}\geq \varrho} \sum_{L\in \Gcal_i\cap\Fscr} 2^{i/2} \int_{R(L)} \dist^2(p,\beta(L)+\Sbf)d\|T\|(p) \\
        &\hspace{6em} + C \sum_{i: 2^{-(i+1)}\geq \varrho} \sum_{L\in \Gcal_i\cap\Fscr} 2^{i/2-mi}\left(|\mathbf{p}^\perp_{\pi_1}(\beta(L))|^2 +\boldsymbol\zeta(\Sbf)^2|\mathbf{p}_{V^\perp\cap\pi_1}(\beta(L))|^2\right) \\
        &\leq C\sum_{i: 2^{-(i+1)}\geq \varrho} \sum_{L\in \Gcal_i\cap\Fscr} 2^{i/2-mi-7i/4} \int_{R(L)} \frac{\dist^2(p,\beta(L)+\Sbf)}{|p-\beta(L)|^{m+7/4}}d\|T\|(p) \\
        &\hspace{6em} + C(\hat\Ebf(T,\Sbf,\Bbf_1) + \Abf^2) \sum_{i: 2^{-(i+1)}\geq \varrho} 2^{i/2-mi}\# \Gcal_i \\
        &\leq C(\hat\Ebf(T,\Sbf,\Bbf_1) + \Abf^2)\left[\sum_{i: 2^{-(i+1)}\geq \varrho} 2^{i/2-mi-7i/4} \# \Gcal_i + \sum_{i: 2^{-(i+1)}\geq \varrho} 2^{i/2-mi}\#\Gcal_i\right]\\
        &\leq C(\hat\Ebf(T,\Sbf,\Bbf_1) + \Abf^2)\sum_i 2^{-i/2} \\
        &\leq C(\hat\Ebf(T,\Sbf,\Bbf_1) + \Abf^2).
    \end{align*}
    In summary, we have established \eqref{e:transversal-coherent-1}.

    For the remaining estimates, we proceed as follows. Firstly, observe that Proposition \ref{p:Simon-shift} with $\kappa=\frac{1}{4}$ tells us that
    \begin{equation}\label{e:shifted-excess}
        \hat\Ebf(T,\beta(L)+\Sbf, \Bbf_{C 2^{-\ell(L)}}(\beta(L))) \leq C 2^{\ell(L)/4}(\hat\Ebf(T,\Sbf,\Bbf_1) + \Abf^2).
    \end{equation}
    Indeed, notice that the estimate \eqref{e:shift-1} applies as long as $C 2^{-\ell(L)}\leq r$, while the case $C 2^{-\ell(L)}\geq r$ follows trivially by the corresponding upper bound on $\ell(L)$.

    The estimate \eqref{e:shifted-excess} in turn tells us that for each cube we may apply Lemma \ref{l:regions}(v) with $\Sbf$ (and its layers) replaced by the shifted open book $\Sbf+\beta(L)$, to yield local $Q_j$-valued Lipschitz approximations $\tilde u_{L,j}$ on $\Omega_j(L) + \mathbf{p}_{V^\perp\cap\Hbf_i}(\beta(L))$ satisfying
    \begin{align*}
        \|D\tilde u_{L,j}\|_{L^2}^2 &\leq C 2^{-m\ell(L)}(\hat \Ebf(T,\Sbf+\beta(L),\Bbf_{C 2^{-\ell(L)}}(\beta(L)))+2^{-2\ell(L)}\Abf^2) \\
        &\leq C 2^{-m\ell(L)+\ell(L)/4} (\hat\Ebf(T,\Sbf,\Bbf_1) + \Abf^2),
    \end{align*}
    and, together with \eqref{e:shift-2}, additionally
    \[
        \|\tilde{u}_{L,j}\ominus (\mathbf{p}_{\pi_j}^\perp(\beta(L)))\|_{L^\infty} \leq C 2^{-7\ell(L)/4}(\hat\Ebf(T,\Sbf,\Bbf_1) + \Abf^2).
    \]
    Now we may use the maps $\tilde u_{L,j}$ in place of $u_{L,j}$ to construct the coherent outer approximation in Proposition \ref{p:coherent}. Note that one may assume these domains contain $L_j$, provided that $\eps$ sufficiently small, in light of \eqref{e:shift-2}. This in turn produces corresponding transversal coherent approximations $\tilde{u}_j$ and $\tilde{w}_j$ as in Proposition \ref{p:transversal-coherent}. Exploiting the above estimates, combined with \eqref{e:shifted-excess}, we have
    \begin{align*}
        \int_{L_j} \frac{|D\tilde{u}_j(z)|^2}{\dist(z,V)^{1/2}}dz &\leq C 2^{-m\ell(L)+3\ell(L)/4} (\hat\Ebf(T,\Sbf,\Bbf_1) + \Abf^2) \\
        \int_{\pm L_1} \frac{|D\tilde{w}_j(z)|^2}{\dist(z,V)^{1/2}}dz &\leq C 2^{-m\ell(L)+3\ell(L)/4} (\hat\Ebf(T,\Sbf,\Bbf_1) + \Abf^2) \\
        \int_{L_j} \frac{|\tilde{u}_j(z)\ominus (\mathbf{p}_{\pi_j}^\perp(\beta(L)))|^2}{\dist(z,V)^{5/2}}dz &\leq C 2^{-m\ell(L)+3\ell(L)/4} (\hat\Ebf(T,\Sbf,\Bbf_1) + \Abf^2) \\
        \int_{\pm L_1} \frac{|\tilde{w}_j(z)\ominus (\mathbf{p}_{\pi_1}^\perp(\beta(L))- A_j(\mathbf{p}_{\pi_1}(\beta(L))))|^2}{\dist(z,V)^{5/2}}dz &\leq C 2^{-m\ell(L)+3\ell(L)/4} (\hat\Ebf(T,\Sbf,\Bbf_1) + \Abf^2).
    \end{align*}
    Summing these estimates over all cubes $L\in \Gcal_i\cap\Fscr$ for $i$ such that $2^{-(i+1)} \geq \varrho$ and once again exploiting the fact that $\# \Gcal_i \leq C 2^{(m-1)i}$, we establish the respective estimates \eqref{e:transversal-coherent-2}-\eqref{e:transversal-coherent-5}.    
\end{proof}

\subsection{Blow-ups and variational identities}
Let us now define our blow-up sequences in the contradiction argument for the proof of Proposition \ref{p:collapsed} and Proposition \ref{p:noncollapsed}. For $m,n,\bar n, q$ fixed and $Q=\frac{q}{2}$, up to extracting a subsequence we have:
\begin{itemize}
    \item a sequence of currents $T_k$, ambient manifolds $\Sigma_k$ with curvatures $\Abf_k$ satisfying Assumption \ref{a:main} and with $\|T_k\|(\Bbf_1) 
    \leq \omega_m(Q+\frac{1}{4})$;
    \item open books $\Sbf_k=\Hbf^k_1\cup\cdots\cup\Hbf^k_N\in \Bscr^q(0)\setminus \Pscr(0)$ for some $N\in \N$, with $\boldsymbol{\zeta}(\Sbf_k) \to 0$ in the case of Proposition \ref{p:collapsed} or $\boldsymbol{\zeta}(\Sbf_k) \geq \eps_c^\star$ for $\eps_c^\star>0$ fixed arbitrarily in the case of Proposition \ref{p:noncollapsed};
    \item we have 
    \begin{equation}\label{e:blowup-exc-to-zero}
        \frac{\Abf_k^2}{\Ebb(T_k,\Sbf_k,\Bbf_1)} + \frac{\Ebb(T_k,\Sbf_k,\Bbf_1)}{\boldsymbol{\sigma}(\Sbf_k)} \to 0,
    \end{equation}
    \item For each $L\in\Gcal$, the $\beta^k(L)$ from Proposition \ref{p:final-blowup-estimates} for $T_k$ satisfy
    \begin{equation}
        \lim_{k\to\infty} |\beta^k(L)-y_L| = 0.
    \end{equation}
\end{itemize}
Up to relabelling the indices, we may additionally assume that the $m$-dimensional planes $\pi_i^k$ which are the extensions of the half-planes $\Hbf_i^k$ satisfy
\[
\dist(\pi_1^k\cap\Bbf_1,\pi_N^k\cap\Bbf_1)=\boldsymbol\zeta(\Sbf_k).
\]
In the collapsed decay regime, up to a rotation of coordinates, $\Hbf_i^k$ converge to $\pm\Hbf_1$. Meanwhile, in the non-collapsed decay regime, up to extracting a further subsequence, each $\Hbf_i^k\to \Hbf_i$ and $\pi_i^k \to \pi_i$ locally in Hausdorff distance and we may assume that $\pi_1\neq \pi_N$ (note that the $\Hbf_i$ need not be distinct). Note that in both cases, we may additionally assume that $V(\Sbf_k) = V$ is fixed along the sequence, again by a rotation of coordinates. We may additionally assume that $T_0\Sigma_k \equiv \tau_0$ is a fixed linear subspace and that $\pi_i^k \subset \tau_0$ for each $k$ and each $i=1,\dots, N$.

Using Proposition \ref{p:transversal-coherent}, in the collapsed decay regime we consider the transversal coherent approximations $w_i^k$, while in the non-collapsed decay regime we instead consider the approximations $u_i^k$. For $r$ as in Proposition \ref{p:transversal-coherent}, by Proposition \ref{p:first-blowup}(i), we may assume that the domains of $w_i^k$ and $u_i^k$ respectively contain
$(\Bbf_r\cap \Hbf_1)\setminus \Bbf_{1/k}(V)$ and $(\Bbf_r\cap \Hbf_i^k)\setminus \Bbf_{1/k}(V)$. Let
\[
    \bar u_i^k \coloneqq \frac{u_i^k}{\Ebb(T_k,\Sbf_k,\Bbf_1)^{1/2}}, \qquad \bar w_i^k \coloneqq \frac{w_i^k}{\Ebb(T_k,\Sbf_k,\Bbf_1)^{1/2}}.
\]
The estimates in Proposition \ref{p:final-blowup-estimates} in particular provide us with $W^{1,2}$ estimates uniformly in $k$ on $\bar u_i^k$ and $\bar{w}_i^k$. Note that for each $k$ the domain of $w_i^k$ is always contained in the same half-plane $\pm \Hbf_1$, while despite the fact that the half-planes on which the $u_i^k$ are defined are varying with $k$, we can use \cite{DMS}*{Lemma 7.4, Remark 8.21} to assume that they are defined on subsets of the fixed limiting half-planes $\Hbf_i$, up to a composition with small rotations (which all converge to the identity map).

Let us begin with the following definition.
\begin{definition}
    A function $W:\Bbf_r\to \R^{m+n}$ is called \emph{cylindrical} if $W(p_1)=W(p_2)$ for any $p_1,p_2$ with $\mathbf{p}_V(p_1)=\mathbf{p}_V(p_2)$ and $\dist(p_1,V)=\dist(p_2,V)$.
\end{definition}

We conclude the following compactness result and variational identities for our blow-ups, which are analogues of similar estimates seen originally in \cite{Simon_cylindrical} and \cite{W14_annals}*{Section 12}.

\begin{proposition}\label{p:final-blowup}
    Let $T_k$, $\Sigma_k$, $\Abf_k$, $\Sbf_k= \Hbf_1^k\cup\cdots\cup \Hbf_N^k$, $\pi_i^k$, and the sequences $\bar u_i^k$ and $\bar w_i^k$ be as described above. Then, up to extracting additional subsequences, the following holds:
    \begin{itemize}
        \item[(a)] $\bar{u}_i^k$ and $\bar{w}_i^k$ converge strongly in $W^{1,2}_{\loc}$ to their respective $Q_i$-valued Dir-minimizing limits $\bar u_i = \sum_{j=1}^{Q_i} \llbracket \bar u_{ij} \rrbracket$ and $\bar w_i = \sum_{j=1}^{Q_i} \llbracket \bar w_{ij} \rrbracket$ on $\Bbf_r\cap \Hbf_i$ and $(\Bbf_r\cap \pi_1)\setminus V$ respectively;
        \item[(b)] The rescaled linear maps $\bar{A}_i^k(te_1,v) \coloneqq \frac{A_i^k(te_1,v)}{\boldsymbol\zeta(\Sbf_k)} = M_i^k t e_1$ converge to linear maps $\bar{A}_i(te_1,v)=M_i t e_1$, where $M_i:\pi_1\cap V^\perp \to \pi_1^\perp$ are linear maps (identified with their matrix representations), $e_1$ is the unit vector in $\Hbf_1\cap V^\perp$ and $M_N \neq 0$;
        \item[(c)] Letting $\beta^k\coloneqq \sum_{L\in \Gcal\cap \Fscr}\beta^k(L) \mathbf{1}_{L_1\cup -L_1}$ on $(\Bbf_r\setminus \Bbf_{1/k}(V))\cap \pi_1$, there exist even, bounded functions $\bar\beta: \Bbf_r\cap\pi_1 \to V^\perp$, $\bar\beta^\perp: \Bbf_r\cap\pi_1 \to \pi_1^\perp$ and $\bar\beta^\parallel: \Bbf_r\cap\pi_1\to V^\perp\cap\pi_1$ such that in the non-collapsed decay case we have
        \begin{equation}
            \frac{\mathbf{p}_{V}^\perp(\beta^k)}{\Ebb(T_k,\Sbf_k,\Bbf_1)} \to \bar\beta \qquad \text{in $L^\infty_\loc$},
        \end{equation}
        whilst in the collapsed decay case we have
        \begin{align}
            \frac{\mathbf{p}_{\pi_1}^\perp(\beta^k)}{\Ebb(T_k,\Sbf_k,\Bbf_1)} &\to \bar\beta^\perp \qquad \text{in $L^\infty_\loc$} \\
            \frac{\boldsymbol{\zeta}(\Sbf_k)\mathbf{p}_{\pi_1}(\beta^k)}{\Ebb(T_k,\Sbf_k,\Bbf_1)} &\to \bar\beta^\parallel \qquad \text{in $L^\infty_\loc$.}
        \end{align}
        \item[(d)] In the collapsed decay case, let $I^+\coloneqq \{i\in \{1,\dots,N\}: w_i^k: \Hbf^k_1 \to \Acal_{Q_i}((\pi^k_1)^\perp)\}$ and $I^-\coloneqq \{i\in \{1,\dots,N\}: w_i^k: -\Hbf^k_1 \to \Acal_{Q_i}((\pi^k_1)^\perp)\}$ and let $\bar{w}^{(1)}:\Bbf_r\cap\Hbf_1\to \Hbf_1^\perp$, $\bar{w}^{(2)}:\Bbf_r\cap\Hbf_1\to V^\perp\cap \Hbf_1$ be given by
        \begin{align}
            \bar{w}^{(1)}(te_1,v)&\coloneqq \sum_{i\in I^+}\sum_{j=1}^{Q_i} \bar{w}_{ij}(te_1,v) + \sum_{i\in I^-} \sum_{j=1}^{Q_i}\bar{w}_{ij}(-te_1,v); \\
            \bar{w}^{(2)}(te_1,v)&\coloneqq \sum_{i\in I^+}M_i^{T}\sum_{j=1}^{Q_i} \bar{w}_{ij}(te_1,v) + \sum_{i\in I^-} M_i^{T} \sum_{j=1}^{Q_i}\bar{w}_{ij}(-te_1,v),
        \end{align}
        where $T$ denotes the transpose. Then, given any vector $v\in V$ and any cylindrical functions $W\in C_c^\infty(\Bbf_r;\R^{m+n})$, $W_o\in C_c^\infty(\Bbf_r;\pi_1^\perp)$, $W_p\in C_c^\infty(\Bbf_r;V^\perp\cap\pi_1)$, we have the following identities
        \begin{align}
            &\sum_i\int_{\Bbf_r\cap\Hbf_i} \sum_{j=1}^{Q_i}\left\langle\nabla\bar u_{ij} , \nabla \frac{\partial W}{\partial v} \right\rangle = 0, \label{e:spine-var-1}
        \end{align}
        in the non-collapsed case, while in the collapsed case we have
        \begin{align}
            &\int_{\Bbf_r\cap\Hbf_1} \left\langle \nabla\bar{w}^{(1)} , \nabla \frac{\partial W_o}{\partial v} \right\rangle = 0, \qquad l=1,2. \label{e:spine-var-2} \\
            &\int_{\Bbf_r\cap\Hbf_1} \left\langle \nabla\bar{w}^{(2)} , \nabla \frac{\partial W_p}{\partial v} \right\rangle = 0, \qquad l=1,2. \label{e:spine-var-3}
        \end{align}
        Here we stress: in the identity \eqref{e:spine-var-1} the operator $\nabla$ denotes differentiation with respect to the variable $z\in \pi_i$, whereas in \eqref{e:spine-var-2} and \eqref{e:spine-var-3} it denotes differentiation with respect to the variable $z\in \pi_1$. 
    \end{itemize}
\end{proposition}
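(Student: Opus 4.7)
The approach follows the strategy of \cite{DMS}*{Section 13}, adapted to the half-plane setting with modifications inspired by the collapsed-case analysis of \cite{W14_annals}*{Section 12}. The key tools are the uniform estimates of Proposition \ref{p:final-blowup-estimates}, the Lipschitz graphical approximations of Propositions \ref{p:coherent} and \ref{p:transversal-coherent}, the Simon shift inequality (Proposition \ref{p:Simon-shift}), and the first variation formula for area-minimizing currents mod$(q)$.

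For part (a), the weighted gradient bounds \eqref{e:transversal-coherent-2} and \eqref{e:transversal-coherent-4} yield $W^{1,2}_{\loc}$ estimates for $\bar u_i^k$ and $\bar w_i^k$ that remain uniform as the cutoff $\varrho\downarrow 0$, so one extracts weak subsequential limits on the full domains $\Bbf_r\cap\Hbf_i$ and $(\Bbf_r\cap\pi_1)\setminus V$. Strong $W^{1,2}_{\loc}$ convergence away from $V$, together with the fact that each limit is Dir-minimizing on compactly contained subsets of the interior of each half-plane, follows from \cite{DLHMS}*{Theorem 5.2} applied to the graphical portion of $T_k$, using the non-graphicality estimates \eqref{e:refined-4} and \eqref{e:transv-coherent-non-graph-set} to discard the bad regions. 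To extend the Dir-minimizing property up to the spine $V$, I would invoke Lemma \ref{l:technical-minimizing}, which upgrades Dir-minimality against compactly perturbed competitors to Dir-minimality against arbitrary trace-preserving competitors. Part (b) is then immediate: the bound $|A_i^k|\leq C\boldsymbol{\zeta}(\Sbf_k)$ from Proposition \ref{p:transversal-coherent}(a) ensures the normalizations $\bar A_i^k$ are uniformly bounded; the kernel condition forces the form $M_i t e_1$; and the indexing choice $\dist(\pi_1^k\cap\Bbf_1,\pi_N^k\cap\Bbf_1)=\boldsymbol{\zeta}(\Sbf_k)$ forces $|M_N^k|$ to be bounded below by a geometric constant, so $M_N\neq 0$ in the limit.

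Part (c) uses the Simon shift inequality \eqref{e:shift-2}, which provides the uniform pointwise bound
\[
|\mathbf{p}_{\pi_1}^\perp(\beta^k(L))|^2 + \boldsymbol\zeta(\Sbf_k)^2|\mathbf{p}_{V^\perp\cap\pi_1}(\beta^k(L))|^2 \leq C(\Abf_k^2 + \hat\Ebf(T_k,\Sbf_k,\Bbf_1))
\]
for every $L$. Dividing by $\Ebb(T_k,\Sbf_k,\Bbf_1)$ and applying \eqref{e:blowup-exc-to-zero} yields uniform $L^\infty$ control of the quantities in (c), from which $L^\infty_\loc$ subsequential limits exist. The evenness of $\bar\beta$, $\bar\beta^\perp$, $\bar\beta^\parallel$ is automatic from the piecewise-constant definition, which assigns the same value of $\beta^k(L)$ to both $L_1$ and $-L_1$; this symmetry passes to the limit. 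Upper semicontinuity of the density together with the no-gaps conclusion of Theorem \ref{t:no-gaps} ensures that every $\beta^k(L)$ is well defined for $k$ large.

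Part (d) is the heart of the proof and will be the main technical obstacle. The strategy is to test the first variation identity for $T_k$ against vector fields of the form $X^k = \eta_k(p) v$, where $v\in V$ and $\eta_k$ is a smooth cutoff built from the cylindrical test function $W$ (resp.\ $W_o, W_p$). Because $v\in V$, such translations leave $V(\Sbf_k)$ invariant and, in the non-collapsed regime, leave $\Sbf_k$ itself invariant modulo lower order terms; in the collapsed regime, they interact nontrivially with the linear maps $A_i^k$ since the graphs of $A_i^k$ are not invariant under translation by $v$ unless $v \in \ker(A_i^k)=V$ (which it is, so the interaction is quadratic). Using the graphical decomposition and the non-graphicality error bounds \eqref{e:refined-4} and \eqref{e:transv-coherent-non-graph-set}, together with the curvature term $\Abf_k$, the first variation identity converts to an approximate weak PDE identity for the multi-valued maps $u_i^k$ (resp.\ $w_i^k$). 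After forming the appropriate difference quotient in the direction $v$ and normalizing by $\Ebb(T_k,\Sbf_k,\Bbf_1)$, all error terms become $o(1)$ by \eqref{e:blowup-exc-to-zero} combined with the Simon-type non-concentration estimates of Proposition \ref{p:final-blowup-estimates}, which handle the potential singular behaviour near $V$. Passing to the limit using the strong $W^{1,2}_\loc$ convergence from part (a) produces \eqref{e:spine-var-1} in the non-collapsed case. In the collapsed case, the analogous computation applied to the transversal approximations $w_i^k = v_i^k\ominus A_i^k$ produces one identity directly — this is \eqref{e:spine-var-2}. The second identity \eqref{e:spine-var-3} arises because the quadratic interaction between the subtracted linear maps $A_i^k$ and the $v$-translation generates an additional first-order term in the area functional after normalization; this term precisely recombines as the weighted sum $\sum M_i^T\bar w_{ij}$. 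The main difficulty lies in this bookkeeping: carefully expanding the mass of the graph of $A_i^k \oplus w_i^k/\Ebb(T_k,\Sbf_k,\Bbf_1)^{1/2}$ up to the correct order, verifying that the cross-terms involving $\beta^k(L)$ either vanish in the limit or combine to produce the intended identity, and handling the asymmetric decomposition into sign-indexed families $I^\pm$. These are exactly the kinds of computations carried out, in related but distinct settings, in \cite{W14_annals}*{Section 12} and \cite{DMS}*{Section 13}, and the present proof will proceed by a careful adaptation of those schemes to open books in arbitrary codimension.
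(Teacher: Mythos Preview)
Your treatment of (a)--(c) matches the paper's: the uniform weighted gradient bounds give $W^{1,2}_{\loc}$ limits, Proposition \ref{p:first-blowup}(iii) upgrades to strong convergence and interior Dir-minimality, and Lemma \ref{l:technical-minimizing} extends minimality to competitors with the same trace on $V$; (b) and (c) follow from Proposition \ref{p:transversal-coherent}(a) and Proposition \ref{p:Simon-shift} exactly as you indicate.

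For (d), your overall architecture is right (first variation, split into spine and graphical regions, control the spine contribution via the non-concentration estimates, pass the graphical part to the limit), but your description of the test vector field is incorrect. You propose testing against $X^k = \eta_k(p)\, v$ with $v\in V$, a field directed \emph{along} the spine. That does not produce the pairing $\langle \nabla \bar u_{ij}, \nabla\partial_v W\rangle$: over a graph on $\Hbf_i$ one has $\diverg_{\vec T_k}(\eta_k v)\approx \partial_v\eta_k$ at leading order, with no first-order dependence on $u_i^k$, so nothing survives after normalization. The paper instead tests directly against $\bar W := \partial_v W$ (respectively $\partial_v W_o$, $\partial_v W_p$); since $W$ takes values in $V^\perp$, so does $\bar W$. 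Translation invariance of $\Sbf_k$ in direction $v$ enters only through the identity $\delta S_k(\partial_v W)=0$, obtained by integrating $\diverg_{\pi_i}(\partial_v W)=\partial_v(\diverg_{\pi_i} W)$ by parts in $v$. Then the difference $\delta T_k(\bar W)-\delta S_k(\bar W)$, expanded to first order on the graphical region, yields precisely $\sum_{i,j}\int \langle \nabla u_{ij}^k,\nabla\bar W\rangle$.

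For \eqref{e:spine-var-3} the mechanism is more specific than your ``quadratic interaction'' phrase suggests. Here $W_p$ is valued in $V^\perp\cap\pi_1$, hence \emph{tangent} to $\pi_1$, so the first-order term in the graphical expansion vanishes and one must normalize by $\boldsymbol\zeta(\Sbf_k)^{-1}E_k^{-1/2}$ and carry the Taylor expansion of $\delta\Gbf_{\tilde h_{ij}^{k,\pm}}(\bar W)-\delta\Gbf_{A_i^{k,\pm}}(\bar W)$ to second order. The surviving cross-term is $\langle \nabla w_{ij}^{k,\pm},\, \nabla A_i^{k,\pm}\cdot\nabla\bar W\rangle$, and transposing $\nabla A_i^{k,\pm}=M_i^k e_1$ inside the inner product is what yields the weights $M_i^T$ defining $\bar w^{(2)}$. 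Your proposal would work once you replace ``test against $\eta_k v$ and then form a difference quotient'' by ``test against $\partial_v W$ directly''; as written, the two are not equivalent.
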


\begin{proof}
The convergence in $W^{1,2}_{\text{loc}}$ from (a) follows from Proposition \ref{p:final-blowup-estimates} together with Proposition \ref{p:first-blowup}(iii) with a sequence of parameters $\varsigma_k \downarrow 0$. This also shows that the maps $\bar{u}_i$, $\bar{w}_i$ are Dir-minimizing on compact subsets in the interior of each domain in question. In turn, this implies that they have smaller energy than any competitor which coincides with itself on a neighbourhood of $V$, i.e. we are in the situation to apply Lemma \ref{l:technical-minimizing} in order to deduce that they are Dir-minimizing with respect to any competitor with the same trace on $V$. 
Property (b) is a trivial consequence of the fact that $V(\Sbf_k)=V$ for each $k$, that $\dist(\pi_1^k\cap\Bbf_1,\pi^k_N\cap\Bbf_1)=\boldsymbol{\zeta}(\Sbf_k)$, and Proposition \ref{p:transversal-coherent}(a). Property (c) follows immediately by Proposition \ref{p:Simon-shift}.

For property (d) we follow the reasoning in \cite{Simon_cylindrical}*{Section 5.1}, \cite{W14_annals}*{Section 12}
(see also \cite{DLHMSS}*{Proposition 10.5(iv)}, \cite{DLHMSS-excess-decay}*{Proposition 11.5}). 
Nevertheless, we repeat the details here as our setting is slightly different to all of these.

Fix $v\in V$ and $W$ with the required assumptions. Let us begin with \eqref{e:spine-var-1}. First, note that since the functions $\bar{u}^k_{i}$ and $w_i^k$ are valued in $V^\perp$ for each $i$, we can assume that $W$ takes values in $V^\perp$ also. Write $(x,y)\in V\times V^\perp$ for coordinates relative to $V$. We next claim that we may without loss of generality assume that $W$ depends only on the variable $x$ in some neighbourhood of $V$. Indeed, fix $\rho\in (0,r)$, and consider a cutoff function $\varphi_\rho\in C^\infty_c([0,\infty);[0,1])$ which equals $0$ on $[0,\rho/2]$, equals $1$ on $[\rho,\infty)$, and satisfies $\|\varphi^\prime_\rho\|_{C^0}\leq C\rho^{-1}$. Consider the vector field
\begin{equation}\label{e:cutoff-approx}
    W_\rho(x,y) \coloneqq W(x,y)\vphi_\rho(|y|) + W(x,0) (1-\vphi_\rho(|y|)).
\end{equation}
As $W-W_\rho = 0$ outside $\Bbf_\rho(V)$, applying Cauchy-Schwartz followed by the estimate \eqref{e:transversal-coherent-2} and then \eqref{e:blowup-exc-to-zero}, we thus have
\begin{align*}
    &\left|\sum_i\int_{\Bbf_r\cap \Hbf^k_i} \sum_{j=1}^{Q_i} \left\langle \nabla \bar u_{ij}^k, \nabla \left(\frac{\partial (W-W_\rho)}{\partial  v}\right)\right\rangle \right| \\
    &\qquad= \left|\sum_i\int_{\Bbf_r\cap\Bbf_\rho(V)\cap \Hbf^k_i} \sum_{j=1}^{Q_i} \left\langle \nabla \bar u_{ij}^k, \nabla \left(\frac{\partial (W-W_\rho)}{\partial  v}\right)\right\rangle \right| \\
    &\qquad\leq \|D^2 W \|_{C^0}\sum_i\int_{\Bbf_r\cap\Bbf_\rho(V)\cap \Hbf^{k}_i} \sum_{j=1}^{Q_i} \left|\nabla \bar u_{ij}^k\right| \\
    &\qquad\leq C \rho^{3/4}\|D^2 W \|_{C^0} \sum_i \sum_{j=1}^{Q_i} \left[\int_{\Bbf_r\cap\Bbf_\rho(V)\cap \Hbf^{k}_i} \frac{|\nabla\bar u_{ij}^k(z)|^{2}}{\dist(z,V)^{1/2}}dz\right]^{1/2} \\
    &\qquad\leq C\rho^{3/4} \|D^2 W \|_{C^0} \to 0,
\end{align*}
as $\rho\downarrow 0$, where $C=C(q,m,n,\bar{n})>0$. Thus, we can without loss of generality assume that $W$ only depends on the variable $x$ in a neighborhood of $V$ by fixing $\rho\in (0,r)$ and working with $W_\rho$, which we henceforth do (and we drop the subscript for notational simplicity). We write $\rho_0<r$ for the radius of the neighborhood of $V$ where $W$ only depends on the variable $x$. 

Now let
$\bar W \coloneqq \frac{\partial W}{\partial v}$. Since $T_k$ satisfy Assumption \ref{a:main} (in particular, $T_k$ is area-minimizing mod$(q)$), we have
\[
    \bar W(p) \cdot \vec H_{T_k}(p) = \mathbf{p}^\perp_{T_p\Sigma_k} \bar W(p) \cdot \vec H_{T_k}(p)
\]
and so
\begin{equation}\label{e:first-var-current}
    |\delta T_k(\bar W)| = \left|\int \mathbf{p}^\perp_{T_p\Sigma_k} \bar W(p) \cdot \vec H_{T_k}(p) d\|T_k\|(p) \right| \leq C\Abf_k^{2} \|\bar W\|_{C^0} \to 0,
\end{equation}
where $\vec H_{T_k}(p) \coloneqq \sum_{i=1}^m A_{\Sigma_k}(\xi_i,\xi_i)$ for an orthonormal basis $\{\xi_i\}_{i=1}^m$ of the approximate tangent plane to $\spt (T_k)$ at $p$.

On the other hand, since $\Sbf_k$ is invariant under the 1-parameter family of translations in the direction $v$ and $\bar{W}$ has compact support, we deduce that $S_k \coloneqq \sum_{i=1}^N Q_i \llbracket \Hbf_i \rrbracket$ satisfies 
\begin{equation}\label{e:first-var-book}
    \delta S_k(\bar W) = 0.
\end{equation}
In particular, we see from \eqref{e:blowup-exc-to-zero} that, if $E_k:= \mathbb{E}(T_k,\Sbf_k,\Bbf_1)$,
$$\lim_{k\to\infty}E_k^{-1/2}(\delta T_k(\bar{W}) - \delta S_k(\bar{W})) = 0.$$
Now fix $\rho\in (0,\rho_0)$, and assume that $k$ is sufficiently large so that $\bar{u}^k_i$ are defined outside $\Bbf_\rho(V)$. We will decompose everything into this neighborhood of $V$ and its complement (where $T_k$ can be approximated by the graph of $\bar{u}^k_i$). Indeed, set $U_\rho:= \Bbf_r\setminus \Bbf_\rho(V)$, and write $T_k = T_k^g + T_k^e$ and $S_k = S_k^g + S_k^e$, where $T_k^g= T_k \mres U_\rho$ and $S_k^g= S_k \mres U_\rho$, while $T_k^e = (T_k - T_k^g)\mres \Bbf_r$ and $S_k^e = (S_k - S_k^g)\mres \Bbf_r$. We claim that
\begin{align}
    \limsup_{k\to\infty} E_k^{-1/2} \left|\int \diverg_{\vec T_k} \bar W\, d\|T_k^e\| -  \int \diverg_{\vec S_k} \bar{W} \,d\|S_k^e\| \right| &\leq C\rho^{1/2} \label{e:first-var-spine} \\
    \lim_{k\to\infty} E_k^{-1/2} \left(\int \diverg_{\vec T_k} \bar W\, d\|T_k^g\| -  \int \diverg_{\vec S_k} \bar W\, d\|S_k^g\| \right) &= \sum_i \int_{(\Bbf_r\cap \Hbf_i)\setminus \Bbf_{\rho}(V)} \sum_{j=1}^{Q_i} \langle \nabla \bar u_{ij}, \nabla \bar W \rangle. \label{e:first-var-graphical}
\end{align}
Once we establish the validity of these claims, \eqref{e:spine-var-1} follows easily. Indeed, bearing in mind the definition of $\delta S_k(\bar W_\rho)$ and $\delta T_k(\bar W_\rho)$, a combination of \eqref{e:first-var-current}, \eqref{e:first-var-book}, \eqref{e:first-var-spine} and \eqref{e:first-var-graphical} yields
\[
    \left|\sum_i \int_{(\Bbf_r\cap \Hbf_i)\setminus \Bbf_{\rho}(V)} \sum_{j=1}^{Q_i} \langle \nabla \bar u_{ij}, \nabla \bar W_\rho \rangle\right| \leq C\rho^{1/2}
\]
for any $\rho\in (0,\rho_0)$, and so taking $\rho\downarrow 0$ gives \eqref{e:spine-var-1}.

It remains to check that \eqref{e:first-var-spine} and \eqref{e:first-var-graphical} hold. First of all, note that since $\bar{W}$ only depends on $x\in V$ in $\Bbf_{\rho_0}(V)$ by construction, we have $\frac{\partial\bar{W}}{\partial y} = 0$ in $\Bbf_{\rho_0}(V)$ for all $y\in V^\perp$. Moreover, as $\bar{W}$ takes values in $V^\perp$, for any subspace $\pi$ containing $V$ we have (for $\{e_1,\dotsc,e_{m-1}\}$ an orthonormal basis of $V$)
\begin{equation}\label{e:div-free}
    \diverg_\pi\bar{W} = \diverg_{V}\bar{W} = \sum_i e_i\cdot D(e_i\cdot \bar W) = 0 \qquad \text{in }\Bbf_{\rho_0}(V).
\end{equation}
In particular, for an arbitrary subspace $\pi$ we have
$$|\diverg_\pi\bar{W}| \leq C\|D\bar{W}\|\cdot|\mathbf{p}_V\circ\mathbf{p}_{\pi^\perp}|$$
(since by adding the components of $V\cap \pi^\perp$ to the sum we get the divergence over a plane containing $V$).

Using this, noting that each plane that is the extension of a half-plane in $\Sbf_k$ contains $V$, together with the fact that $S^e_k$ is supported in $\Bbf_{\rho_0}(V)$, the above discussion immediately gives
\[
    \int \diverg_{\vec S_k} \bar W\, d\| S^e_k \| = 0
\]
(indeed, the integrand vanishes pointwise in this case). Moreover, turning our attention to $T^e_k$, we then have from the above discussion also
\begin{align*}
    \left|\int \diverg_{\vec T_k} \bar W \, d\|T^e_k\|\right| &\leq C \|\bar W\|_{C^1} \left(\|T_k\|(\Bbf_r\cap\Bbf_\rho(V))\right)^{1/2} \left(\int_{\Bbf_r} \left|\mathbf{p}_V \circ \mathbf{p}_{\vec T_k^\perp}\right|^2d\|T_k\|\right)^{1/2} \\
    &\leq C \|\bar W\|_{C^1}\rho^{1/2} E_k^{1/2}
\end{align*}
for $C=C(q,m,n,\bar n)>0$, where we have used Cauchy--Schwarz in the first inequality and then \eqref{e:Simons-gradient-2} and \eqref{e:blowup-exc-to-zero} in the second. Combining the above, this concludes the proof of \eqref{e:first-var-spine}.

To see the validity of \eqref{e:first-var-graphical}, we exploit the graphical approximations $\bar u^k_i$ for $T_k$, which are valid over $U_\rho\cap \Hbf_i$. We also don't use the expressions in \eqref{e:first-var-graphical} directly, but more the first variation from which they come from. For $i=1,\dots, N$, let $A^k_i:\Hbf_i\to \Hbf_i^\perp$ denote the linear maps whose graphs are $\Hbf^k_i$. Observe that in the limit these maps converge to $0$ as $\Hbf^k_i$ converges to $\Hbf_i$. Let
\[
    h_i^k(x) = \sum_{j=1}^{Q_i}\llbracket h_{ij}^k(x) \rrbracket \coloneqq \sum_{j=1}^{Q_i} \llbracket (g^k_{ij}(x),\Psi^k(x,g^k_{ij}(x)))\rrbracket , 
\]
where $\Psi^k\equiv \Psi^k_0$ is as in Assumption \ref{a:main} for $\Sigma_k$ and $g^k_{ij}\coloneqq \mathbf{p}_{T_0\Sigma_k}\circ (u_{ij}^k + A^k_i) \in T_0 \Sigma_k\cap (\pi_i^k)^\perp$ (recall that $\pi_i^k \subset T_0\Sigma_k$). Invoking \eqref{e:transv-coherent-non-graph-set} (summed over $L\in \Gcal_{\leq \bar\ell}\cap U_\rho\cap \Hbf_i$) and \eqref{e:blowup-exc-to-zero}, we thus have (for the difference between graphical and non-graphical regions)
\[
    E_k^{-1/2}\left|\int \diverg_{\vec T_k} \bar W\, d\|T^g_k\| - \sum_{i} \sum_{j=1}^{Q_i}\underbrace{\int_{\mathbf{p}^{-1}_{\Hbf_i}(U_\rho\cap \Hbf_i)} \diverg_{\vec \Gbf_{h^k_{ij}}} \bar W\, d\|\Gbf_{h^k_{ij}}\|}_{=: I_{k,i,j}^{(1)}} \right| \leq C \|\bar W \|_{C^1} E_k^{1/2+\gamma} \to 0,
\]
for $\gamma>0$ as in Proposition \ref{p:crude-approx}. Moreover,
\[
    \int \diverg_{\vec S_k}\bar W\, d\| S_k\| = \sum_{i}\sum_{j=1}^{Q_i}\underbrace{\int_{\mathbf{p}_{\Hbf_i}^{-1}(U_\rho\cap \Hbf_i)} \diverg_{\vec\Gbf_{A^k_i}} \bar W\, d\|\Gbf_{A^k_i}\|}_{=: I_{k,i,j}^{(2)}}.
\]
It therefore remains to verify that for each $i,j$ we have
\begin{equation}\label{e:avg-variation}
    \lim_{k\to\infty} E_k^{-1/2}(I^{(1)}_{k,i,j} - I^{(2)}_{k,i,j}) = \int_{U_\rho\cap\Hbf_i} \langle \nabla \bar u_{ij}, \nabla \bar W \rangle.
\end{equation}
Fix $i\in\{1,\dots,N\}$, $j\in \{1,\dotsc,Q_i\}$, and an orthonormal frame $\{\zeta_l\}_{l=1}^{m+n}$ of $\R^{m+n}$ such that $\{\zeta_1,\dots,\zeta_m\}$ is an orthonormal basis of the plane extending $\Hbf_i$. Then over a point $x\in \Hbf_i$,
\[
    \sigma_l := \zeta_l + (\zeta_l\cdot \nabla)h^k_{ij},\qquad \tau_l:= \zeta_l + (\zeta_l\cdot\nabla) A^k_i \qquad l=1,\dots,m,
\]
are the respective coordinate frames for the tangent planes to $\Gbf_{h^k_{ij}}$ and $\Gbf_{A^k_i}$. For notational simplicity, let us drop the indices $i,j,k$ for the following computations. Form the matrices $A$ and $B$, where the $i^{\text{th}}$ column of $A$ is given by $\sigma_i$ and the $i^{\text{th}}$ column of $B$ is $\tau_i$. The Jacobian determinants for the maps $h$ and $A$ are then given by $\sqrt{\det(A^TA)}$ and $\sqrt{\det(B^TB)}$, respectively. The variation determined by $\bar{W}$ then gives (by differentiating the relevant Jacobian) that the first variation is determined by the matrices
$$M(A) = \sum_{\alpha,\beta}\sqrt{\det(A^TA)}(A^TA)^{-1}_{\alpha\beta}A_\alpha\otimes A_\beta$$
and
$$M(B) = \sum_{\alpha,\beta}\sqrt{\det(B^TB)}(B^TB)^{-1}_{\alpha\beta}B_\alpha\otimes B_\beta.$$
Thus, using the first variation formula to rewrite the expressions for $I^{(1)}$ and $I^{(2)}$, we get
$$I^{(1)} - I^{(2)} = \int_{U_\rho\cap \Hbf}(M(A)-M(B)):D\bar{W}.$$
We can then compute
$$A_\alpha\otimes A_\beta = B_\alpha\otimes B_\beta + \partial_\alpha u\otimes e_\beta + e_\alpha\otimes \partial_\beta u + o(E^{1/2}) + o(1)|\nabla u| + O(|\nabla u|^2)$$
and
$$(A^TA)_{\alpha\beta} = \delta_{\alpha\beta} + \partial_\alpha A\otimes\partial_\beta A + o(E^{1/2}) + o(1)|\nabla u| + O(|\nabla u|^2)$$
$$(B^TB)_{\alpha\beta} = \delta_{\alpha\beta} + \partial_\alpha A\otimes\partial_\beta A + o(E^{1/2}) = \delta_{\alpha\beta} + o(1).$$
Using the fact that $W$ is cylindrical and so $\partial_v W = 0$ on $\Hbf$ whenever $v\in \Hbf^\perp$, and moreover that $\partial_\alpha u\in \Hbf^\perp$ (c.f. \cite{DLHMSS}*{Proposition 10.5(iv)}), we can then compute
$$I^{(1)} - I^{(2)} = E^{1/2}\int_{U_\rho\cap \Hbf}\langle \nabla \bar{u},\nabla\bar{W}\rangle + o(E^{1/2})$$
which completes the proof of \eqref{e:avg-variation}.

To show \eqref{e:spine-var-2}, we follow entirely analogous reasoning to the above proof of \eqref{e:spine-var-1}, replacing $W$ with $W_o$ and observing that $W_o$ identifies with a cylindrical function whose image lies in $\R^{m+n}$ but whose components in the directions of $\pi_1$ are zero (and $\bar w_{ij}$ is also identified with an $\R^{m+n}$-valued function, as in \eqref{e:spine-var-1}). In addition, the application of \eqref{e:transversal-coherent-2} is replaced with \eqref{e:transversal-coherent-4} here. This establishes \eqref{e:spine-var-2}.

Now let us demonstrate the validity of \eqref{e:spine-var-3}. Firstly, by the same reasoning as that for \eqref{e:spine-var-1} (only applying \eqref{e:transversal-coherent-4} in place of \eqref{e:transversal-coherent-2}), we have 
\[
    \left|\int_{\Bbf_r\cap\Hbf_1}\left\langle \nabla \bar w^{(2)}, \nabla \left(\frac{\partial(W_p-(W_p)_\rho)}{\partial v}\right)\right\rangle\right| \leq C\rho^{3/4}\|D^2 W\|_{C^0} \to 0,
\]
as $\rho \downarrow 0$ where $(W_p)_\rho$ is defined as in \eqref{e:cutoff-approx} for $W_p$ in place of $W$. This allows us to once again without loss of generality assume that $W_p$ only depends on the $x$-variable in a neighborhood $\Bbf_{\rho_0}(V)$ of $V$. As before, fix $\rho\in (0,\rho_0)$ and write $U_\rho:= \Bbf_r\setminus\Bbf_\rho(V)$, and for $k$ sufficiently large decompose the currents as $T_k = T^g_k + T^e_k$ and $S_k = S^g_k+S^e_k$. Let $\bar W=\frac{\partial W_p}{\partial v}$, and observe that \eqref{e:first-var-current} and \eqref{e:first-var-book} still hold with this new choice of test vector field. Thus, in light of \eqref{e:blowup-exc-to-zero}, it suffices to demonstrate that
\begin{align}
    \limsup_{k\to\infty} \boldsymbol\zeta(\Sbf_k)^{-1} E_k^{-1/2} \left|\int \diverg_{\vec T_k} \bar W\, d\|T_k^e\| -  \int \diverg_{\vec S_k} \bar W\, d\|S_k^e\| \right| &\leq C\rho^{1/2} \label{e:first-var-spine-collapsed} \\
    \lim_{k\to\infty} \boldsymbol\zeta(\Sbf_k)^{-1}E_k^{-1/2} \left(\int \diverg_{\vec T_k} \bar W\, d\|T_k^g\| -  \int \diverg_{\vec S_k} \bar W\, d\|S_k^g\| \right) &= \int_{(\Bbf_r\cap \Hbf_1)\setminus \Bbf_{\rho}(V)} \langle \nabla \bar w^{(2)}, \nabla \bar W \rangle \label{e:first-var-graphical-collapsed}.
\end{align}
Let us begin with \eqref{e:first-var-spine-collapsed}. We follow similar reasoning to \eqref{e:first-var-spine}. Indeed, because $\bar{W}$ takes values in $V^\perp$ by assumption, and in $\Bbf_{\rho_0}(V)$ we know $\frac{\partial\bar{W}}{\partial y} = 0$ for all $y\in V^\perp$, for any $m$-dimensional plane $\varpi$ containing $V$ we have
$$\diverg_\varpi\bar{W} = 0,$$
by the same computation as in \eqref{e:div-free}. So in particular,
$$\int\diverg_{\vec{S}_k}\bar{W}\, d\|S^e_k\| = 0 \qquad \text{in }\Bbf_{\rho_0}(V).$$
For an arbitrary $m$-dimensional plane $\varpi$, we have by similar reasoning (c.f. the argument leading to \cite{DLHMSS-excess-decay}*{(11.25)}), noting that in fact by assumption we know that $\bar{W}\in \pi_1\cap V^\perp$,
$$|\diverg_\varpi\bar{W}| \leq C|D\bar{W}|_{C^0}\cdot |\mathbf{p}_V\circ\mathbf{p}_\varpi^\perp|\cdot|\mathbf{p}_{\varpi}^\perp\circ\mathbf{p}_{\pi_1}| \qquad \text{in }\Bbf_{\rho_0}(V).$$
Combining this with Allard's tilt excess estimate \cite{Allard_72} (c.f. \cite{DL-All}*{Proposition 4.1}), Lemma \ref{l:planar-excess-zeta}(a), \eqref{e:Simons-gradient-2} and \eqref{e:blowup-exc-to-zero}, we therefore have
\begin{align*}
    \left|\int \diverg_\pi \bar W \, d\|T^e_k\|\right| &\leq C \left(\int |\mathbf{p}^\perp_{\vec T_k}\circ \mathbf{p}_{\pi_1}|^2 \, d\|T^e_k\|\right)^{1/2} \left(\int_{\Bbf_r} \left|\mathbf{p}_V \circ \mathbf{p}_{\vec T_k}^\perp\right|^2\,d\|T_k\|\right)^{1/2} \\
    &\leq C \left(\int |\mathbf{p}_{\vec T_k}-\mathbf{p}_{\pi_1}|^2 \,d\|T^e_k\|\right)^{1/2} \left(\int_{\Bbf_r} \left|\mathbf{p}_V \circ \mathbf{p}_{\vec T_k}^\perp\right|^2\,d\|T_k\|\right)^{1/2} \\
    &\leq C\rho^{1/2} \boldsymbol{\zeta}(\Sbf_k)E_k^{1/2}.
\end{align*}
This completes the proof of \eqref{e:first-var-spine-collapsed}.

Now let us prove \eqref{e:first-var-graphical-collapsed}. For any index $i\in\{1,\dots,N\}$, let $w_{ij}^{k,+}\coloneqq w_{ij}^k$ if $i\in I^+$ and let $w_{ij}^{k,-}(te_1,v) \coloneqq w_{ij}^k(-t e_1,v)$ if $i\in I^-$. Similarly, let $A^{k,\pm}_i(t e_1,v) \coloneqq A^k_i(\pm te_1,v)$ and
\[
    \tilde h_i^{k,\pm}(x) = \sum_{j=1}^{Q_i}\llbracket h_{ij}^{k,\pm}(x) \rrbracket \coloneqq \sum_{j=1}^{Q_i} \llbracket (\tilde g^{k,\pm}_{ij}(x),\Psi^k(x,\tilde g^{k,\pm}_{ij}(x)))\rrbracket , 
\]
where the sign $\pm$ is chosen to be $+$ for $i\in I^+$ and $-$ for $i\in I^-$, $\Psi^k\equiv \Psi^k_0$ is as in Assumption \ref{a:main} for $\Sigma_k$ and $\tilde g^{k,\pm}_{ij}\coloneqq \mathbf{p}_{T_0\Sigma}\circ (w_{ij}^{k,\pm} + A_i^{k,\pm})$. Again observe that $\nabla \tilde h_{ij}^{k,\pm} = \nabla w_{ij}^{k,\pm} + \nabla  A^{k,\pm}_i + O(\|D\Psi^k\|_{C^0})$. Arguing exactly as above, combined with the estimate in Lemma \ref{l:planar-excess-zeta}(a), together with the fact that $\bar{W}_\rho$ is cylindrical, we have
\[
    \boldsymbol{\zeta}(\Sbf)^{-1} E_k^{-1/2}\left|\int \diverg_{\vec T_k} \bar W\, d\|T^g_k\| -\sum_{j=1}^{Q_i}\underbrace{\int_{\mathbf{p}^{-1}_{\pi_1}(U_\rho\cap \Hbf_1)} \diverg_{\vec \Gbf_{\tilde h^{k,\pm}_{ij}}} \bar W\, d\|\Gbf_{\tilde h^{k,\pm}_{ij}}\|}_{=: J_{k,i,j}^{(1)}} \right| \leq C \|\bar W \|_{C^1} E_k^{\gamma} \to 0,
\]
and
\[
    \int \diverg_{\vec S_k}\bar W\, d\| S_k\| = \sum_{i}\sum_{j=1}^{Q_i}\underbrace{\int_{\mathbf{p}_{\pi_1}^{-1}(U_\rho\cap \Hbf_1)} \diverg_{\vec\Gbf_{A^{k,\pm}_i}} \bar W\, d\|\Gbf_{A^{k,\pm}_i}\|}_{=: J_{k,i,j}^{(2)}},
\]
Therefore, observe that it suffices to show that for each $i\in I^\pm$ and each $j=1,\dots,Q_i$ and $k$ sufficiently large, we have
\begin{equation}\label{e:avg-variation-collapsed}
    \lim_{k\to\infty}\boldsymbol{\zeta}(\Sbf_k)^{-1}E_k^{-1/2}(J_{k,i,j}^{(1)}-J_{k,i,j}^{(2)}) =\int_{U_\rho\cap \Hbf_1} \langle\nabla M_i^{T}\bar w_{ij}^{\pm},\nabla \bar W \rangle.
\end{equation}

Observe that since $\bar W$ is directed in $\Hbf_1\cap V^\perp$, for a Lipschitz function $v$ on $\Omega_\rho\coloneqq U_\rho\cap\Hbf_1$, the first variation of $\Gbf_v$ in the direction of the vector field $\bar W$ is the inner variation
\begin{align*}
    \delta \Gbf_v(\bar W) = \frac{d}{dt}\Big|_{t= 0} \int_{\Omega_\rho} \Jbf v_t = \int_{\Omega_\rho} \left\langle D_M \Acal(\nabla v), \nabla v \cdot \nabla \bar W\right\rangle - \Acal(\nabla v) \diverg_{\pi_1} \bar W,
\end{align*}
where $v_t = v\circ \Phi_t$ for $\Phi_t(x)\coloneqq x+ t\bar W(x)$, $\Acal(\nabla v) \coloneqq \Jbf v$ and $D_M$ denotes the Frech\'{e}t derivative of $\Acal$ with respect to the matrix variable. 

Now
\[
    \Acal(\nabla A^{k,\pm}_i) = \sqrt{1+ |\nabla A_i^{k,\pm}|^2}, \qquad D_M \Acal(\nabla A^{k,\pm}_i) = \frac{\nabla A_i^{k,\pm}}{\sqrt{1+ |\nabla A_i^{k,\pm}|^2}}.
\]
Moreover, Lemma \ref{l:planar-excess-zeta}, \eqref{e:blowup-exc-to-zero} and the estimates in Proposition \ref{p:transversal-coherent} imply that
\begin{align*}
    \|D\Psi^k\|_{C^0} &= O(\Abf_k) = o(E_k^{1/2}) =o(\boldsymbol\zeta(\Sbf_k)), \\
    \|\nabla w^{k,\pm}_{ij}\|_{L^2} &= O(E_k^{1/2}) = o(\boldsymbol\zeta(\Sbf_k)) \\
    |\nabla A^{k,\pm}_i| &= O(\boldsymbol\zeta(\Sbf_k)).
\end{align*}
Thus, by a Taylor expansion, we have
\begin{align*}
    \Acal(\nabla \tilde h_{ij}^{k,\pm})&= \sqrt{1+ |\nabla A_i^{k,\pm}|^2}+ \frac{\langle \nabla A^{k,\pm}_i, \nabla w_{ij}^{k,\pm}\rangle}{\sqrt{1+ |\nabla A_i^{k,\pm}|^2}} + o(\boldsymbol\zeta(\Sbf_k) E_k^{1/2}) + O(|\nabla w^{k,\pm}_{ij}|^2), \\
    D_M \Acal(\nabla \tilde h_{ij}^{k,\pm}) &= \frac{\nabla \tilde h_{ij}^{k,\pm}}{\sqrt{1+ |\nabla A_i^{k,\pm}|^2}} + o(\boldsymbol\zeta(\Sbf_k) E_k^{1/2}) + O(|\nabla w^{k,\pm}_{ij}|). 
\end{align*}
Since $J_{k,i,j}^{(1)}-J_{k,i,j}^{(2)} = \delta \Gbf_{\tilde h_{ij}^{k,\pm}}(\bar W) - \delta\Gbf_{A_i^{k,\pm}}(\bar W)$, this combined with the above graphicality estimates yields
\begin{align*}
    & J_{k,i,j}^{(1)}-J_{k,i,j}^{(2)} \\
    = & \int_{\Omega_\rho} \frac{\langle\nabla w_{ij}^{k,\pm}, \nabla w_{ij}^{k,\pm} \cdot \nabla \bar W\rangle}{\sqrt{1+ |\nabla A_i^{k,\pm}|^2}} + \frac{\langle\nabla A_{i}^{k,\pm}, \nabla w_{ij}^{k,\pm} \cdot \nabla \bar W\rangle}{\sqrt{1+ |\nabla A_i^{k,\pm}|^2}} + \frac{\langle\nabla w_{ij}^{k,\pm}, \nabla A_{i}^{k,\pm} \cdot \nabla \bar W\rangle}{\sqrt{1+ |\nabla A_i^{k,\pm}|^2}} \\
    &\qquad - \int_{\Omega_\rho} \frac{\langle \nabla A^{k,\pm}_i, \nabla w_{ij}^{k,\pm}\rangle}{\sqrt{1+ |\nabla A_i^{k,\pm}|^2}}\diverg_{\pi_1} \bar W + o(\boldsymbol\zeta(\Sbf_k)E_k^{1/2}) \\
    &= \int_{\Omega_\rho} \langle\nabla A_{i}^{k,\pm}, \nabla w_{ij}^{k,\pm} \cdot \nabla \bar W\rangle + \langle\nabla w_{ij}^{k,\pm}, \nabla A_{i}^{k,\pm} \cdot \nabla \bar W\rangle - \langle \nabla A^{k,\pm}_i, \nabla w_{ij}^{k,\pm}\rangle\diverg_{\pi_1} \bar W \\
    &\qquad + o(\boldsymbol\zeta(\Sbf_k)E_k^{1/2}).
\end{align*}
Note that above, $\nabla w_{ij}^{k,\pm}\cdot\nabla \bar W$ and $\nabla A_{ij}^{k,\pm}\cdot\nabla \bar W$ denotes tensor multiplication between the gradients of $\R^{m+n}$-valued functions. Now observe that since $\nabla A_i^{k,\pm}= M_i^k e_1$ and $\diverg_{\pi_1}\bar W = e_1\cdot \nabla(e_1\cdot \bar W)$ in light of the fact that $\bar W$ takes values in $V^\perp\cap \pi_1$, we arrive at
\[
    \langle\nabla A_{i}^{k,\pm}, \nabla w_{ij}^k \cdot \nabla \bar W\rangle = \langle \nabla A^{k,\pm}_i, \nabla w_{ij}^k\rangle\diverg_{\pi_1} \bar W.
\]
Transposing $\nabla A_i^{k,\pm}$ in the inner product in the second term on the right-hand side of the above estimate, this concludes the proof of \eqref{e:avg-variation-collapsed}. 
\end{proof}

\subsection{Smoothness of average and decay}
For $t\in \R^+$ and $e_i$ the unit vector in $\Hbf_i\cap V^\perp$, we may parameterize $\Hbf_i$ as
\[
    \Hbf_i= \{(te_i, v): t\in \R^+, \ v\in V\},
\]
and in turn write
\begin{align*}
    \bar{u}_i(te_1,v) &\coloneqq \sum_{j=1}^{Q_i} \llbracket \bar{u}_{ij}(te_i,v) \rrbracket, \\
    \bar{w}_i(te_1,v) &\coloneqq \sum_{j=1}^{Q_i} \llbracket \bar{w}_{ij}(\pm te_1,v) \rrbracket.
\end{align*}
Note that we are abusing notation slightly, since these are not the same as the definitions of $\bar u_i$ and $\bar w_i$ as defined in Proposition \ref{p:final-blowup}; namely we are now reparameterizing to $\Hbf_1$. We additionally write the average $\bar{u}: \Bbf_r\cap\Hbf_1 \to \R^{m+n}$ as
\[
    \bar{u}(te_1,v) \coloneqq \frac{1}{q}\sum_{i=1}^N\sum_{j=1}^{Q_i} \bar{u}_{ij}(te_i,v).
\]
The following proposition characterizes the properties of $\bar{u}$ and $\bar{w}$.

\begin{proposition}\label{p:harmonic}
    The following properties hold:
    \begin{itemize}
        \item[(i)] $\bar u$, $\bar w^{(1)}$ and $\bar w^{(2)}$ are harmonic and extend to harmonic functions on $\Bbf_r\cap\pi_1$ (not relabelled) with $\frac{\partial^2 \bar u}{\partial t \partial v} = \frac{\partial^2 \bar w^{(1)}}{\partial t \partial v} = \frac{\partial^2 \bar w^{(2)}}{\partial t \partial v} = 0$ on $V\cap \Bbf_r$ for any $v\in V$;
        \item[(ii)] $\bar{u}(0)=\bar{w}(0)=0$.
    \end{itemize}
\end{proposition}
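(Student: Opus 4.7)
The proof splits naturally into establishing the harmonicity and boundary condition in (i), and then the pointwise vanishing at the origin in (ii). My plan is to first obtain interior harmonicity from Almgren's theory, then use the variational identities \eqref{e:spine-var-1}--\eqref{e:spine-var-3} combined with the cylindrical nature of the test fields to convert them into a Neumann-type boundary condition on $V$, and finally construct the harmonic extension by Schwarz reflection. For (ii), I will translate so that the origin is a $Q$-density point and use the weighted estimates \eqref{e:transversal-coherent-3}, \eqref{e:transversal-coherent-5} to force vanishing of the boundary trace.

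For the interior harmonicity: by Proposition \ref{p:final-blowup}(a), each $\bar u_i$ is $Q_i$-valued Dir-minimizing on $\Bbf_r\cap \Hbf_i$, so by the standard theory in \cite{DLS_MAMS} the single-valued sums $\sum_j \bar u_{ij}$ are harmonic on the interior of $\Bbf_r\cap \Hbf_i$. Reparametrizing each $\Hbf_i$ isometrically onto $\Hbf_1$ via $(te_i,v)\mapsto (te_1,v)$ and summing, $\bar u$ is harmonic on the open half-plane $H^+\cap\Bbf_r$. The same argument, performed separately on the sheets of $\bar w_i$ with the appropriate reflection across $V$ for $i\in I^-$ and the fixed linear weighting by $M_i^T$ in the case of $\bar w^{(2)}$, yields interior harmonicity of $\bar w^{(1)}$ and $\bar w^{(2)}$ on $(H^+\cap \Bbf_r)\subset \Hbf_1$.

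For the boundary condition, fix a direction $v\in V$ and a cylindrical test field $W$. Since $W$ is cylindrical, its restriction to each $\Hbf_i$ in the intrinsic coordinates $(t,y)\in \R^+\times V$ is the same function, so the reparametrization of \eqref{e:spine-var-1} onto $H^+$ collects $q$ identical contributions and gives
\[
\int_{H^+}\langle \nabla \bar u,\nabla \partial_v W\rangle = 0.
\]
Integrating by parts first in $v$ (moving the $\partial_v$ off $W$) and then applying the weak Green's identity, the interior contribution vanishes by harmonicity of $\bar u$, leaving the boundary integral $\int_V \partial_t\partial_v\bar u\cdot W|_V\,dy=0$. Since cylindricity places no constraint on $W|_V$, which can be an arbitrary smooth function compactly supported in $V\cap\Bbf_r$, we conclude $\partial_t\partial_v\bar u=0$ on $V$ for every direction $v\in V$. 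Applying the same strategy to \eqref{e:spine-var-2} and \eqref{e:spine-var-3}, which are already formulated directly on $\Hbf_1$, yields the corresponding boundary identities for $\bar w^{(1)}$ and $\bar w^{(2)}$. From $\partial_v(\partial_t\bar u|_V)\equiv 0$ for every $v\in V$ we deduce $\partial_t\bar u|_V=c$ for a constant vector $c\in\R^{m+n}$; the function $\bar u- ct$ is then harmonic on $H^+$ with vanishing Neumann data and extends by Schwarz reflection to a harmonic function on $\Bbf_r\cap\pi_1$, and adding $ct$ back produces the desired harmonic extension of $\bar u$. The same extension procedure works for $\bar w^{(1)}, \bar w^{(2)}$.

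For (ii), Theorem \ref{t:no-gaps} guarantees that the sets $\{\Theta(T_k,\cdot)\geq Q\}$ have no large gaps near $V$, so we may translate the currents along $V$ by a vanishing sequence (which preserves the blow-up setup and all hypotheses up to harmless error) to arrange that $0$ is a $Q$-density point of each $T_k$. Then $\beta^k(L_0)=0$ for each $k$. Dividing the estimate \eqref{e:transversal-coherent-3} at the scale of $L_0$ by $\Ebb(T_k,\Sbf_k,\Bbf_1)$ and passing to the limit using the strong $W^{1,2}_{\loc}$ convergence from Proposition \ref{p:final-blowup}(a), we obtain $\int_{L_{0,j}} \dist(z,V)^{-5/2}|\bar u_j(z)|^2\,dz<\infty$ for each $j$; the non-integrability of this weight near $V$ forces $\bar u_{ij}|_V=0$ for every sheet, and averaging yields $\bar u|_V=0$, which in particular gives $\bar u(0)=0$. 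An entirely analogous argument using \eqref{e:transversal-coherent-5} in place of \eqref{e:transversal-coherent-3} handles $\bar w$.

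The main technical subtlety is in rigorously justifying the boundary integration by parts: a priori $\bar u$ is only in $W^{1,2}(H^+\cap \Bbf_r)$, so both $\partial_t\bar u|_V$ and $\partial_t\partial_v\bar u|_V$ need to be interpreted distributionally and then upgraded. The interior harmonicity of $\bar u$ together with the uniform $L^\infty$ bounds inherited from the estimates in Proposition \ref{p:coherent} and Proposition \ref{p:final-blowup-estimates} provide enough smoothness of $\bar u$ up to any compact subset of $V\cap \Bbf_r$ to interpret the boundary trace of $\partial_t\bar u$ pointwise via standard boundary regularity for harmonic functions with bounded Neumann data, after which the preceding weak identity yields the pointwise condition $\partial_t\partial_v\bar u\equiv 0$ on $V$. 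A secondary subtlety arises in the collapsed regime: the vector fields $W_o$ and $W_p$ are constrained to take values in $\pi_1^\perp$ and $V^\perp\cap \pi_1$ respectively, so the componentwise argument must be run separately and the matrix weights $M_i^T$ appearing in the definition of $\bar w^{(2)}$ must be tracked through the reflection of the sheets indexed by $I^-$ to confirm that the resulting identity is still of the form required to apply the Green's identity on $H^+$.
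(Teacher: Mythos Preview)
Your outline for part (i) is essentially the paper's argument: interior harmonicity from the Dir-minimizing property, the variational identities \eqref{e:spine-var-1}--\eqref{e:spine-var-3} integrated by parts to identify the distributional normal derivative on $V$ as a constant vector, subtraction of a linear function, and Schwarz reflection. One point deserving care is your justification of the boundary trace of $\partial_t\bar u$: you appeal to ``boundary regularity for harmonic functions with bounded Neumann data'', but at that stage you do not yet know the Neumann data is bounded---that is precisely what you are trying to establish. The paper avoids this by invoking the weighted Dirichlet estimates \eqref{e:transversal-coherent-2} and \eqref{e:transversal-coherent-4}, which pass to the limit to give $\int_{\Hbf_1\cap\Bbf_r}\frac{|\nabla\bar u|^2}{t^{1/2}}<\infty$ (and likewise for $\bar w^{(l)}$); this weighted bound is what makes the distributional trace $\partial_t\bar u|_V$ well-defined without circularity.

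Your argument for (ii), however, has a genuine gap. You restrict the estimate \eqref{e:transversal-coherent-3} to the single cube $L_0$ and, after arranging $\beta^k(L_0)=0$, pass to the limit to obtain $\int_{L_{0,j}}\dist(z,V)^{-5/2}|\bar u_j|^2<\infty$. But by construction $L_{0,j}=R(L_0)\cap\Hbf_j$ is the region where $\frac12\le \dist(z,V)\le 1$, so the weight is bounded on $L_{0,j}$ and the estimate carries no information whatsoever about $\bar u$ near $V$ or at the origin. If instead you sum over all cubes $L$, the subtracted term $\mathbf{p}^\perp_{\pi_j}(\beta^k(L))$ is nonzero for $L\neq L_0$ and survives in the limit as a nontrivial function, so you cannot conclude $\bar u|_V=0$ that way either. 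The paper's proof of (ii) uses a different estimate, namely the Hardt--Simon inequalities \eqref{e:nonconc-Hardt-Simon} and \eqref{e:nonconc-Hardt-Simon-collapsed}, which are centered at the origin: after passing to the limit one gets $\int_{\Bbf_r\cap\pi_1}|z|^{-(m-2)}\bigl|\partial_r(\bar u(z)/|z|)\bigr|^2\,dz<\infty$, and since $\bar u$ is smooth (having been extended harmonically across $V$) the integrand behaves like $|\bar u(0)|^2|z|^{-(m+2)}$ near $0$, whose non-integrability forces $\bar u(0)=0$.
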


\begin{proof}
The proof follows the same line of reasoning as \cite{DLHMSS}*{Lemma 11.2} but we repeat some of the details here for the convenience of the reader.
The harmonicity of $\bar u$, $\bar{w}^{(1)}$ and $\bar w^{(2)}$ on $\Bbf_r\cap\Hbf_1$ follows immediately from Proposition \ref{p:final-blowup}. To see that they extend to harmonic functions, we proceed as follows. Observe that $\bar u, \bar w^{(1)}, \bar w^{(2)} \in W^{1,2}$ on $\Bbf_r\cap\Hbf_1$. Furthermore, the estimates \eqref{e:transversal-coherent-2} and \eqref{e:transversal-coherent-4} imply that
\begin{align*}
    \int_{\Bbf_r\cap\Hbf_1} \frac{|\nabla \bar u|^2}{t^{1/2}} \, dtdv&< +\infty, \\
    \int_{\Bbf_r\cap\Hbf_1} \frac{|\nabla \bar w^{(l)}|^2}{t^{1/2}} \, dtdv &< +\infty, \qquad l=1,2.
\end{align*}
In particular, $\frac{\partial \bar u}{\partial t}$, $\frac{\partial \bar w^{(1)}}{\partial t}$ and $\frac{\partial \bar w^{(2)}}{\partial t}$ are all well-defined on $V\cap\Bbf_r$ (as distributions supported on there). Thus, given any cylindrical vector fields $W\in C_c^\infty(\Bbf_r\cap\pi_1;\R^{m+n})$, $W_o\in C_c^\infty(\Bbf_r\cap\pi_1;\pi_1^\perp)$, $W_p\in C_c^\infty(\Bbf_r\cap\pi_1;V^{\perp}\cap\pi_1)$, Proposition \ref{p:final-blowup}(d) integrated by parts tells us that
\begin{equation}\label{e:spine-derivatives}
    \left\langle\frac{\partial \bar u}{\partial t}\Big|_{V\cap\Bbf_r}, \frac{\partial W}{\partial v} \right\rangle = \left\langle\frac{\partial \bar w^{(1)}}{\partial t}\Big|_{V\cap\Bbf_r}, \frac{\partial W_o}{\partial v} \right\rangle = \left\langle \frac{\partial \bar w^{(2)}}{\partial t}\Big|_{V\cap\Bbf_r}, \frac{\partial W_p}{\partial v}\right\rangle = 0,
\end{equation}
for any $v\in V$, where we use $\langle\cdot,\cdot\rangle$ to denote the pairing of distribution and test function. In other words, the distributions $\frac{\partial \bar u}{\partial t}\Big|_{V\cap\Bbf_r}, \frac{\partial \bar w^{(1)}}{\partial t}\Big|_{V\cap\Bbf_r}, \frac{\partial \bar w^{(2)}}{\partial t}\Big|_{V\cap\Bbf_r}$ are in fact constant vectors, and thus, after translating $\bar u, \bar w^{(1)}, \bar w^{(2)}$ by appropriate respective linear functions dependent only on $t$, we obtain harmonic functions with vanishing normal derivative along $V\cap\Bbf_r$. By Schwartz reflection, we may thus extend the translated functions to harmonic functions on $\Bbf_r\cap\pi_1$, which in turn produces extensions for $\bar u, \bar w^{(1)},\bar w^{(2)}$ respectively. The claimed differential constraint on $V\cap\Bbf_r$ in (i) follows immediately from \eqref{e:spine-derivatives}.

Given the estimates \eqref{e:nonconc-Hardt-Simon} and \eqref{e:nonconc-Hardt-Simon-collapsed}, the validity of the property (ii) follows by the exact same reasoning as that for \cite{DLHMSS}*{Lemma 11.2(ii)}. We omit the details here.
\end{proof}

We are now in a position to state the main decay property for the maps $\bar u_i$ and $\bar w_i$.

\begin{proposition}\label{p:final-decay}
    For every $\eps>0$, there exists:
    \begin{itemize}
        \item $\rho_0=\rho_0(q,m,n,\bar n,\eps) \in (0,r)$ in the collapsed case,
        \item $\rho_0 = \rho_0 (q,m,n,\bar n, \eps, \varepsilon_c^\star)\in (0,r)$ in the non-collapsed case,
    \end{itemize}
    such that for every $\rho \in (0,\rho_0]$, the following holds. Let $\bar u_i$ and $\bar w_i$ be as above. Then there are linear maps $b:V\to V^\perp$, $b^\perp: V \to \pi_1^\perp$, $b^\parallel:V\to V^\perp\cap \pi_1$, and maps $a^c, a^{nc}\in \Lscr$ as in Definition \ref{d:spaces} (for $H=\Hbf_1$) with
    
    \[\sum_i\|\nabla a_i^c \|_{L^\infty} + \sum_i \|\nabla a_i^{nc} \|_{L^\infty} + \|\nabla b \|_{L^\infty}+ \|\nabla b^\perp \|_{L^\infty} + \|\nabla b^\parallel \|_{L^\infty} \leq C;\]
    \begin{align*}
        &\int_{\Hbf_1\cap \Bbf_\rho} \Gcal\left(\sum_{i=1}^N \bar u_i(te_1,v) \ominus b(v), a^{nc}(te_1,v)\right)^2\, dy dt \\
        &\hspace{12em}\leq \eps \left(\frac{\rho}{r}\right)^{m+2} \int_{\Hbf_1\cap \Bbf_r}\left|\sum_{i=1}^N \bar u_i(te_1,v) \ominus b(v)\right|^2 \, dy dt \\
        &\int_{\Hbf_1\cap \Bbf_\rho}\Gcal\left(\sum_{i=1}^N \bar w_i(te_1,v) \ominus (b^\perp(v) + \bar A_i(b^\parallel(v))), a^{c}(te_1,v)\right)^2\, dy dt \\
        &\hspace{12em}\leq \eps \left(\frac{\rho}{r}\right)^{m+2} \int_{\Hbf_1\cap \Bbf_r}\left|\sum_{i=1}^N \bar w_i(te_1,v) \ominus (b^\perp(v) + \bar A_i(b^\parallel(v)))\right|^2 \, dy dt, \\
    \end{align*}
    where $C=C(q,m,n,\bar n)>0$ in the non-collapsed case and $C = C(q,m,n,\bar n, \varepsilon_c^\star)$ in the collapsed case.
\end{proposition}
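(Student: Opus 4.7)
The plan is to reduce Proposition \ref{p:final-decay} to separate applications of Lemma \ref{l:bdry-decay} to each map $\bar u_i$ (resp.\ $\bar w_i$), after subtracting an appropriate linear shift encoding the infinitesimal displacement of the spine that emerges in the blow-up. The extraction of this shift uses Proposition \ref{p:harmonic}: the averages $\bar u$, $\bar w^{(1)}$, $\bar w^{(2)}$ extend to harmonic functions on $\Bbf_r\cap\pi_1$ with $\bar u(0)=0$ and vanishing mixed derivative $\partial_t\partial_v\bar u|_{V\cap \Bbf_r}=0$ for every $v\in V$ (and analogously for $\bar w^{(l)}$). The linear-in-$v$ part of the Taylor expansion at the origin then defines the linear maps $b:V\to V^\perp$, $b^\perp:V\to \pi_1^\perp$, $b^\parallel:V\to V^\perp\cap \pi_1$, with $L^\infty$-bounds on their gradients following from standard interior harmonic estimates against the uniform $L^2$-norm of the averages inherited from the blow-up normalization. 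The maps $\bar A_i$ entering the collapsed-case subtraction are bounded by Proposition \ref{p:final-blowup}(b), and this is where the constant dependence on $\varepsilon_c^\star$ enters in the collapsed regime.

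\textbf{Reduction to $\mathscr{H}$.} For each $i\in\{1,\dots,N\}$ we use the parameterization $\Hbf_i\cong \R^+\times V$ to identify all these half-planes with a common half-space $H^+$; under this identification $\bar u_i$ becomes a $Q_i$-valued Dir-minimizing map on $\Bbf_r\cap H^+$. Introduce the translated maps
\[
\tilde u_i\coloneqq\bar u_i\ominus b, \qquad \tilde w_i\coloneqq\bar w_i\ominus (b^\perp+\bar A_i\circ b^\parallel),
\]
which remain Dir-minimizing on the interior, and aim to show that each belongs to the class $\mathscr{H}$ of Definition \ref{d:spaces}. The trivial-trace condition on $V$ follows by combining Theorem \ref{t:no-gaps}, which provides density-$\frac{q}{2}$ points $\beta^k(L)$ arbitrarily close to every point of $V$ in the pre-limit at which all sheets of the Lipschitz approximations coincide (thanks to the classification in Proposition \ref{p:classification-cones}), with the convergence of the rescaled positions $\beta^k/\sqrt{\Ebb(T_k,\Sbf_k,\Bbf_1)}$ to the graph of $b$ (or $b^\perp+\bar A_i\circ b^\parallel$) from Proposition \ref{p:final-blowup}(c). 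The frequency bound of at least $1$ at every boundary point $p\in V\cap \Bbf_\rho$ is obtained by passing to the limit in the Hardt--Simon estimates \eqref{e:nonconc-Hardt-Simon}, \eqref{e:nonconc-Hardt-Simon-collapsed}, re-centered at nearby density-$\frac{q}{2}$ points via Proposition \ref{p:Simon-shift} when $p\neq 0$.

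\textbf{Decay, recombination, and the principal obstacle.} Fix $\eps>0$ and let $\rho_0\leq r$ be the minimum over $i=1,\dots,N$ of the thresholds provided by Lemma \ref{l:bdry-decay} applied with $Q=Q_i$. For every $\rho\in(0,\rho_0]$ the lemma furnishes $a_i\in \mathscr{L}$ with $\|\nabla a_i\|_{L^\infty}\leq C$ and
\[
\int_{\Hbf_1\cap \Bbf_\rho}\mathcal{G}(\tilde u_i,a_i)^2\leq \eps\left(\frac{\rho}{r}\right)^{m+2}\int_{\Hbf_1\cap \Bbf_r}|\tilde u_i|^2.
\]
Setting $a^{nc}\coloneqq \sum_i a_i$ (an element of $\mathscr{L}$ for the total multiplicity $\sum_i Q_i=q$) and summing the above estimates via the elementary inequality for $\mathcal{G}$ on a formal sum of multi-valued maps yields the desired bound in the non-collapsed case; the collapsed case proceeds identically with $\tilde w_i$ and $a^c$ in place of $\tilde u_i$ and $a^{nc}$. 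The principal obstacle in executing the plan is the verification step above that $\tilde u_i, \tilde w_i\in\mathscr{H}$: identifying $b$ as precisely the shift that cancels the boundary contribution of $\bar u_i$ requires a quantitative uniform-in-$v$ description of the density-$\frac{q}{2}$ set near $V$, while the frequency bound at non-origin boundary points requires careful shift-centering via Proposition \ref{p:Simon-shift} combined with the no-gap structure of Theorem \ref{t:no-gaps}.
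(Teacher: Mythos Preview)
Your overall strategy—subtract a linear shift determined by the harmonic averages and then invoke Lemma \ref{l:bdry-decay}—is essentially the paper's. One cosmetic difference: the paper applies Lemma \ref{l:bdry-decay} once to the single $q$-valued map $\sum_i \bar u_i\ominus b$ (respectively $\sum_i \bar w_i\ominus(b^\perp+\bar A_i b^\parallel)$), whereas you apply it to each $\tilde u_i$, $\tilde w_i$ separately and then sum. Since $\mathcal G(\sum_i S_i,\sum_i T_i)^2\leq\sum_i\mathcal G(S_i,T_i)^2$ and $|\sum_i S_i|^2=\sum_i|S_i|^2$, the recombination step you describe is valid, so either route works.

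There is, however, a genuine gap in the collapsed case. You propose to read off $b^\perp$ and $b^\parallel$ directly as the linear-in-$v$ parts of the Taylor expansions of $\bar w^{(1)}$ and $\bar w^{(2)}$ at the origin. But these averages are \emph{coupled} linear combinations of $\bar\beta^\perp$ and $\bar\beta^\parallel$: passing to the limit in the weighted estimate \eqref{e:transversal-coherent-5} shows that on $V$ one has $\bar w^{(1)}=q\bar\beta^\perp-(\sum_i\bar A_i)\bar\beta^\parallel$ and $\bar w^{(2)}=(\sum_i M_i^T)\bar\beta^\perp-(\sum_i\|M_i\|^2)\bar\beta^\parallel$. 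The paper therefore \emph{inverts} this $2\times 2$ system (this is where $M_N\neq 0$ from Proposition \ref{p:final-blowup}(b) is used, and where the $\varepsilon_c^\star$-dependence of the constants enters) to express $\bar\beta^\perp,\bar\beta^\parallel$ as traces of explicit harmonic combinations $\bar h^\perp,\bar h^\parallel$ of $\bar w^{(1)},\bar w^{(2)}$, and only then sets $b^\perp=\nabla_V\bar h^\perp(0)$, $b^\parallel=\nabla_V\bar h^\parallel(0)$. Without this inversion the subtraction $\bar w_i\ominus(b^\perp+\bar A_i b^\parallel)$ does not match the actual boundary trace of $\bar w_i$, and your $\tilde w_i$ fails to have the required zero trace.

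A second, smaller issue: you assert that $\beta^k/\sqrt{\Ebb(T_k,\Sbf_k,\Bbf_1)}$ converges ``to the graph of $b$''. What Proposition \ref{p:final-blowup}(c) actually gives is convergence to the bounded functions $\bar\beta$ (and $\bar\beta^\perp,\bar\beta^\parallel$), which a priori need not be linear in $v$; the linear map $b$ is only their first-order Taylor polynomial at the origin. The paper's route to the trace identification goes instead through the weighted estimates \eqref{e:transversal-coherent-3}, \eqref{e:transversal-coherent-5} (with the $\dist(z,V)^{-5/2}$ weight), which force the boundary trace of each $\bar u_i$ (resp.\ $\bar w_i$) to equal $Q_i\llbracket\bar\beta(0,v)\rrbracket$ (resp.\ $Q_i\llbracket\bar\beta^\perp(0,v)-\bar A_i\bar\beta^\parallel(0,v)\rrbracket$). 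You should make this precise, and then either observe that the discrepancy $\bar\beta(0,v)-b(v)=O(|v|^2)$ is harmless at scale $\rho$ compared with the $\rho^{m+2}$ target, or equivalently argue membership in $\mathscr H$ after subtracting $\bar\beta(0,\cdot)$ itself.
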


\begin{proof}
    In light of Lemma \ref{l:bdry-decay}, observe that it suffices to find $b$, $b^\perp$ and $b^\parallel$ as in the statement of the proposition such that
    \[
        \sum_i \bar u_i(te_1,v)\ominus b(v) \in \Hscr, \qquad \sum_i \bar w_i(te_1,v)\ominus (b^\perp(v) + \bar A_i(b^\parallel(v))) \in \Hscr.
    \]
    where $\Hscr$ is as in Definition \ref{d:spaces}, with respect to $H=\Hbf_1$.

    Now, we may first argue as in the proof of \cite{DLHMSS-excess-decay}*{Proposition 12.1} to define the traces of the functions $\bar\beta$, $\bar\beta^\parallel$, and $\bar \beta^\perp$, and use Proposition \ref{p:final-blowup-estimates} to reach the estimates 
    \begin{align}
        \int_{\Hbf_1\cap\Bbf_{r/2}} \frac{\left|\sum_i \bar u_i(te_1,v)\ominus \bar\beta(0,v)\right|^2}{|t|^{9/4}} &\leq C; \\
        \int_{\Hbf_1\cap\Bbf_{r/2}} \frac{\left|\sum_i \bar w_i(te_1,v)\ominus (\bar\beta^\perp(0,v) - \bar A_i(\bar\beta^\parallel(0,v)))\right|^2}{|t|^{9/4}} &\leq C.
    \end{align}
    In particular, we deduce that
    \begin{align}
        \int_{\Hbf_1\cap\Bbf_{r/2}} \frac{\left|\bar u(te_1,v)\ominus \bar\beta(0,v)\right|^2}{|t|^{9/4}} &\leq C; \label{e:trace}\\
        \int_{\Hbf_1\cap\Bbf_{r/2}} \frac{\left|\bar w^{(1)}(te_1,v) - (q\bar\beta^\perp(0,v) - \sum_i \bar A_i(\bar\beta^\parallel(0,v)))\right|^2}{|t|^{9/4}} &\leq C; \\
        \int_{\Hbf_1\cap\Bbf_{r/2}} \frac{\left|\bar w^{(2)}(te_1,v) - (\sum_i M_i^T\bar\beta^\perp(0,v) - \sum_i \|M_i \|^2 \bar\beta^\parallel(0,v))\right|^2}{|t|^{9/4}} &\leq C.
    \end{align}
    Inverting the system corresponding to the latter two estimates (c.f. \cite{DLHMSS-excess-decay}*{Proof of Proposition 12.1} or \cite{W14_annals}*{Proof of Theorem 12.2}), this further yields
    \begin{align}
        \int_{\Hbf_1\cap\Bbf_{r/2}} \frac{\left|\bar\beta^\perp(0,v) - (\alpha_1 \bar w^{(1)}(te_1,v) + B_1 \bar w^{(2)}(te_1,v))\right|^2}{|t|^{9/4}} &\leq C; \label{e:trace-perp}\\
        \int_{\Hbf_1\cap\Bbf_{r/2}} \frac{\left|\bar\beta^\parallel(0,v) - (B^T_2 \bar w^{(1)}(te_1,v) + \alpha_2 \bar w^{(2)}(te_1,v))\right|^2}{|t|^{9/4}} &\leq C;\label{e:trace-parallel}
    \end{align}
    where $\alpha_1,\alpha_2\in \R$ and $B_1,B_2: \pi_1\cap V^\perp \to\pi^\perp$ are linear maps, identified with their matrix representations.

    From \eqref{e:trace}, \eqref{e:trace-perp} and \eqref{e:trace-parallel} and Proposition \ref{p:harmonic}, we therefore deduce that the functions $\bar\beta(0,\cdot)$, $\bar\beta^\perp(0,\cdot)$ and $\bar\beta^\parallel(0,\cdot)$ are traces on $V\cap\Bbf_{r/2}$ of harmonic functions $\bar u$, $\bar h^\perp \coloneqq \alpha_1 \bar w^{(1)} + B_1 \bar w^{(2)}$ and $\bar h^\parallel \coloneqq B^T_2 \bar w^{(1)}(te_1,v) + \alpha_2 \bar w^{(2)}(te_1,v)$ respectively. 

    Since $\bar u(0)$, $\bar h^\perp(0)$, $\bar h^\parallel(0)$ are all zero, we may thus let
    \[
        b(v) \coloneqq \nabla_V \bar u(0) \cdot v, \qquad b^\perp(v) \coloneqq \nabla_V h^\perp(0) \cdot v, \qquad b^\parallel(v) \coloneqq \nabla_V h^\parallel(0)\cdot v,
    \]
    which are the linearizations of these traces. The conclusion follows.
\end{proof}

\subsection{Conclusion} In this section we are going to show Proposition \ref{p:collapsed} and Proposition \ref{p:noncollapsed}, following \cite{DMS}*{Section~13.5}. We fix the decay scale $\varsigma_1$ and we will show that this will be reached at a certain radius, $r_c$ or $r_{nc}$, whether we are in the collapsed or non-collapsed setting, respectively, via a contradiction argument. We start with Proposition \ref{p:collapsed}; we fix a contradiction sequence $T_k$, $\mathbf{S}_k$, and $\Sigma_k$ as in the previous section and use Proposition \ref{p:final-blowup}, Proposition \ref{p:final-blowup-estimates}, and Proposition \ref{p:final-blowup} to extract the blow-up limits $\bar{A}_i$, $\bar{w}_i$, and find the functions $b^\perp$, $b^\parallel$ and $a_i^c$. As before, without loss of generality we have applied a rotation so that $V (\Sbf_k)$ coincide all with a fixed $V$ and the page $\Hbf_1\subset \pi_1$ is common to all the books $\Sbf_k$.

We build a skew-symmetric map of $\pi_1$ onto itself by mapping
\[
V\oplus (V^\perp\cap\pi_1) \ni y+x \mapsto b^\parallel (y) - (b^\parallel)^{T} (x)\, .
\]
This skew-symmetric map generates a one-parameter family $R [t]$ of rotations of $\pi_1$, which we may extend to all of $\R^{m+\bar{n}}$ by setting it to be the identity on $\pi_1^\perp$ and extended linearly. We next define the rotations
\[
R_k := R \left[\frac{\mathbb{E} (T_k, \mathbf{S}_k, \Bbf_1)^{1/2}}{\boldsymbol{\zeta} (\mathbf{S}_k)}\right]
\]
and observe that these rotations map $\pi_1$ and $\pi_1^\perp$ onto themselves. 

The rotated cones $\mathbf{S}'_k := R_k (\mathbf{S}_k)$ are thus a first step towards the cones which will have the desired decay at the radius $r_c$. We next take the linear functions
$A^k_i$ whose graphs over $\pi_1$ give the planes $\pi^k_i$, hence the linear functions $\xi_i$, and construct the maps 
\[
L^k_i := (A^k_i + \mathbb{E} (T_k, \mathbf{S}_k, \Bbf_1)^{1/2} (a^c_i + b^\perp))\circ R_k^{-1}\, .
\]
We now split the indices $i\in \{1, \ldots, N\}$ in $I^+\cup I^-$ according to Proposition \ref{p:final-blowup}(d) and end up defining two multi-valued linear functions:
\[
L^{k,+} := \sum_{i\in I^+} \llbracket L^k_i \rrbracket \qquad \text{and} \qquad
L^{k,-} := \sum_{i\in I^-} \llbracket L^k_i\rrbracket\, .
\]
The union of the graph of $L^{k,+}$ over $\Hbf_1$ and of the graph of $L^{k,-}$ over $-\Hbf_1$ give our final cone $\Sbf_k''$. Using the estimate in Proposition \ref{p:final-blowup-estimates} and Proposition \ref{p:final-decay} we get 
\[
\lim_{k\to \infty} \frac{\mathbb{E} (T_k, \mathbf{S}_k'', \Bbf_{r_c})}{\mathbb{E} (T_k, \mathbf{S}_k, \Bbf_1)} 
\leq C \varepsilon
\]
for any fixed radius $r_c$ smaller than $\rho_0$, where $C$ is a geometric constant, $\varepsilon$ is the fixed threshold with which we apply Proposition \ref{p:final-decay} and $\rho_0$ is the radius given by Proposition \ref{p:final-decay}. It is then obvious that it suffices to choose $C\varepsilon < \varsigma_1$ to reach the desired contradiction.

We now pass to the non-collapsed case. Since at this point we know that Proposition \ref{p:collapsed} has been established, for our purposes we can consider the parameter $\varepsilon_c^\star$ fixed and hence treat all the constants which depend on it as geometric. Again the argument is by contradiction. We again assume that $\varsigma_1$ and $\varepsilon_{c}^\star$ are given, that $r_{nc}$ is fixed, and that there is absence of decay by $\varsigma_1$ for sequences $T_k$, $\Sigma_k$, and $\mathbf{S}_k$. We then apply again Proposition \ref{p:final-blowup}, Proposition \ref{p:final-blowup-estimates}, and Proposition \ref{p:final-decay} to get, this time, the maps $\bar u_i$, $a^{nc}_i$, and $b$. 

First of all we consider $b$ as a map from $V$ to $V^\perp$, we let $b^T: V^\perp \to V$ be its transpose, we again build the skew-symmetric map
\[
V \oplus V^\perp \ni y+x \mapsto b (y) - b^T (x)
\]
and hence we let $t\mapsto R[t]$ be the one parameter family of rotations generated by it. As in the collapsed case we set
\[
R_k := R [ \mathbb{E} (T_k, \mathbf{S}_k, \Bbf_1)^{1/2}]
\]
and we consider the first adjustment to the cones as
\[
\mathbf{S}'_k := R_k (\mathbf{S}_k)\, .
\]
Next recall that along the sequence in our previous discussions we were considering $\Hbf^k_1$ to be always the same half-plane $\Hbf_1$ (by applying a suitable rotation), while we were assuming that $\Hbf^k_i$ converges to $\Hbf_i$. We then fix a rotation $O_{k,i}$ which maps $\Hbf^k_i$ to $\Hbf_i$ (this can be done ``canonically'' using for instance the argument of \cite{DMS}*{Lemma 3.7}). We then define the following multi-valued maps over $\Hbf_{k,i}' := R_k (\Hbf^k_i)$:
\[
L^k_i := R_k \circ O_{k,i} \circ (\mathbb{E} (T_k, \Sbf_k, \Bbf_1)^{1/2} a^{nc}_i) \circ O_{k,i}^{-1} \circ R_k^{-1}\, .
\]
These maps take values on the orthogonal complements of $R_k (\pi^k_i)$, where $\pi^k_i$ is the plane of dimension $m$ containing $\Hbf^k_i$ (i.e. obtained by completing the latter with its reflection along $V= V (\Sbf_k)$. The union of the graphs of these maps gives the new open book $\Sbf_k''$. Using the decay in Proposition \ref{p:final-decay}, the estimates in Proposition \ref{p:final-blowup-estimates} and the convergence in Proposition \ref{p:final-blowup} we then conclude as before that
\[
\lim_{k\to \infty} \frac{\mathbb{E} (T_k, \mathbf{S}_k'', \Bbf_{r_{nc}})}{\mathbb{E} (T_k, \mathbf{S}_k, \Bbf_1)} 
\leq C \varepsilon
\]
for any fixed radius $r_{nc}\leq \rho_0$, where $\varepsilon$ is the fixed threshold chosen for the application of Proposition \ref{p:final-decay} and $\rho_0$ the corresponding radius given by the proposition in the non-collapsed case. This time the constant $C$ can also depend on $\varepsilon^\star_c$, which however has been previously fixed. In particular, choosing $\varepsilon$ so that $C\varepsilon < \varsigma_1$ and $r_{nc}= \rho_0$, we reach the desired contradiction. This therefore completes the proofs of Proposition \ref{p:collapsed} and Proposition \ref{p:noncollapsed}, and hence completes the proof of Theorem \ref{t:decay}. \qed

\section{Proof of Theorem \ref{t:main} and Corollaries}

Having now completed the proof of the main excess decay theorem (Theorem \ref{t:decay}), we now use it to prove Theorem \ref{t:main} and its corollaries.

\begin{proof}[Proof of Theorem \ref{t:main}]
    Our proof closely follows those seen in \cite{W14_annals} and \cite{MW}*{Theorem 3.1}.

    Fix $\zeta = \frac{1}{8}$, and choose $\eps_0 = \eps_0(q,m,n,\bar{n},\zeta)\in (0,1/2]$ as in Theorem \ref{t:decay}. Let $p\in \spt^q(T)\cap U$ be a point at which there is a tangent cone $\Sbf$ with an $(m-1)$-dimensional spine. Then, we know that there is a radius $r>0$ such that $(\iota_{p,r})_\sharp T$ obeys the assumptions of Theorem \ref{t:decay} with the above choice of $\eps_0$; in particular, we may assume without loss of generality that $p = 0$ and $r=1$. Thus, Theorem \ref{t:decay} gives that there is some $\Sbf^\prime\in\mathscr{B}^q(0)\setminus\mathscr{P}(0)$ such that
    \begin{equation*}
        \mathbb{E}(T,\Sbf^\prime,\Bbf_{r_0}) \leq \frac{1}{4}\mathbb{E}(T,\Sbf,\Bbf_1);
    \end{equation*}
    \begin{equation*}
        \frac{\mathbb{E}(T,\Sbf^\prime,\Bbf_{r_0})}{\mathbf{E}^p(T,\Bbf_{r_0})} \leq \frac{1}{4}\frac{\mathbb{E}(T,\Sbf,\Bbf_1)}{\mathbf{E}^p(T,\Bbf_1)};
    \end{equation*}
    \begin{equation*}
    \dist^2(\Sbf^\prime\cap\Bbf_1,\Sbf\cap \Bbf_1) \leq C\mathbb{E}(T,\Sbf,\Bbf_1);
    \end{equation*}
    \begin{equation*}
        \dist^2(V(S)\cap\Bbf_1, V(\Sbf^\prime)\cap\Bbf_1)\leq C\frac{\mathbb{E}(T,\Sbf,\Bbf_1)}{\mathbf{E}^p(T,\Bbf_1)}.
    \end{equation*}
Here, $r_0 = r_0(q,m,n,\bar{n})\in (0,1/2]$ and $C = C(q,m,n,\bar{n})$. We note that since $T$ is assumed to be close to a non-flat cone, we in fact have $\mathbf{E}^p(T,\Bbf_r)\geq \delta>0$ for some fixed $\delta>0$ independent of $r$ (but dependent on the cone $\Sbf$), and so we could remove the denominators involving $\mathbf{E}^p(T,\Bbf_1)$ if we wanted to, allowing the constants to depend on $\Sbf$, however we will keep the more general form of these inequalities for now. In particular, all the assumptions of Theorem \ref{t:decay} still hold with $(\iota_{0,r_0})_\sharp T$, $\Sbf^\prime$ in place of $T,\Sbf$, respectively, and so we may iterate this. The outcome is that for each $k\in \{0,1,2,\dotsc\}$, we find a cone $\Sbf_k\in \mathscr{B}^q(0)\setminus\mathscr{P}(0)$, where $\Sbf_0 = \Sbf$, $\Sbf_1 = \Sbf^\prime$, such that
\begin{equation*}%\label{e:decay-1}
    \mathbb{E}(T,\Sbf_{k+1},\Bbf_{r_0^{k+1}}) \leq \frac{1}{4}\mathbb{E}(T,\Sbf_k,\Bbf_{r_0^k}) \leq \cdots \leq \frac{1}{4^{k+1}}\mathbb{E}(T,\Sbf,\Bbf_1);
\end{equation*}
\begin{equation*}%\label{e:decay-2}
    \frac{\mathbb{E}(T,\Sbf_{k+1},\Bbf_{r_0^{k+1}})}{\mathbf{E}^p(T,\Bbf_{r_0^{k+1}})} \leq \frac{1}{4}\cdot\frac{\mathbb{E}(T,\Sbf_k,\Bbf_{r_0^k})}{\mathbf{E}^p(T,\Bbf_{r_0^k})} \leq \cdots \leq \frac{1}{4^{k+1}}\cdot\frac{\mathbb{E}(T,\Sbf,\Bbf_1)}{\mathbf{E}^p(T,\Bbf_1)};
\end{equation*}
\begin{equation}\label{e:decay-3}
    \dist^2(\Sbf_{k+1}\cap\Bbf_1,\Sbf_k\cap \Bbf_1) \leq C\mathbb{E}(T,\Sbf_k,\Bbf_{r_0^k}) \leq \frac{C}{4^k}\mathbb{E}(T,\Sbf,\Bbf_1);
\end{equation}
\begin{equation}\label{e:decay-4}
    \dist^2(V(\Sbf_{k+1})\cap\Bbf_1,V(\Sbf_k)\cap\Bbf_1) \leq C\frac{\mathbb{E}(T,\Sbf_k,\Bbf_{r_0^k})}{\mathbf{E}^p(T,\Bbf_{r_0}^k)} \leq \frac{C}{4^k}\frac{\mathbb{E}(T,\Sbf,\Bbf_1)}{\mathbf{E}^p(T,\Bbf_1)}.
\end{equation}
Then \eqref{e:decay-3} and \eqref{e:decay-4} readily imply that the sequences $(\Sbf_k)_k$ and $(V(\Sbf_k))_k$ are Cauchy sequences in the Hausdorff distance topology, and thus we have $\Sbf_k\to \Sbf_*$ and $V(\Sbf_k)\to V_*$ locally in Hausdorff distance for some $\Sbf_*\in \mathscr{B}^q(0)$ and $(m-1)$-dimensional linear subspace $V_*$, and moreover $V_* = V(\Sbf_*)$. Moreover, the triangle inequality gives for each $k\in \{0,1,2,\dotsc\}$,
\begin{equation*}
    \mathbb{E}(T,\Sbf_*,\Bbf_{r_0^k}) \leq \frac{1}{4^k}\mathbb{E}(T,\Sbf,\Bbf_1);
\end{equation*}
\begin{equation*}
    \frac{\mathbb{E}(T,\Sbf_*,\Bbf_{r_0^k})}{\mathbf{E}^p(T,\Bbf_{r_0^k})} \leq \frac{1}{4^k}\cdot\frac{\mathbb{E}(T,\Sbf,\Bbf_1)}{\mathbf{E}^p(T,\Bbf_1)};
\end{equation*}
\begin{equation*}
    \dist^2(\Sbf_*\cap\Bbf_1,\Sbf_k\cap\Bbf_1)\leq \frac{C}{4^k}\mathbb{E}(T,\Sbf,\Bbf_1);
\end{equation*}
\begin{equation*}
    \dist^2(V(\Sbf_*)\cap\Bbf_1,V(\Sbf_k)\cap\Bbf_1) \leq \frac{C}{4^k}\cdot\frac{\mathbb{E}(T,\Sbf,\Bbf_1)}{\mathbf{E}^p(T,\Bbf_1)},
\end{equation*}
    the second of which evidently implies that $\Sbf_*\in \mathscr{B}^q(0)\setminus\mathscr{P}(0)$.
Moreover, a standard interpolation argument between the geometric sequence of scales $r_0,r_0^2,\dotsc$ gives the following estimate for the \emph{one-sided} excess:
\begin{equation*}
    \hat\Ebf(T,\Sbf_*,\Bbf_r) \leq Cr^{2\beta}\mathbb{E}(T,\Sbf,\Bbf_1) \qquad \text{for all }r\in (0,1),
\end{equation*}
for a suitable $\beta = \beta(q,m,n,\bar{n})\in (0,1)$. This shows that $\Sbf_*$ is the unique tangent cone to $T$ at $0$.

We now wish to repeat this argument about any nearby point $x$ with $\Theta(T,x)\geq\frac{q}{2}$. In fact, since $\mathbf{E}^p(T,\Bbf_1)\geq \delta>0$ with $\delta$ depending on $\Sbf$, and $\mathbb{E}(T,\Sbf,\Bbf_1)\leq \eps_0^2\mathbf{E}^p(T,\Bbf_1) \leq C\eps_0^2$, one can check (c.f. Proposition \ref{p:Simon-shift} and the arguments therein) that $(\iota_{x,1/2})_\sharp T$ obeys the assumptions of Theorem \ref{t:decay} (up to increasing $\eps_0$ by some fixed amount) for any such $x\in \Bbf_{1/4}$. Although we will only need this in the proof of Corollary \ref{c:mult-1}, we remark also that one can check the same claim without assuming largeness of the planar excess $\mathbf{E}^p(T,\Bbf_1)$, by first pruning the cone via Lemma \ref{l:pruning} to get a new cone where the excess is much smaller than the minimal angle in the pruned cone, and then translating (again, c.f. Proposition \ref{p:Simon-shift} and the arguments therein).

Thus, what we currently have is that at any point $x\in \mathcal{S}\cap \Bbf_{1/4}$, i.e. with $\Theta(T,x)\geq\frac{q}{2}$, there exists a cone $\Sbf_x\in \mathscr{B}^q(0)\setminus\mathscr{P}(0)$ with, writing $T_{x,1/2} = (\iota_{x,1/2})_\sharp T$
\begin{equation*}
    \dist^2(\Sbf_x\cap\Bbf_1,\Sbf\cap\Bbf_1) \leq C\mathbb{E}(T,\Sbf,\Bbf_1);
\end{equation*}
\begin{equation*}%\label{e:spine-control-at-x}
    \dist^2(V(\Sbf_x)\cap\Bbf_1,V(\Sbf)\cap\Bbf_1) \leq C\mathbb{E}(T,\Sbf,\Bbf_1);
\end{equation*}
\begin{equation}\label{e:decay-at-x-1}
    \hat\Ebf(T_{x,1/2},\Sbf_x,\Bbf_r) \leq Cr^{2\beta}\mathbb{E}(T,\Sbf,\Bbf_1) \qquad \text{for all }r\in (0,1);
\end{equation}
\begin{equation}\label{e:decay-at-x-2}
    \hat\Ebf(\Sbf_x,T_{x,1/2},\Bbf_{r_0^k}) \leq C(r^k_0)^{2\beta}\mathbb{E}(T,\Sbf,\Bbf_1)
\end{equation}
for all $k\in\{0,1,2,\dotsc\}$. The uniqueness of the tangent cone $\Sbf_x$ follows immediately. Let us now use \eqref{e:decay-at-x-1} and \eqref{e:decay-at-x-2} to verify the remaining claims of the theorem. Indeed, we first claim that for each $y\in V(\Sbf)\cap \Bbf_{1/8}$, we have
\begin{equation}\label{e:decay-slices}
    \mathcal{S}\cap \mathbf{p}_V^{-1}(y) \neq\emptyset
\end{equation}
i.e. every slice orthogonal to $V(\Sbf)$ contains a point of density at least $\frac{q}{2}$. Indeed, if this were false for some $y\in V(\Sbf)\cap \Bbf_{1/8}$, then since $\mathcal{S}\cap \Bbf_{1/8}$ is a relatively closed subset of $\Bbf_{1/8}$ and $0\in \mathcal{S}$, we can find $r\in (0,1/8)$ such that
$$\mathcal{S}\cap \{z: |\mathbf{p}_V(z)-y|<r\} = \emptyset$$
yet
$$\mathcal{S}\cap \{z\in\mathbf{B}_{1/8}:|\mathbf{p}_V(z)-y|=r\} \neq\emptyset.$$
But then if we choose $x\in \mathcal{S}\cap \{z\in\Bbf_{1/8}:|\mathbf{p}_V(z)-y|=r\}$, by \eqref{e:decay-at-x-1} and \eqref{e:decay-at-x-2}, we can recenter about $x$ at a sufficiently small scale and apply Theorem \ref{t:no-gaps} to get a contradiction (as to one side of $x$ there would be a gap by construction).

Moreover, note that for $x\in \mathcal{S}$, for each $\rho\in (0,1/4)$ we see from \eqref{e:decay-at-x-1} and \eqref{e:decay-at-x-2} and \eqref{e:shift-2} that
$$\mathcal{S}\cap \{y\in \Bbf_\rho(x): |\mathbf{p}^\perp_{V(\Sbf_x)}|\geq \rho/8\} = \emptyset.$$

This combined with \eqref{e:decay-slices} implies that for each $y\in V(\Sbf)\cap \Bbf_{1/16}$, we have that $\mathcal{S}\cap \mathbf{p}_V^{-1}(y)$ is a unique point, and thus we see that
$$\mathcal{S}\cap \Bbf_{1/16} = \graph(\phi)$$
is the graph of a function $\phi: V(\Sbf)\cap \Bbf_{1/16}\to V(\Sbf)^\perp$. Our above decay estimates tell us that $\phi$ is Lipschitz and moreover that, when defined, the tangent space to the graph of $\phi$ at $(x,\phi(x))$ is indeed the spine of the corresponding cone $\Sbf_{\phi(x)}$ (see for instance \cite{W14_annals}*{(16.33), (16.39)}).

All that remains is to show that $\phi$ is $C^{1,\beta}$; indeed, once we have this we have already justified the other claims. For this, we can take two points $x_1,x_2\in\mathcal{S}\cap\Bbf_{1/16}$ and recenter $T$ at $x_1$ and rescale by $\frac{1}{2}r_0^k$, where $k$ is such that $r_0^{k+1}<2|x_1-x_2|\leq r_0^k$, i.e. consider $\tilde{T} = (\iota_{x_1,\frac{1}{2}r_0^k})_\sharp T$. The proof is then completed by noting that the point $\tilde{x} := \frac{2(x_2-x_1)}{r_0^k}$ is in the support of $\tilde{T}$ and has density $\frac{q}{2}$, and so we may apply the previous iteration argument based on Theorem \ref{t:decay} to $\tilde{T}$ at $\tilde{x}$. This gives a unique tangent cone to $\tilde{T}$ at $\tilde{x}$, which we can then relate back to the tangent cone to $T$ at $x_2$ by rescaling and translation. The above estimates established from the application of Theorem \ref{t:decay} then give the desired $C^{1,\beta}$ control required to deduce the regularity of $\phi$. We omit the details here and refer the readers to similar arguments in, for example, \cite{W14_annals}*{(16.39) -- (16.51)} or \cite{DLHMSS}*{Proof of Corollary 13.1}. This completes the proof of Theorem \ref{t:main}.
\end{proof}

Finally, Corollary \ref{c:mult-1} follows from the argument given in the proof of Theorem \ref{t:main}, using the additional information that all the multiplicities in the cone are $1$, and thus Allard's regularity theorem applies to give a full sheeting on a neighbourhood of $\mathcal{S}$ (using the notation from Theorem \ref{t:main}) as well as away from $\mathcal{S}$. The argument is essentially identical to that in \cite{W14_annals}*{Theorem 16.1} (see also \cite{MW}*{Theorem 3.1}), and so we omit the details.
\qed

\begin{remark}
To get the full Remark \ref{remark:main} requires more work in the above proof of Theorem \ref{t:main}, as one needs to not allow the constants to depend on the cone, and so cannot use the fact that $\mathbf{E}^p(T,\Bbf_1)\geq\delta>0$, with $\delta$ depending on the cone. In order to get Remark \ref{remark:main}, one needs to instead argue more closely to \cite[Theorem 3.1]{MW}. In fact, one do not need the full quantitative version of this, but just the statement on some neighborhood of the point (in particular, giving the tangent cone behaviour). Again, we leave the details to the reader due to similarity.
\end{remark}

\part{Rectifiability of flat singularities and $\Hcal^{m-2}$-a.e. uniqueness of tangent cones}\label{pt:2}

\section{Setup and preliminaries}
We are now in a position to prove Theorem \ref{c:main}. As previously observed, for most of this part, we only need to treat the case of even $q$. Nevertheless, for the conclusion (e) of Theorem \ref{c:main}, one needs to also treat the case of odd moduli $q$.

We henceforth work with the following underlying assumptions, which we may do without loss of generality after localizing, rescaling and translating (c.f. Assumption \ref{a:main}).
\begin{assumption}\label{a:main-pt2}
    Assume that $m, n, \bar n \geq 2$, $q \geq 3$ are positive integers, and that
\begin{itemize}
\item[(i)] $\Sigma\subset \mathbb R^{m+n}$ is a complete embedded $C^{3,\alpha_0}$ $(m+\bar n)$-dimensional submanifold for some positive $\alpha_0>0$;
\item[(ii)] $T$ is a representative mod$(q)$ of an area-minimizing flat chain mod$(q)$ in $\Sigma \cap \Bbf_{7\sqrt{m}}$ with $\partial T \res \Bbf_{7\sqrt{m}} = 0$  mod$(q)$;
\item[(iii)] $\Sigma \cap \Bbf_{7\sqrt{m}}$ is the graph of a $C^{3,\alpha_0}$ function $\Psi_p : \Trm_p\Sigma \cap \Bbf_{7\sqrt{m}} \to \Trm_p\Sigma^\perp$ for every $p \in \Sigma\cap \Bbf_{7\sqrt{m}}$, with
	\[
	\boldsymbol{c}(\Sigma,\Bbf_{7\sqrt{m}}):=\sup_{p \in \Sigma \cap \Bbf_{7\sqrt{m}}}\|D\Psi_p\|_{C^{2,\alpha_0}} \leq \bar{\eps},
	\]
	where $\bar\eps$ will be determined later.
\end{itemize}
The property (iii) in particular gives us the following uniform control on the second fundamental form $A_\Sigma$ of $\Sigma$:
	\[
	\Abf_\Sigma := \|A_\Sigma\|_{C^0(\Sigma\cap \Bbf_{7\sqrt{m}})} \leq C_0\boldsymbol{c}(\Sigma,\Bbf_{7\sqrt{m}}) \leq C_0 \bar\eps.
	\]
\end{assumption}

The remaining conclusions of Theorem \ref{c:main} that we are yet to verify may thus be reformulated as follows.

\begin{theorem}\label{t:main-pt2-initial}
Suppose that $m$, $n$, $\bar n$, $q$, $T$, $\Sigma$ satisfy Assumption \ref{a:main-pt2}. Then
\begin{itemize}
    \item[(a)] The set of flat singular points $\Ffrak(T)$
%\[
 %   \Ffrak_Q(T) \coloneqq \{p\in \Ffrak(T)\cap\Bbf_{6\sqrt{m}}: \Theta(T,p) \geq {\color{orange}q/2}\}
%\]
is countably $(m-2)$-rectifiable and the unique tangent cone of $T$ at $\Hcal^{m-2}$-almost every $p\in \Ffrak_Q(T)$ is $q^\prime\llbracket \pi \rrbracket$ for some $m$-dimensional oriented plane $\pi$ and positive integer $q^\prime\leq q/2$.
\item[(b)] Suppose for some positive integer $N\leq Q$ and positive integers $Q_i\geq 1$, the tangent cone to $T$ at a given singular point is of the form $\sum^N_{i=1}Q_i\llbracket \pi_i\rrbracket$ for a collection of oriented planes $\pi_1,\dotsc,\pi_N$ with $\sum_i Q_i\leq q/2$ and $\cap^N_{i=1}\pi_i = V$ for some $(m-2)$-dimensional subspace $V$. Then, at $\mathcal{H}^{m-2}$-a.e. such point, this is the unique tangent cone there.
\end{itemize}
\end{theorem}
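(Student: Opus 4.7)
My plan is to reduce Theorem \ref{t:main-pt2-initial} to three ingredients: (I) a companion fine excess decay theorem to Theorem \ref{t:decay} for cones with an $(m-2)$-dimensional spine; (II) an almost-monotonicity and a quantitative $\BV$-estimate for a universal frequency function built from graphical approximations of $T$; and (III) a Naber--Valtorta rectifiable-Reifenberg covering argument. The overall structure follows our previous papers \cite{DLSk1,DLSk2,DMS}, but with the additional subtleties that come from the mod$(q)$ setting with even $q$.

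For (I), I would prove a decay statement where the target is a planar cone $\sum_i Q_i \llbracket \pi_i \rrbracket$ with $\cap_i \pi_i = V$ an $(m-2)$-dimensional subspace, $\sum_i Q_i \leq q/2$, under the two additional hypotheses that $T$ is quantitatively farther from every open book $\mathbf{S}\in \mathscr{B}^q$ than from the given planar cone, and that no density ``gap'' occurs near $V$ (that is, the projection of $\{\Theta(T,\cdot)\geq \sum_i Q_i\}$ onto $V$ is effectively dense). The first assumption excludes the $(m-1)$-invariant regime already handled in Part \ref{pt:1}, while the second ensures that the Lipschitz blow-up limits are genuine $(m-2)$-invariant Dir-minimizing maps rather than degenerating. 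The architecture of the argument parallels Part \ref{pt:1} almost line by line: a pruning lemma analogous to Lemma \ref{l:pruning}, a Whitney decomposition towards $V$, a layered coherent graphical approximation, Simon-type spine non-concentration estimates, the linear boundary decay for $(m-2)$-dimensional singular sets as in \cite{W14_annals}, and a final blow-up contradiction.

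For (II), I would construct on each dyadic radial scale a best-approximating cone and associated multi-valued Almgren-type approximation, and define a universal frequency $I(p,r)$ from the Dirichlet and height integrals of this approximation, as in \cite{DLSk1}. Combining the decay theorems of Part \ref{pt:1} and step (I) with the improved height bound from \cite{DMS}, one obtains an almost-monotonicity inequality for $I(p,\cdot)$ and a uniform quantitative $\BV$-bound of the form
\[
\int_0^1 \left| \tfrac{d}{dr} I(p,r) \right|\,dr \leq C
\]
on compact subsets of $\Ffrak(T)\setminus \overline{\mathcal{S}}$, where $\mathcal{S}$ is the $(m-1)$-dimensional submanifold from Theorem \ref{t:main}. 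This $\BV$-estimate is the crucial quantitative input.

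With this in hand, in step (III) I would apply the Naber--Valtorta machinery in the form used in \cite{DLSk2} to the frequency defect measure, obtaining countable $(m-2)$-rectifiability of $\Ffrak(T)$, hence of $\Ffrak_Q(T)$. At $\Hcal^{m-2}$-a.e. such point the frequency pinches and the approximate tangent plane to the rectifiable set is well-defined; classification of $(m-2)$-invariant homogeneous Dir-minimizers (via Almgren's regularity theory) then forces the unique tangent cone to have the required planar structure in (a), and under the hypothesis of (b) to coincide with the given union of planes. Standard iteration of the decay theorem of (I) upgrades this pinching to full uniqueness of the tangent cone.

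The principal obstacle, and the genuinely new difficulty compared to \cite{DLSk2}, is the possibility of flat singular points of top density $Q=q/2$ when $q$ is even: at such points, $T$ may be close to a multiplicity-$Q$ plane with lower-density singularities accumulating, and density gaps can in principle appear at every scale. To handle these I would use a gap-detection dichotomy: wherever a density gap is detected at some scale, Theorem \ref{t:main} or the persistence results of Section \ref{s:no-gaps} reduce the local problem to the $(m-1)$-stratum already controlled; wherever no gap ever appears, the $(m-2)$-decay of step (I) applies. This dichotomy preserves countable $(m-2)$-rectifiability but is exactly what prevents a stronger $C^{1,\beta}$ conclusion (and explains why (b) in Theorem \ref{t:main-pt2-initial} is phrased only for the planar/union-of-planes tangent cones, as mentioned in the discussion after Theorem \ref{c:main}).
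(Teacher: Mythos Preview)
Your ingredients are broadly the right ones, but the assembly differs from the paper in ways that matter, and one step is misconceived.

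First, you skip the actual immediate content of the proof: the paper's argument for Theorem \ref{t:main-pt2-initial} is a \emph{reduction}. For odd $q$, and for even $q$ at points of density strictly below $Q=q/2$, upper semi-continuity of density plus Proposition \ref{p:integrality} identifies $T$ locally with an area-minimizing \emph{integral} current, so both (a) and (b) follow directly from \cite{DMS}. What remains is only the case $q=2Q$ even and density exactly $Q$ (Theorem \ref{t:main-pt2}). You never invoke this reduction, so your proposal re-proves the integral case from scratch.

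Second, for the remaining density-$Q$ case the paper does \emph{not} run a single unified Naber--Valtorta argument. It introduces the singularity degree $\Irm(T,x)$ via fine blow-ups over center manifolds (Theorem \ref{t:sing-deg}) and splits $\Ffrak_Q(T)=\Ffrak_{Q,>1}\sqcup\Ffrak_{Q,1}$. The Naber--Valtorta machinery with the frequency $\BV$-estimate (Theorem \ref{t:BV}) handles only $\Ffrak_{Q,>1}$; for $\Ffrak_{Q,1}$ one shows the stronger statement $\mathcal{H}^{m-2}(\Ffrak_{Q,1})=0$ using the $(m-2)$-invariant excess decay (Lemma \ref{l:m-2-decay}), following \cite{DMS}. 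Your ``gap-detection dichotomy'' at each scale is not how the paper handles gaps; instead, the no-gap hypothesis \eqref{e:no-gaps-2} is imposed in Lemma \ref{l:m-2-decay} and the degree-$1$ analysis supplies the points where it is verified.

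Third, your step (II) misattributes the source of the frequency $\BV$-estimate. It is not obtained by ``combining the decay theorems of Part \ref{pt:1} and step (I)''; it comes from the center manifold/normal approximation machinery and variational identities, exactly as in \cite{DLSk1}, with the mod$(q)$ substitutes from \cite{DLHMS,DLHMS_linear}. The decay theorems enter only later, in the degree-$1$ branch. Also, in your step (I) the hypothesis ``$T$ farther from every open book than from the planar cone'' is replaced in the paper by the simpler structural assumption (iv) of Lemma \ref{l:m-2-decay} (a tangent cone at $0$ lies in $\mathscr{C}(Q,0)$), which via Remark \ref{remark:planar-open-book-comparison} and Part \ref{pt:1} automatically yields the open-book comparison you assume.
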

We note that in fact the only remaining content for Theorem \ref{t:main-pt2-initial} is actually when $q$ is even. Indeed, notice that in both (a) and (b) of Theorem \ref{t:main-pt2-initial}, the tangent cone in question has integer density. If $q$ is odd, then the maximal density is $q/2$ (c.f. \cite[(4.1)]{DLHMS}), which is a half-integer, and thus in both cases the tangent cone in question has density $\leq (q-1)/2$. But then by upper semi-continuity of the density and Proposition \ref{p:integrality}, $T$ must locally about such singular points identify with an area minimizing integral current, and thus both (a) and (b) of Theorem \ref{t:main-pt2-initial} follow from our previous work \cite[Theorem 1.1]{DMS}. Thus, we only need to consider the case where $q=2Q$ is even. Moreover, by the same argument as above, we only need to deal with the case when the tangent cone has density exactly $q/2 = Q$, as otherwise the tangent cone has density at most $Q-1$, and so we can appeal again to Proposition \ref{p:integrality} to reduce to the area minimizing case, which is a consequence of \cite[Theorem 1.1]{DMS}. 

Hence, we are left with showing:

\begin{theorem}\label{t:main-pt2}
Suppose that $m$, $n$, $\bar n$, $q =2Q$, $T$, $\Sigma$ satisfy Assumption \ref{a:main-pt2}. Then
\begin{itemize}
    \item[(a)] The set of flat singular points
\[
    \Ffrak_Q(T) \coloneqq \{p\in \Ffrak(T)\cap\Bbf_{6\sqrt{m}}: \Theta(T,p) {=} Q\}
\]
is countably $(m-2)$-rectifiable and the unique tangent cone of $T$ at $\Hcal^{m-2}$-almost every $p\in \Ffrak_Q(T)$ is $Q\llbracket \pi \rrbracket$ for some $m$-dimensional oriented plane $\pi$;
\item[(b)] {Suppose for some positive integer $N\leq Q$ and positive integers $Q_i\geq 1$, the tangent cone to $T$ at a given singular point is of the form $\sum^N_{i=1}Q_i\llbracket \pi_i\rrbracket$ for a collection of oriented planes $\pi_1,\dotsc,\pi_N$ with $\sum_i Q_i = Q$ and $\cap^N_{i=1}\pi_i = V$ for some $(m-2)$-dimensional subspace $V$. Then, at $\mathcal{H}^{m-2}$-a.e. such point, this is the unique tangent cone there.}
\end{itemize}
\end{theorem}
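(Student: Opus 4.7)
\medskip

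\noindent\textbf{Proof proposal for Theorem \ref{t:main-pt2}.} The plan is to reduce both conclusions to a \emph{fine $(m-2)$-invariant excess decay theorem} coupled with a Naber--Valtorta-type rectifiability scheme driven by an almost-monotone universal frequency function. First I would set up the relevant class of comparison cones: at a flat singular point $p\in\Ffrak_Q(T)$ with $\Theta(T,p)=Q$, the possible tangent cones are integer combinations $\sum_i Q_i\llbracket\pi_i\rrbracket$ with $\sum_i Q_i=Q$ and $\cap_i \pi_i=V$ of dimension at most $m-2$ (otherwise Theorem \ref{t:main} already applies and the conclusion reduces to \cite{DMS}). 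I would then define the fine excess $\Ebb(T,\Sbf,\Bbf_r(p))$ as in Definition \ref{def:L2_height_excess}, but now with $\Sbf$ an $(m-2)$-invariant superposition of planes, and isolate the ``non-collapsed, no-gap'' regime characterised by (i) $\mathbb{E}(T,\Sbf,\Bbf_1)\ll \mathbf{E}^p(T,\Bbf_1)$; (ii) $T$ is much further from every $(m-1)$-invariant cone than from $\Sbf$; (iii) no density gaps at the $(m-2)$-spine, i.e.\ $\{\Theta(T,\cdot)\geq Q\}$ is $\eta$-dense in a tubular neighbourhood of $V(\Sbf)$.

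The first main step is to prove the decay theorem: under (i)--(iii) there exist $\eps_0,r_0,\beta>0$ and a new $(m-2)$-invariant cone $\Sbf'$ with
\[
\Ebb(T,\Sbf',\Bbf_{r_0})\leq \tfrac{1}{4}\Ebb(T,\Sbf,\Bbf_1),\quad \dist^2(V(\Sbf),V(\Sbf'))\lesssim \Ebb(T,\Sbf,\Bbf_1)/\Ebf^p(T,\Bbf_1).
\]
This proceeds along the same lines as Theorem \ref{t:decay}, but with two essential modifications. First, in the graphical approximation scheme of Section \ref{s:approx} the cones are unions of \emph{full planes} meeting in an $(m-2)$-spine, so the pruning/layer-subdivision and coherent outer approximation are taken directly from \cite{DMS} (or their obvious adaptations). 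Second, the no-gap condition (iii) replaces the automatic no-gap statement of Theorem \ref{t:no-gaps} (which is not available here because even in the mod$(q)$ setting $(m-2)$-invariant cones with density $Q$ can coexist with density drops off the spine). The Simon-type estimates at the spine (Section \ref{s:spine-est}) and the boundary-regularity application of Lemma \ref{l:bdry-decay} both persist verbatim, so the blow-up procedure of Section \ref{s:blowup} yields a Dir-minimizing limit with an $(m-2)$-dimensional set of $Q$-points, whose harmonic average and decay force the excess to contract.

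The second main step is to define a \emph{universal frequency function}, in the spirit of \cite{DLSk1}. For each point $p\in\Ffrak_Q(T)$ and each scale $r$, one takes the coherent graphical approximation of $T_{p,r}$ produced by whichever of Theorem \ref{t:decay}, the $(m-2)$-invariant decay above, or Almgren's planar Lipschitz approximation is available, and forms Almgren's frequency of the associated multi-valued map. These scale-dependent frequencies are shown to be almost monotone in $r$ (with an error decaying geometrically in scale), together with a quantitative $\mathrm{BV}$-type bound on $\log(r)\mapsto I(r)$ analogous to \cite{DLSk1}. The $\mathrm{BV}$-bound furnishes, at $\mathcal{H}^{m-2}$-a.e.\ point of $\Ffrak_Q(T)$, a well-defined frequency limit and control on the oscillation between consecutive decay scales, which combined with the fine decay theorem will yield uniqueness of planar tangent cones $\mathcal H^{m-2}$-a.e.\ on the subset where the approximating cones are actually planar; the same machinery handles case (b) when the cone has $(m-2)$-dimensional spine.

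Finally, I would combine these ingredients with Naber--Valtorta's rectifiability framework \cite{NV_varifolds}, exactly as in \cite{DLSk2, DMS}: the almost-monotone universal frequency controls the $L^2$-distance of $V(\Sbf_r)$ between consecutive dyadic scales, the fine decay contracts the excess geometrically once the regime (i)--(iii) is entered, and the complement of that regime is handled by Theorem \ref{t:main} (points where a tangent cone has an $(m-1)$-spine form a $C^{1,\beta}$ submanifold) and by a simple stratification of density drops. The main obstacle I anticipate is the no-gap hypothesis (iii): unlike in Part \ref{pt:1}, gaps genuinely occur along the $(m-2)$-spine in the even-$q$ setting, and the conjugate technical difficulty is quantifying them so that the decay only needs to be iterated where gaps are absent, while the gap set itself is shown to be $(m-2)$-rectifiable a priori through a covering/Reifenberg argument. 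Handling this interaction carefully, and showing that the resulting bad set contributes nothing to the top stratum $\mathcal{H}^{m-2}$-almost everywhere, is the crux of the proof.
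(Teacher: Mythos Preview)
Your general framework is correct but the organization diverges from the paper's, and one step would be hard to make rigorous as stated. The paper's central move is the \emph{singularity degree} decomposition $\Ffrak_Q(T)=\Ffrak_{Q,>1}(T)\sqcup\Ffrak_{Q,1}(T)$ (Theorem \ref{t:reduction}), where $\Irm(T,p)$ is defined as the frequency of fine blow-ups coming from \emph{center manifold normal approximations} (Section \ref{ss:compactness} and Theorem \ref{t:sing-deg}), not from graphical approximations over varying cones. The two pieces are handled very differently. For $\Ffrak_{Q,>1}(T)$ the $(m-2)$-decay lemma is never invoked: rectifiability and $\mathcal{H}^{m-2}$-a.e.\ uniqueness of the flat tangent cone follow directly from the quantitative BV estimate on the center manifold frequency $\Ibf_{N_j}$ (Theorem \ref{t:BV}) combined with Naber--Valtorta, exactly as in \cite{DLSk2, Sk-modp}. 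For $\Ffrak_{Q,1}(T)$ the paper proves $\mathcal{H}^{m-2}(\Ffrak_{Q,1}(T))=0$ following \cite{DMS}: at a degree-$1$ point coarse and fine blow-ups coincide (Corollary \ref{c:coarse=fine}), the blow-up is a $1$-homogeneous Dir-minimizer with spine of dimension at most $m-2$ (Proposition \ref{p:1-homog-Dir-min}, where assumption (iv) of Lemma \ref{l:m-2-decay} together with Remark \ref{remark:planar-open-book-comparison} rules out $(m-1)$-invariant limits), and the $(m-2)$-decay Lemma \ref{l:m-2-decay} is applied only in this regime, with the no-gap condition \eqref{e:no-gaps-2} taken as a \emph{hypothesis}. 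Part (b) is handled by iterating the same decay lemma.

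Your ``universal frequency'' assembled by switching among graphical approximations over $(m-1)$-books, $(m-2)$-planar cones, or single planes according to which regime is active is not the object the paper uses, and establishing almost-monotonicity or a BV bound across regime transitions would be a substantial additional task; the center manifold normal approximation sidesteps this entirely because it is defined consistently on each interval of flattening, independent of which cone best fits. Your no-gap concern is likewise over-weighted: in the paper's scheme the no-gap hypothesis only enters on the null set $\Ffrak_{Q,1}(T)$, where the \cite{DMS}-type argument treats gap and no-gap alternatives together, and it never touches $\Ffrak_{Q,>1}(T)$. One minor correction: in the $(m-2)$-spine decay the blow-ups are Dir-minimizers on \emph{full} planes (the $\alpha_i$ extend through $V$), so the relevant linearization is the interior theory of \cite{DMS}*{Sections 12--13}, not the boundary-regularity Lemma \ref{l:bdry-decay}.
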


{Henceforth, we will take $q=2Q$ even.}

\begin{remark}
    It is perhaps informative to note that if $T_k$ is a sequence of area minimizers mod$(2Q)$ which is given by the sum of planes which all meet exactly along a $(m-2)$-dimensional subspace, that $T_k$ cannot converge to a non-trivial open book $T$. Indeed, as we saw in Part \ref{pt:1}, for an area-minimizing mod$(2Q)$ current supported on an open book, the orientations must all be pointing towards the spine or all away from the spine. As each plane in $T_k$ is oriented in a fixed manner, this is incompatible with limiting onto $T$.
\end{remark}

\section{Singularity degree and main reduction}
\subsection{Fine blow-ups and compactness}\label{ss:compactness}
Let us recall the compactness procedure of \cite[Part 4]{DLHMS} relative to center manifolds around a given point $x\in\Ffrak_Q(T)$. We may without loss of generality henceforth work with the following additional assumption.

\begin{assumption}\label{a:main-2}
    Suppose that $m$, $n$, $\bar n$, $q=2Q$, $T$, $\Sigma$ are as in Assumption \ref{a:main}. Suppose that $0\in \Ffrak_Q(T)$.
\end{assumption}

Recall that the non-oriented tilt excess $\Ebf^{no}(T,\Cbf_r(p,\pi),\varpi)$ of $T$ in a given cylinder $\Cbf_r(p,\pi)$ relative to a given $m$-dimensional plane $\varpi$ is defined by
\[
    \Ebf^{no}(T,\Cbf_r(p,\pi),\varpi) := \frac{1}{2\omega_m r^m} \int_{\Cbf_r(p,\pi)} (\min\{|\vec{T}-\vec{\varpi}|, |\vec{T}+\vec{\varpi}|\})^2 \, d\|T\|,
\]
and the non-oriented tilt excess $\Ebf^{no}(T,\Cbf_r(p,\pi))$ is then defined by
\[
    \Ebf^{no}(T,\Cbf_r(p,\pi)):= \min_{\substack{\text{$m$-dim. planes} \\ \varpi\subset \R^{m+n}}} \Ebf^{no}(T,\Cbf_r(p,\pi),\varpi).
\]
The quantities $\Ebf^{no}(T,\Bbf_r(p),\varpi)$ and $\Ebf^{no}(T,\Bbf_r(p))$ are defined analogously. 

As in \cite{DLHMS}, we may subdivide the interval $(0,1]$ of scales around $0$ into a collection of mutually disjoint intervals $(s_j,t_j]$, referred to as \emph{intervals of flattening}, such that for every $r\in(s_j,t_j]$,
\begin{itemize}
    \item $\Ebf^{no}(T,\Bbf_{6\sqrt{m}r}) \leq \eps_3^2$,
    \item $\Ebf^{no}(T,\Bbf_{6\sqrt{m}r}) \leq C (\tfrac{r}{t_j})^{2-2\delta_2} \mbf_{0,j}$,
\end{itemize}
where
\[
    \mbf_{0,j} := \max\{\Ebf^{no}(T,\Bbf_{6\sqrt{m}t_j}), \bar\eps^2 t_j^{2-2\delta_2} \},
\]
and $\eps_3,\delta_2$ are as in \cite{DLHMS}. Note that this definition of $\mbf_{0,j}$ may indeed be taken in place of the one in \cite{DLHMS}, in light of the scaling of $\boldsymbol{c}(\Sigma_{0,t_j},\Bbf_{7\sqrt{m}})$. In each interval $(s_j,t_j]$, we may then construct a center manifold $\Mcal_j$ for $T_{0,t_j}\mres\Bbf_{6\sqrt{m}}$ with associated normal approximation $N_j$ as in \cite{DLHMS}*{Section 17.2}.

In light of the above construction, we additionally impose the following.

\begin{assumption}\label{a:main-3}
    Suppose that $m$, $n$, $\bar n$, $q=2Q$, $T$, $\Sigma$ are as in Assumption \ref{a:main-2}. Suppose in addition that $\bar\eps$ is chosen small enough so that $t_0=1$ and $\mbf_{0,0}\leq \eps_3^2$.
\end{assumption}

We refer to a sequence $r_k\downarrow 0$ as a \emph{blow-up sequence of radii} around $x\in \Ffrak_Q(T)$ if the rescalings $T_{x,r_k}\mres \Bbf_{6\sqrt{m}} \toweakstar Q\llbracket \pi_0 \rrbracket$ for some $m$-dimensional plane $\pi_0$. If $x=0$, we will omit reference to the center point.

Having fixed a blow-up sequence of radii $\{r_k\}$, observe that for each $k$ sufficiently large there exists a unique $j(k)$ such that $r_k \in (s_{j(k)}, t_{j(k)}]$. We use the notation $T_k$ and $\Sigma_k$ respectively for the rescaled currents $T_{0, t_{j(k)}} \res \Bbf_{6\sqrt{m}}$ and ambient manifolds $\Sigma_{0, t_{j(k)}}$. We let $\Mcal_k$ and $N_k: \Bcal_3 \subset \Mcal \to \Ascr_Q(\R^{m+n})$ denote, respectively, the corresponding center manifolds and normal approximations for $T_k$. We refer the reader to e.g. \cite{DLHMS_linear} for the relevant background for the notion of a \emph{special $Q$-valued function} that takes values in the space $\Ascr_Q$. By a suitable small rotation of coordinates, we may further assume that the plane $\pi_k\subset T_0\Sigma_k$ optimizing the excess $\Ebf(T_k, \Bbf_{6\sqrt{m}})$ over which the center manifold $\Mcal_k$ is parameterized, satisfies $\pi_k \equiv \pi_0 \equiv \R^m\times \{0\}^n \subset\R^{m+n}$. We denote by $\boldsymbol\varphi_k: B_3(\pi_0)\to \bar\Mcal_k$ the function parameterizing  $\bar\Mcal_k$, and we let $\mathbf\Phi_k(x):=(x,\boldsymbol\varphi_k(x))$ denote its graph.

\subsection{Fine blow-ups}
Let $T$ and $\Sigma$ satisfy Assumption \ref{a:main-3}. Let $\tfrac{\bar s_k}{t_{j_k}}\in \big(\tfrac{3r_k}{2t_{j(k)}},\tfrac{3r_k}{t_{j(k)}}\big]$ be the scale $\sigma$ at which the reverse Sobolev inequality \cite{DLHMS}*{Corollary 27.3} holds for $r=\tfrac{r_k}{t_{j(k)}}$. Let $\bar r_k:=\tfrac{2\bar s_k}{3t_{j(k)}}\in \big(\tfrac{r_k}{t_{j(k)}}, \tfrac{2r_k}{t_{j(k)}}\big]$, and in turn define the additional rescalings
\[
    \bar T_k := (\iota_{0,\bar r_k})_\sharp T_k = \big((\iota_{0,\bar r_k t_{j(k)}})_\sharp T\big)\mres \Bbf_{6\sqrt{m}/\bar r_k}, \qquad \bar\Sigma_k := \iota_{0,\bar r_k}(\Sigma_k), \qquad \bar \Mcal_k := \iota_{0,\bar r_k}(\Mcal_k).
\]
Moreover, let $\bar N_k:\bar\Mcal_k \to \Ascr_Q(\R^{m+n})$ be given by
\[
    \bar N_k(p) := \frac{1}{\bar r_k} N_k(\bar r_k p).
\]
Now, let $\mathbf{e}_k: T_{p_k}\bar \Mcal_k \simeq \pi_0 \to\bar\Mcal_k$ denote the exponential map for $\bar\Mcal_k$, with $p_k:= \frac{\mathbf\Phi_k(0)}{\bar r_k}$. Consequently, let $u_k: B_3(\pi_0) \to \Ascr_Q(\R^{m+n})$ be given by
\[
    u_k := \frac{\bar N_k\circ \mathbf{e}_k}{\mathbf{h}_k},
\]
where $\mathbf{h}_k := \|\bar N_k\|_{L^2(\Bcal_{3/2})}$.

In light of the reverse Sobolev inequality \cite{DLHMS}*{Corollary 27.3}, the maps $u_k$ are uniformly bounded in $W^{1,2}(B_{3/2}(0,\pi_0);\Ascr_Q(\R^{m+n}))$. Following \cite{DLHMS}, up to extracting a subsequence, there exists a special $Q$-valued Dir-minimizer $u:B_{3/2}(\pi_0)\to\Ascr_Q(\pi_0^\perp)$ satisfying $\boldsymbol\eta\circ u = 0$ and $\|u\|_{L^2(B_{3/2})}=1$, such that
\[
    u_k \longrightarrow u \quad \text{strongly in $W^{1,2}_{\loc}\cap L^2(B_{3/2}(\pi_0))$.}
\]

\begin{definition}
    Any special $Q$-valued Dir-minimizer $u\in W^{1,2}(B_{3/2}(\pi_0);\Ascr_Q(\pi_0^\perp))$ obtained via the above compactness procedure along (a subsequence of) some blow-up sequence of radii $r_k$ is called a \emph{fine blow-up} of $T$ (around $0$).

    Via translation and rotation, we analogously define a fine blow-up around any other point $x\in\Ffrak_Q(T)$.
\end{definition}

\subsection{Frequency function and singularity degree}
Let $u: B_{3/2} \to \Ascr_Q(\R^{m+n})$ be a Dir-minimizer. Let $\phi:[0,\infty)\to[0,1]$ be a monotone non-increasing Lipschitz cutoff that vanishes for all $t\geq 1$ and is identically equal to $1$ for all $t$ sufficiently small. For $x\in B_{3/2}$ and $r\in (0,\dist(x,\partial B_{3/2}))$, we may introduce the quantities
\begin{align*}
    D_u(x,r) &:= \int |Du(y)|^2 \phi\left(\tfrac{|y-x|}{r}\right) \, dy, \\
    H_u(x,r) &:= -\int \frac{|u(y)|^2}{|y-x|} \phi'\left(\tfrac{|y-x|}{r}\right) \, dy, \\
    I_u(x,r) &:= \frac{r D_u(x,r)}{H_u(x,r)}.
\end{align*}
$I_u$ is (a regularization of) Almgren's frequency function for $u$, and the computations within \cite{DLHMS_linear}*{Section 9} may be repeated for this regularization to deduce that $r\mapsto I_u(x,r)$ is monotone non-decreasing in $r$ and is equal to a constant value $\alpha>0$ if any only if $u$ is radially $\alpha$-homogeneous about $x$. In particular, the limit
\[
    I_{u,x}(0) := \lim_{r\downarrow 0} I_u(x,r)
\]
exists and one can readily check that it is independent of the choice of $\phi$. 

We will henceforth fix the following convenient choice of $\phi$:
\[
    \phi(t):=\begin{cases}
        1 & 0\leq t\leq \tfrac{1}{2} \\
        2-2t & \tfrac{1}{2} \leq t \leq 1 \\
        0 & \text{otherwise.}
    \end{cases}
\]
We will often omit dependency of $D_u$, $H_u$ and $I_u$ on $x$ in the case when $x=0$.

We are now in a position to introduce the singularity degree, which is defined analogously to that in \cite{DLSk1}. 

\begin{definition}
Given $T$ as in Assumption \ref{a:main-3}, we define the collection $\Fcal(T,0)$ of \emph{frequency values} of $T$ around 0 by
\[
    \Fcal(T,0) := \{I_u(0) : \text{$u$ is a fine blow-up of $T$ along some $r_k \downarrow 0$}\}.
\]
We define the \emph{singularity degree} of $T$ at $0$ by
\[
    \Irm(T,0) := \inf\{\alpha : \alpha\in \Fcal(T,0)\}.
\]
The singularity degree $\Irm(T,x)$ of $T$ at any other point $x\in\Ffrak_Q(T)$ is analogously defined.
\end{definition}

We record the following key properties relating to the singularity degree of $T$.

\begin{theorem}\label{t:sing-deg}
    Suppose that $T$ satisfies Assumption \ref{a:main-3}. Then
    \begin{itemize}
        \item[(i)] $\Irm(T,0) \geq 1$ and $\Fcal(T,0) = \{\Irm(T,0)\}$;
        \item[(ii)] All fine blow-ups are radially homogeneous with degree $I(T,0)$;
        \item[(iii)] if $s_{j_0}=0$ for some $j_0\in \N$ then $\lim_{r\downarrow 0} \Ibf_{N_{j_0}}(r) = \Irm(T,0)$;
        \item[(iv)] if, conversely, there are infinitely many intervals of flattening $(s_k,t_k]$, the functions $\Ibf_{N_j}$ converge uniformly to the constant function $\Irm(T,0)$ when $\Irm(T,0)>1$, while when $\Irm(T,0)=1$, $\lim_{k \to \infty} \Ibf_{j(k)}(\tfrac{r_k}{t_{j(k)}}) = \Irm(T,0) =1$ for every blow-up sequence of radii $r_k$;
        \item[(v)] if $\Irm(T,0)>1$ then the rescalings $T_{0,r}$ converge polynomially fast to a unique flat tangent cone $Q\llbracket \pi_0 \rrbracket$ as $r\downarrow 0$;
        \item[(vi)] if additionally $\Irm(T,0)> 2-\delta_2$ then $s_{j_0} = 0$ for some $j_0\in\N$;
        \item[(vii)] if $\Irm(T,0) < 2-\delta_2$ then there are infinitely many intervals of flattening and $\inf_j \frac{s_j}{t_j} > 0$.
    \end{itemize}
\end{theorem}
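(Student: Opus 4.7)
The plan is to adapt the strategy developed for area-minimizing integral currents in \cite{DLSk1}*{Section 5} to the mod$(q)$ setting, leveraging the special $Q$-valued Dir-minimizer framework and the compactness for fine blow-ups recalled in Section \ref{ss:compactness}. The cornerstone of the argument is the almost monotonicity of Almgren's frequency function $\Ibf_{N_j}$ along each interval of flattening $(s_j, t_j]$, which is already established in \cite{DLHMS}.

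First I would prove (ii): the strong $W^{1,2}_\loc$ convergence $u_k\to u$ on $B_{3/2}$, combined with the particular choice of the cutoff $\phi$ and the convergence of the boundary integrals $H_{u_k}\to H_u$, identifies the rescaled frequency function of $N_{j(k)}$ with $I_u(0,\sbullet)$ in the limit. The almost monotonicity of $\Ibf_{N_j}$ forces $I_u(0,r)$ to be constant in $r$, and the equality case in Almgren's frequency monotonicity for special $Q$-valued Dir-minimizers yields that $u$ is radially $I_u(0)$-homogeneous. Conclusion (i) then follows: the lower bound $\Irm(T,0)\geq 1$ is a consequence of the fact that any nontrivial radially homogeneous special $Q$-valued Dir-minimizer $u$ with $\boldsymbol\eta\circ u = 0$ and $\|u\|_{L^2(B_{3/2})}=1$ must have frequency at least $1$ (otherwise its sheets lift locally to classical harmonic functions of frequency $<1$ with vanishing average, which is impossible); the uniqueness $\Fcal(T,0)=\{\Irm(T,0)\}$ will be a byproduct of the analysis of (iv) below.

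Conclusion (iii) is then immediate: when $s_{j_0}=0$, the frequency $\Ibf_{N_{j_0}}$ is defined on all of $(0, t_{j_0}]$, and by almost monotonicity its limit at zero exists and by Step 1 coincides with $I_u(0)$ for any fine blow-up $u$. The main work is in (iv). Here one must compare the center manifolds on two consecutive intervals $(s_{j+1}, t_{j+1}]\subset(s_j, t_j]$ across the transition scale $s_j$: a quantitative stability argument, paralleling \cite{DLSk1}*{Sections 5--6} but formulated for special $Q$-valued normal approximations, shows that the frequency values match modulo an error that is controlled by the excess at the transition, which in turn is controlled (via Step 1 and (v)) by a geometric factor. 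When $\Irm(T,0)>1$ this error is summable and one obtains uniform convergence of $\Ibf_{N_j}$ to the common limit $\Irm(T,0)$. When $\Irm(T,0)=1$ uniform convergence may fail, but the identity along blow-up sequences still holds by Step 1. This also shows $\Fcal(T,0)$ is a singleton.

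For (v), the frequency gap $\Irm(T,0)>1$ combined with the reverse Sobolev inequality \cite{DLHMS}*{Corollary 27.3} converts the lower bound on frequency into a polynomial decay of $\hat{\Ebf}(T, Q\llbracket \pi_0\rrbracket, \Bbf_r)$, which iterates to yield polynomial decay of $T_{0,r}$ to a unique flat tangent cone. Finally, (vi) and (vii) follow by analyzing how the stopping conditions defining the intervals of flattening interact with the decay rate of the excess: if $\Irm(T,0)>2-\delta_2$, (v) gives an excess decay faster than the stopping threshold $r^{2-2\delta_2}$, so the algorithm terminates at some $j_0$ with $s_{j_0}=0$; conversely, if $\Irm(T,0)<2-\delta_2$, the stopping condition fires infinitely often, and a comparison between the excess decay rate and $r^{2-2\delta_2}$ pins down a uniform lower bound on the ratio $s_j/t_j$.

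The hard part will be the transition analysis in Step 4 underlying (iv) and the uniqueness of the frequency value. This requires a quantitative stability statement comparing the special $Q$-valued normal approximations $N_j$ and $N_{j+1}$ at overlapping scales, with errors controlled purely in terms of the excess. Because of the special $Q$-valued structure (as opposed to the classical $Q$-valued case in \cite{DLSk1}), one must carefully track both the multi-valued approximation and the coherent identification of its sheets across the two center manifolds, which is the principal technical obstacle.
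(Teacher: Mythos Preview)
Your proposal is correct and follows essentially the same approach as the paper, which simply states that the proof follows by the exact same arguments as in \cite{DLSk1}, with the preliminary results from \cite{DLS_multiple_valued,DLS16centermfld,DLS16blowup} replaced by their special $Q$-valued analogues in \cite{DLHMS_linear, DLHMS}. Your sketch in fact provides more detail than the paper gives; the only minor point is that your emphasis on the ``coherent identification of sheets across the two center manifolds'' as the principal obstacle is perhaps slightly overstated, since the special $Q$-valued framework in \cite{DLHMS_linear, DLHMS} is designed precisely so that the arguments of \cite{DLSk1} carry over with only routine modifications.
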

We omit the proof of Theorem \ref{t:sing-deg} here, and simply observe that it follows by the exact same arguments as those in \cite{DLSk1}, with the appropriate preliminary results from \cite{DLS_multiple_valued,DLS16centermfld,DLS16blowup} replaced by their analogues in \cite{DLHMS_linear, DLHMS}. Of course, by translation, we obtain the same consequences of Theorem \ref{t:sing-deg} for $\Irm(T,x)$ for any $x\in \Ffrak_Q(T)$. 

A particularly important byproduct of the proof of Theorem \ref{t:sing-deg} is the following BV-estimate on the frequency function.

\begin{theorem}\label{t:BV}
    There exists $\gamma_4=\gamma_4(m,n,Q)>0$ and $C=C(m,n,Q)>0$ such that the following holds. Let $T$ satisfy Assumption \ref{a:main-3}. Suppose that $\{(s_k,t_k]_{k=j_0}^J\}$ is a sequence of intervals of flattening with coinciding endpoints, i.e. $s_k=t_{k+1}$. For $r\in (s_J, t_{j_0}]$, let
    \[
        \Ibf(r) := \Ibf_{N_k}(\tfrac{r}{t_k}) \mathbf{1}_{(s_k,t_k]}.
    \]
    Then $\log(\Ibf +1)\in \BV((s_J,t_{j_0}])$ with the quantitative estimate
    \begin{equation}
        \left|\left[\frac{d\log(\Ibf+1)}{dr}\right]_-\right|\big((s_J,t_{j_0}]\big) \leq C \sum_{k=j_0}^{J} \mbf_{0,k}^{\gamma_4}.
    \end{equation}
    In addition, if $(a,b]\subset (s_k,t_k]$ for some interval of flattening $(s_k,t_k]$, we have
    \begin{equation}
        \left|\left[\frac{d\log(\Ibf+1)}{dr}\right]_-\right|\big((a,b]\big) \leq C \left(\frac{b}{t_k}\right)^{\gamma_4}\mbf_{0,k}^{\gamma_4}.
    \end{equation}
\end{theorem}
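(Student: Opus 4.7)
The plan is to adapt the BV estimate strategy of \cite{DLSk1}*{Theorem 5.2} to our mod$(q)$ setting, where the normal approximations $N_k$ are \emph{special} $Q$-valued maps rather than classical $Q$-valued maps. The argument splits naturally into two parts: first an \emph{interior} estimate showing that $\log(\Ibf+1)$ is almost monotone within each interval of flattening $(s_k,t_k]$, and second a \emph{gluing} estimate controlling the jump of $\log(\Ibf+1)$ at each endpoint $s_k=t_{k+1}$.

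For the interior estimate, within a fixed interval $(s_k,t_k]$ one computes $\frac{d}{dr}\log(\Ibf_{N_k}(r/t_k)+1)$ using the derivative identities for $D_{N_k}$ and $H_{N_k}$, exactly as in the proof of the monotonicity of the frequency for Dir-minimizers (cf.\ \cite{DLHMS_linear}*{Section 9}). Since $N_k$ is not itself Dir-minimizing but is the normal approximation to $T_k$ on $\Mcal_k$, one inserts inner and outer variation identities, using appropriate Lipschitz cut-off vector fields on $\Mcal_k$, with error terms coming from (i) the failure of $N_k$ to be exactly stationary, (ii) the curvature of $\Mcal_k$, and (iii) the difference between $T_k$ and the graph of $N_k$ on the region where they disagree. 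These error terms are controlled by the effective estimates on $N_k$ from \cite{DLHMS}*{Part 4}, which in turn are governed by $\mbf_{0,k}$. This should yield, for $(a,b]\subset (s_k,t_k]$, an estimate of the form
\[
\left|\left[\tfrac{d\log(\Ibf+1)}{dr}\right]_{-}\right|\big((a,b]\big) \leq C \left(\frac{b}{t_k}\right)^{\gamma_4}\mbf_{0,k}^{\gamma_4},
\]
giving the second conclusion.

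For the gluing estimate at the endpoints $s_k=t_{k+1}$, the idea is to compare $\Ibf_{N_k}(s_k/t_k)$ with $\Ibf_{N_{k+1}}(1)$ by observing that both center manifolds $\Mcal_k$ and $\Mcal_{k+1}$ graphically approximate $T$ on the annular region of scale $\sim s_k$, with closeness quantified by $\mbf_{0,k}^{\gamma_4}$. Consequently, $N_k$ restricted to the appropriate scale is a reparametrization of $N_{k+1}$ plus a small error, and analogous control holds for $D_{N_k}, H_{N_k}$ versus $D_{N_{k+1}}, H_{N_{k+1}}$. This yields a bound on the jump $|\log(\Ibf_{N_k}(s_k/t_k)+1) - \log(\Ibf_{N_{k+1}}(1)+1)|$ of order $\mbf_{0,k}^{\gamma_4}$. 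Summing over $k=j_0,\ldots,J$ the interior contributions (controlled by $\mbf_{0,k}^{\gamma_4}$ upon taking $b=t_k$ in the previous display) and the gluing contributions gives the first conclusion.

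The main obstacle, as compared to \cite{DLSk1}, is that the normal approximation takes values in $\Ascr_Q$ rather than $\Acal_Q(\R^{m+n})$, so one must reproduce the inner-outer variation identities, the gradient/height bounds, and the quantitative coupling between $N$ and the mass of $T$ in this setting. The technical framework for this has already been developed in \cite{DLHMS_linear, DLHMS}, so the bulk of the work consists of carefully assembling the relevant statements and verifying that a \emph{uniform} exponent $\gamma_4=\gamma_4(m,n,Q)>0$ can be extracted from both the interior and the gluing estimates — this extraction of a common exponent is the most delicate bookkeeping point in the argument.
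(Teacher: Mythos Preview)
Your proposal is correct and follows essentially the same approach as the paper: the paper does not give an independent proof of this theorem but records it as a byproduct of the proof of Theorem~\ref{t:sing-deg}, which in turn is obtained by repeating the arguments of \cite{DLSk1} verbatim with the classical $\Acal_Q$-valued theory of \cite{DLS_multiple_valued,DLS16centermfld,DLS16blowup} replaced by the special $\Ascr_Q$-valued analogues in \cite{DLHMS_linear,DLHMS}. Your breakdown into an interior almost-monotonicity estimate via inner/outer variations and a gluing estimate at the endpoints $s_k=t_{k+1}$ is exactly the structure of the argument in \cite{DLSk1}, and your identification of the $\Ascr_Q$ versus $\Acal_Q$ bookkeeping as the only new ingredient matches the paper's assertion.
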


\section{Reduction to degree 1 points}
In light of Theorem \ref{t:sing-deg}, the proof of Theorem \ref{t:main-pt2}{(a)} can therefore be reduced to the following.
\begin{theorem}\label{t:reduction}
    Let $T$ and $\Sigma$ be as in Theorem \ref{t:main-pt2}. Then we may write
    \[
        \Ffrak_Q(T) = \Ffrak_{Q,> 1}(T) \sqcup \Ffrak_{Q,1}(T),
    \]
    as a disjoint union, where
    \[
        \Ffrak_{Q,> 1}(T) \coloneqq \{x\in \Ffrak(T): \Irm(T,x) > 1\}, \qquad \Ffrak_{Q,1}(T) \coloneqq \{x\in \Ffrak(T): \Irm(T,x) = 1\}.
    \]
    Moreover,
    \begin{equation}\label{e:high-degree-dim-est}
       \Ffrak_{Q,>1}(T) \ \text{is countably $(m-2)$-rectifiable,}
    \end{equation}
    and
    \begin{equation}\label{e:degree-1-dim-est}
        \Hcal^{m-2}(\Ffrak_{Q,1}(T)) =0.
    \end{equation}
\end{theorem}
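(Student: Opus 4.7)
The decomposition $\Ffrak_Q(T) = \Ffrak_{Q,>1}(T)\sqcup\Ffrak_{Q,1}(T)$ is immediate from Theorem \ref{t:sing-deg}(i), which guarantees $\Irm(T,x)\geq 1$ at every $x\in\Ffrak_Q(T)$. The two measure-theoretic claims will be proved by quite different methods, so the plan is to treat them separately.

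For \eqref{e:high-degree-dim-est}, I would stratify by quantitative lower bounds on the singularity degree, $\Ffrak_{Q,>1}(T) = \bigcup_{k\in\mathbb{N}}\{x : \Irm(T,x)\geq 1+\tfrac{1}{k}\}$, and treat each stratum separately. On a fixed stratum, Theorem \ref{t:sing-deg}(v) gives a unique flat tangent cone $Q\llbracket\pi_x\rrbracket$ with polynomial convergence of the rescalings $T_{x,r}$, while Theorem \ref{t:BV} yields a sharp BV bound on the frequency function along all dyadic radii, uniformly across the intervals of flattening. Following the strategy of \cite{DLSk2, DMS}, these ingredients are assembled into a universal frequency function defined on all scales, whose BV bound can be integrated to give a pointwise Dini/H\"older modulus of convergence for $x\mapsto \pi_x$. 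The Naber--Valtorta rectifiability criterion then applies to produce a countable $(m-2)$-rectifiable cover of the stratum, and a countable union over $k$ concludes the first claim.

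For \eqref{e:degree-1-dim-est}, I would argue by contradiction: assume $\Hcal^{m-2}(\Ffrak_{Q,1}(T))>0$ and fix an $\Hcal^{m-2}$-density point $x_0$. By Theorem \ref{t:sing-deg}(ii), every fine blow-up of $T$ at $x_0$ is radially $1$-homogeneous, and the classification of $1$-homogeneous special $Q$-valued Dir-minimizers with $\boldsymbol\eta\circ u\equiv 0$ expresses each such $u$ as a sum of linear maps whose graphs are oriented $m$-planes meeting along an $(m-2)$-dimensional subspace $V$. One now distinguishes two cases. If some tangent cone at $x_0$ admits a density gap (i.e.\ its total density on a subregion is strictly less than $q$), then by upper semi-continuity of $\Theta(T,\cdot)$ together with Proposition \ref{p:integrality}, $T$ identifies with an area-minimizing integral current in a neighborhood of $x_0$, so the conclusion follows from the degree-$1$ case of \cite{DMS}. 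In the no-gap case, Theorem \ref{t:main} first rules out any tangent cone at $x_0$ with an $(m-1)$-dimensional spine; then I would apply the no-gap fine $\varepsilon$-regularity theorem for $(m-2)$-invariant planar cones (the second main excess-decay ingredient announced in the introduction, to be established later in Part \ref{pt:2}) in combination with a persistence-of-$Q$-points argument at the $\Hcal^{m-2}$-density scale, forcing excess decay at $x_0$ incompatible with $x_0$ being a flat singular point of positive $\Hcal^{m-2}$-density within $\Ffrak_{Q,1}(T)$.

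The main obstacle is the no-gap case of \eqref{e:degree-1-dim-est}. Unlike Theorem \ref{t:no-gaps} in the $(m-1)$-invariant regime---where the no-gap condition is a consequence of the $\mod(q)$ structure---absence of density gaps near an $(m-2)$-invariant planar cone is not automatic, and it must be imposed as a hypothesis in the corresponding excess-decay step. The delicate technical point, which ultimately distinguishes the $\mod(q)$ setting from the integral one treated in \cite{DMS}, is to verify via upper semi-continuity of the density and the $\mod(q)$ constancy theorem that the no-gap hypothesis is satisfied at $\Hcal^{m-2}$-a.e.\ point of $\Ffrak_{Q,1}(T)$ that is not already covered by case (i), so that the fine $\varepsilon$-regularity theorem may actually be invoked and the contradiction closed.
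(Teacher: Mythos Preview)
Your treatment of \eqref{e:high-degree-dim-est} is correct and matches the paper's approach: the BV estimate of Theorem \ref{t:BV} feeds into the universal frequency framework of \cite{DLSk1,DLSk2}, and the Naber--Valtorta machinery yields rectifiability of each stratum $\{\Irm(T,\cdot)\geq 1+\tfrac{1}{k}\}$.

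For \eqref{e:degree-1-dim-est}, the overall contradiction strategy at an $\Hcal^{m-2}$-density point, together with the use of an excess-decay lemma towards $(m-2)$-invariant planar cones, is also the paper's route. However, your case distinction contains a genuine error. In your ``density gap'' case you invoke Proposition \ref{p:integrality} to identify $T$ with an area-minimizing integral current in a neighborhood of $x_0$. This is impossible: $x_0\in\Ffrak_Q(T)$ has $\Theta(T,x_0)=Q=\tfrac{q}{2}$, while Proposition \ref{p:integrality} requires $\Theta(T,x)<\tfrac{q}{2}$ for \emph{every} $x$ in the open set. There is no such case; the reduction to \cite{DMS} does not proceed by making $T$ integral near $x_0$.

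You have also inverted where the real difficulty lies. The no-gap hypothesis \eqref{e:no-gaps-2} at an $\Hcal^{m-2}$-density point of $\Ffrak_{Q,1}(T)$ is straightforward: points of $\Ffrak_{Q,1}(T)$ all have density $\geq Q$, and their $\Hcal^{m-2}$-density near $x_0$ forces them to populate every $\varepsilon_0$-ball centered on an $(m-2)$-dimensional spine. No constancy theorem is needed. What genuinely distinguishes the mod$(q)$ setting from \cite{DMS} is the \emph{proof of the excess-decay lemma itself} (Lemma \ref{l:m-2-decay}): one must first show that under its hypotheses the density drops below $Q$ \emph{away} from the spine (Proposition \ref{p:density-drop-m-2}), so that Proposition \ref{p:integrality} identifies $T$ with an integral current there---not near $x_0$---and the graphical approximations, Simon estimates, and blow-up analysis of \cite{DMS} carry over in that region. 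The additional mod$(q)$-specific inputs are Lemma \ref{l:Morgan-mod-q} (Morgan's angle equality for minimizing-mod$(q)$ unions of planes) and Proposition \ref{p:1-homog-Dir-min}, which ensures the relevant blow-ups are classical $\Acal_Q$-valued Dir-minimizers once open-book behavior has been excluded via Part \ref{pt:1}.
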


\subsection{Proof of \eqref{e:high-degree-dim-est}}
The conclusion \eqref{e:high-degree-dim-est} follows easily from a combination of \cite{Sk-modp} and \cite{DLSk2}. Indeed, given the BV estimate of Theorem \ref{t:BV} and the associated variational estimates (see \cite{DLSk1}*{Lemma 6.4}), together with the unique continuation and homogeneity results of \cite{Sk-modp}*{Section 5.1}, the arguments of \cite{DLSk2} remain valid in the case when $T$ is area-minimizing mod$(q)$.

Thus, the remainder of this article is dedicated to proving \eqref{e:degree-1-dim-est} and Theorem \ref{t:main-pt2}(b).

\section{Points of singularity degree 1}
In order to obtain the conclusion \eqref{e:degree-1-dim-est} of Theorem \ref{t:reduction}, we will closely follow the arguments of \cite{DMS}. Indeed, we will often demonstrate that we may suitably reduce to the conclusions therein.

Before proceeding, we first recall another key consequence of the conclusions in Theorem \ref{t:sing-deg}. Let us recall the notion of a coarse blow-up, which is defined as follows.

Let $\{r_k\}_k$ be a blow-up sequence of radii. For $r_k:=\frac{r_k}{t_{j(k)}}$, where $(s_{j(k)},t_{j(k)}]$ is the interval of flattening containing $r_k$. Let $\pi_k\subset T_0\Sigma$ be a sequence of $m$-dimensional planes with the property that 
\[
    \mathbf{E} (T_{0, r_k}, \mathbf{B}_{4}) = \mathbf{E} (T_{0, r_k}, \mathbf{B}_{4},\pi_k).
\]
Observe that for $k$ sufficiently large,
\begin{align}\label{eq:E_k}
\Ebf(T_{0, r_k}, \Cbf_{2}, \pi_k) \leq \mathbf{E} (T_{0, r_k}, \mathbf{B}_{4}) =: E_k \to 0\, , 
\end{align}
where the latter limit is due to the definition of $r_k$. Define $\Abf_k \coloneqq \Abf_{\Sigma_{0, r_k}}$. Clearly we must have $\pi_k\to \pi_0$, so by applying a suitable small rotation, we can assume that $\pi_k=\pi_0$, while at the risk of abusing notation, we relabel $T_{0,r_k}$ and $\Sigma_{0,r_k}$ as the rotated objects.

For $k_0 \in \N$ large enough, we can ensure that 
\begin{equation}\label{eq:smallexcess}
    E_k + \Abf_k^2 < \min\Big\{\eps, \frac{1}{2}\Big\} \qquad \text{for every $k \geq k_0$},
\end{equation}
where $\eps$ is the threshold in~\cite{DLHMS}*{Theorem~15.1}. We can therefore let $f_k : B_1(0,\pi_0) \to \Ascr_Q(\pi_0^\perp)$ be the strong Lipschitz approximation of~\cite{DLHMS}*{Theorem~15.1} for $T_{0, r_k}$ and define the rescaled maps
\begin{equation}\label{eq:HSfreq}
        \bar{f}_k \coloneqq \frac{f_k}{E_k^{1/2}}\, .
    \end{equation}
Assume in addition that 
\begin{align}
&\Abf_k^2 \leq C r_k^2 = o (E_k)\, .\label{e:A-E-infinitesimal}
\end{align}
It then follows from \cite{DLHMS} that, up to extracting a subsequence, 
\begin{itemize}
    \item $\bar f_k$ converges strongly in $L^2\cap W^{1,2}_\loc (B_1 (0, \pi_0))$ to a Dir-minimizing map $\bar{f}: B_1(0,\pi_0) \to \Ascr_Q(\pi_0^\perp)$,
    \item $\bar f$ takes values in  $\pi_0^\perp\cap T_0 \Sigma$, 
    \item $\bar f (0) = Q \llbracket 0\rrbracket$.
 \end{itemize}

\begin{definition}\label{d:coarse}
A Dir-minimizing map $\bar f$ as above will be called a {\em coarse} blow-up (at $0$). Its average free part is given by the map
\begin{equation}\label{e:average-free}
v (x) := \sum_i \llbracket \bar f_i (x) - \boldsymbol{\eta} \circ \bar f (x)\rrbracket\, .
\end{equation}
We say that $\bar{f}$ is nontrivial if it does not vanish identically.
\end{definition}

We have the following comparablity of coarse and fine blow-ups for subquadratic singularity degrees (see \cite{DLSk1}*{Corollary 4.3}).

\begin{corollary}\label{c:coarse=fine}
Let $T$ be as in Assumption \ref{a:main-3} and assume the singularity degree $\Irm (T,0)$ is strictly smaller than $2-\delta_2$. Then for any blow-up sequence of radii $r_k$, \eqref{e:A-E-infinitesimal} holds for $k$ suficiently large and the coarse blow-up $\bar{f}$ along (a subsequence of) $r_k$ is nontrivial, $\Irm (T, 0)$-homogeneous, and has average $0$. In fact, there exists $\lambda > 0$ such that $\bar f= \lambda u$ for a fine blow-up $u$ at $0$.
\end{corollary}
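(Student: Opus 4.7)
\textbf{Proof proposal for Corollary \ref{c:coarse=fine}.} The plan is to exploit Theorem \ref{t:sing-deg} under the assumption $\Irm(T,0) < 2-\delta_2$, which by (vii) forces infinitely many intervals of flattening $(s_j,t_j]$ with a uniform lower bound $\inf_j s_j/t_j > 0$. In particular $t_{j(k)} \downarrow 0$ along any blow-up sequence of radii $r_k$, and $r_k/t_{j(k)}$ stays bounded below by some $\alpha>0$. The first step is to verify \eqref{e:A-E-infinitesimal}: since $\Abf_k^2 \leq C r_k^2 \leq C t_{j(k)}^2$, while by the definition of intervals of flattening $E_k \geq c\, \mathbf{m}_{0,j(k)} \geq c\bar\eps^2 t_{j(k)}^{2-2\delta_2}$ (using that $r_k/t_{j(k)}\geq \alpha$), we get $\Abf_k^2/E_k \leq C t_{j(k)}^{2\delta_2} \to 0$, as required.

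Next I would run the coarse blow-up construction from Definition \ref{d:coarse} in parallel with the fine blow-up construction of Section \ref{ss:compactness}. Both procedures produce Dir-minimizing limits, differing only in the choice of normalization constant: $E_k^{1/2}$ versus $\mathbf{h}_k = \|\bar N_k\|_{L^2(\Bcal_{3/2})}$. The core task is therefore to show that $E_k \asymp \mathbf{h}_k^2$ along a subsequence, so that the two normalizations differ by a constant factor $\lambda > 0$ and the limits are related by $\bar f = \lambda u$ (after composing with the center-manifold exponential, which is the identity to first order). For the upper bound $E_k \leq C \mathbf{h}_k^2$, the standard estimates comparing the Lipschitz approximation to the normal approximation on a center manifold (see the $L^2$-estimates in \cite{DLHMS}*{Section 17} and the analogous reverse Sobolev argument) suffice. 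For the lower bound $\mathbf{h}_k^2 \leq C E_k$, I would use the frequency monotonicity together with Theorem \ref{t:sing-deg}(iv): the frequencies $\Ibf_{N_{j(k)}}(r_k/t_{j(k)})$ converge to $\Irm(T,0) \geq 1$, which prevents $\mathbf{h}_k$ from being too small relative to $E_k$ (via the height estimates of the normal approximation, controlling the mass of $T_{0,r_k}$ off the plane $\pi_0$ by $\mathbf{h}_k^2$ modulo higher-order terms).

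Once this comparability is established, I would extract a subsequence along which $\mathbf{h}_k / E_k^{1/2} \to \lambda^{-1}$ for some $\lambda \in (0,\infty)$ and along which both $u_k \to u$ and $\bar f_k \to \bar f$ strongly in $W^{1,2}_\loc$. Identifying the limits via the closeness of $f_k$ and $N_k \circ \mathbf{\Phi}_k$ (up to an error infinitesimal in $L^2$ thanks to the center manifold estimates and \eqref{e:A-E-infinitesimal}) gives $\bar f = \lambda u$. The remaining conclusions are inherited from the fine blow-up: $\bar f$ is nontrivial because $\|u\|_{L^2(\Bcal_{3/2})}=1$ and $\lambda > 0$; it has average zero because $\boldsymbol\eta\circ u = 0$ by construction of $u$; and it is $\Irm(T,0)$-homogeneous by Theorem \ref{t:sing-deg}(ii).

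The main obstacle is the lower bound $\mathbf{h}_k^2 \leq C E_k$, i.e., ruling out the degenerate scenario where the excess decays much faster than the normal approximation's $L^2$ norm. This is where the subquadratic assumption $\Irm(T,0) < 2-\delta_2$ is essential: it guarantees (via Theorem \ref{t:sing-deg}(vii)) that the rescalings do not escape to smaller intervals of flattening faster than the excess captures, and combined with the uniform convergence of the frequency to $\Irm(T,0) \geq 1$ (Theorem \ref{t:sing-deg}(iv)), it prevents pathological concentration of $\bar N_k$ on scales where $E_k$ is negligible. This is precisely the mod$(q)$ analogue of the comparability argument in \cite{DLSk1}*{Corollary 4.3}, and the same line of reasoning (adapted using the special $Q$-valued setting of \cite{DLHMS_linear}) should carry through.
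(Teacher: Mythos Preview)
Your overall strategy is the one the paper invokes (it gives no proof of its own and simply defers to \cite{DLSk1}*{Corollary 4.3}): use Theorem~\ref{t:sing-deg}(vii) to get infinitely many intervals with $\inf_j s_j/t_j>0$, establish $E_k\asymp \mathbf{h}_k^2$, and then read off the identification $\bar f=\lambda u$. However, you have the two halves of the comparability reversed, and this is not merely cosmetic.

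The inequality $\mathbf{h}_k^2\le C E_k$ is the routine direction: the center-manifold construction gives $\int_{\mathcal{B}_{3/2}} |\bar N_k|^2 \le C\,\mathbf{m}_{0,j(k)}$ directly, and $\mathbf{m}_{0,j(k)}$ is comparable to $E_k$ once $r_k/t_{j(k)}$ is bounded below. No frequency input is needed here, and the ``degenerate scenario'' you describe---excess much smaller than $\mathbf{h}_k^2$---simply cannot occur. The substantive direction is $E_k\le C\,\mathbf{h}_k^2$, i.e.\ ruling out $\mathbf{h}_k/E_k^{1/2}\to 0$. That is precisely the scenario in which the center manifold absorbs essentially all of the excess and the multivalued normal approximation becomes negligible; it is exactly what happens when the frequency is $\ge 2$, and excluding it is where the hypothesis $\Irm(T,0)<2-\delta_2$ enters. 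The mechanism is that the interval of flattening stops at $s_{j(k)}\sim t_{j(k)}$ because of an excess cube, and the splitting-before-tilting estimates then force $\int|N_{j(k)}|^2\ge c\,\mathbf{m}_{0,j(k)}$ at the relevant scale. Your own parenthetical ``prevents $\mathbf{h}_k$ from being too small relative to $E_k$'' is the correct intuition for this step, but you have attached it to the opposite inequality. A separate, smaller point: the lower bound $E_k\ge c\,\mathbf{m}_{0,j(k)}$ you use to verify \eqref{e:A-E-infinitesimal} is not immediate from the definition of the intervals (which only gives an \emph{upper} bound for the excess inside the interval); it too relies on the stopping mechanism just described.
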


We have the following additional classification of 1-homogeneous coarse blow-ups that are translation-invariant in at most $m-2$ independent directions.

\begin{proposition}\label{p:1-homog-Dir-min}
    Suppose that $u: B_1\subset \R^m \to \Ascr_Q(\R^n)$ is a non-zero $1$-homogeneous Dir-minimizer with $\boldsymbol\eta\circ u = 0$. Suppose that its spine $\mathcal{S}_u = \{x\in B_1 : I_{u,x}(0) \geq 1 \}$ has dimension at most $m-2$. Then $u$ is a classical ($\Acal_Q$-valued) Dir-minimizer and either
    \begin{itemize}
        \item[(a)] {$\dim(\mathcal{S}_u)=m-2$, and }$u = \sum_{i=1}^Q \llbracket L_i \rrbracket$, where $L_i:B_1\to \R^n$ are linear functions which simultaneously vanish exactly along $\mathcal{S}_u$;
        \item[(b)] ${\dim(\mathcal{S}_u)} \leq m-3$.
    \end{itemize}
    In particular, if case (a) holds, the Morgan angles (see Definition \ref{d:frank}) of any pair of the linear functions $L_i,L_j$, $i\neq j$, are comparable by an absolute constant.
\end{proposition}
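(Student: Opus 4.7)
The plan is to first reduce, via translation invariance along the spine, to a classification of $1$-homogeneous Dir-minimizers with isolated spine on a low-dimensional domain, and then to rule out any genuinely ``special'' (sign-flipping) sheets using the hypothesis $\dim\mathcal{S}_u \le m-2$.

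The first step is to show that $\mathcal{S}_u$ is a linear subspace and that $u$ is translation-invariant along every vector in $\mathcal{S}_u$. Scale invariance of $\mathcal{S}_u$ is immediate from the $1$-homogeneity of $u$. Translation invariance is the standard Federer splitting argument: for $v\in\mathcal{S}_u$, the conditions $I_{u,v}(0)\ge 1$ and $I_{u,0}(0)=1$ combined with $1$-homogeneity about $0$ force, via the equality case of frequency monotonicity (see \cite{DLS16blowup} for the classical version), that $u$ is independent of the $v$-direction. Setting $V:=\mathcal{S}_u$, this yields $u(x,y) = w(y)$ for a $1$-homogeneous Dir-minimizer $w:V^\perp\to \Ascr_Q(\R^n)$ with $\boldsymbol{\eta}\circ w=0$ and $\mathcal{S}_w=\{0\}$.

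Next I would use $\mathcal{S}_w=\{0\}$ to eliminate the special structure. Restricting $w$ to the unit sphere in $V^\perp$ produces an eigenfunction of the spherical Laplacian on special $Q$-valued maps; decomposing $w$ into its irreducible (branch-connected) components as in \cite{DLHMS_linear}, each branch is of the form $r^\alpha\varphi(\theta)$ with $2\alpha\in\mathbb{N}$, and a half-integer $\alpha$ corresponds to a sheet that flips sign across a hyperplane, hence contributes a nodal set of dimension at least $\dim V^\perp-1 \geq 1$ to $\mathcal{S}_w$ — incompatible with $\mathcal{S}_w=\{0\}$. So every branch has integer frequency and $w$ is classical $\Acal_Q$-valued. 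By Almgren's classification (\cite{DLS_MAMS}), a $1$-homogeneous $\Acal_Q$-valued Dir-minimizer is a sum of linear maps; in case (a) this gives $u=\sum_i\llbracket L_i\circ\pi_{V^\perp}\rrbracket$, and the condition $\mathcal{S}_u = V$ forces the $L_i$ to vanish simultaneously exactly on $\{0\}\subset V^\perp$, so that $u$ vanishes exactly on $V$ after lifting. In case (b), nothing beyond $\dim V \le m-3$ and $\Acal_Q$-valuedness is asserted, so the classification step is unnecessary.

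The Morgan angle comparability is then reduced, by the preceding steps, to the purely $\Acal_Q$-valued setting: if some pair of Morgan angles were arbitrarily smaller than another, a rescaling around an appropriate near-coincidence point would decouple the ``close'' pair of linear sheets from the remaining ones in the limit, producing a reducible $1$-homogeneous Dir-minimizer whose spine has dimension strictly greater than $m-2$, contradicting the rigidity of $\mathcal{S}_u$; this is essentially the argument carried out in \cite{DMS} for the area-minimizing integral case. The main obstacle, as usual in the even mod$(q)$ setting, is the elimination of half-integer modes in the second step; the cleanest way to handle it is to work branch-by-branch rather than on the whole multi-valued object, so that $\boldsymbol{\eta}\circ w = 0$ is used only to preclude the unique $0$-frequency mode while each half-integer branch is ruled out individually by the codimension of its nodal set.
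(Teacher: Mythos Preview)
Your first step---splitting off the spine $V=\mathcal{S}_u$ via translation invariance---is fine and is the standard Federer reduction. The gap is in the second step, where you try to eliminate the special (sign-flipping) structure by a ``half-integer branch'' decomposition.

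First, the map $w$ is $1$-homogeneous by construction, so any ``branch'' of $w$ is also $1$-homogeneous; there are no half-integer homogeneities $\alpha$ present to rule out. You appear to conflate the frequency of a tangent map at a zero of $w$ (which can indeed be $<1$ for special Dir-minimizers) with the global homogeneity of $w$ itself. Second, and more seriously, your implicit claim that a sign flip forces the nodal set $\Omega_0^w=\{w=Q\llbracket 0\rrbracket\}$ to sit inside $\mathcal{S}_w$ is precisely what needs proof: a special Dir-minimizer can vanish at a point with frequency strictly below $1$, so $\Omega_0^w\not\subset\mathcal{S}_w$ in general, and the contradiction with $\mathcal{S}_w=\{0\}$ does not follow. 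Third, the branch/spectral picture you invoke is two-dimensional; in case (b) one has $\dim V^\perp\ge 3$, there is no analogous classification, and you still must establish that $w$ is $\Acal_Q$-valued there---so ``the classification step is unnecessary'' is incorrect.

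The paper's argument is different and sidesteps all of this. It works directly on $u$ (no reduction to $V^\perp$) and bounds the Hausdorff dimension of the collision set $\Omega_0=\{u=Q\llbracket 0\rrbracket\}$. At any $x\in\Omega_0\setminus\mathcal{S}_u$ one takes a tangent map $\tilde u$; if $\dim\mathcal{S}_{\tilde u}=m-1$ then $\tilde u$ reduces to a one-variable special Dir-minimizer, whose sheets are necessarily linear, so $\tilde u$ is $1$-homogeneous and hence $I_{u,x}(0)=1$, contradicting $x\notin\mathcal{S}_u$. Thus every point of $\Omega_0$ lies either in $\mathcal{S}_u$ or in the $(m-2)$-stratum, giving $\dim_{\mathcal H}\Omega_0\le m-2$. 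A closed set of codimension at least $2$ does not disconnect $B_1$, so the sign is globally constant and $u$ is classical. The linear structure in case (a) and the Morgan-angle comparability then follow from the $\Acal_Q$ theory in \cite{DMS}, exactly as you say. The missing idea in your proposal is this stratification bound on $\Omega_0$, which replaces the branch analysis and works uniformly in both cases.
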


\begin{proof}
    {To show that $u$ is a classical Dir-minimizer, it suffices to show that $\Omega_0:=\{x\in B_1:u(x) = (Q\llbracket 0\rrbracket,1) \equiv (Q\llbracket 0\rrbracket,-1)\}$ has Hausdorff dimension at most $m-2$ (in fact, Hausdorff dimension strictly smaller than $m-1$ suffices). In turn to see this, it suffices to show that the Hausdorff dimension of $\Omega_0\setminus \mathcal{S}_u$ is at most $m-2$. So suppose $x\in \Omega_0\setminus \mathcal{S}_u$, and let $\tilde{u}$ be a tangent map to $u$ at $x$ (see e.g. \cite{DLS_MAMS}). Suppose first that % $\mathcal{S}_{\tilde{u}}$ has dimension $\geq m-1$. If 
    $\dim(\mathcal{S}_{\tilde{u}})=m-1$. Then $\tilde{u}$ has a graph which is an open book, and thus its frequency at origin is $1$. But this would imply that the frequency of $u$ at $x$ is $1$, and so (as $u$ is homogeneous of degree $1$, thus by properties of the frequency function $\mathcal{S}_u$ is exactly the set of points of frequency $1$ in $u$) we would need $x\in \mathcal{S}_u$, a contradiction to our assumption. 
    
    In particular, the only singular points of $u$ away from $\mathcal{S}_u$ are points in the $(m-2)$-stratum of $u$. From this, it follows that $\dim_\Hcal(\Omega_0)\leq m-2$ (in fact, $\dim_\Hcal(\sing(u))\leq m-2$, where $\sing(u)$ is the full singular set of $u$). In particular, this implies that we must have (without loss of generality) $u = (u^+,1)$ everywhere on $B_1$ (as $B_1\setminus\Omega_0$ is connected), for some $1$-homogeneous $\Acal_Q$-valued map $u^+$. In particular, this says that $u$ identifies with a classical $\Acal_Q$-valued Dir-minimizer. In particular, the remaining conclusions then follow immediately from properties of Dir-minimizers (see, for instance, \cite[Proposition 7.6]{DMS}, for the last claim).}
\end{proof} 

\begin{remark}
    In fact, one can instead assume in the above that $\dim_\Hcal(\Omega_0)<m-1$ in place of $\dim_\Hcal(\Scal_u)\leq m-2$, since $\Scal_u\subset \Omega_0$, and thus one can apply the proposition in this case also.
\end{remark}

\section{Decay towards $(m-2)$-invariant cones}

In order to prove \eqref{e:degree-1-dim-est} {and Theorem \ref{t:main-pt2}(b)}, we need the following notion of planar cones with exactly $(m-2)$-directions of invariance, which we recall from \cite{DMS}.

\begin{definition}\label{def:cones}
For every fixed integer $Q\geq 2$ we denote by $\mathscr{C} (Q)$ those subsets of $\mathbb R^{m+n}$ which are unions of $1 \leq N\leq Q$ $m$-dimensional planes $\alpha_1, \ldots, \alpha_N$ satisfying the following properties:
\begin{itemize}
    \item[(i)] $\alpha_i \cap \alpha_j$ is the same $(m-2)$-dimensional plane $V$ for every pair $(i,j)$ with $i<j$;
    \item[(ii)] Each plane $\alpha_i$ is contained in the same $(m+\bar n)$-dimensional plane $\varpi$.
\end{itemize}
If $p\in \Sigma$, then $\mathscr{C} (Q, p)$ will denote the subset of $\mathscr{C} (Q)$ for which $\varpi = T_p \Sigma$.

$\mathscr{P}$ and $\mathscr{P} (p)$ will denote the subset of those elements of $\mathscr{C} (Q)$ and $\mathscr{C} (Q,p)$ respectively which consist of a single plane; namely, with $N=1$. For $\mathbf{S}\in \mathscr{C} (Q)\setminus \mathscr{P}$, the $(m-2)$-dimensional plane $V$ described in (i) above is referred to as the {\em spine} of $\mathbf{S}$ and will often be denoted by $V (\mathbf{S})$.
\end{definition}

We use the same notion as in Part \ref{pt:1} for the \emph{conical $L^2$ height excess} between $T$ and elements in $\Cscr(Q)\cup \Bscr^q$. Namely, we use the following definitions:

\begin{definition}%\label{def:L2_height_excess}
	Given a ball $\Bbf_r(q) \subset \R^{m+n}$ and a cone $\mathbf{S}\in \mathscr{C} (Q)$, the \emph{one-sided conical $L^2$ height excess of $T$ relative to $\Sbf$ in $\Bbf_r(q)$}, denoted $\hat{\Ebf}(T, \mathbf{S}, \Bbf_r(q))$, is given by
	\[
		\hat{\Ebf}(T, \mathbf{S}, \Bbf_r(q)) \coloneqq \frac{1}{r^{m+2}} \int_{\Bbf_r (q)} \dist^2 (p, \mathbf{S})\, d\|T\|(p).
	\]
We further define the corresponding \emph{reverse one-sided excess} as
 \[
\hat{\Ebf} (\mathbf{S}, T, \Bbf_r (q)) \coloneqq \frac{1}{r^{m+2}}\int_{\Bbf_r (q)\cap \mathbf{S}\setminus \Bbf_{ar} (V (\mathbf{S}))}
\dist^2 (x, {\rm spt}\, (T))\, d\mathcal{H}^m (x)\, ,
\]
where $a=a(m)$ is a dimensional constant, to be determined later. We subsequently define the \emph{two-sided conical $L^2$ height excess} as 
\[
    \mathbb{E} (T, \mathbf{S}, \Bbf_r (q)) :=
\hat{\Ebf} (T, \mathbf{S}, \Bbf_r (q)) + \hat{\Ebf} (\mathbf{S}, T, \Bbf_r (q))\, .
\]
%We recall from Definition \ref{def:L2_height_excess} the one-sided conical $L^2$ height excess of $T$ relative to an open book $\Sbf\in \Bscr^q$ inside $\Bbf_r(x)$, which we relabel as $\hat{\Ebf}^b(T,\Sbf,\Bbf_r(x))$ in this part. We analogously relabel the corresponding reverse one-sided excess as $\hat{\Ebf}^b(\Sbf,T,\Bbf_r(x))$ and the two-sided excess as $\Ebb^b(T,\Sbf,\Bbf_r(x))$

%Furthermore, the \emph{planar $L^2$ height excess} is given by
%\[
%\Ebf^p (T, \Bbf_r (x)) = \min_{\pi\in \mathscr{P} (x)} \hat{\Ebf}^c (T, \pi, \Bbf_r (x))\, .
%\]
\end{definition}

Let us now state the main excess decay lemma, which is very much analogous to Theorem \ref{t:decay}, only near $(m-2)$-invariant cones.

\begin{lemma}[Excess Decay towards $(m-2)$-invariant cones]\label{l:m-2-decay}
Suppose that $m,n,\bar n, q=2Q$, $T$, $\Sigma$ are as in Assumption \ref{a:main-pt2}. For every $\varsigma>0$, there are positive constants $\varepsilon_0 = \varepsilon_0(q,m,n,\bar n, \varsigma) \leq \frac{1}{2}$, $r_0 = r_0(q,m,n,\bar n, \varsigma) \leq \frac{1}{2}$ and $C = C(q,m,n,\bar n)$ with the following property. Assume that 
\begin{itemize}
\item[(i)] $\|T\| (\Bbf_1) \leq (Q+\frac{1}{4}) \omega_m$;
\item[(ii)] There is $\mathbf{S}\in \mathscr{C} (Q, 0)\setminus\Pscr(0)$ such that 
\begin{equation}\label{e:smallness-2}
\mathbb{E} (T, \mathbf{S}, \Bbf_1) \leq \varepsilon_0^2 \mathbf{E}^p (T, \Bbf_1)\, 
\end{equation}
and 
\begin{equation}\label{e:no-gaps-2}
\Bbf_{ \varepsilon_0} (\xi) \cap \{p: \Theta (T,p)\geq Q\}\neq \emptyset \qquad \forall \xi \in V (\mathbf{S})\cap \Bbf_{1/2}\, ;
\end{equation}
\item[(iii)] $\mathbf{A}^2 \leq \varepsilon_0^2 \mathbb{E} (T, \mathbf{S}', \Bbf_1)$ for {any} $\mathbf{S}'\in \mathscr{C} (Q, 0)$;
\item [(iv)] {$0\in\Sing(T)$} has $\Theta(T,0)\geq Q$ and has at least one tangent cone which belongs to $\mathscr{C}(Q,0)$.
\end{itemize}
Then there is a $\mathbf{S}'\in \mathscr{C} (Q,0) \setminus \mathscr{P} (0)$ such that 
\begin{enumerate}%[\normalfont(a)]
    \item [\textnormal{(a)}] $\mathbb{E} (T, \mathbf{S}', \Bbf_{r_0}) \leq \varsigma \mathbb{E} (T, \mathbf{S}, \Bbf_1)\,$ \label{e:decay} \\
    \item [\textnormal{(b)}] $\dfrac{\mathbb{E} (T, \mathbf{S}', \Bbf_{r_0})}{\mathbf{E}^p (T, \Bbf_{r_0})} 
\leq 2 \varsigma \dfrac{\mathbb{E} (T, \mathbf{S}, \Bbf_1)}{\mathbf{E}^p (T, \Bbf_1)}$ \\
    \item [\textnormal{(c)}] $\dist^2 (\Sbf^\prime \cap \Bbf_1,\Sbf\cap \Bbf_1) \leq C \mathbb{E} (T, \mathbf{S}, \Bbf_1)$\label{e:cone-change}
    \item[\textnormal{(d)}] $\dist^2 (V (\mathbf{S}) \cap \Bbf_1, V (\mathbf{S}')\cap \Bbf_1) \leq C \dfrac{\mathbb{E}(T,\mathbf{S},\Bbf_1)}{\Ebf^p(T,\Bbf_1)}$\, .\label{e:spine-change}
    \end{enumerate}
\end{lemma}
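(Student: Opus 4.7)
The plan is to follow closely the excess decay argument of \cite{DMS} for integral area-minimizers near $(m-2)$-invariant cones, adapting each step to the mod$(q)$ setting. The hypothesis (iv) that $0$ is a singular point with density $\geq Q$ admitting at least one tangent cone in $\mathscr{C}(Q,0)$, combined with the no-gap condition (ii), play the role that was automatic for integral currents and are what allow the classical machinery to be imported.

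First, I would establish the analogues of the pruning/layer decomposition and graphical approximation results from Part \ref{pt:1} (Lemmas \ref{l:pruning}, \ref{l:layers}, Propositions \ref{p:crude-approx}, \ref{p:coherent}, \ref{p:first-blowup}), but now over planes meeting along an $(m-2)$-dimensional spine $V(\mathbf{S})$ rather than halfplanes with $(m-1)$-dimensional spine. The proofs carry over essentially verbatim: the no-gap assumption \eqref{e:no-gaps-2} and the density upper bound $Q + \tfrac14$ from (i) play the role of the matching-multiplicities lemma (Lemma \ref{l:matching-Q}), ensuring that in the compactness argument one gets $\sum_i Q_i = q$ with each $Q_i \leq Q$, and hence allows identification of $T$ with an area-minimizing integral current outside a thin tubular neighborhood of $V(\mathbf{S})$ via Proposition \ref{p:integrality} (on each page, where the density is strictly less than $Q$).

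Second, I would establish the non-concentration (Simon-type) estimates at the spine. Here the key input is different from Section \ref{s:spine-est} of Part \ref{pt:1}: instead of relying purely on monotonicity centered at $Q$-density points, one uses the BV estimate Theorem \ref{t:BV} on the universal frequency function together with the Hardt--Simon inequality applied to the center manifold normal approximations $N_j$. This yields non-concentration estimates for the rescaled graphical approximations $\bar{u}_i^k/\Ebb(T_k, \mathbf{S}_k, \Bbf_1)^{1/2}$ in a neighborhood of $V$, exactly analogous to Proposition \ref{p:final-blowup-estimates}. At this step the hypothesis (iv) enters crucially, since the singularity degree machinery requires a well-defined sequence of intervals of flattening based at a $Q$-density point.

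Third, I would run the blow-up contradiction argument. Assume $\varsigma > 0$ is fixed and decay fails along a sequence $(T_k, \Sigma_k, \mathbf{S}_k)$ satisfying the hypotheses with $\varepsilon_0 = \varepsilon_k \downarrow 0$. The normalized approximations converge strongly in $W^{1,2}_{\loc}$ to a limit $\bar{u} = \sum_i \bar{u}_i$ which, via the first-variation identities (adapted from Proposition \ref{p:final-blowup}(d) to test vector fields tangent to the $(m-2)$-spine $V$) and the non-concentration estimates, is a $Q$-valued Dir-minimizer with spine of dimension at least $m-2$. Combining Corollary \ref{c:coarse=fine} (applied through the fact that $\Irm(T_k,0) < 2-\delta_2$ near flat singularities of density $Q$ in the non-trivial blow-up regime) with Proposition \ref{p:1-homog-Dir-min} forces the blow-up to be a classical $\mathcal{A}_Q$-valued $1$-homogeneous Dir-minimizer which is a superposition of linear maps vanishing precisely on a $(m-2)$-dimensional spine. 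The linear decay for such Dir-minimizers then produces a new cone $\mathbf{S}' \in \mathscr{C}(Q,0) \setminus \mathscr{P}(0)$ at the smaller scale $r_0$ realizing the desired excess decay, contradicting the assumption.

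The main obstacle will be the non-concentration estimate at the spine. Unlike Part \ref{pt:1}, where one can bootstrap from the mod$(q)$ classification of $(m-1)$-invariant tangent cones and the Hardt--Simon-type monotonicity centered at every $Q$-density point in $V(\mathbf{S})$, here the spine is $(m-2)$-dimensional and the set of high-density points near $V(\mathbf{S})$ is not a priori dense enough to apply Simon's shift argument pointwise. The resolution is to combine the BV bound from Theorem \ref{t:BV} (which controls the oscillation of the frequency at points of density $\geq Q$) with a covering argument for the gaps in $\{\Theta(T,\cdot) \geq Q\} \cap \Bbf_{\varepsilon_0}(V(\mathbf{S}))$ guaranteed by \eqref{e:no-gaps-2}; the resulting estimate then suffices for the compactness step to produce a limiting Dir-minimizer taking the average-free value zero along $V$.
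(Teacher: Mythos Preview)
Your proposal misidentifies the main obstacle. You treat the non-concentration estimates at the $(m-2)$-spine as the crux requiring new machinery (the BV bound of Theorem \ref{t:BV} and the singularity-degree apparatus via Corollary \ref{c:coarse=fine}), but in fact these estimates go through \emph{exactly} as in \cite{DMS}*{Section 11} and Section \ref{s:spine-est} of Part \ref{pt:1}. The reason is that the no-gap condition \eqref{e:no-gaps-2} is a \emph{hypothesis} of the lemma, not something to be proved: once you are given a point of density $\geq Q$ within $\varepsilon_0$ of every $\xi\in V(\mathbf{S})\cap\Bbf_{1/2}$, Simon's shift inequality (the analogue of Proposition \ref{p:Simon-shift}) applies centered at each such point and the covering argument proceeds verbatim. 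Together with Proposition \ref{p:density-drop-m-2} (which uses hypothesis (iv) only to exclude the possibility that the limit collapses to an open book) and Proposition \ref{p:integrality}, this already identifies $T$ with an area-minimizing integral current away from any fixed neighborhood of $V(\mathbf{S})$, so the entire framework of \cite{DMS}*{Sections 7, 8, 10, 11, 12, 13} carries over unchanged. Note also that your invocation of Corollary \ref{c:coarse=fine} is problematic on its own terms: that corollary is stated for flat singular points, whereas hypothesis (iv) allows the tangent cone at $0$ to be a nontrivial union of planes.

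The actual place requiring adaptation---which your proposal does not isolate---is the cone-balancing step corresponding to \cite{DMS}*{Proposition 9.3}. In the integral case one used Morgan's classification of area-minimizing unions of planes to conclude that a limiting cone in $\mathscr{C}(Q)$ has equal Morgan angles; here one needs the mod$(q)$ analogue (Lemma \ref{l:Morgan-mod-q}), whose proof is identical because Morgan's competitor construction remains admissible for mod$(q)$ minimizers. In the degenerate case (b) of that proposition the blow-up is a priori a \emph{special} ($\Ascr_Q$-valued) Dir-minimizer, and one uses hypothesis (iv) via Remark \ref{remark:planar-open-book-comparison} to rule out an $(m-1)$-dimensional spine, after which Proposition \ref{p:1-homog-Dir-min} forces the blow-up to be a classical superposition of linear maps with comparable Morgan angles. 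The BV and frequency estimates play no role in the proof of this lemma; they enter only later, in the rectifiability argument for $\Ffrak_{Q,>1}(T)$.
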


\begin{remark}\label{remark:planar-open-book-comparison}
Assumption (iv) is an additional assumption made here in comparison to Theorem \ref{t:decay}. Notice that for the purposes of Theorem \ref{t:main-pt2}, this is an assumption we can make (with the possibility that $N=1$ in the case where $0 \in \Ffrak_Q(T)$), and it remains true under iteration of Lemma \ref{l:m-2-decay}. The main reason for making such an assumption is that without it, we would need make the stronger assumption that $\mathbb{E}(T,\Sbf,\Bbf_1)$ is not just small in comparison to $\mathbf{E}^p(T,\Bbf_1)$, but is small in comparison to $\inf_{\Sbf^\prime}\hat{\Ebf}(T,\Sbf^\prime,\Bbf_1)$, where the infimum here is taken over all open books $\Sbf^\prime$; proving Lemma \ref{l:m-2-decay} under this latter assumption would be much more technically involved. However, by making assumption (iv), we automatically get that 
    \begin{equation}\label{E:m-2-plane-comparison-open-book}
        \Ebf^p(T,\Bbf_1)\leq \eps_1^2\mathbf{E}(T,\Sbf^\prime,\Bbf_1)
    \end{equation}
for any open book $\Sbf^\prime$, for suitable $\eps_1 = \eps_1(q,n,m,\bar{n})$, in light of the arguments in Part \ref{pt:1}. Thus, this means that \eqref{e:smallness-2} and (iv) implies the stronger condition that $\mathbb{E}(T,\Sbf,\Bbf_1)$ is small in comparison to the excess relative to any open book. Indeed, if \eqref{E:m-2-plane-comparison-open-book} were to fail, for suitably chosen $\eps_1$ Remark \ref{remark:main} would apply to give that all points of density $\geq Q$ have unique tangent cones which are open books, contradicting (iv).
\end{remark}

We will in fact reduce the majority of the arguments towards the proof of Lemma \ref{l:m-2-decay} to those in \cite{DMS}. The key will be Proposition \ref{p:integrality}, which will allow us to identify $T$ with an area-minimizing integral current outside of an arbitrarily small neighborhood of the spine of a nearby cone $\Sbf \in \Cscr(Q)\setminus \Pscr$.

First, let us introduce some notation (much analogous to that in Part \ref{pt:1}) that we will use frequently. Given a cone $\Sbf= \alpha_1\cup\cdots\cup\alpha_N\in \Cscr(Q,0)\setminus\Pscr(0)$, let
\[
    \boldsymbol\mu(\Sbf) := \max_{i,j} \dist(\alpha_i\cap\Bbf_1,\alpha_j\cap\Bbf_1),
\]
and let
\[
    \boldsymbol{\sigma} (\mathbf{S}) = \min_{i<j}
\dist (\alpha_i \cap \Bbf_1, \alpha_j \cap \Bbf_1)\, .
\]

Let us begin with the following proposition, which provides an analogue of property \eqref{e:density-drop} of Theorem \ref{t:no-gaps} in this setting.

\begin{proposition}\label{p:density-drop-m-2}
    For every $\eta,\rho >0$, there exists $\eps_0=\eps_0(q,m,n,\bar n, \eta, \rho )>0$ such that the following holds. Suppose that $T$, $\Sigma$ satisfy Assumption \ref{a:main-pt2} with $\|T\|(\Bbf_1)\leq (Q+\tfrac{1}{4})\omega_m$ and suppose that $\Sbf\in \Cscr(Q,0)\setminus \Pscr(0)$ satisfies
\begin{equation*}
    \Ebb(T,\Sbf,\Bbf_1) + \Abf^2 \leq \eps^2\boldsymbol{\mu}(\Sbf)^2\, .
\end{equation*}
Furthermore, assume that $0\in \Sing(T)$ has $\Theta(T,0)\geq Q$ and has at least one tangent cone which belongs to $\mathscr{C}(Q,0)$. Then
\begin{equation}\label{e:density-drop-m-2}
	\Theta(T,x)<\frac{q}{2} \qquad \text{for every $x\in \Bbf_{1-\eta/8} \setminus \Bbf_{\rho/4}(V)$.}
\end{equation}
\end{proposition}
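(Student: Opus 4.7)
The plan is to argue by contradiction, following the general scheme of the proof of Theorem \ref{t:no-gaps} with the role of open books replaced by the planar $(m-2)$-invariant cones in $\Cscr(Q)$. Fix $\eta,\rho>0$ and suppose the conclusion fails. Then one extracts sequences $T_k$, $\Sigma_k$ and $\Sbf_k\in \Cscr(Q,0)\setminus \Pscr(0)$ satisfying the hypotheses with parameters $\eps_k\downarrow 0$, together with points $x_k\in \Bbf_{1-\eta/8}\setminus \Bbf_{\rho/4}(V(\Sbf_k))$ for which $\Theta(T_k,x_k)\geq Q$. After a rotation we may assume $V(\Sbf_k)\equiv V$ is fixed, and, up to extraction of a subsequence, $\Sbf_k\cap\bar\Bbf_1\to \Sbf\cap\bar\Bbf_1$ in Hausdorff distance for some $\Sbf\in \Cscr(Q)$ containing $V$, $\Sigma_k\to\varpi$ an $(m+\bar n)$-dimensional plane, $T_k\toweakstar T$ as mod$(q)$ flat chains with $\|T_k\|\toweakstar \|T\|$ as varifolds (see \cite{DLHMS}*{Proposition 4.2}), and $x_k\to x\in \bar\Bbf_{1-\eta/8}\setminus \Bbf_{\rho/4}(V)$. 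The hypothesis (iv) applied to each $T_k$, together with upper semi-continuity of the density, gives $\Theta(T,0)\geq Q$ and $\Theta(T,x)\geq Q$, while the smallness of $\mathbb{E}(T_k,\Sbf_k,\Bbf_1)$, combined with monotonicity and an Allard-type height bound at points of positive mass density, forces both $\spt(T)\cap \Bbf_1\subset \Sbf$ and, via the reverse excess $\hat{\mathbf{E}}(\Sbf_k,T_k,\Bbf_1)\to 0$, the converse inclusion $\Sbf\cap \bar\Bbf_1\setminus \Bbf_a(V)\subset \spt(T)$.

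In the non-degenerate case $\boldsymbol\mu(\Sbf_k)\not\to 0$, we have $\Sbf\in \Cscr(Q)\setminus \Pscr$, and the mod$(q)$ constancy theorem (\cite{DLHMS}*{Lemma 7.4}) applied on each connected component of $\Sbf\setminus V$ produces a decomposition $T\res(\Bbf_1\setminus V) = \sum_i Q_i\llbracket\alpha_i\rrbracket$ with integers $Q_i\in \{0,\dots,Q\}$ after choosing orientations suitably. Monotonicity, the density bound $\Theta(T,0)\geq Q$, and the mass bound $\|T\|(\Bbf_1)\leq (Q+1/4)\omega_m$ together force $\sum_i Q_i = Q$, while the reverse-excess inclusion $\Sbf\setminus \Bbf_a(V)\subset \spt(T)$ forces $Q_i\geq 1$ for every $i$. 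Since $\Sbf$ has at least two distinct planes, each $Q_i\leq Q-1$; in particular the limit point $x\in \spt(T)\setminus V$ lies on some $\alpha_i\setminus V$ and therefore satisfies $\Theta(T,x)\leq Q-1 < Q$, contradicting $\Theta(T,x)\geq Q$.

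In the degenerate case $\boldsymbol\mu(\Sbf_k)\to 0$, the cone $\Sbf$ collapses to a single $m$-plane $\pi\supset V$, and the same constancy argument combined with $\Theta(T,0)\geq Q$ and the mass bound yields $T = Q\llbracket\pi\rrbracket$ in $\Bbf_1$. One then runs verbatim the collapsed subcase of Case 1 in the proof of Theorem \ref{t:no-gaps}: after a rotation bringing $\pi$ to a fixed plane, \cite{DLHMS}*{Theorem 15.1} (or, alternatively, Almgren's Lipschitz approximation for stationary integral varifolds) yields multi-valued graphs $f_k$ over $\pi$ whose normalizations $\bar f_k \coloneqq f_k/\boldsymbol\mu(\Sbf_k)$ converge in $L^2$ to a Dir-minimizing $\bar f$, and the hypothesis $\mathbb{E}(T_k,\Sbf_k,\Bbf_1)\ll \boldsymbol\mu(\Sbf_k)^2$ forces the graph of $\bar f$ to coincide with the non-planar limit $\Sbf'\in \Cscr(Q)\setminus \Pscr$ of the rescaled cones $L_k(\Sbf_k)$, where $L_k$ dilates by $\boldsymbol\mu(\Sbf_k)^{-1}$ in the directions normal to $\pi$. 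A persistence-of-$Q$-points argument, based either on the Hardt--Simon inequality or on \cite{DLHMS}*{Theorem 23.1}, then transfers the density-$Q$ condition at $x_k$ to a $Q$-point of $\bar f$ at $y = \mathbf{p}_\pi(x)\in B_{1-\eta/8}(0,\pi)\setminus B_{\rho/8}(V)$, contradicting the fact that all $Q$-points of $\bar f$ are confined to $V$ (since the pages of $\Sbf'$ meet only along $V$). The main obstacle is establishing the reverse-excess inclusion $\Sbf\setminus \Bbf_a(V)\subset \spt(T)$ used in the non-degenerate case, which is the key input ensuring that every page of $\Sbf$ carries positive multiplicity in the limit $T$; once this is in place, both cases follow by close adaptation of the arguments in the proof of Theorem \ref{t:no-gaps}.
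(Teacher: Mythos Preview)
Your contradiction setup and the overall two-case structure mirror the paper's approach, but there is a genuine gap: you never rule out the possibility that the limiting cone is an open book. In the non-degenerate case you assert $\Sbf\in\Cscr(Q)\setminus\Pscr$ with spine $V$ and then argue that $x$ lies on a single plane $\alpha_i\setminus V$ with $\Theta(T,x)=Q_i\leq Q-1$. But even though $V(\Sbf_k)\equiv V$ for every $k$, two distinct limit planes $\alpha_i\neq\alpha_j$ may share an $(m-1)$-dimensional subspace $V'\supsetneq V$ (one Morgan angle of the approximating pair can vanish while the other stays bounded away from zero). Since $\sum_i Q_i=Q$ and each $Q_i\geq 1$, the condition $\Theta(T,x)\geq Q$ forces $x$ to lie on \emph{every} limit plane, so all planes contain $V+\R x$ and $\Sbf$ is supported on an open book with spine of dimension $m-1$; your density count then gives $\Theta(T,x)=Q$ and no contradiction. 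The same issue recurs in the degenerate case: the rescaled limit $\Sbf'$ need not have its planes meeting only along $V$, so the claim ``all $Q$-points of $\bar f$ are confined to $V$'' is unjustified.

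The paper closes this gap by exploiting the part of hypothesis (iv) that you use only to extract $\Theta(T,0)\geq Q$, namely that $0$ has a tangent cone in $\Cscr(Q,0)$. If the limit $\Sbf$ were supported on an open book, then for large $k$ the current $T_k$ is close to that open book and the excess-decay theory of Part~\ref{pt:1} (Theorem~\ref{t:main}, via Remark~\ref{remark:main}) forces the unique tangent cone of $T_k$ at $0$ to lie in $\Bscr^q\setminus\Pscr$, contradicting (iv). In the degenerate case the analogous exclusion of the open-book alternative is supplied by Remark~\ref{remark:planar-open-book-comparison}. The reverse-excess inclusion you flag as the ``main obstacle'' is in fact routine; the missing structural ingredient is precisely this exclusion of the $(m-1)$-invariant alternative.
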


% In order to prove this, we will require the following pruning lemma for cones $\Sbf\in \Cscr(Q)$ (cf. Lemma \ref{l:pruning}, which was proven in our work \cite{DMS}.

% \begin{lemma}[Pruning lemma, \cite{DMS}*{Lemma 8.2}]\label{l:pruning-2}
% Let $N\geq 2$, $D>0$, and $0<\delta \leq 1$. Set $\Gamma := \delta^{2-N} (N-1)!$ and $\varepsilon:= (1+\Gamma )^{-1} \delta$. If
% \begin{itemize}
% \item[(i)] $\mathbf{S} = \alpha_1 \cup \cdots \cup \alpha_N \in \Cscr(Q)$;
% \item[(ii)] $D\leq \varepsilon \max_{i<j} \dist (\alpha_i\cap \Bbf_1, \alpha_j\cap \Bbf_1)$;
% \end{itemize}
% then there is a subcollection $I\subset \{1, \ldots, N\}$ consisting of at least $2$ elements and satisfying the following requirements: 
% \begin{align}
% \max_j \min_{i\in I} \dist (\alpha_i\cap \Bbf_1, \alpha_j\cap \Bbf_1) &\leq \Gamma  D\label{e:pruning-v2-1}\\
% D + \max_j \min_{i\in I} \dist (\alpha_i\cap \Bbf_1, \alpha_j\cap \Bbf_1) &\leq \delta \min_{j,\ell\in I:\, j<\ell}
% \dist (\alpha_j \cap \Bbf_1, \alpha_\ell \cap \Bbf_1)\, .\label{e:pruning-v2-2}\\
% \max_{i,j\in I:\, i<j} \dist (\alpha_i \cap \Bbf_1, \alpha_j\cap \Bbf_1)
% &= \max_{i<j} \dist (\alpha_i \cap \Bbf_1, \alpha_j \cap \Bbf_1)\, .\label{e:pruning-v2-3}
% \end{align}
% \end{lemma}

\begin{proof}
    We argue by contradiction. Suppose that there exists $\eta,\rho >0$ for which there are sequences $T_k, \Sigma_k$ and $\Sbf_k$ with 
    \begin{equation}\label{e:density-drop-contradiction-1}
        \Ebb(T_k,\Sbf_k,\Bbf_1) + \Abf_k^2 \leq \eps_k^2 \boldsymbol\mu(\Sbf_k)^2,
    \end{equation}
    for $\eps_k \downarrow 0$ and $\Abf_k = \Abf(\Sigma_k)$, but
    \begin{equation}
        \Theta(T_k,x_k) \geq \frac{q}{2} \qquad \text{for some $x_k\in \Bbf_{1-\eta/8}\setminus \Bbf_{\rho/4}(V_k)$,}
    \end{equation}
    where $V_k = V(\Sbf_k)${, and furthermore $0\in \Sing(T_k)$ has $\Theta(T_k,0)\geq Q$ and at least one tangent cone to $T_k$ at $0$ belongs to $\mathscr{C}(Q,0)$}. Then, up to extracting a subsequence,
    \begin{itemize}
        \item $\Sigma_k$ converges in $C^{3,\alpha_0}$ to an $(m+\bar n)$-dimensional subspace of $\R^{m+n}$;
        \item $\Sbf_k$ converges in Hausdorff distance to a cone $\Sbf=\pi_1\cup\cdots\cup\pi_N$ in $\bar\Bbf_1$ with $\dim V(\Sbf) \geq m-2${, and moreover $V_k$ converges in Hausdorff distance to an $(m-2)$-dimensional subspace $V$ with $V\subset V(\Sbf)$};
        \item $T_k$ converges (in the mod$(q)$ flat topology) to an area-minimizing current mod$(q)$ in $\Bbf_1$, denoted by $T$, with $\spt(T) \cap \Bbf_1 = \Sbf\cap\Bbf_1$ and in addition $\|T_k \mres \Bbf_1\| \to \|T\mres\Bbf_1\|$;
        \item $x_k \to x \in \bar\Bbf_{1-\eta/8} \setminus \Bbf_{\rho/4}(V)$ with $\Theta(T,x) \geq \frac{q}{2}$.
    \end{itemize}

    {Notice first that the fourth bullet point above guarantees that $\dim V(\Sbf)\geq m-1$, as $x\in V(\Sbf)\setminus V$. However, we cannot have $\dim V(\Sbf)=m-1$, as then this would imply that $\Sbf$ is an open book, meaning that for all $k$ sufficiently large, we could apply Theorem \ref{t:main} to $T_k$, implying that $0\in\Sing(T_k)$ has a unique tangent cone which is an open book, contradictory to our assumption. Thus, we must have $\dim V(\Sbf) = m$, i.e. that $\Sbf$ is a single plane with multiplicity $Q$. But this case also leads to a contradiction, as one may then argue analogously as in the corresponding proof of Theorem \ref{t:no-gaps}, blowing-up relative to the plane (normalizing by $\boldsymbol\mu(\Sbf_k)$) to get a blow-up whose graph is in $\mathscr{C}(Q)\setminus\mathscr{P}$, yet has a density $Q$ point away from its axis, providing the desired contradiction. Indeed, to see this contradiction, notice that this would give that the graph of the blow-up is either a single plane with multiplicity $Q$, or an open book. But it cannot be a single plane by \eqref{e:density-drop-contradiction-1}, and moreover cannot be an open book by Remark \ref{remark:planar-open-book-comparison}. This gives the desired contradiction.}
\end{proof}

\section{Proof of Lemma \ref{l:m-2-decay}}
In light of Proposition \ref{p:density-drop-m-2} and Proposition \ref{p:integrality}, under the assumptions of Lemma \ref{l:m-2-decay}, $T$ identifies with an area-minimizing integral current outside of a small neighborhood of $V(\Sbf)$. Thus, all of the arguments in \cite{DMS}*{Sections 7 \& 8} remain unchanged, as do those in \cite{DMS}*{Section 10}, assuming the validity of the results in \cite{DMS}*{Section 9}. Moreover, the estimates at the spine in \cite{DMS}*{Section 11} are exactly the same (cf. Section \ref{s:spine-est}). Furthermore, as the Simon and Wickramasekera blow-ups from \cite{DMS}*{Section 13} in the present setting will also be usual Dir-minimizers (and not special Dir-minimizers) the corresponding results used from \cite{DMS}*{Sections 12 \& 13} are also the same. Consequently, in order to conclude the proof of Lemma \ref{l:m-2-decay} (and thus of Theorem \ref{t:main-pt2} and hence Theorem \ref{c:main}), it suffices to verify the results of \cite{DMS}*{Section 9}. In fact, the only argument that one needs to adapt to our current setting is that of \cite{DMS}*{Proposition 9.3}.

\subsection{Morgan angles and cone balancing}
Let us recall the notions of \emph{Morgan angles} and \emph{balanced cones} $\Sbf\in \Cscr(Q)$, first introduced in \cite{DMS}.

\begin{definition}\label{d:frank}
Given two $m$-dimensional linear subspaces $\alpha, \beta$ of $\mathbb R^N$ whose intersection has dimension $m-k$, we order the $k$ positive eigenvalues $\lambda_1 \leq \lambda_2 \leq \cdots \leq \lambda_k$ of the quadratic form $\Qcal_1:\alpha\to\R$ given by $\Qcal_1(v):=\dist^2(v,\beta)$, with the convention that the number of occurrences of the same real $\lambda$ in the list equals its multiplicity as eigenvalue of $\Qcal_1$. The \textit{Morgan angles} of the pair $\alpha$ and $\beta$ are the numbers $\theta_i (\alpha, \beta) := \arcsin \sqrt{\lambda_i}$ for $i=1,\dots,k$.
\end{definition}

The Morgan angles between two intersecting $m$-dimensional affine planes are in turn defined by simply translating an intersection point to the origin. 
% We have the following elementary linear algebra result, whose proof may be found in \cite{DMS}*{Section 7}

% \begin{corollary}\label{c:growth}
% Suppose that $\alpha, \beta$ are two $m$-dimensional linear subspaces of $\R^N$. Denote by $V$ their $(m-k)$-dimensional intersection and let $\theta_1 \leq \cdots \leq \theta_k$ be their Morgan angles. Then 
% \begin{equation}\label{e:Morgan-max-min}
% |v| \sin \theta_1 \leq \dist (v, \beta) \leq |v| \sin \theta_k
% \qquad \forall v\in V^\perp\cap \alpha\, ,
% \end{equation}
% and
% \begin{equation}\label{e:Haus=max_eigen}
% \dist (\alpha \cap \Bbf_1, \beta \cap \Bbf_1) = \sup \{ \dist (v, \beta): v\in \alpha\cap \Bbf_1\} = \sin \theta_k\, .
% \end{equation}
% \end{corollary}

\begin{definition}\label{d:balanced}
Let $\mathbf{S}\in \mathscr{C} (Q)$, $M\geq 1$, and let $\alpha_1, \ldots , \alpha_N$ be the $N$ distinct $m$-dimensional planes forming $\mathbf{S}$. We say that $\mathbf{S}$ is \emph{$M$-balanced} if for every $i\neq j$ the inequality
\begin{equation}\label{e:balanced}
\theta_2 (\alpha_i, \alpha_j) \leq M \theta_1 (\alpha_i, \alpha_j)
\end{equation}
holds for the two Morgan angles of the pair of planes $\alpha_i, \alpha_j$.
\end{definition}

We now observe that the argument in \cite{Morgan}*{Theorem 2} showing that a collection of planes that is area-minimizing mod$(q)$ with the property that pairwise the planes intersect along the same $(m-2)$-dimensional subspace have equal Morgan angles.

\begin{lemma}\label{l:Morgan-mod-q}
    Let $\Sbf\subset\R^{m+n}$ be the union of $N$ distinct $m$-dimensional planes $\alpha_1,\dotsc,\alpha_N$ with the property that, for every $i<j$, $\alpha_i\cap \alpha_j$ is the same $(m-2)$-dimensional plane $V$. If $T$ is an $m$-dimensional area minimizing current mod$(q)$ such that $\spt(T) = \Sbf$, then the (two) Morgan angles of any pair $\alpha_i,\alpha_j$, $i\neq j$, coincide.
\end{lemma}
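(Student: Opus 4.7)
My approach is to reduce the problem to the two-dimensional case and then invoke an adaptation of Morgan's original argument from \cite{Morgan}*{Theorem 2}. First, I would slice $T$ by a generic $(2+n)$-dimensional affine subspace $W$ transverse to $V$ at a point $p_0 \in V$. By the slicing theory for area-minimizing currents mod-$(q)$ (cf.\ \cite{DLHMS}), the slice $\langle T, \mathbf{p}_{V^\perp}, \mathbf{p}_{V^\perp}(p_0)\rangle$ is a $2$-dimensional current, area-minimizing mod-$(q)$ in $W \cong \mathbb{R}^{n+2}$, and supported on $\bigcup_i \beta_i$, where $\beta_i \coloneqq \alpha_i \cap W$ are distinct $2$-planes meeting only at $\{p_0\}$. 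Since the Morgan angles of $(\alpha_i, \alpha_j)$ depend only on the position of these planes relative to $V^\perp$, they coincide with those of $(\beta_i, \beta_j)$, so it suffices to prove the lemma in the case $m=2$.

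In this reduced setting, I would combine Proposition \ref{p:integrality} (applied in a neighborhood of any point of $\beta_i \setminus \{p_0\}$) with the Constancy Theorem to obtain a decomposition $T = \sum_{i=1}^N Q_i \llbracket \beta_i \rrbracket$ on $\mathbb{R}^{n+2} \setminus \{p_0\}$, for suitable orientations of the $\beta_i$ and positive integers $Q_i$. Indeed, at any point of $\beta_i \setminus \{p_0\}$ the density of $T$ is a positive integer strictly smaller than $q/2$ (since the planes are distinct), so Proposition \ref{p:integrality} applies to identify $T$ with an integral current there.

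I would then argue by contradiction. Suppose some pair $\beta_i, \beta_j$ has Morgan angles $\theta_1 < \theta_2$. Both planes lie in the $4$-dimensional linear subspace $U \coloneqq \beta_i + \beta_j$, and are determined up to rotations in $U$ by these two angles. Morgan's construction produces, within $U$, an integral $2$-current $S'$ supported in $\Bbf_\rho \cap U$ centered at $p_0$, with $\partial S' = \partial(Q_i\llbracket \beta_i \rrbracket + Q_j\llbracket \beta_j \rrbracket)\res(\Bbf_\rho \cap U)$ and strictly less mass than $(Q_i\llbracket \beta_i \rrbracket + Q_j\llbracket \beta_j \rrbracket)\res(\Bbf_\rho\cap U)$. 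Define a competitor $T'$ to $T$ by replacing $(Q_i\llbracket \beta_i \rrbracket + Q_j\llbracket \beta_j \rrbracket)\res \Bbf_\rho$ by $S'$ inside $\Bbf_\rho$ and leaving the other $\beta_k$ untouched. Then $T' - T$ is an integral cycle supported in $\Bbf_\rho$, so $T' \equiv T$ mod-$(q)$, yet $\|T'\|(\Bbf_\rho) < \|T\|(\Bbf_\rho)$, contradicting the area-minimality of $T$ mod-$(q)$ and forcing $\theta_1 = \theta_2$.

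The main obstacle will be verifying that Morgan's explicit area-reducing deformation extends to the codimension-$n$ setting and, crucially, that it can be realized so as to remain confined to the $4$-dimensional subspace $U$—so that the other planes $\beta_k$ for $k \neq i,j$, which may intersect $\Bbf_\rho$ but lie transversally to $U$, are genuinely untouched by the modification. Once this localization is secured, the mod-$(q)$ compatibility of the competitor is automatic, since $\partial(T'-T) = 0$ and the modification is by an integral current.
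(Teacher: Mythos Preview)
Your core idea---use Morgan's explicit area-reducing competitor and observe that it remains a valid competitor mod$(q)$---is exactly what the paper does, and in fact the paper's entire proof is a single sentence to that effect. Everything else you add is unnecessary scaffolding, and some of it is shaky. First, the slicing reduction to $m=2$ is not needed: Morgan's competitor in \cite{Morgan}*{Theorem 2} is built for $m$-planes meeting along an $(m-2)$-subspace, so you can apply it directly to $T$. If you do want to slice, note that generic slices of minimizers are \emph{not} minimizers; you would have to use the translation-invariance of $T$ along $V$ (i.e.\ the product structure) to argue that the cross-section is minimizing, not just ``slicing theory''. Second, the appeal to Proposition \ref{p:integrality} is both unnecessary and improperly justified: the planes being distinct does not imply that the density on each $\beta_i$ is strictly below $q/2$ (for even $q$ the multiplicity could be exactly $q/2$). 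You only need the mod$(q)$ constancy theorem to write $T=\sum_i Q_i\llbracket\beta_i\rrbracket$, and your final competitor argument---$T'-T$ is an integral cycle with compact support, hence a mod$(q)$ boundary, and $\|T'\|<\|T\|$---goes through without ever identifying $T$ with an integral minimizer. Strip out those two detours and your proof is the paper's proof.
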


Indeed, the proof of this fact in \cite{Morgan}*{Theorem 2} follows by constructing a competitor, which remains valid in the mod$(q)$ minimizing setting as well (as indeed one allows a larger class of competitors with the area-minimizing mod$(q)$ hypothesis when compared with just area-minimizing).

We are now in a position to justify the corresponding statement to \cite{DMS}*{Proposition 9.3} for the present setting; we will follow the notation used therein. For case (a), the argument is now identical replacing \cite{DMS}*{Lemma 7.5} with Lemma \ref{l:Morgan-mod-q}. For case (b), we instead perform a blow-up using special ($\Ascr_Q$-valued) Lipschitz approximations, resulting in a blow-up which is a special Dir-minimizer. Similarly to Proposition \ref{p:density-drop-m-2}, we know from Remark \ref{remark:planar-open-book-comparison} and the hypotheses that the blow-up is homogeneous of degree $1$ with a spine of dimension $m-2$. Hence, we may now apply Proposition \ref{p:1-homog-Dir-min} to see that in fact the blow-up is a classical $\Acal_Q$-valued Dir-minimizer induced by a superposition of planes with all Morgan angles between pairwise disjoint planes comparable, which therefore leads to the same contradiction as in \cite{DMS}*{Proposition 9.3}. After this, the proof is identical.
\qed

\bibliographystyle{alpha}
\bibliography{references}

\end{document}